\def\citestacks#1{\cite[\href{https://stacks.math.columbia.edu/tag/#1}{Tag #1}]{stacks-project}}
\numberwithin{equation}{section}
\theoremstyle{plain}
\newtheorem{theorem}{Theorem}[section]
\newtheorem{lemma}[theorem]{Lemma}
\newtheorem{proposition}[theorem]{Proposition}
\newtheorem{corollary}[theorem]{Corollary}
\newtheorem{conjecture}[theorem]{Conjecture}
\theoremstyle{definition}
\newtheorem{definition}[theorem]{Definition}
\newtheorem{example}[theorem]{Example}
\newtheorem{remark}[theorem]{Remark}
\newtheoremstyle{italicsname}
 {3pt}
 {3pt}
 {\itshape}
 {}
 {\itshape}
 {.}
 {.5em}
 {\thmname{#1}\thmnumber{\@ifnotempty{#1}{ }#2}%
 \thmnote{ {\the\thm@notefont(#3)}}}
\theoremstyle{italicsname}
\newenvironment{step}[1]
 {\innerstep}
 {\endinnerstep}
\setlist[itemize]{leftmargin=*, itemsep={2pt}}
\setlist[enumerate]{leftmargin=*, itemsep={2pt}} 
\newcommand{\st}{\mid}
\newcommand{\llangle}{\left \langle}
\newcommand{\rrangle}{\right \rangle}
\DeclareMathOperator{\Dperf}{D_{{perf}}}
\DeclareMathOperator{\Dqc}{D_{qc}}
\newcommand{\Cat}{\mathrm{Cat}}
\newcommand{\PrCat}{\mathrm{PrCat}}
\newcommand{\Fun}{\mathrm{Fun}}
\DeclareMathOperator{\Ind}{Ind}
\DeclareMathOperator{\Map}{Map}
\newcommand{\cB}{\mathcal{B}}
\newcommand{\Ku}{\CMcal{K}u}
\DeclareMathOperator{\Stab}{Stab}
\newcommand{\Gr}{\mathrm{Gr}}
\DeclareMathOperator{\Spec}{Spec}
\newcommand{\wtilde}{\widetilde}
\newcommand{\Pf}{\mathrm{Pf}}
\newcommand{\cHom}{\mathcal{H}\!{\it om}}
\DeclareMathOperator{\Hom}{Hom}
\DeclareMathOperator{\Ext}{Ext}
\DeclareMathOperator{\cExt}{\mathcal{E}\!{\it xt}}
\newcommand{\Tor}{\mathrm{Tor}}
\DeclareMathOperator{\Cone}{Cone}
\newcommand{\svee}{\scriptscriptstyle\vee}
\newcommand{\id}{\mathrm{id}}
\newcommand{\tH}{\wtilde{\mathrm{H}}}
\newcommand{\vphi}{\varphi}
\newcommand{\Ktop}[1][]{\rK_{#1}^{\rtop}}
\DeclareMathOperator{\Sp}{Sp}
\newcommand{\rtop}{\mathrm{top}}
\DeclareMathOperator{\HH}{HH}
\DeclareMathOperator{\HN}{HN}
\DeclareMathOperator{\HP}{HP}
\DeclareMathOperator{\Hdg}{Hdg}
\DeclareMathOperator{\CH}{CH}
\DeclareMathOperator{\Br}{Br}
\DeclareMathOperator{\gr}{gr}
\newcommand{\an}{\mathrm{an}}
\newcommand{\Shv}{\mathrm{Shv}}
\newcommand{\one}{\mathbf{1}}
\newcommand{\coev}{\mathrm{coev}}
\newcommand{\ev}{\mathrm{ev}}
\newcommand{\op}{\mathrm{op}}
\newcommand{\dR}{\mathrm{dR}}
\DeclareMathOperator{\Tr}{Tr}
\newcommand{\Art}{\mathrm{Art}}
\newcommand{\Sets}{\mathrm{Sets}}
\DeclareMathOperator{\Def}{Def}
\newcommand{\coker}{\mathrm{coker}}
\newcommand{\Voi}{\mathrm{V}}
\newcommand{\tors}{\mathrm{tors}}
\newcommand{\tf}{\mathrm{tf}}
\DeclareMathOperator{\ch}{ch} 
\DeclareMathOperator{\td}{td} 
\newcommand{\cO}{\mathcal{O}}
\newcommand{\cA}{\mathcal{A}}
\newcommand{\cC}{\mathcal{C}}
\newcommand{\cD}{\mathcal{D}}
\newcommand{\cM}{\CMcal{M}}
\newcommand{\cU}{\mathcal{U}}
\newcommand{\rK}{\mathrm{K}}
\newcommand{\rH}{\mathrm{H}}
\newcommand{\rh}{\mathrm{h}}
\newcommand{\rS}{\mathrm{S}}
\newcommand{\rM}{\mathrm{M}}
\newcommand{\rR}{\mathrm{R}}
\newcommand{\rT}{\mathrm{T}}
\newcommand{\bC}{\mathbf{C}}
\newcommand{\bG}{\mathbf{G}}
\newcommand{\bZ}{\mathbf{Z}}
\newcommand{\bP}{\mathbf{P}}
\newcommand{\bQ}{\mathbf{Q}}
\begin{document}

\title[The integral Hodge conjecture for CY2 categories]{The integral Hodge conjecture for two-dimensional Calabi--Yau categories}

\author{Alexander Perry}
\address{Department of Mathematics, University of Michigan, Ann Arbor, MI 48109 \smallskip}
\email{arper@umich.edu}

\thanks{This work was partially supported by NSF grant DMS-1902060 and the Institute for Advanced Study.}

\begin{abstract}
We formulate a version of the integral Hodge conjecture for categories, prove the conjecture for two-dimensional Calabi--Yau categories which are suitably deformation equivalent to the derived category of a K3 or abelian surface, and use this to deduce cases of the usual integral Hodge conjecture for varieties. 
Along the way, we prove a version of the variational 
integral Hodge conjecture for families of two-dimensional Calabi--Yau categories, as well as a general smoothness result for relative moduli spaces of objects in such families. 
Our machinery also has applications to the structure of intermediate Jacobians, 
such as a criterion in terms of derived categories for when they split as a sum of Jacobians of curves. 
\end{abstract}

\maketitle

\section{Introduction} 
\label{section-introduction} 

Let $X$ be a smooth projective complex variety. 
The Hodge conjecture in degree $n$ for $X$ states that the subspace of Hodge classes in $\rH^{2n}(X, \bQ)$ 
is generated over $\bQ$ by the classes of algebraic cycles of codimension $n$ on $X$. 
This conjecture holds for $n = 0$ and $n = \dim(X)$ for trivial reasons,  
for $n=1$ by the Lefschetz $(1,1)$ theorem, and 
for $n = \dim(X) - 1$ by the case $n = 1$ and the hard Lefschetz theorem. 
In all other degrees, the conjecture is far from being known in general, and is one of the deepest open problems in algebraic geometry. 

There is an integral refinement of the conjecture, which is in 
fact the version originally proposed by Hodge \cite{hodge}. 
Let $\Hdg^n(X, \bZ) \subset \rH^{2n}(X, \bZ)$ denote the subgroup of integral Hodge classes, consisting of cohomology classes whose image in $\rH^{2n}(X, \bC)$ is of type $(n,n)$ for the Hodge decomposition. 
Then the cycle class map $\CH^n(X) \to \rH^{2n}(X, \bZ)$ factors through $\Hdg^n(X, \bZ)$.  
The integral Hodge conjecture in degree $n$ states that the image of this map is precisely $\Hdg^n(X, \bZ)$. 
This implies the rational version from above, 
and is known for $n = 0, 1, \dim(X)$ for the same reasons. 
However, in all other degrees, the integral Hodge conjecture is false in general. 
Indeed, Atiyah and Hirzebruch constructed the first of many counterexamples 
\cite{atiyah-hirzebruch-hodge, trento, soule-voisin, CT-voisin, totaro-IHC-number-field, schreieder2, benoist-ottem} 
showing that Hodge's original hope is quite far from being true. 

The failure of the integral Hodge conjecture is measured by the cokernel $\Voi^n(X)$ 
of the map $\CH^n(X) \to \Hdg^n(X, \bZ)$, 
which we call the degree $n$ \emph{Voisin group} of $X$. 
This is a finitely generated abelian group, predicted to be finite by the Hodge conjecture. 
The group $\Voi^n(X)$ is especially interesting for $n = 2$ or $n = \dim(X) - 1$ 
because then it is birationally invariant, as observed by Voisin \cite{soule-voisin}. 
In particular, for rational varieties $\Voi^n(X)$ vanishes in these degrees, i.e. the integral Hodge 
conjecture holds. 
This is the first in a line of results which show that, despite the counterexamples mentioned above, 
the integral Hodge conjecture may hold under interesting geometric conditions. 
For instance, for $n = 2$ the conjecture is known if $X$ is a threefold of negative Kodaira 
dimension or of Kodaira dimension zero with $\rH^0(X, K_X) \neq 0$ \cite{voisin-IHC-3fold, totaro-IHC-CY3}, 
a fibration in quadrics over a surface \cite{CT-voisin}, 
or a fibration in at worst nodal cubic threefolds over a curve \cite{voisin-AJ}. 
For $n = \dim(X) - 1$, the conjecture is known if $X$ is a Fano fourfold \cite{horing-voisin}, a Fano 
variety of index $\dim(X) - 3$ and $\dim(X) = 5$ or $\dim(X) \geq 8$ \cite{horing-voisin, floris}, 
or a hyperk\"{a}hler variety of K3$^{[n]}$ or generalized Kummer type \cite{ottem-HK}. 

We highlight two general questions suggested by these results: 
\begin{itemize}
\item When does the integral Hodge conjecture hold for varieties with $K_X = 0$? 
\item When does the integral Hodge conjecture hold in degree $2$ for Fano fourfolds? 
\end{itemize}
The second question is of particular importance because its failure obstructs rationality. 
The main goal of this paper is to give a positive answer to the 
first question for certain ``noncommutative surfaces'', and to use this to provide  
positive answers to the second question for interesting examples. 
To do so, we develop Hodge theory for suitable categories, 
introduce a technique involving moduli spaces of objects in categories to prove 
the integral Hodge conjecture, 
and prove a smoothness result for such moduli spaces in the two-dimensional Calabi--Yau case 
that also has applications to hyperk\"{a}hler geometry. 
Our Hodge-theoretic apparatus for categories can also be applied to 
questions about the odd degree cohomology of varieties, 
such as when an intermediate Jacobian splits as a sum of Jacobians of curves. 

\subsection{The integral Hodge conjecture for categories}
We will be concerned with an analogue of the above story for 
a ``noncommutative smooth proper complex variety'', i.e. an admissible subcategory 
$\cC \subset \Dperf(X)$ of the derived category of 
a smooth proper complex variety $X$. 
For any such $\cC$, we show that the 
(the zeroth homotopy group of) Blanc's topological K-theory \cite{blanc} gives a finitely generated abelian group 
$\Ktop[0](\cC)$ which is equipped with a canonical weight $0$ Hodge structure, whose Hodge decomposition is given in terms of Hochschild homology. 
Moreover, the natural map from the Grothendieck group $\rK_0(\cC) \to \Ktop[0](\cC)$ factors through 
the subgroup $\Hdg(\cC, \bZ) \subset \Ktop[0](\cC)$ of integral Hodge classes. 
The integral Hodge conjecture for $\cC$ then states that the 
map $\rK_0(\cC) \to \Hdg(\cC, \bZ)$ is surjective, while the Hodge conjecture for $\cC$ states that 
this is true after tensoring with $\bQ$. 

When $\cC = \Dperf(X)$, after tensoring with $\bQ$ the construction $\rK_0(\cC) \to \Hdg(\cC, \bZ)$ 
recovers the usual cycle class map 
$\CH^*(X) \otimes \bQ \to \Hdg^*(X, \bQ)$
to the group of rational Hodge classes of all degrees.  
Therefore, the Hodge conjecture in all degrees 
for $X$ is equivalent to the Hodge conjecture for $\Dperf(X)$. 
The integral Hodge conjectures for $X$ and $\Dperf(X)$ are more subtly, but still very closely, related 
(Proposition~\ref{proposition-IHC-vs-NCIHC}). 

The key motivating example for us is 
the \emph{Kuznetsov component} $\Ku(X) \subset \Dperf(X)$ of a cubic fourfold $X \subset \bP^5$, 
defined by the semiorthogonal decomposition 
\begin{equation*}
\Dperf(X) = \llangle \Ku(X) , \cO_X, \cO_X(1), \cO_X(2) \rrangle. 
\end{equation*} 
Kuznetsov \cite{kuznetsov-cubic} proved that $\Ku(X)$ is a \emph{two-dimensional Calabi--Yau (CY2) category}, i.e. $\Ku(X)$ satisfies Serre duality in the form 
\begin{equation*}
\Ext^i(E, F) \cong \Ext^{2-i}(F, E)^{\svee} \quad \text{for} \quad  E, F \in \Ku(X), 
\end{equation*} 
and is connected in the sense that its zeroth Hochschild cohomology is $1$-dimensional. 
The simplest example of a CY2 category is $\Dperf(T)$ where $T$ is a K3 or abelian surface, 
or more generally the twisted derived category $\Dperf(T, \alpha)$ for a Brauer class 
$\alpha \in \Br(T)$. 
Kuznetsov proved that for special $X$ 
the category $\Ku(X)$ is equivalent to such an example, 
while by \cite{addington-thomas} no such equivalence exists for very general $X$. 
Since then a number of further CY2 categories have been discovered 
(see \S\ref{section-CY2-examples}), the next most studied being the Kuznetsov 
component 
of a Gushel--Mukai fourfold (a Fano fourfold that generically can be written as the 
intersection of the Grassmannian $\Gr(2,5)$ with a hyperplane and a quadric). 
Recently, CY2 categories have attracted a great deal of attention due to their connections to 
birational geometry, Hodge theory, and the construction of hyperk\"{a}hler varieties 
\cite{addington-thomas, huybrechts-cubic, huybrechts-torelli, BLMS, LPZ, BLMNPS, 
KuzPerry:dercatGM, GMstability}. 

Inspired by work of Addington and Thomas~\cite{addington-thomas}, 
for any CY2 category $\cC$ we define the \emph{Mukai Hodge structure} $\tH(\cC, \bZ)$ as the 
weight $2$ Tate twist of $\Ktop[0](\cC)$. 
The group $\tH(\cC, \bZ)$ is also equipped with a natural pairing $(-,-)$, defined as the negative of the Euler pairing. 
In the case where $\cC = \Dperf(T)$ for a K3 or abelian surface $T$, this recovers the classical Mukai Hodge structure. 

Our first main result gives a criterion for the validity of the integral Hodge conjecture for a CY2 category. 
This criterion is of a variational nature, and depends on the notion of a family of CY2 categories. 
In general, the notion of a family of categories can be formalized as an $S$-linear 
admissible subcategory $\cC \subset \Dperf(X)$, where $X \to S$ is a morphism of varieties. 
There is a well-behaved notion of base change for such categories, 
which gives rise to a fiber category $\cC_s \subset \Dperf(X_s)$ for any point $s \in S$. 
When $X \to S$ is smooth and proper, we show that a relative version of topological K-theory 
from \cite{moulinos} gives a local system $\Ktop[0](\cC/S)$ on $S^{\an}$ 
underlying a canonical variation of Hodge 
structures of weight $0$, which fiberwise recovers the Hodge structure on $\Ktop[0](\cC_s)$ from above. 

We say $\cC \subset \Dperf(X)$ is a \emph{CY2 category over $S$} if $X \to S$ is smooth and proper 
and the fibers $\cC_s$ are CY2 categories.  
For example, if $X \to S$ is a family of cubic fourfolds, then 
similar to the case where the base is a point, one can define a CY2 category 
$\Ku(X) \subset \Dperf(X)$ over $S$ with fibers $\Ku(X)_s \simeq \Ku(X_s)$  
(Example~\ref{example-cubic}). 
As in the absolute case, we define the Mukai local system $\tH(\cC/S, \bZ)$ of a CY2 
category $\cC$ over $S$ as a Tate twist of $\Ktop[0](\cC/S)$. Now we can state our first main theorem. 

\begin{theorem}
\label{theorem-IHC} 
Let $\cC$ be a CY2 category over $\bC$. 
Let $v \in \Hdg(\cC, \bZ)$. 
Assume there exists a CY2 category $\cD$ over a complex variety 
$S$ with points $0, 1 \in S(\bC)$ such that: 
\begin{enumerate}
\item $\cD_0 \simeq \cC$. 
\item $\cD_1 \simeq \Dperf(T, \alpha)$ where $T$ is a K3 or abelian surface and $\alpha \in \Br(T)$ is a Brauer class. 
\item $v$ remains of Hodge type along $S$, i.e. extends to a section 
of the local system $\tH(\cD/S, \bZ)$. 
\end{enumerate}
Further, assume $(v,v) \geq -2$ or $(v,v) \geq 0$ according to whether 
$T$ is a K3 or abelian surface. 
Then $v$ is algebraic, i.e. lies in the image of $\rK_0(\cC) \to \tH(\cC, \bZ)$. 

In particular, if the cokernel of the map $\rK_0(\cC) \to \Hdg(\cC, \bZ)$ is generated  
by elements $v$ as above, then this map is in fact surjective, i.e. the integral Hodge conjecture holds for $\cC$. 
\end{theorem}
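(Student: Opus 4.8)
The plan is to realize $v$ as the topological K-theory class of a genuine object of $\cC$, by transporting a stable object from the K3 or abelian fiber $\cD_1$ to $\cD_0\simeq\cC$ along a relative moduli space of Bridgeland-stable objects over $S$; this simultaneously establishes a variational integral Hodge conjecture for CY2 families. \textbf{Reduction to $v$ primitive.} Writing $v=mv'$ with $v'$ primitive in $\Ktop[0](\cC)$ and $m\geq 1$, it suffices to prove $v'$ algebraic, since the image of $\rK_0(\cC)\to\tH(\cC,\bZ)$ is a subgroup. The flat section of $\tH(\cD/S,\bZ)\otimes\bQ$ extending $v/m=v'$ is integral at $0$, hence everywhere, as parallel transport preserves the integral lattice; so $v'$ satisfies hypothesis (3). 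Moreover $(v',v')=(v,v)/m^2$ still satisfies the relevant inequality ($\geq -2$ if $(v,v)\geq -2$, and $\geq 0$ if $(v,v)\geq 0$), and the fiber $\cD_1$ is unchanged, so $v'$ satisfies all the hypotheses. Thus we may assume $v$ primitive. Finally, after replacing $S$ by an irreducible curve through $0$ and $1$ over which $\cD$ is pulled back, we may assume $S$ is a connected curve.

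\textbf{Construction of the relative moduli space.} By hypotheses (1)--(2), $\cD$ is a CY2 category over $S$ that is deformation equivalent to $\Dperf(T,\alpha)$, so by the theory of stability conditions in families one can deform a stability condition from $\cD_1$ and equip $\cD/S$, over a covering of $S$ by opens, with families of Bridgeland stability conditions $\sigma$, chosen generic for $v$ so that no relevant fiber carries strictly $v$-semistable objects. Let $\cM\to S$ be the resulting relative moduli space of $\sigma_s$-stable objects $E\in\cD_s$ whose class in the local system $\Ktop[0](\cD/S)$ is the section extending $v$. Then $\cM$ is a separated algebraic space of finite type over $S$, and it is proper over $S$ by the valuative criterion for families of stable objects, using genericity (to rule out strictly semistable limits) and boundedness. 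By the general smoothness theorem for relative moduli of objects in families of CY2 categories — one of the main results of the paper — the morphism $\cM\to S$ is moreover smooth: a $\sigma$-stable object $E$ is simple, so its deformations inside the fixed family are unobstructed, the obstructions in $\Ext^2(E,E)$ being traceless while $\Ext^2(E,E)\cong\Hom(E,E)^{\svee}\cong\bC$ consists of traces; the relative dimension at $[E]$ equals $\dim\Ext^1(E,E)=(v,v)+2$.

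\textbf{Nonemptiness and propagation.} By hypothesis (2), the fiber $\cM_1$ is the moduli space of $\sigma_1$-stable objects of $\Dperf(T,\alpha)$ with Mukai vector $v$; since $v$ is primitive and satisfies $(v,v)\geq -2$ (K3 case) or $(v,v)\geq 0$ (abelian case), this space is nonempty by the existence theorems for moduli of Bridgeland-stable complexes on twisted K3 and abelian surfaces. The morphism $\cM\to S$ is smooth, hence open, and proper, hence closed; as $S$ is connected it is surjective (propagating fiberwise nonemptiness along the chosen open cover, using that over each open the moduli space is smooth and proper), so $\cM_0$ is nonempty. Over $\bC$ a $\bG_m$-gerbe over a nonempty algebraic space has a rational point, so there is an object $E\in\cC=\cD_0$ whose class $[E]\in\Ktop[0](\cC)$ is the value at $0$ of the flat section cut out by the K-class of the tautological family, which by construction is $v$. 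Hence $v=[E]$ lies in the image of $\rK_0(\cC)\to\tH(\cC,\bZ)$, as claimed. The ``in particular'' statement is now immediate: if a generating set of $\coker(\rK_0(\cC)\to\Hdg(\cC,\bZ))$ consists of classes $v$ of the above type, each such $v$ maps to $0$ in the cokernel, so the cokernel vanishes and $\rK_0(\cC)\to\Hdg(\cC,\bZ)$ is surjective.

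\textbf{Main obstacle.} The heart of the proof is the construction and geometry of $\cM\to S$. The difficult points are: (i) producing families of Bridgeland stability conditions on $\cD/S$ — deforming one from the K3/abelian fiber via openness of stability in families, then arranging genericity for $v$, with care as to whether such a family exists Zariski-locally versus only étale- or analytic-locally and to the possibility that $0$ or $1$ lies on a wall, which must be handled by perturbation or by a judicious choice of curve; (ii) properness of $\cM\to S$, i.e.\ a valuative criterion uniform over $S$, which is precisely where genericity and boundedness are used; and (iii) the relative unobstructedness in the CY2 case, which requires a clean relative obstruction theory for objects in an $S$-linear category. By comparison the Hodge-theoretic content — that the K-class of the tautological family is a flat section of $\Ktop[0](\cD/S)$ recovering the Mukai vector fiberwise — is the formal glue supplied by the variation of Hodge structure on $\Ktop[0](\cD/S)$.
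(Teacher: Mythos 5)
Your global strategy --- produce an object of the right class on the twisted K3/abelian fiber and transport it to $\cC$ along a relative moduli space over $S$ --- is the right one, and your reduction to $v$ primitive matches the paper. But the execution has two genuine gaps, both located exactly where the paper deliberately takes a different route. First, you build the relative moduli space out of Bridgeland-stable objects, which requires a (family of) stability condition(s) on every fiber $\cD_s$, in particular on $\cC=\cD_0$ itself. Deforming a stability condition from $\cD_1$ only produces stability conditions on an open neighborhood of $1$; there is no general mechanism for covering all of $S$, and the existence of stability conditions on an arbitrary CY2 category is precisely the hard open problem this theorem is designed to circumvent (the paper is explicit that its goal is to excise stability conditions from the argument of \cite{BLMNPS}). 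Second, you invoke properness of $\cM\to S$ to propagate nonemptiness from the fiber over $1$ to the fiber over $0$. Properness of moduli of stable objects is itself a deep theorem even when stability conditions exist, and you give no argument for it; "the valuative criterion for families of stable objects" is not available here.

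The fix is to drop both ingredients. Work with Lieblich's stack $\cM(\cD/S,\vphi)$ of universally gluable objects of class $\vphi$ (no stability condition needed), and use the paper's smoothness theorem for its open substack of simple objects. Stability on $\Dperf(T,\alpha)$ is used only once, classically, to produce a single simple universally gluable object of class $v$ in the fiber $\cD_1$. Then, instead of properness, one argues as in Proposition~\ref{proposition-VHC-criterion}: the image of the smooth locus of $\cM(\cD/S,\vphi)\to S$ is a nonempty open $U\subset S$ containing $1$; after an \'etale base change one gets an actual object $E_U\in\cD_U$ of class $\vphi|_U$; and since $X\to S$ is smooth (after resolving $S$), the perfect complex $E_U$ extends to an object $E$ of $\Dperf(X)$, whose projection into $\cD$ is an object defined over \emph{all} of $S$. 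Its K-theory class is a flat section of the local system $\Ktop[0](\cD/S)$ agreeing with $\vphi$ over $U$, hence equal to $\vphi$ everywhere --- in particular $\vphi_0=v$ is the class of $E_0\in\cC$. This open-image-plus-extension argument is what replaces properness, and it is the step missing from your proposal.
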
 

In practice, this reduces the integral Hodge conjecture for a given CY2 category to 
checking that it deforms within any Hodge locus to a category of the form $\Dperf(T, \alpha)$ 
(see Remark~\ref{remark-CY2-IHC}). 
We apply the theorem to prove the integral Hodge conjecture for the Kuznetsov components of 
cubic and Gushel--Mukai fourfolds, and use this to deduce the following consequence. 

\begin{corollary}
\label{corollary-example-IHC}
The integral Hodge conjecture in degree $2$ holds for cubic fourfolds and Gushel--Mukai fourfolds. 
\end{corollary}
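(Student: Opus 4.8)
The plan is to apply Theorem~\ref{theorem-IHC} to the Kuznetsov components $\Ku(X)$ of cubic fourfolds $X\subset\bP^5$ and of Gushel--Mukai fourfolds, and then to transfer the resulting statement about $\tH(\Ku(X),\bZ)$ back to an ordinary integral Hodge statement for $X$ in degree $2$ via the comparison between the integral Hodge conjecture for a variety and for its derived category (Proposition~\ref{proposition-IHC-vs-NCIHC}), together with the semiorthogonal decomposition defining $\Ku(X)$. The first point to establish is that the Voisin group $\Voi^2(X)$ is, after this dictionary, controlled by Hodge classes in $\tH(\Ku(X),\bZ)$ of the type allowed in Theorem~\ref{theorem-IHC}: because $\cO_X,\cO_X(1),\cO_X(2)$ are exceptional and their K-theory classes are manifestly algebraic, the cokernel of $\rK_0(\Ku(X))\to\Hdg(\Ku(X),\bZ)$ computes the relevant part of $\Voi^2(X)$, and one checks that every class in $\Hdg(\Ku(X),\bZ)$ satisfies $(v,v)\ge -2$, using that the Mukai pairing on a CY2 category of K3 type is even. (For Gushel--Mukai fourfolds the Kuznetsov component is again of K3 type, so the same bound applies.)

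Next I would invoke the deformation input needed for hypotheses (1)--(3) of Theorem~\ref{theorem-IHC}. For cubic fourfolds, the key classical fact is that the Hodge loci in the moduli space of cubic fourfolds along which a given Hodge class in $\tH(\Ku(X),\bZ)$ stays algebraic meet the divisors $\cC_d$ (in the sense of Hassett) parametrizing cubics whose Kuznetsov component is equivalent to $\Dperf(T,\alpha)$ for a (twisted) K3 surface $T$; concretely, by the work of Hassett, Addington--Thomas, and the lattice-theoretic analysis of which $d$ are ``admissible,'' one shows that any Hodge class $v$ deforms within its Hodge locus to a cubic lying on such a divisor. Thus one builds a family $\cD$ of CY2 categories over an appropriate base $S$ — a quasi-projective variety mapping to the moduli of cubic fourfolds — with $\cD_0\simeq\Ku(X)$, with $\cD_1\simeq\Dperf(T,\alpha)$ at a point $1\in S(\bC)$ lying over a special cubic, and with $v$ extending to a section of $\tH(\cD/S,\bZ)$. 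For Gushel--Mukai fourfolds one argues in the same way using the analogous period description (Debarre--Iliev--Manivel, Kuznetsov--Perry) and the divisors along which $\Ku(X)$ becomes (twisted) K3 type. Theorem~\ref{theorem-IHC} then yields that $v$ is algebraic in $\Ku(X)$, hence the integral Hodge conjecture holds for $\Ku(X)$, and unwinding the semiorthogonal decomposition gives the integral Hodge conjecture in degree $2$ for $X$ itself.

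The main obstacle is the deformation step: one must guarantee that an arbitrary integral Hodge class on $\Ku(X)$ — not merely a rational one — can be propagated within its Hodge locus to a point where the Kuznetsov component is literally equivalent to $\Dperf(T,\alpha)$, and that the class $v$ extends as an \emph{integral} section of the Mukai local system along the connecting family (hypothesis (3)). This requires that the relevant Hodge loci in the period domain are nonempty and that they contain points of the special (twisted-K3) divisors, which is ultimately a lattice-embedding question about the Mukai lattice of $\Ku(X)$: one needs, for each primitive algebraic class $v$, an embedding of a hyperbolic plane (or the appropriate rank-two lattice) into $v^\perp$ realized by a genuine K3-type degeneration. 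I expect this to follow from the surjectivity of the period map together with standard results on embeddings of lattices into the (twisted) K3 lattice, but making it uniform over all Hodge classes — and checking the numerical hypothesis $(v,v)\ge -2$ is automatically satisfied rather than an extra constraint — is the technical heart of the argument. The remaining steps (the K-theoretic bookkeeping around the semiorthogonal decomposition, and the passage through Proposition~\ref{proposition-IHC-vs-NCIHC}) are formal given the machinery already developed in the paper.
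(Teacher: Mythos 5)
Your overall architecture matches the paper's: prove the integral Hodge conjecture for $\Ku(X)$ via Theorem~\ref{theorem-IHC}, then descend to $\Voi^2(X)$ via Proposition~\ref{proposition-IHC-vs-NCIHC} and Lemma~\ref{lemma-IHC-sod} (the cohomological hypotheses there being supplied by the Lefschetz hyperplane theorem). But there is a genuine gap in how you handle the numerical hypothesis of Theorem~\ref{theorem-IHC}. You assert that \emph{every} class in $\Hdg(\Ku(X),\bZ)$ satisfies $(v,v)\ge -2$ ``using that the Mukai pairing \dots is even.'' Evenness only tells you $(v,v)\in 2\bZ$; the Mukai lattice of $\Ku(X)$ has large negative-definite part, so it contains Hodge classes with $(v,v)=-4,-6,\dots$, and the claim is simply false. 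The paper's fix is different and essential: the image of $\rK_0(\Ku(X))\to\Hdg(\Ku(X),\bZ)$ contains a canonical class $\lambda$ with $(\lambda,\lambda)>0$ (the rank-two positive-definite sublattice of Addington--Thomas, resp.\ Kuznetsov--Perry), so every element of the \emph{cokernel} is represented by some $v+t\lambda$ with $(v+t\lambda,v+t\lambda)\ge -2$ for $t\gg 0$. Since Theorem~\ref{theorem-IHC} only requires the cokernel to be generated by such classes, this suffices; without this reduction your argument does not cover arbitrary Hodge classes.

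A second, smaller issue is the Gushel--Mukai case. You propose to ``argue in the same way using the analogous period description,'' but the argument for cubics rests on Laza--Looijenga's determination of the image of the period map, which is precisely what is \emph{not} known for GM fourfolds. The paper circumvents this with a separate geometric argument: starting from the construction in Debarre--Iliev--Manivel one deforms within the Hodge locus to an ordinary GM fourfold containing a $\sigma$-plane, and then moves within a fiber of the period map (using the EPW-sextic description of fibers and irreducibility of preimages of irreducible subvarieties) to a GM fourfold containing a quintic del Pezzo surface, whose Kuznetsov component is a genuine K3 category. Your proposal correctly identifies the deformation step as the technical heart, but the route you sketch for GM fourfolds would stall at the unknown image of the period map.
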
 

This result is new for Gushel--Mukai fourfolds. For cubic fourfolds it was originally proved by Voisin 
\cite[Theorem 18]{Voisin-aspects-of-Hodge-conjecture},  
and was recently reproved in \cite{BLMNPS} using the 
construction of Bridgeland stability conditions on the Kuznetsov component and the theory of 
stability conditions in families. 
One of the main contributions of this paper is to show that 
the particular geometry of cubic fourfolds 
and the difficult ingredients about stability conditions can be excised from the proof of~\cite{BLMNPS}, 
giving a general tool for attacking cases of the integral Hodge conjecture. 

Corollary~\ref{corollary-example-IHC} is natural from the point of view of rationality problems. 
One of the biggest open conjectures in classical algebraic geometry is the irrationality of very general cubic fourfolds. 
The same conjecture for Gushel--Mukai fourfolds is closely related and expected to be equally difficult. 
Corollary~\ref{corollary-example-IHC} shows there is no obstruction to rationality for these fourfolds 
coming from the integral Hodge conjecture.  
Our argument applies more generally to any fourfold whose derived category decomposes into a collection of exceptional objects and 
a CY2 category that deforms within any Hodge locus to one of the form $\Dperf(T, \alpha)$. 
This jibes with the fact that, despite many recent advances on the rationality problem 
\cite{voisin-universal2cycle, CTP, totaro-hypersurfaces, HPT, schreieder1, schreieder2, nicaise-shinder, kontsevich-tschinkel},  
irrationality results remain out of reach for such fourfolds. 

Our methods also 
lead to bounds on the torsion order of Voisin groups. 
As illustrations, we show that $\Voi^3(X)$ is $2$-torsion for $X$ a Gushel--Mukai sixfold (Corollary~\ref{corollary-GM6}), 
and that $\Voi^4(X)$ is $6$-torsion for $X \subset \bP^3 \times \bP^3 \times \bP^3$ a smooth $(1,1,1)$ divisor (Corollary~\ref{corollary-dumb-IHC}). 

\subsection{The variational integral Hodge conjecture and moduli spaces of objects} 
Now we explain the idea of the proof of Theorem~\ref{theorem-IHC}, which involves some results of independent interest. 
The first is an instance of the variational integral Hodge conjecture for categories.  
Recall that an object $E$ of the derived category of a variety is called 
simple if $\Hom(E, E)$ is $1$-dimensional and universally gluable if $\Ext^{< 0}(E, E) = 0$. 

\begin{theorem}
\label{theorem-VHC} 
Let $\cC$ be a CY2 category over a complex variety $S$. 
Let $\vphi$ be a section of the local system $\tH(\cC/S, \bZ)$. 
Assume there exists a complex point $0 \in S(\bC)$ such that the 
fiber $\vphi_0 \in \tH(\cC_0, \bZ)$ is the class of a simple universally gluable object of $\cC_0$.  
Then $\vphi_s \in \tH(\cC_s, \bZ)$ is algebraic
for every $s \in S(\bC)$, i.e. lies in the image of $\rK_0(\cC_s) \to \tH(\cC_s,\bZ)$. 
\end{theorem}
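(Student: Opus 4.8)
The plan is to construct a relative moduli space of objects in $\cC$ over $S$ which contains a point parametrizing a simple universally gluable object with class $\vphi_0$ in the fiber $\cC_0$, and then to use a spreading-out / connectedness argument to propagate algebraicity to all fibers. First I would invoke the existence of the moduli stack $\cM_{\cC/S}$ of universally gluable objects in $\cC$ over $S$ (in the sense of Lieblich, Toën--Vaquié, or the relative version used later in the paper), which is an algebraic stack, locally of finite presentation over $S$; restricting to the open locus of simple objects gives a $\mathbf{G}_m$-gerbe over an algebraic space $\rM_{\cC/S} \to S$. The class $\vphi$, being a section of the local system $\tH(\cC/S,\bZ)$, is locally constant, so it cuts out an open and closed substack $\cM_{\cC/S}^{\vphi}$ of objects whose class in the fiber of $\Ktop[0](\cC/S)$ equals $\vphi$; by hypothesis the fiber of this substack over $0$ is nonempty (it contains the given simple universally gluable object $E$ with $[E] = \vphi_0$).

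The key geometric input is the smoothness result for relative moduli of objects in CY2 categories promised in the abstract and stated later in the paper. Applied to $\cM_{\cC/S}^{\vphi}$, it shows that this moduli space is smooth over $S$ at the point $[E]$: indeed, in a CY2 category the obstruction space $\Ext^2(E,E)$ is Serre-dual to $\Hom(E,E)^{\svee}$, which is $1$-dimensional, and the trace map kills this obstruction, so the deformation theory over $S$ is unobstructed. Hence the image of $\cM_{\cC/S}^{\vphi} \to S$ contains an open neighborhood of $0$; after shrinking and passing to a suitable étale cover I may assume $\rM_{\cC/S}^{\vphi} \to S$ is smooth and surjective with a section through $[E]$ over $0$, or at least dominant with nonempty fibers over a dense open. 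A universal (twisted) object on $\rM_{\cC/S}^{\vphi} \times_S X$ then gives, fiberwise over any $s$ in the image, an actual object of $\cC_s$ whose class is $\vphi_s$; this is precisely the assertion that $\vphi_s$ lies in the image of $\rK_0(\cC_s) \to \tH(\cC_s,\bZ)$.

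It remains to pass from an open dense subset of $S$ to all of $S$. For this I would use the standard trick: the locus $\{s \in S(\bC) : \vphi_s \text{ is algebraic}\}$ is a countable union of closed subvarieties (the images of the countably many components of relevant relative moduli spaces), while the locus where $\vphi_s$ remains of Hodge type is all of $S$ since $\vphi$ is a section of the local system; combined with the fact that, by the above, algebraicity holds on a nonempty Zariski-open subset, a Baire-category / Noetherian-induction argument — or more simply the observation that a countable union of proper closed subsets cannot cover an irreducible variety unless one of them is the whole space — forces $\vphi_s$ to be algebraic for all $s$. (If $S$ is not irreducible one argues component by component, using that every component meets the good locus once one knows $\cM^{\vphi} \to S$ is surjective onto a neighborhood of $0$ and that $S$ may be connected through $0$; in the generality stated one should assume, as is implicit, that $0$ lies on every component, or replace $S$ by the component containing $0$.)

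The main obstacle is the smoothness of the relative moduli space $\cM_{\cC/S}^{\vphi}$ over $S$ at $[E]$ — this is where the CY2 hypothesis and the ``universally gluable'' condition are essential, and it is the step that the paper singles out as a general smoothness theorem of independent interest. Granting it, the dimension count shows $\cM_{\cC/S}^{\vphi} \to S$ is smooth of the expected relative dimension $2 - (v,v)$ (compare the numerical hypotheses on $(v,v)$ in Theorem~\ref{theorem-IHC}, which ensure nonemptiness in the untwisted K3/abelian surface case), so surjectivity near $0$ follows from smoothness plus nonemptiness of the central fiber. The construction of the relative topological K-theory local system $\Ktop[0](\cC/S)$ from \cite{moulinos}, used to make sense of ``$\vphi$ is a locally constant section'' and hence of the open-closed decomposition by class, is the other technical ingredient, but it is imported wholesale.
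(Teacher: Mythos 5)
Your first two steps track the paper's proof: form the moduli stack $\cM(\cC/S,\vphi)$ of universally gluable objects of class $\vphi$, get smoothness over $S$ at simple universally gluable points from the CY2 smoothness theorem (Theorem~\ref{theorem-mukai} --- note its hypothesis that \emph{all} fibers $\vphi_s$ be Hodge classes is not supplied by $\vphi$ merely being a flat section, but by Deligne's global invariant cycle theorem, Lemma~\ref{lemma-invariant-cycles}, applied to the algebraic class $\vphi_0$), and conclude that the image of the smooth locus is an open neighborhood $U$ of $0$ over which $\vphi$ is realized by an actual family of objects. The gap is in your final step, the passage from $U$ to all of $S(\bC)$. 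Your Baire-category argument needs the algebraic locus to be a countable union of \emph{closed} subvarieties; that is the standard situation for relative Hilbert schemes or moduli of semistable sheaves, which are projective over $S$, but $\cM(\cC/S)$ is only locally of finite presentation and nowhere near proper, so by Chevalley the images of its components are merely constructible. A countable union of constructible sets containing a dense open gives you nothing beyond what you already have --- algebraicity on a dense open, i.e.\ at the very general point --- whereas the theorem asserts algebraicity at \emph{every} complex point, which is what the application to Theorem~\ref{theorem-IHC} actually requires.

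The paper closes this gap by a different device (Proposition~\ref{proposition-VHC-criterion}): after an \'etale base change and a compactification one obtains an honest object $E_U\in\cC_U$ of class $\vphi|_U$; replacing $S$ by a resolution of singularities makes $X$ smooth, so the perfect complex $E_U\in\Dperf(X_U)$ extends to some $F\in\Dperf(X)$ (\cite[Lemma~2.3.1]{polishchuk}), and the projection of $F$ into $\cC$ is an object $E\in\cC$ defined over \emph{all} of $S$. Its class $\vphi_E$ is a section of the local system $\Ktop[0](\cC/S)$ agreeing with $\vphi$ over the nonempty open $U$, hence equal to $\vphi$ everywhere on the connected $S$; thus $\vphi_s=[E_s]$ is algebraic for every $s\in S(\bC)$. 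To repair your write-up you should replace the Baire step with this extension argument (or some other input that forces the algebraic locus to be closed).
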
 

This implies Theorem~\ref{theorem-IHC} because twisted derived categories of 
K3 or abelian surfaces always contain many simple universally gluable objects. 

Our proof of Theorem~\ref{theorem-VHC} relies on moduli spaces of objects in categories. 
For any $S$-linear admissible subcategory $\cC \subset \Dperf(X)$ where $X \to S$ is a smooth proper morphism of complex varieties, Lieblich's work \cite{lieblich} gives an 
algebraic stack $\cM(\cC/S) \to S$ parameterizing universally gluable objects in $\cC$. 
For any section $\vphi$ of the local system $\Ktop[0](\cC/S)$, there is an open substack 
$\cM(\cC/S, \vphi)$ parameterizing objects of class $\vphi$. 
We prove that if there is a point $0 \in S(\bC)$ such that the fiber $\vphi_0$ can be represented by 
the class of an object in $\cC_0$ at which the morphism $\cM(\cC/S, \vphi) \to S$ is smooth, 
then $\vphi_s$ is algebraic for every $s \in S(\bC)$ (Proposition~\ref{proposition-VHC-criterion}). 
This gives a general method for proving the variational Hodge conjecture for categories, 
which can be thought of as a noncommutative version of Bloch's method from~\cite{bloch-semiregularity}. 

In his seminal paper \cite{mukai-K3}, Mukai proved that the moduli space of simple sheaves on a K3 or abelian surface is smooth. 
More recently, Inaba generalized this to moduli spaces of objects in the derived category of such a surface \cite{inaba}. 
The following further generalization 
replaces a fixed surface with a family of CY2 categories, 
and implies Theorem~\ref{theorem-VHC}. 
We write $s\cM(\cC/S, \vphi) \subset \cM(\cC/S, \vphi)$ for the open substack of 
simple objects, which is a $\bG_m$-gerbe over an algebraic space $s\rM(\cC/S,\vphi)$ 
(Lemma~\ref{lemma-sM-lft}). 

\begin{theorem}
\label{theorem-mukai} 
Let $\cC$ be a CY2 category over a complex variety $S$. 
Let $\vphi$ be a section of the local system $\tH(\cC/S, \bZ)$ 
whose fibers $\vphi_s \in  \tH(\cC_s, \bZ)$ 
are Hodge classes for all $s \in S(\bC)$. 
Then $s\cM(\cC/S, \vphi)$ and $s\rM(\cC/S,\vphi)$ are smooth over $S$. 
\end{theorem}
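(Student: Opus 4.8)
The plan is to reduce the smoothness statement to a deformation-theoretic computation using the classical philosophy: a moduli space of objects is smooth over the base precisely when the obstructions to deforming an object vanish. I would first recall (from Lieblich's work and the gerbe structure in Lemma~\ref{lemma-sM-lft}) that the tangent and obstruction theory of $s\cM(\cC/S,\vphi) \to S$ at a $\bC$-point corresponding to a simple universally gluable object $E \in \cC_s$ is governed by $\Ext^1_{\cC_s}(E,E)$ and $\Ext^2_{\cC_s}(E,E)$ respectively; more precisely, since $E$ is simple and $\Ext^{<0}(E,E)=0$, the relative cotangent complex has the expected two-term shape and the obstruction lives in $\Ext^2_{\cC_s}(E,E)$. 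Since $s\rM(\cC/S,\vphi)$ is obtained from $s\cM(\cC/S,\vphi)$ by rigidifying a $\bG_m$-gerbe, smoothness of one is equivalent to smoothness of the other, so it suffices to treat the stack. The key point to establish is therefore: for $E$ as above, the obstruction class of any square-zero extension of the base actually vanishes.

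\emph{The main step} is a semiregularity-type argument. Serre duality in the CY2 category $\cC_s$ gives $\Ext^2_{\cC_s}(E,E) \cong \Hom_{\cC_s}(E,E)^{\svee} \cong \bC$, since $E$ is simple; thus the obstruction space is one-dimensional, spanned by the Serre-dual of the identity. The obstruction to deforming $E$ along a first-order deformation of the base maps, under this duality, to a pairing of the Kodaira--Spencer class of the deformation against the trace/Euler-pairing data attached to $E$. Here I would use that $\vphi$ is a section of $\tH(\cC/S,\bZ)$ whose fibers are \emph{Hodge classes for all} $s \in S(\bC)$: this is exactly the hypothesis that forces the relevant component of the Kodaira--Spencer map, viewed through the variation of Hodge structure on $\Ktop[0](\cC/S)$, to annihilate the obstruction. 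Concretely, the obstruction to lifting $E$ over a square-zero extension $S' \supset S$ is, via Serre duality and the HKR-type identification of Hochschild homology with the Hodge decomposition of $\Ktop[0]$, identified with the failure of the class $\vphi_s = [E]$ to remain of Hodge type; since $\vphi$ is a global section of the integral local system that stays Hodge everywhere, this failure vanishes. This is the noncommutative analogue of Mukai's and Inaba's vanishing of the obstruction via the existence of a holomorphic symplectic form, and of Bloch's semiregularity argument as invoked in Proposition~\ref{proposition-VHC-criterion}.

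\emph{Finally}, I would assemble these local computations into the global statement: smoothness of $s\cM(\cC/S,\vphi) \to S$ can be checked at $\bC$-points via the infinitesimal lifting criterion (the stack is locally of finite presentation by Lemma~\ref{lemma-sM-lft}), and the vanishing of obstructions above gives formal smoothness at every such point. Passing to the rigidification handles $s\rM(\cC/S,\vphi)$. The expected main obstacle is making the identification of the obstruction class with the Hodge-theoretic quantity fully rigorous in the relative, categorical setting: one must set up the deformation theory of objects in an $S$-linear admissible subcategory compatibly with Moulinos's relative topological K-theory and its Hodge filtration, and check that the Serre-duality pairing on $\Ext$-groups is compatible with the Euler pairing $(-,-)$ on $\tH(\cC/S,\bZ)$ under the cycle-class-type map $\rK_0(\cC_s) \to \tH(\cC_s,\bZ)$. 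Once that compatibility is in place — which is really a bookkeeping exercise with Hochschild homology and the relative HKR isomorphism — the vanishing of the obstruction is immediate from hypothesis, and the theorem follows.
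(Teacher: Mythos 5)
Your central idea coincides with the heart of the paper's proof: by Serre duality in the CY2 fiber the obstruction space $\Ext^2_{\cC_s}(E,E)$ is one-dimensional, it is identified with $\HH_{-2}(\cC_s)$ via the Chern character (equivalently, dually, via $\HH^0(\cC_s)=\bC$; this is Lemma~\ref{lemma-simple-ug-CY2}\eqref{chern-simple-ug}), and under this identification the obstruction becomes $\kappa(\cC')\cdot\ch(E)$ (Lemma~\ref{lemma-ch-functoriality}), i.e.\ the derivative of the condition that $[E]$ stay of Hodge type, which vanishes by hypothesis. The reduction from $s\cM$ to $s\rM$ via the $\bG_m$-gerbe is also as in the paper.

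There is, however, a genuine gap in how you propose to globalize this local computation. The identification of the obstruction class $\omega(E)$ with the Kodaira--Spencer class evaluated on $E$ (Lemma~\ref{lemma-defE}\eqref{omega2}, resting on Huybrechts--Thomas) is only available for \emph{trivial} (split) square-zero extensions $A'\cong A[\varepsilon]/(\varepsilon^2)$ --- indeed the Kodaira--Spencer class itself is only defined in that case (Lemma~\ref{lemma-KS}). Checking formal smoothness by the naive infinitesimal lifting criterion, as you propose, requires lifting over \emph{arbitrary} small extensions in $\Art_{\bC}$, for which no such Hodge-theoretic description of the obstruction is at hand; your argument as written only kills the obstruction for first-order (split) deformations. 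The paper circumvents this with the $T^1$-lifting theorem of Kawamata and Fantechi--Manetti, which reduces smoothness of the deformation functor to surjectivity of $F(A'_{n+1})\to F(A'_n)\times_{F(A_n)}F(A_{n+1})$ with $A'_n=A_n[\varepsilon]/(\varepsilon^2)$ split, and then uses base change and local freeness of the sheaves $\cExt^i_S(E,E)$ (Lemma~\ref{lemma-simple-ug-CY2}) to see that the relevant restriction map on $\Ext^1$ is surjective. A second, smaller omission: the theorem allows a possibly singular base $S$, while the $T^1$-lifting argument needs the base point to be smooth; the paper first reduces to smooth $S$ by combining smoothness of the fibers (the absolute case) with flatness descent along a resolution of singularities (Lemma~\ref{lemma-flatness}), and then deduces smoothness of the morphism from smoothness of the total space together with constancy of the fiber dimension $-\chi^{\rtop}(\vphi,\vphi)+2$. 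Your proposal would need to supply substitutes for both of these steps.
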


In Theorem~\ref{theorem-mukai}, 
it is in fact enough to assume a single fiber of $\vphi$ is a Hodge class, because then all fibers are (Lemma~\ref{lemma-invariant-cycles}). 

\begin{remark}
Theorem~\ref{theorem-mukai} plays a crucial role in recent constructions of 
hyperk\"{a}hler varieties as moduli spaces of Bridgeland stable objects 
in CY2 categories \cite{BLMNPS, GMstability}. 
Namely, the theorem allows one to prove facts (e.g. nonemptiness) about such moduli spaces by 
deformation from a special CY2 category (e.g. the derived category of a K3 surface). 
The special case of Theorem~\ref{theorem-mukai} where $\cC$ is the Kuznetsov component of a 
family of cubic fourfolds was first proved in 
\cite[Theorem 3.1]{BLMNPS}, using properties of cubic fourfolds. 
Our result does not use anything about the ambient variety containing $\cC$ in its derived category, 
and thus provides a general tool for studying moduli spaces of objects in families of CY2 categories, 
which has already been put to use in \cite{GMstability}. 
\end{remark}

This paper's approach to the (variational) integral Hodge conjecture via moduli spaces of objects 
may be useful in other contexts. 
It would be interesting, for instance, to apply this method 
to varieties whose Kuznetsov components are not CY2 categories. 
In a different direction, we plan to develop in a sequel to this paper a 
version of our results in positive characteristic, with applications to the integral Tate conjecture.  

\subsection{Intermediate Jacobians} 
\label{section-intro-IJ}
The Hodge conjecture concerns the even degree cohomology of a variety, but there are also many interesting questions about the Hodge structures on odd degree cohomology. 
The machinery developed in this paper gives a version of Hodge theory in odd degree for noncommutative varieties. 
Namely, for an admissible subcategory $\cC \subset \Dperf(X)$ of the derived category of a smooth proper complex variety $X$ and any integer $n$, we show that the $n$-th homotopy group of Blanc's topological K-theory gives a finitely generated abelian group 
$\Ktop[n](\cC)$ which is equipped with a canonical weight $-n$ Hodge structure, whose Hodge decomposition is given in terms of Hochschild homology. 
These Hodge structures are Tate twists of each other for varying even or odd $n$, so there are essentially only two of interest --- $\Ktop[0](\cC)$ discussed above, and $\Ktop[1](\cC)$. 

When $\cC = \Dperf(X)$, the rational Hodge structure 
$\Ktop[1](\cC) \otimes \bQ$ recovers the rational odd cohomology $\rH^{\mathrm{odd}}(X, \bQ)$, with the weight $-1$ Hodge structure obtained by taking appropriate Tate twists in each degree. 
The integral relation is more subtle, but at least assuming that $X$ is odd-dimensional and its odd degree cohomology is concentrated in degree $\dim(X)$,  
$\Ktop[-\dim(X)](\Dperf(X))$ recovers the polarized Hodge structure 
$\rH^{\dim(X)}(X, \bZ)$ (Proposition~\ref{proposition-odd-cohomology}).

This has many applications to the structure of intermediate Jacobians. 
Recall that if $X$ is a smooth proper complex variety, 
then for any odd integer $k$ the intermediate Jacobian 
$J^k(X)$ is a complex torus constructed from the Hodge structure $\rH^k(X, \bZ)$, 
which is in fact a canonically principally polarized abelian variety if the Hodge decomposition 
only has two terms (i.e. $\rH^k(X, \bC) = \rH^{p,q}(X) \oplus \rH^{q,p}(X)$ for some $p,q$) and $k = \dim(X)$. 
Intermediate Jacobians have vast applications in algebraic geometry, 
ranging from irrationality results \cite{clemens-griffiths, beauville-IJ}, to Torelli theorems \cite{debarre-torelli-3-quads, voisin-quartic-double, debarre-quartic-double}, 
to infinite generation results for algebraic cycles 
\cite{clemens-griffiths-group, voisin-griffiths-group}. 
As a sample application of our techniques, we prove the following. 

\begin{theorem}
\label{theorem-IJ} 
Let $X$ be a smooth proper complex variety of odd dimension $n$, 
such that $\rH^k(X, \bZ) = 0$ for all odd $k < n$. 
Assume there is a semiorthogonal decomposition 
\begin{equation*}
\Dperf(X) = \llangle \Dperf(Y_1), \dots, \Dperf(Y_m) \rrangle 
\end{equation*} 
where each $Y_i$ is a smooth proper complex variety of dimension $n_i$, such that: 
\begin{itemize}
\item if $n_i$ is odd then $\rH^{k}(Y_i, \bZ) = 0$ for all odd $k < n_i$, and 
\item if $n_i$ is even then $\rH^{\mathrm{odd}}(Y_i, \bQ) = 0$. 
\end{itemize} 
Then there is an isomorphism of complex tori 
\begin{equation}
\label{J-isomorphism} 
J^n(X) \cong \bigoplus_{n_i \, \mathrm{odd}} J^{n_i}(Y_i) . 
\end{equation} 
If we further assume that there is a fixed integer $t \geq 0$ such that 
for all odd $n_i$ we have $\rH^{n_i}(Y_i, \bC) = \rH^{p_i, q_i}(Y_i) \oplus \rH^{q_i, p_i}(Y_i)$ where 
$p_i - q_i = 2t +1$, then 
$\rH^n(X, \bC) = \rH^{p,q}(X) \oplus \rH^{q,p}(X)$ where $p - q = 2t+1$  
and~\eqref{J-isomorphism} is an isomorphism of principally polarized abelian varieties. 
\end{theorem}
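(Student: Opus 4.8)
The plan is to extract the decomposition from the behaviour of Blanc's topological K-theory under semiorthogonal decompositions, together with the comparison between $\Ktop[\bullet]$ and classical middle cohomology furnished by Proposition~\ref{proposition-odd-cohomology}. First I would turn the numerical hypotheses into concentration statements via Poincar\'e duality: for odd $k$ with $n < k \le 2n$ one has $2n-k < n$ odd, so $\rH^{2n-k}(X,\bZ)=0$ and hence the free part of $\rH^k(X,\bZ)$ vanishes; thus the odd cohomology of $X$ is, rationally, concentrated in degree $n$, and similarly that of each $Y_i$ with $n_i$ odd is concentrated in degree $n_i$, while $Y_i$ with $n_i$ even has $\rH^{\mathrm{odd}}(Y_i,\bQ)=0$ by assumption. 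Since $\Ktop[\bullet]$ is additive under semiorthogonal decompositions and its Hodge structure is functorial, the inclusions $\Dperf(Y_i)\hookrightarrow\Dperf(X)$ induce an isomorphism of Hodge structures
\begin{equation*}
\Ktop[-n](\Dperf(X)) \;\cong\; \bigoplus_{i=1}^{m}\Ktop[-n](\Dperf(Y_i)).
\end{equation*}
For $n_i$ odd, $\Ktop[-n](\Dperf(Y_i))=\Ktop[-n_i](\Dperf(Y_i))$ as groups (by $2$-periodicity and $n\equiv n_i\bmod 2$), the two conventional weights differing by a Tate twist, and by the concentration statement and Proposition~\ref{proposition-odd-cohomology} this is, up to that twist, the polarized Hodge structure $\rH^{n_i}(Y_i,\bZ)$; for $n_i$ even, $\Ktop[-n](\Dperf(Y_i))=\Ktop[1](\Dperf(Y_i))$ is torsion since its rationalisation is $\rH^{\mathrm{odd}}(Y_i,\bQ)=0$. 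Likewise $\Ktop[-n](\Dperf(X))$ is, up to a Tate twist, the polarized Hodge structure $\rH^n(X,\bZ)$.

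To obtain \eqref{J-isomorphism} I would use that a finitely generated weight $-1$ integral Hodge structure $V$ has an associated complex torus $J(V)=V_{\bC}/(F^0V_{\bC}+V)$, that $V\mapsto J(V)$ is additive, and that $J(V)=0$ when $V$ is torsion. Twisting the displayed isomorphism by $(n+1)/2$ brings everything to weight $-1$; tracking the twists above, this identifies the left-hand side with $\rH^n(X,\bZ)(\tfrac{n+1}{2})$ and the $i$-th summand with $\rH^{n_i}(Y_i,\bZ)(\tfrac{n_i+1}{2})$ when $n_i$ is odd and with a torsion group when $n_i$ is even. By the classical description of intermediate Jacobians, $J$ of these is $J^n(X)$, respectively $J^{n_i}(Y_i)$, respectively $0$, so applying $J(-)$ gives \eqref{J-isomorphism}.

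For the final statement, assume the fixed $t$ exists. For $n_i$ odd the Hodge decomposition of $\Ktop[-n](\Dperf(Y_i))\otimes\bC$ is, up to a Tate twist, $\rH^{p_i,q_i}(Y_i)\oplus\rH^{q_i,p_i}(Y_i)$; a Tate twist preserves the difference of bidegree components, so after the twist to weight $-1$ the two bidegrees occurring are $(t,-1-t)$ and $(-1-t,t)$, independent of $i$. Therefore $\Ktop[-n](\Dperf(X))\otimes\bC$, being the sum of these (the even-$n_i$ summands vanishing after $\otimes\bC$), has exactly two Hodge pieces; untwisting gives $\rH^n(X,\bC)=\rH^{p,q}(X)\oplus\rH^{q,p}(X)$ with $p-q=2t+1$. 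In each case the middle cohomology is entirely primitive (by the concentration statement, hard Lefschetz leaves no non-primitive contribution from lower odd cohomology), so a suitable sign times the cup-product pairing is a polarization and $J^n(X)$ and each $J^{n_i}(Y_i)$ with $n_i$ odd is a principally polarized abelian variety. Finally, the pairing on $\Ktop[\bullet]$ is compatible with semiorthogonal decompositions, so full faithfulness of $\Dperf(Y_i)\hookrightarrow\Dperf(X)$ makes each induced inclusion an isometry onto its image, and for $i\ne j$ semiorthogonality makes the $\Ktop[\bullet]$-pairing between the two summands zero from one side; since that pairing on $\Ktop[-n]$ is skew — it agrees up to sign with the cup-product pairing on odd cohomology (Proposition~\ref{proposition-odd-cohomology}), which is skew as $n$ is odd — it vanishes altogether. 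Thus the decomposition is an orthogonal direct sum of polarized Hodge structures and \eqref{J-isomorphism} upgrades to an isomorphism of principally polarized abelian varieties.

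The main obstacle is this last point. One must verify that the pairing on $\Ktop[\bullet]$ is natural and additive over semiorthogonal decompositions, and that Proposition~\ref{proposition-odd-cohomology} identifies it with the cup-product polarization on the middle cohomology of $X$ and of the $Y_i$ with matching signs and Tate twists: the na\"ive Euler pairing is not additive over semiorthogonal decompositions, so one genuinely needs its topological incarnation, which is additive here precisely because the odd cohomology is concentrated in the middle degree. The isomorphism of complex tori, by contrast, is essentially formal once the additivity of $\Ktop[\bullet]$ and the comparison with cohomology are available.
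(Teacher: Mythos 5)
Your proposal is correct and follows essentially the same route as the paper: both arguments rest on Proposition~\ref{proposition-odd-cohomology} together with additivity of $\Ktop[\bullet]$ under semiorthogonal decompositions, the observation that $\Ktop[1](\Dperf(Y_i))$ is torsion when $n_i$ is even, and the fact that the isometry of that proposition transports the cup-product polarizations. The only difference is one of packaging — you unfold the Poincar\'e-duality concentration step and the skew-symmetry/orthogonality argument explicitly, whereas the paper absorbs these into the statement and proof of Proposition~\ref{proposition-odd-cohomology} — so no further comment is needed.
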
 

We formulate the following special case of our results which is of particular interest. 

\begin{corollary}
\label{corollary-IJ-threefold} 
Let $X$ be a smooth proper complex threefold such 
that $\rH^1(X, \bZ) = 0$. 
Assume there is a semiorthogonal decomposition 
\begin{equation} 
\label{Db-rcX} 
\Dperf(X) = \llangle \Dperf(C_1), \dots, \Dperf(C_r), E_1, \dots, E_s \rrangle 
\end{equation} 
where each $C_i$ is a smooth proper curve and $E_j \in \Dperf(X)$ is an exceptional object. 
Then there is an isomorphism 
\begin{equation} 
\label{J3-rcX}
J^3(X) \cong J^1(C_1) \oplus \cdots \oplus J^1(C_n) 
\end{equation} 
of principally polarized abelian varieties. 
If we further assume that $\rH^5(X, \bZ) = 0$, then we have 
$\rH^3(X, \bZ)_{\tors} = 0$, i.e. $\rH^3(X, \bZ)$ is torsion free. 
\end{corollary}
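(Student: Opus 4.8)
The plan is to deduce both assertions from Theorem~\ref{theorem-IJ} together with the description of odd cohomology via topological K-theory, so the work is essentially a matter of checking hypotheses.

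For the isomorphism of intermediate Jacobians I would apply Theorem~\ref{theorem-IJ} with $n = 3$, viewing each exceptional object $E_j$ as spanning an admissible subcategory $\langle E_j\rangle \simeq \Dperf(\Spec\bC)$. The variety $\Spec\bC$ has even dimension $0$ and trivially satisfies $\rH^{\mathrm{odd}}(\Spec\bC,\bQ) = 0$; each curve $C_i$ has odd dimension $1$ and there is no odd integer $k < 1$, so the vanishing hypothesis on $C_i$ is vacuous; and the assumption $\rH^1(X,\bZ) = 0$ is precisely the requirement that $\rH^k(X,\bZ) = 0$ for all odd $k < n = 3$. Theorem~\ref{theorem-IJ} then yields an isomorphism of complex tori $J^3(X) \cong \bigoplus_i J^1(C_i)$. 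To promote it to an isomorphism of principally polarized abelian varieties I would invoke the last part of Theorem~\ref{theorem-IJ} with $t = 0$: for each curve $\rH^1(C_i,\bC) = \rH^{1,0}(C_i) \oplus \rH^{0,1}(C_i)$, so $p_i - q_i = 1 = 2t+1$. This also gives $\rH^3(X,\bC) = \rH^{2,1}(X) \oplus \rH^{1,2}(X)$.

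For the torsion statement, assume in addition that $\rH^5(X,\bZ) = 0$. Then the odd-degree cohomology of $X$ is concentrated in degree $3 = \dim(X)$, so Proposition~\ref{proposition-odd-cohomology} provides an isomorphism of abelian groups $\rH^3(X,\bZ) \cong \Ktop[-3](\Dperf(X))$. Since Blanc's topological K-theory is additive, the semiorthogonal decomposition~\eqref{Db-rcX} gives
\[
\Ktop[-3](\Dperf(X)) \;\cong\; \bigoplus_{i=1}^{r} \Ktop[-3](\Dperf(C_i)) \;\oplus\; \bigoplus_{j=1}^{s} \Ktop[-3](\Dperf(\Spec\bC)).
\]
Each summand $\Ktop[-3](\Dperf(\Spec\bC))$ is zero, since Blanc's topological K-theory of $\Spec\bC$ has homotopy groups concentrated in even degrees. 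For each curve $C_i$, whose odd cohomology is concentrated in degree $1 = \dim(C_i)$, Proposition~\ref{proposition-odd-cohomology} gives $\Ktop[-1](\Dperf(C_i)) \cong \rH^1(C_i,\bZ)$, and since the groups $\Ktop[n]$ for odd $n$ are Tate twists of one another — hence abstractly isomorphic as abelian groups — we get $\Ktop[-3](\Dperf(C_i)) \cong \rH^1(C_i,\bZ)$, which is torsion free. Therefore $\rH^3(X,\bZ) \cong \bigoplus_i \rH^1(C_i,\bZ)$ is torsion free.

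Since everything is a formal consequence of the theorems already established, there is no serious obstacle; the only points requiring care are bookkeeping. One must verify that the concentration hypothesis of Proposition~\ref{proposition-odd-cohomology} genuinely holds both for $X$ (using $\rH^1(X,\bZ) = \rH^5(X,\bZ) = 0$) and for each $C_i$, so that the resulting comparison is an isomorphism of \emph{abelian groups} and therefore detects torsion, and one must use that $\Ktop$ carries a semiorthogonal decomposition to a direct sum — which holds because Blanc's topological K-theory is an additive invariant of stable $\infty$-categories.
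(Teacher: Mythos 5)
Your proof is correct and follows essentially the same route as the paper: the first assertion is obtained by feeding the decomposition (with each $\llangle E_j \rrangle \simeq \Dperf(\Spec \bC)$) into Theorem~\ref{theorem-IJ}, and the torsion statement by combining the second part of Proposition~\ref{proposition-odd-cohomology} with additivity of $\Ktop$ under semiorthogonal decompositions and $2$-periodicity to get $\rH^3(X,\bZ) \cong \bigoplus_i \rH^1(C_i,\bZ)$. The bookkeeping points you flag (vanishing of odd topological K-theory of a point, applying the proposition both to $X$ and to each $C_i$) are exactly the ones the paper relies on implicitly.
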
 

The first part of Corollary~\ref{corollary-IJ-threefold} is an immediate consequence of 
Theorem~\ref{theorem-IJ}, while we prove the second part in \S\ref{section-IJ} as a consequence 
of a result (Proposition~\ref{proposition-odd-cohomology}) relating odd cohomology 
to odd topological K-theory. 

\begin{remark} 
Let $X$ be a rationally connected smooth proper complex threefold. 
Then $\rH^1(X, \bZ) = \rH^5(X, \bZ) = 0$. 
Hence Corollary~\ref{corollary-IJ-threefold} shows the existence of a 
semiorthogonal decomposition of the form~\eqref{Db-rcX} implies that $X$ satisfies 
both the Clemens--Griffiths criterion for rationality (the splitting of the intermediate Jacobian as a sum of Jacobians of curves) \cite{clemens-griffiths} 
and the Artin--Mumford criterion (the vanishing of $\rH^3(X,\bZ)_{\tors}$) \cite{artin-mumford}. 
Kuznetsov's rationality conjectures \cite{kuznetsov-rationality, kuznetsov-cubic} (see also \cite{bernardara-representability}) predict that if $X$ is rational, then a semiorthogonal decomposition of the form~\eqref{Db-rcX} exists. 
Thus, our result shows that Kuznetsov's conjectural criterion implies the classical two. 
\end{remark} 

Theorem~\ref{theorem-IJ} and Corollary~\ref{corollary-IJ-threefold} 
(as well as Proposition~\ref{proposition-odd-cohomology} in the body of the paper)
greatly generalize many results in the literature relating intermediate Jacobians to derived categories \cite{bernardara-conic, bernardara-representability, bernardara-tabuada, kuznetsov-IJ}. 
For instance, the main result of \cite{bernardara-conic} is the splitting~\eqref{J3-rcX} in the very special case where $X$ is a standard conic bundle over a rational surface with a decomposition~\eqref{Db-rcX} that is suitably compatible with the conic bundle structure; 
similarly, \cite[Proposition~8.4]{kuznetsov-IJ} gives the splitting~\eqref{J3-rcX} in the case where $X$ is a rationally connected threefold and there is a single curve in the decomposition~\eqref{Db-rcX}. 

More generally, our results can be used to relate intermediate Jacobians of varieties whose derived 
categories have a semiorthogonal component in common. 
Bernardara and Tabuada \cite{bernardara-tabuada} previously studied this problem using 
noncommutative motives, but in general their results only give isogenies between the algebraic parts of intermediate Jacobians,  
which can only be shown to be isomorphisms under hypotheses that are difficult to check in practice. 
Our results, on the other hand, only require cohomological hypotheses which are easy to check, 
give simple proofs of the applications considered in~\cite{bernardara-tabuada}, and also apply to many cases 
inaccessible by the results there (see Example~\ref{example-XL-YL}). 

As a final application, we give a simple proof of a recent result of Debarre and Kuznetsov \cite{GM-IJ}, which identifies the intermediate Jacobians of odd-dimensional Gushel--Mukai varieties that are ``generalized partners or duals'' (Theorem~\ref{theorem-GM-IJ}). 
Answering a question of Kuznetsov (see \cite[Remark 1.2]{GM-IJ}), 
we show that this follows from the equivalence 
proved in \cite[Theorem~1.6]{categorical-cones} between the Kuznetsov 
components of such varieties. 

The results of this paper suggest developing other aspects of 
the Hodge theory of categories for applications to classical algebraic geometry. 
For instance, it would be interesting to study Abel--Jacobi maps taking values 
in the intermediate Jacobians of categories (as defined in Definition~\ref{definition-IJC}); 
we leave this to future investigation. 

\subsection{Organization of the paper} 
We begin by reviewing the framework of categories linear over a base scheme in \S\ref{section-linear-categories}. 
In \S\ref{section-HH-homology} and \S\ref{section-HH-cohomology} we review some aspects of 
Hochschild homology and cohomology, which are needed later in the paper 
for studying the Hodge theory of categories and the deformation theory of objects in a category. 
In \S\ref{section-hodge-theory} we develop the Hodge theory of categories; in particular, we formulate the 
(integral) Hodge conjecture for categories and relate it to the corresponding conjecture for varieties, 
as well as prove the results on intermediate Jacobians described above. 
In \S\ref{section-CY2} we define CY2 categories and their associated Mukai Hodge structures, 
and survey the known examples of CY2 categories. 
In \S\ref{section-moduli-objects} we prove Theorem~\ref{theorem-mukai} on the smoothness of relative moduli spaces of objects in families of CY2 categories. 
Finally, in \S\ref{section-theorem-proofs} we prove our other main results 
--- Theorem~\ref{theorem-IHC}, Corollary~\ref{corollary-example-IHC}, and Theorem~\ref{theorem-VHC} --- 
as well as several complementary results. 

\subsection{Conventions}
\label{section-conventions}
All schemes are assumed to be quasi-compact and quasi-separated. 
A variety over a field $k$ is an integral scheme which is separated and of finite type over $k$. 
For a scheme $X$, $\Dperf(X)$ denotes the category of perfect complexes and 
$\Dqc(X)$ denotes the unbounded derived category of quasi-coherent sheaves. 
If $\alpha \in \Br(X)$ is a Brauer class, $\Dperf(X, \alpha)$ denotes 
the category of perfect complexes over an Azumaya algebra $\CMcal{A}$ representing $\alpha$, 
consisting of complexes of $\CMcal{A}$-modules which are locally quasi-isomorphic to a bounded 
complex of locally projective $\CMcal{A}$-modules of finite rank. 
When $X$ is a smooth variety, as will be the case whenever $\Dperf(X, \alpha)$ is considered in this paper, this category 
agrees with the bounded derived category of coherent $\alpha$-twisted sheaves \cite[Lemma 10.19]{kuznetsov-hyperplane}. 
All functors are derived by convention. 
In particular for a morphism $f \colon X \to Y$ of 
schemes we write $f_*$ and $f^*$ for the derived pushforward and pullback functors, 
and for $E, F \in \Dperf(X)$ we write $E \otimes F$ for the derived tensor product. 
For technical convenience, all categories are regarded as $\infty$-categories as reviewed in \S\ref{section-linear-categories}, but most arguments in the paper can be made at the triangulated level for admissible subcategories of derived categories of varieties. 

\subsection{Acknowledgements} 
The genesis of this paper was the Workshop on Derived Categories, Moduli Spaces, and Deformation Theory at Cetraro in June 2019. 
There, Manfred Lehn asked me  
whether stability conditions could be removed from the proof of the integral Hodge conjecture for cubic fourfolds in \cite{BLMNPS}, 
and an ensuing discussion with Daniel Huybrechts convinced me that this was possible. 
I thank both of them for their role in inspiring this paper, 
as well as the organizers of the conference for creating such a stimulating environment.

I would also like to thank Nicolas Addington, Arend Bayer, Bhargav Bhatt, Olivier Debarre, 
Sasha Kuznetsov, Jacob Lurie, Emanuele Macr\`{i}, Tasos Moulinos, Laura Pertusi, and Xiaolei Zhao 
for discussions, comments, and questions about this work. 


\section{Linear categories}  
\label{section-linear-categories}

In this paper we use the formalism of categories linear over a base scheme. 
We summarize the key points of this theory here following \cite{NCHPD}, which is based on 
Lurie's work \cite{HA}. 
Throughout this section we fix a (quasi-compact and quasi-separated) base scheme $S$. 

\subsection{Small linear categories} 
\label{linear-categories} 
An \emph{$S$-linear category} $\cC$ is a small idempotent-complete 
stable $\infty$-category equipped with a module structure over $\Dperf(S)$. 
The collection of all $S$-linear categories is organized into an $\infty$-category $\Cat_S$, 
which admits a symmetric monoidal structure. 
For $\cC, \cD \in \Cat_S$ we denote by 
\begin{equation*}
\cC \otimes_{\Dperf(S)} \cD \in \Cat_S 
\end{equation*} 
their tensor product. 
A morphism $\cC \to \cD$ in $\Cat_S$, also called an \emph{$S$-linear functor}, is an exact functor that suitably commutes with the action of $\Dperf(S)$; 
these morphisms form the objects of an $S$-linear category $\Fun_S(\cC, \cD)$, which is the internal mapping object in the category $\Cat_S$. 

If $\cC \in \Cat_S$ and $T \to S$ is a morphism of schemes, then the tensor product 
\begin{equation*}
\cC_{T} = \cC \otimes_{\Dperf(S)} \Dperf(T)  \in \Cat_{T} 
\end{equation*} 
is naturally a $T$-linear category, called the \emph{base change} of $\cC$ along $T \to S$. 
If $s \in S$ is a point with residue field $\kappa(s)$, then we write $\cC_s$ for the $\kappa(s)$-linear 
category obtained by base change along $\Spec(\kappa(s)) \to S$, and call it the \emph{fiber} of $\cC$ over $s \in S$. 
In this way, an $S$-linear category $\cC$ can be thought of as a family of categories 
parameterized by $S$. 

\begin{example}
\label{example-Dperf-X}
Let $f \colon X \to S$ be a morphism of schemes. 
Then $\cC = \Dperf(X)$ has the structure of an $S$-linear category, where the 
action functor $\cC \times \Dperf(S) \to \Dperf(X)$ is given by $(E, F) \mapsto E \otimes f^*F$. If $T \to S$ is a morphism of schemes, then by \cite[Theorem 1.2]{bzfn} there is a $T$-linear equivalence 
\begin{equation*}
\cC_{T} \simeq \Dperf(X_{T}) 
\end{equation*} 
where $X_{T} = X \times_{S} T \to T$ denotes the derived fiber product, which agrees with the usual fiber product of schemes if $X \to S$ and $T \to S$ are $\Tor$-independent over $S$. 
\end{example} 

\subsection{Semiorthogonal decompositions} 
The above example can be amplified using the following observation. 
If $\cC \in \Cat_S$, a semiorthogonal decomposition 
\begin{equation} 
\label{C-sod}
\cC = \llangle \cC_1, \dots, \cC_m \rrangle 
\end{equation} 
is called $S$-linear if the $\Dperf(S)$-action preserves each of the components $\cC_i$. 
In this case, the $\cC_i$ inherit the structure of $S$-linear categories. 
In particular, if $X$ is an $S$-scheme, then $S$-linear semiorthogonal components of 
$\Dperf(X)$ are $S$-linear categories. 
This will be our main source of examples in the paper. 

By \cite[Lemma 3.15]{NCHPD}, given an $S$-linear semiorthogonal decomposition \eqref{C-sod} and a morphism $T \to S$, there is an induced $T$-linear semiorthogonal 
decomposition 
\begin{equation*}
\cC_T = \llangle (\cC_1)_T, \dots, (\cC_m)_T \rrangle.
\end{equation*} 
If $\cC = \Dperf(X)$ and $X$ and $T$ are $\Tor$-independent over $S$, the 
base changes $(\cC_i)_T$ can be expressed without the use of higher categories and derived algebraic geometry, by working inside of the ambient category 
$\Dperf(X_T)$, see \cite{kuznetsov-base-change}. 

The property that an $S$-linear subcategory $\cA \subset \cC$ forms part of a semiorthogonal decomposition can be characterized in terms of the embedding functor $\alpha \colon \cA \to \cC$. 
Namely, we say $\cA \subset \cC$ is a left admissible if $\alpha$ admits a left adjoint, 
right admissible if $\alpha$ admits a right adjoint, and admissible if $\alpha$ admits both adjoints. 
Then if $\cA, \cB \subset \cC$ are $S$-linear subcategories, we have a semiorthogonal 
decomposition $\cC = \llangle \cA, \cB \rrangle$ if and only if $\cA$ is left admissible and $\cB = {^\perp}\cA$, 
if and only if $\cB$ is right admissible and $\cA = \cB^{\perp}$. 

\subsection{Presentable linear categories} 
For technical reasons, it is sometimes useful to work with ``large'' versions of 
linear categories, which we review here; 
for clarity we sometimes say ``small $S$-linear category'' to mean an $S$-linear category in the sense of \S\ref{linear-categories}. 
Large categories will only be needed for our discussion of 
Hochschild (co)homology in \S\ref{section-HH-homology} and \S\ref{section-HH-cohomology}. 

A \emph{presentable $S$-linear category} $\cC$ is a presentable stable $\infty$-category 
$\cC$ equipped with a module structure over $\Dqc(S)$. 
As in the case of small linear categories, the collection of all such categories is organized into a 
symmetric monoidal $\infty$-category $\PrCat_S$, whose tensor product is denoted by 
\begin{equation*}
\cC \otimes_{\Dqc(S)} \cD \in \PrCat_S. 
\end{equation*} 
A morphism $\cC \to \cD$ in $\PrCat_S$ is a cocontinuous $S$-linear functor; these morphisms 
form the objects of a presentable $S$-linear category $\Fun_{S}(\cC, \cD)$, 
which is the internal mapping object in the category $\PrCat_S$. 

Many presentable $S$-linear categories which arise in practice are compactly generated, e.g. 
$\Dqc(S)$ is so by our assumption that $S$ is quasi-compact and quasi-separated \cite[Theorem~3.1.1]{bondal-vdb}. 
We denote by $\PrCat_{S}^{\omega}$ the $\infty$-category of compactly generated 
presentable $S$-linear categories, with morphisms the cocontinuous $S$-linear functors which 
preserve compact objects. Again, $\PrCat_{S}^{\omega}$ admits a symmetric monoidal 
structure and an internal mapping object 
$\Fun_{S}^{\omega}(\cC, \cD)$ for $\cC, \cD \in \PrCat_S^{\omega}$. 

The various versions of linear categories $\Cat_S, \PrCat_S,$ and $\PrCat_S^{\omega}$ are 
related as follows. By definition, $\PrCat_S^{\omega}$ is a non-full subcategory of $\PrCat_S$. 
Moreover, for any $\cC \in \Cat_S$ there is a category $\Ind(\cC) \in \PrCat_S^{\omega}$ 
called its Ind-completion, which roughly is 
obtained from $\cC$ by freely adjoining all filtered colimits. 
This gives a functor 
\begin{equation*}
\Ind \colon \Cat_S \to \PrCat_{S}^{\omega}
\end{equation*} 
which is in fact a symmetric monoidal equivalence with inverse the functor 
\begin{equation*} 
(-)^c \colon \PrCat_S^{\omega} \to \Cat_S
\end{equation*} 
taking $\cC \in \PrCat_S$ to its subcategory $\cC^c$ of compact objects. 

\begin{example}
\label{example-Dqc-X}
Let $f \colon X \to S$ be a morphism of schemes. 
Similar to Example~\ref{example-Dperf-X}, $\Dqc(X)$ naturally has the structure of a 
presentable $S$-linear category. 
In fact, if $X$ is quasi-compact and quasi-separated, then 
there is an equivalence 
$\Ind(\Dperf(X)) \simeq \Dqc(X)$ of presentable $S$-linear categories. 
\end{example}

\subsection{Mapping objects} 
For objects $E,F \in \cC$ of an $\infty$-category, we write $\Map_{\cC}(E,F)$ for the space of 
maps from $E$ to $F$. 
If $\cC$ is a presentable $S$-linear category, then 
there is a mapping object 
\begin{equation*}
\cHom_S(E,F) \in \Dqc(S) 
\end{equation*} 
characterized by equivalences 
\begin{equation*}
\Map_{\Dqc(S)}(G, \cHom_S(E,F)) \simeq \Map_{\cC}( E \otimes G, F ) 
\end{equation*} 
for $G \in \Dqc(S)$. 

If instead $\cC$ is an $S$-linear category, 
we write $\cHom_S(E, F) \in \Dqc(S)$ for the mapping object between $E$ and $F$ 
regarded as objects of the presentable $S$-linear category $\Ind(\cC)$; 
equivalently, $\cHom_S(E, F)$ can be characterized by equivalences 
\begin{equation*}
\Map_{\Dqc(S)}(G, \cHom_S(E,F)) \simeq \Map_{\cC}( E \otimes G, F) 
\end{equation*} 
for $G \in \Dperf(S)$. 
For $i \in \bZ$ we write 
$\cExt^i_S(E,F)$ for the degree $i$ cohomology sheaf of $\cHom_S(E,F)$, 
and 
$\Ext^i_S(E,F)$ for the degree $i$ hypercohomology of $\cHom_S(E,F)$. 

\begin{example}
Let $f \colon X \to S$ be a morphism of schemes. 
Then for $E,F \in \Dqc(X)$, we have 
\begin{equation*}
\cHom_S(E,F) \simeq f_* \cHom_X(E,F)
\end{equation*}
where $\cHom_X(E,F) \in \Dqc(X)$ denotes the derived sheaf Hom on $X$. 
\end{example} 

\subsection{Dualizable categories} 
\label{section-dualizable} 
Let $(\cA, \otimes, \one)$ be a symmetric monoidal $\infty$-category.  
An object $A \in \cA$ is called \emph{dualizable} if there exists an object $A^{\svee} \in \cA$ and 
morphisms 
\begin{equation*}
\coev_A \colon \one \to A \otimes A^{\svee} \qquad \text{and} \qquad \ev_A \colon A^{\svee} \otimes A \to \one 
\end{equation*} 
such that the compositions 
\begin{align*}
& A \xrightarrow{ \,  \coev_A \otimes \id_{A} \,} A \otimes A^{\svee} \otimes A \xrightarrow{\, \id_{A} \otimes \ev_A \,} A  , \\ 
& A^{\svee} \xrightarrow{ \, \id_{A^{\svee}} \otimes \coev_A \,} A^{\svee} \otimes A \otimes A^{\svee} \xrightarrow{\, \ev_A \otimes \id_{A^{\svee}} \,} A^{\svee} , 
\end{align*}
are equivalent to the identity morphisms of $A$ and $A^{\svee}$. 

\begin{remark}
\label{remark-duality-data} 
Dualizability of an object $A \in \cA$ is detected at the level of the homotopy category $\rh \cA$; 
moreover, if $A$ is dualizable, then the object $A^{\svee}$ and the evalation and coevaluation morphisms 
are uniquely determined in $\rh\cA$. 
\end{remark}

The following gives a large source of dualizable presentable linear categories. 

\begin{lemma}[{\cite[Lemma 4.3]{NCHPD}}]
\label{lemma-presentable-dualizable} 
Let $\cC$ be a compactly generated presentable $S$-linear category. 
Then $\cC$ is dualizable as an object of $\PrCat_S$, with dual given by 
\begin{equation*}
\cC^{\svee} = \Ind((\cC^c)^{\op}) 
\end{equation*} 
where $(\cC^c)^{\op}$ denotes the opposite of the category $\cC^{c}$ of compact objects in $\cC$.  
There is a canonical equivalence $\cC \otimes_{\Dqc(S)} \cC^{\svee} \simeq \Fun_S(\cC, \cC)$ under which the coevaluation morphism 
\begin{equation*}
\coev_{\cC} \colon \Dqc(S) \to \cC \otimes_{\Dqc(S)} \cC^{\svee} 
\end{equation*} 
is the canonical functor sending $\cO_S \in \Dqc(S)$ to $\id_{\cC} \in \Fun_S(\cC, \cC)$. 
The evaluation morphism 
\begin{equation*}
\ev_{\cC} \colon \cC^{\svee} \otimes_{\Dqc(S)} \cC \to \Dqc(S) 
\end{equation*} 
is induced by the functor $\cHom_{S}(-,-) \colon (\cC^c)^{\op} \times \cC^c \to \Dqc(S)$. 
\end{lemma}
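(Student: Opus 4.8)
The plan is to recognize this as an instance of Morita theory for presentable linear categories and to assemble the argument from two essentially formal inputs, working in the homotopy category $\rh\PrCat_S$ throughout --- which by Remark~\ref{remark-duality-data} loses nothing, since both dualizability of $\cC$ and the uniqueness of $\cC^\svee$ together with its structure morphisms are detected there. Write $\cA = \cC^c$, so that $\cC \simeq \Ind(\cA)$ with $\cA \in \Cat_S$, and set $\cC^\svee := \Ind(\cA^{\op})$. The first input is that $\Ind \colon \Cat_S \to \PrCat_S^{\omega}$ is symmetric monoidal (recalled above), which gives canonical equivalences $\cC \otimes_{\Dqc(S)} \cC^\svee \simeq \Ind(\cA \otimes_{\Dperf(S)} \cA^{\op})$ and $\cC^\svee \otimes_{\Dqc(S)} \cC \simeq \Ind(\cA^{\op} \otimes_{\Dperf(S)} \cA)$. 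The second input is the standard identification $\Ind(\cB) \simeq \Fun^{\mathrm{ex}}_S(\cB^{\op}, \Dqc(S))$ of the $\Ind$-completion of a small $S$-linear category $\cB$ with the category of exact $S$-linear functors on $\cB^{\op}$ (the $S$-linear form of the Yoneda embedding together with the universal property of $\Ind$, cf.\ \cite{HA}). Uncurrying, the second input also supplies
\[
\Fun_S(\cC,\cC) \;\simeq\; \Fun^{\mathrm{ex}}_S(\cA, \Ind(\cA)) \;\simeq\; \Ind(\cA \otimes_{\Dperf(S)} \cA^{\op}) \;\simeq\; \cC \otimes_{\Dqc(S)} \cC^\svee ,
\]
which is the canonical equivalence asserted in the statement: a Morita/Eilenberg--Watts description of cocontinuous $S$-linear endofunctors of $\cC$ by integral kernels.

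Next I would define the structure morphisms. For $\ev_\cC$, I would take the cocontinuous $\Dqc(S)$-linear extension of the $\Dperf(S)$-bilinear functor $\cHom_S(-,-) \colon \cA^{\op} \times \cA \to \Dqc(S)$ along $\cC^\svee \otimes_{\Dqc(S)} \cC \simeq \Ind(\cA^{\op} \otimes_{\Dperf(S)} \cA)$; note that this genuinely lands in $\Dqc(S)$ rather than $\Dperf(S)$, which is harmless since $\cC$ need not be proper. Dually, for $\coev_\cC$ I would take the cocontinuous functor sending $\cO_S$ to the object of $\Ind(\cA \otimes_{\Dperf(S)} \cA^{\op})$ which, under the identification of this category with exact $S$-linear functors on $\cA^{\op} \otimes_{\Dperf(S)} \cA$, corresponds to that same functor $\cHom_S(-,-)$. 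Tracing through the Eilenberg--Watts equivalence above, this distinguished object is precisely the integral kernel of $\id_\cC$, so $\coev_\cC$ is the canonical morphism picking out the identity functor, exactly as in the statement.

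Finally I would verify the two triangle identities, which by Remark~\ref{remark-duality-data} may be checked in $\rh\PrCat_S$. Unwinding the definitions, the composite $\cC \to \cC \otimes_{\Dqc(S)} \cC^\svee \otimes_{\Dqc(S)} \cC \to \cC$ becomes the endofunctor of $\Ind(\cA)$ that reconstructs each object $M$ as the canonical colimit of the compact objects mapping to it, which is $M$ itself by the density (co-Yoneda) property of the $\Ind$-completion; the other triangle identity follows in the same way with $\cA^{\op}$ in place of $\cA$. I expect the main obstacle to be exactly this last step: making the chain of identifications bundled into the two inputs above concrete enough that the two triangle-identity composites visibly collapse to these density resolutions, and in particular checking that the evaluation and coevaluation constructed above really are the ``same'' copy of $\cHom_S(-,-)$. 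The reduction to the homotopy category afforded by Remark~\ref{remark-duality-data} is what keeps this a finite piece of bookkeeping rather than an infinite tower of coherences; the genuinely non-formal content --- the Yoneda-type identification and the symmetric monoidality of $\Ind$ --- is imported from \cite{HA, NCHPD}.
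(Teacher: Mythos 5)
Your proof is correct, and it is essentially the argument the paper is implicitly relying on: note that the paper gives no proof of this lemma at all --- it is imported verbatim from \cite[Lemma 4.3]{NCHPD}, which in turn rests on Lurie's general dualizability results for compactly generated presentable module categories --- and your Morita/co-Yoneda reconstruction is precisely the standard argument underlying those citations. The one thing worth flagging is that your ``second input'' $\Ind(\cB) \simeq \Fun^{\mathrm{ex}}_S(\cB^{\op}, \Dqc(S))$ is where essentially all of the content is concentrated (applied to $\cB = \cC^c$ it already identifies $\Fun_S(\cC,\Dqc(S))$ with the proposed dual, so it is very close to the dualizability statement itself); since that identification is a genuine theorem available in \cite{HA}, this makes the rest of your argument honest bookkeeping rather than a gap, but you should be aware that citing it is not a purely formal move.
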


In particular, the lemma implies the following, where recall that by convention all schemes are quasi-compact and quasi-separated. 
\begin{corollary}
If $f \colon X \to S$ is a morphism of schemes, then 
$\Dqc(X)$ is a dualizable presentable $S$-linear category. 
\end{corollary}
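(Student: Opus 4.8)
The plan is to deduce this immediately from Lemma~\ref{lemma-presentable-dualizable} together with Example~\ref{example-Dqc-X}. First I would observe that since $X$ is quasi-compact and quasi-separated by our standing conventions, the category $\Dqc(X)$ is compactly generated by \cite[Theorem~3.1.1]{bondal-vdb}; indeed, $\Dqc(X) \simeq \Ind(\Dperf(X))$ as recorded in Example~\ref{example-Dqc-X}, and more concretely the compact objects are exactly the perfect complexes. Next I would note that, via the $S$-linear structure on $\Dqc(X)$ described in Example~\ref{example-Dqc-X} (with $\Dqc(S)$ acting through $f^*$ and $\otimes$), the category $\Dqc(X)$ is a compactly generated presentable $S$-linear category, i.e.\ an object of $\PrCat_S^{\omega} \subset \PrCat_S$.

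With these two observations in place, the corollary is a direct application of Lemma~\ref{lemma-presentable-dualizable}: that lemma says precisely that any compactly generated presentable $S$-linear category is dualizable in $\PrCat_S$, with explicit dual $\Ind((\Dperf(X))^{\op})$ and the evaluation and coevaluation morphisms as described there. So the proof is essentially a one-line citation, and I would write it as such.

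There is no real obstacle here; the only thing to be careful about is making sure the hypotheses of Lemma~\ref{lemma-presentable-dualizable} are genuinely met, namely that $\Dqc(X)$ is compactly generated as a \emph{presentable $S$-linear} category (not merely as a stable $\infty$-category). This follows because the compact generation of $\Dqc(X)$ is intrinsic to $\Dqc(X)$ as a stable $\infty$-category and the $S$-linear module structure is compatible with filtered colimits, so $\Dqc(X) \in \PrCat_S^{\omega}$. If anything, the subtle point worth a parenthetical remark is the appeal to the convention (stated in \S\ref{section-conventions}) that all schemes, in particular $X$, are quasi-compact and quasi-separated, which is what licenses the compact generation in the first place.

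\begin{proof}
By our conventions, $X$ is quasi-compact and quasi-separated, so $\Dqc(X)$ is a compactly generated presentable $S$-linear category; indeed $\Ind(\Dperf(X)) \simeq \Dqc(X)$ as presentable $S$-linear categories (Example~\ref{example-Dqc-X}), with compact objects the perfect complexes. The claim now follows from Lemma~\ref{lemma-presentable-dualizable}.
\end{proof}
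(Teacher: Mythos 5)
Your proof is correct and matches the paper's: the corollary is stated there as an immediate consequence of Lemma~\ref{lemma-presentable-dualizable}, using exactly the facts you cite — that $X$ is quasi-compact and quasi-separated by convention, so $\Dqc(X) \simeq \Ind(\Dperf(X))$ is a compactly generated presentable $S$-linear category. Nothing further is needed.
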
 

Dualizability of a small $S$-linear category is more restrictive. 
Recall that if $\cC$ is a small $S$-linear category, then $\cC$ is called: 
\begin{itemize}
\item \emph{proper (over $S$)} if 
$\cHom_S(E,F) \in \Dperf(S) \subset \Dqc(S)$ for all $E,F \in \cC$, and 
\item \emph{smooth (over $S$)} if $\id_{\Ind(\cC)} \in \Fun_{\Dqc(S)}(\Ind(\cC), \Ind(\cC))$ is a compact object. 
\end{itemize} 
Moreover, $\cC$ is dualizable as an object of $\Cat_S$ if and only if $\cC$ is smooth and proper over $S$, in which case the dual is given by $\cC^{\svee} = \cC^{\op}$ \cite[Lemma 4.8]{NCHPD}. 

This is closely related to the usual notions of smoothness and properness in geometry. 
For instance, if $f \colon X \to S$ is a smooth and proper morphism, 
then $\Dperf(X)$ is smooth and proper over $S$ \cite[Lemma 4.9]{NCHPD}. 
Further, semiorthogonal components of a smooth proper $S$-linear category are automatically smooth, proper, and admissible 
 \cite[Lemma 4.15]{NCHPD}. 
Putting these observations together gives the following key examples of smooth and proper linear categories for this paper.  

\begin{lemma}
Let $f \colon X \to S$ be a smooth proper morphism.  
If $\cC$ is an $S$-linear semiorthogonal component of $\Dperf(X)$, then 
$\cC$ is smooth and proper over $S$, and the embedding $\cC \hookrightarrow \Dperf(X)$ 
is admissible. 
\end{lemma}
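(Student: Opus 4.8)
The plan is to obtain the statement by assembling two facts recalled in \S\ref{section-linear-categories}. First, because $f \colon X \to S$ is smooth and proper, the $S$-linear category $\Dperf(X)$ is smooth and proper over $S$ by \cite[Lemma 4.9]{NCHPD}. Second, $\cC$ is by hypothesis a component of an $S$-linear semiorthogonal decomposition of $\Dperf(X)$, and semiorthogonal components of a smooth proper $S$-linear category are automatically smooth, proper, and admissible by \cite[Lemma 4.15]{NCHPD}; so all three assertions follow at once. The only mild point of bookkeeping is that \cite[Lemma 4.15]{NCHPD} is phrased in terms of admissible subcategories, whereas the statement above speaks of a component of a semiorthogonal decomposition; but as recalled in \S\ref{section-linear-categories}, for an $S$-linear subcategory of $\Dperf(X)$ these two notions coincide, so no translation is needed.

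If one prefers not to quote \cite[Lemma 4.15]{NCHPD} as a black box, the ingredients are as follows. Admissibility of the embedding $\cC \hookrightarrow \Dperf(X)$ follows from the description of $S$-linear semiorthogonal decompositions in \S\ref{section-linear-categories}, the adjoints being the associated projection functors. Properness of $\cC$ over $S$ follows because for $E, F \in \cC$ the mapping object $\cHom_S(E,F)$ agrees with the one computed in $\Dqc(X)$ — the $\Dperf(S)$-action on $\cC$ being the restriction of that on $\Dperf(X)$, which preserves $\cC$ — and this lies in $\Dperf(S)$ since $\Dperf(X)$ is proper over $S$. Finally, smoothness of $\cC$ is deduced from the smoothness of the ambient category $\Dperf(X)$ (provided by \cite[Lemma 4.9]{NCHPD}) via the dualizability criterion recalled in \S\ref{section-dualizable}: a smooth proper $S$-linear category is dualizable in $\Cat_S$ with dual its opposite, and a semiorthogonal component of a dualizable, proper $S$-linear category is again dualizable, hence smooth and proper.

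I do not expect any genuine obstacle here: the content is entirely contained in \cite[Lemmas 4.9 and 4.15]{NCHPD}, and the present statement is merely their combination. If anything, the real subtlety lives upstream, in the proofs of those two lemmas, which rely on the behaviour of smoothness and properness under base change and under passage to semiorthogonal components; but those are established in \cite{NCHPD} and we take them as given.
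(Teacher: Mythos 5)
Your proposal is correct and matches the paper exactly: the lemma is stated there as an immediate combination of \cite[Lemma 4.9]{NCHPD} (smoothness and properness of $\Dperf(X)$ over $S$ for $f$ smooth and proper) with \cite[Lemma 4.15]{NCHPD} (semiorthogonal components of a smooth proper $S$-linear category are smooth, proper, and admissible), with no further argument given. Your optional unpacking of the second lemma is consistent with the discussion in \S\ref{section-dualizable} but is not needed.
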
 

Smooth and proper categories enjoy many nice properties. 
For instance, a smooth proper $S$-linear category $\cC$ always admits a 
\emph{Serre functor $\rS_{\cC/S}$ over $S$} \cite[Lemma 4.19]{NCHPD}. 
By definition, this means $\rS_{\cC/S}$ is an autoequivalence of $\cC$ 
such that there are natural equivalences 
\begin{equation*}
\cHom_S(E, \rS_{\cC/S}(F)) \simeq \cHom_S(F,E)^{\svee} 
\end{equation*} 
for $E, F \in \cC$. 
For example, if $f \colon X \to S$ is a smooth proper morphism 
of relative dimension~$n$, 
then $\rS_{\Dperf(X)/S} = - \otimes \omega_{X/S}[n]$ is a Serre functor over $S$. 


\section{Hochschild homology} 
\label{section-HH-homology}

In this section we review the definition of Hochschild homology and various of its properties 
relevant to this paper. 
All of the constructions and results we discuss are well-known in some form, but 
for convenience or lack of suitable references we often sketch the details. 

There are various settings in which Hochschild homology can be defined. 
In this paper, we consider Hochschild homology as an invariant of small linear categories or dualizable presentable linear categories, defined in terms of categorical traces. 
See \cite{kuznetsov-HH-sod} for a more down-to-earth definition in the case of a semiorthogonal component of the derived category of a smooth proper variety, 
which is the case needed in the main results of this paper.  
The definition below has the advantage of being manifestly canonical and convenient for making abstract arguments. 
 
In general, if $(\cA, \otimes, \one)$ is a symmetric monoidal $\infty$-category 
and $A \in \cA$ is a dualizable object, then the \emph{trace} of an endomorphism $F \colon A \to A$ is the 
map $\Tr(F) \in \Map_{\cA}(\one, \one)$ given as the 
composite 
\begin{equation*}
\one \xrightarrow{\, \coev_A \,} A \otimes A^{\svee} \xrightarrow{\, F \otimes \id_{A^{\svee}} \,} A \otimes A^{\svee} \simeq A^{\svee} \otimes A \xrightarrow{\, \ev_{A} \,} \one. 
\end{equation*} 
We will be interested in the case where $\cA$ is $\Cat_S$ or $\PrCat_S$. 
In this case, $\one$ is $\Dperf(S)$ or $\Dqc(S)$, and the functor  
$\Tr(F)$ is determined by its value on the structure sheaf $\cO_S$. 

\begin{definition}
Let $\cC$ be a dualizable 
presentable $S$-linear category, and let $F \colon \cC \to \cC$ be an endomorphism. 
Then the \emph{Hochschild homology of $\cC$ over $S$ with coefficients in $F$} is 
the complex 
\begin{equation*}
\HH_*(\cC/S, F) = \Tr(F)(\cO_S) \in \Dqc(S). 
\end{equation*} 
The \emph{Hochschild homology of $\cC$ over $S$} is the complex 
\begin{equation*}
\HH_*(\cC/S) = \HH_*(\cC/S, \id_{\cC}) \in \Dqc(S). 
\end{equation*} 
If $\cC$ is an $S$-linear category and $F \colon \cC \to \cC$ is an endomorphism, 
then $\Ind(\cC)$ is a dualizable presentable $S$-linear category by Lemma~\ref{lemma-presentable-dualizable},  and we define 
\begin{align*}
\HH_*(\cC/S, F) & = \HH_*(\Ind(\cC)/S, \Ind(F)) , \\ 
\HH_*(\cC/S) & = \HH_*(\cC/S, \id_{\cC}) . 
\end{align*} 
Note that any object $F \in \Dqc(S)$ gives a natural coefficient for Hochschild homology of categories over $S$, by considering the corresponding endofunctor $- \otimes F \colon \cC \to \cC$; in this situation, we use the notation  
\begin{equation*}
\HH_*(\cC/S, F) = \HH_*(\cC/S, (- \otimes F)). 
\end{equation*}
Finally, in any of the above situations, for $i \in \bZ$ we set  
\begin{equation*}
\HH_i(\cC/S, F) = \rH^{-i}(\HH_*(\cC/S, F))
\end{equation*} 
to be the degree $-i$ cohomology sheaf of $\HH_*(\cC/S,F)$. 
\end{definition}

\begin{remark}
\label{remark-HH-dualizable}
If $\cC$ is a dualizable small $S$-linear category (equivalently, a smooth and proper small $S$-linear category, see \S\ref{section-dualizable}) and $F \colon \cC \to \cC$ is an endomorphism, then by definition the trace of $F$ is a functor $\Tr(F) \colon \Dperf(S) \to \Dperf(S)$. 
Further, there is a canonical equivalence $\Ind(\Tr(F)) \simeq \Tr(\Ind(F))$, 
because by Remark~\ref{remark-duality-data} the functor $\Ind$ takes the duality data of $\cC$ to that of $\Ind(\cC)$.  
Thus $\HH_*(\cC/S, F) \simeq \Tr(F)(\cO_S) \in \Dperf(S)$. 
\end{remark}

Below we review some well-known properties of Hochschild homology, in a guise that is tailored to our purposes. 

\subsection{Functoriality} 
\label{HH_*-functorial} 
Hochschild homology (with coefficients) is suitably functorial. 
This functoriality exists in the general context of traces of dualizable objects in a symmetric monoidal $(\infty,2)$-category, 
see e.g \cite{nonlinear-traces, toen-traces, higher-traces, categorical-atiyah-bott}, but here we 
only recall the relevant details in the case 
of Hochschild homology of categories. 

Namely, let $(\cC, F)$ be a pair where $\cC$ is a dualizable presentable $S$-linear category and 
$F \colon \cC \to \cC$ is an endomorphism. 
Let $(\cD, G)$ by another such pair. 
We define a morphism $(\cC, F) \to (\cD, G)$ to be a pair $(\Phi, \gamma)$ where 
$\Phi \colon \cC \to \cD$ is morphism that admits a cocontinuous right adjoint $\Phi^!$ (which is thus also a morphism in $\PrCat_S$), 
and $\gamma \colon \Phi \circ F \to G \circ \Phi$ is a natural transformation of functors; in other words, 
a morphism is a (not necessarily commutative) diagram 
\begin{equation*}
\xymatrix{
\cC \ar[r]^{F} \ar[d]_{\Phi} & \cC \ar[d]^{\Phi}  \ar@{=>}[dl]_{\gamma} \\ 
\cD \ar[r]_{G} & \cD 
}  
\end{equation*}
Given such a morphism $(\Phi, \gamma)$, we consider the diagram
\begin{equation}
\label{tr-functorial} 
\vcenter{
\xymatrix{
\Dqc(S) \ar[rr]^{\coev_{\cC}} \ar@{=}[dd] & & \cC \otimes_{\Dqc(S)} \cC^{\svee} \ar@{=>}[ddll] \ar[rr]^{F \otimes \id_{\cC^{\svee}}} \ar[dd]_{\Phi \otimes (\Phi^{!})^{\svee}} & & \cC \otimes_{\Dqc(S)} \cC^{\svee} \ar[dd]^{\Phi \otimes (\Phi^{!})^{\svee}} \ar@{=>}[ddll]_{\gamma \otimes \id_{(\Phi^{!})^{\svee}}} \ar[rr]^{\ev_{\cC}} && \Dqc(S) \ar@{=}[dd] \ar@{=>}[ddll] \\ \\
\Dqc(S) \ar[rr]_{\coev_{\cD}} & & \cD \otimes_{\Dqc(S)} \cD^{\svee} \ar[rr]_{G \otimes \id_{\cD^{\svee}}} & &  \cD \otimes_{\Dqc(S)} \cD^{\svee} \ar[rr]_{\ev_{\cD}}&& \Dqc(S)
}
}
\end{equation} 
where: 
\begin{itemize}
\item $(\Phi^!)^{\svee} \colon \cC^{\svee} \to \cD^{\svee}$ is the dual of the functor $\Phi^! \colon \cD \to \cC$, defined as the composition  
\begin{equation*}
\cC^{\svee} \xrightarrow{ \id_{\cC^{\svee}} \otimes \coev_{\cD}} 
\cC^{\svee} \otimes_{\Dqc(S)} \cD \otimes_{\Dqc(S)} \cD^{\svee} 
\xrightarrow{ \id_{\cC^{\svee}} \otimes \Phi^! \otimes \id_{\cD^{\svee}}} 
\cC^{\svee} \otimes_{\Dqc(S)} \cC \otimes_{\Dqc(S)} \cD^{\svee} 
\xrightarrow{\ev_{\cC} \otimes \id_{\cD^{\svee}}} \cD^{\svee}  . 
\end{equation*} 
\item The $2$-morphism in the first square is the natural transformation  
\begin{equation*}
(\Phi \otimes (\Phi^{!})^{\svee}) \circ \coev_{\cC} \simeq 
((\Phi \circ \Phi^!) \otimes \id_{\cD^{\svee}}) \circ \coev_{\cD} 
\to \coev_{\cD}
\end{equation*} 
induced by the counit of the adjunction between $\Phi$ and $\Phi^!$. 
\item The $2$-morphism in the last square is the natural transformation 
\begin{equation*} 
\ev_{\cC} \to \ev_{\cC} \circ ((\Phi^! \circ \Phi) \otimes \id_{\cC^{\svee}}) \simeq \ev_{\cD} \circ (\Phi \otimes (\Phi^!)^{\svee}) 
\end{equation*} 
induced by the unit of the adjunction between $\Phi$ and $\Phi^!$. 
\end{itemize}
The compositions along the top and bottom of~\eqref{tr-functorial} are 
by definition the traces $\Tr(F)$ and $\Tr(G)$, so the composition of the $2$-morphisms in the diagram gives a natural transformation 
\begin{equation*}
\Tr(\Phi, \gamma) \colon \Tr(F) \to \Tr(G). 
\end{equation*} 
In particular, applying this to $\cO_S$, we obtain a morphism on 
Hochschild homology 
\begin{equation*}
\HH_*(\Phi, \gamma) \colon \HH_*(\cC/S, F) \to \HH_*(\cD/S, G). 
\end{equation*} 

The functoriality of Hochschild homology implies the following 
result, cf. \cite{kuznetsov-HH-sod} which treats the case of semiorthogonal decompositions of varieties. 

\begin{lemma}
Let $\cC = \llangle \cC_1, \dots, \cC_m \rrangle$ be an $S$-linear semiorthogonal decomposition with admissible components. 
Then there is an equivalence  
\begin{equation*}
\HH_*(\cC/S) \simeq \HH_*(\cC_1/S) \oplus \cdots \oplus \HH_*(\cC_m/S), 
\end{equation*} 
where the map $\HH_*(\cC/S) \to \HH_*(\cC_i/S)$ is induced by the projection functor onto the 
component $\cC_i$. 
\end{lemma}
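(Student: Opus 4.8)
The plan is to deduce the additivity of Hochschild homology along a semiorthogonal decomposition from the functoriality constructed above, applied to the projection and inclusion functors. Recall that for an admissible component $\cC_i \subset \cC$ with inclusion $\iota_i$ and left adjoint $\pi_i$ (the projection functor), we obtain morphisms of pairs $(\pi_i, \id) \colon (\cC, \id_\cC) \to (\cC_i, \id_{\cC_i})$ and $(\iota_i, \id) \colon (\cC_i, \id_{\cC_i}) \to (\cC, \id_\cC)$ — here one must check that $\pi_i$ admits a cocontinuous right adjoint (namely $\iota_i$, since $\cC_i$ is admissible, hence $\iota_i$ has a right adjoint as well, and all these adjoints are cocontinuous after Ind-completion) so that the functoriality of \S\ref{HH_*-functorial} applies. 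Passing to Ind-completions and taking traces at $\cO_S$ yields maps $p_i \colon \HH_*(\cC/S) \to \HH_*(\cC_i/S)$ and $q_i \colon \HH_*(\cC_i/S) \to \HH_*(\cC/S)$.

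First I would record that $p_i \circ q_i \simeq \id$ for each $i$: this follows because the composite $(\pi_i, \id) \circ (\iota_i, \id)$ is the pair $(\pi_i \circ \iota_i, \id) \simeq (\id_{\cC_i}, \id)$, using $\pi_i \circ \iota_i \simeq \id_{\cC_i}$, together with the functoriality of $\Tr$ under composition of morphisms of pairs (this compatibility is part of the general trace formalism referenced in \S\ref{HH_*-functorial}, e.g. \cite{higher-traces, categorical-atiyah-bott}). Next I would show that $q_i \circ p_j \simeq 0$ for $i \neq j$: the relevant composite is $(\pi_i \circ \iota_j, \id)$, and since $\cC = \llangle \cC_1, \dots, \cC_m\rrangle$ is semiorthogonal, $\pi_i \circ \iota_j = 0$ whenever the component $\cC_i$ sits to the left of $\cC_j$; for the other order one uses instead the right adjoint projection, but in either case the functor composite vanishes, hence so does the induced map on traces. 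Finally I would check that $\sum_i q_i \circ p_i \simeq \id_{\HH_*(\cC/S)}$, which encodes the fact that the objects of $\cC$ are glued from their projections to the $\cC_i$; concretely this comes from the filtration of $\id_\cC$ by the functors $\iota_i \circ \pi_i$ arising from the semiorthogonal decomposition, and the additivity of traces over exact triangles of endofunctors. Together these identities exhibit $\HH_*(\cC/S) \simeq \bigoplus_i \HH_*(\cC_i/S)$ with the projection $\HH_*(\cC/S) \to \HH_*(\cC_i/S)$ equal to $p_i$, which is induced by the projection functor onto $\cC_i$ as claimed.

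The main obstacle is the last point: making precise the claim that $\id_\cC$ is built, via the semiorthogonal decomposition, from the $\iota_i \circ \pi_i$ in a way that is compatible with the trace construction, i.e. that traces are additive in exact triangles of endofunctors and that the relevant triangles $\iota_i \pi_i \to (\text{truncation}) \to (\text{truncation})$ are $S$-linear and survive Ind-completion. For a two-step decomposition $\cC = \llangle \cC_1, \cC_2 \rrangle$ there is an exact triangle $\iota_1 \pi_1 \to \id_\cC \to \iota_2 \pi_2$ of $S$-linear endofunctors; additivity of trace then gives $\HH_*(\cC/S) \simeq \HH_*(\cC_1/S) \oplus \HH_*(\cC_2/S)$ once one knows the connecting map in the triangle $\Tr(\iota_1\pi_1) \to \Tr(\id_\cC) \to \Tr(\iota_2\pi_2)$ is split by the maps $q_i, p_i$ above; the general case follows by induction on $m$. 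I would either invoke the additivity of traces in stable $(\infty,2)$-categorical settings directly from the literature cited in \S\ref{HH_*-functorial}, or reduce to the known statement for semiorthogonal decompositions of derived categories of varieties in \cite{kuznetsov-HH-sod} in the representable case and bootstrap. Everything else is formal manipulation of the trace functoriality already set up.
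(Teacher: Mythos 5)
The paper does not actually prove this lemma: it is stated as a formal consequence of the functoriality of \S\ref{HH_*-functorial}, with a pointer to \cite{kuznetsov-HH-sod} for the geometric case. Your outline is exactly the argument being invoked, and the overall structure (maps $p_i, q_i$ from the projection and inclusion functors, plus the filtration of $\id_{\cC}$ by the $\iota_i\pi_i$ and exactness of $\Tr$ in the endofunctor variable) is sound. Two points deserve correction. First, a typographical one: the composites you want to control are $p_i \circ q_j \colon \HH_*(\cC_j/S) \to \HH_*(\cC_i/S)$, not $q_i \circ p_j$, which does not typecheck. Second, and more substantively: for a \emph{fixed} choice of projection functor $\pi_i$ (say the left adjoint of $\iota_i$), the composite $\pi_i \circ \iota_j$ vanishes only for $j$ on one side of $i$; semiorthogonality kills $\Hom(\cC_j, \cC_i)$ for $j > i$ but says nothing about $\Hom(\cC_j,\cC_i)$ for $j < i$, so $p_i \circ q_j$ need not vanish for $j < i$. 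You cannot switch to the right-adjoint projection for those $j$, since that changes the map $p_i$ itself. This is not fatal: the matrix $(p_i \circ q_j)$ is triangular with identities on the diagonal, hence an equivalence, and combined with $\sum_i q_i p_i \simeq \id$ (from your filtration argument, using additivity of $\Tr$ over exact triangles of endofunctors and the cyclic identification $\Tr(\iota_i\pi_i) \simeq \Tr(\pi_i\iota_i) = \HH_*(\cC_i/S)$) one concludes that both comparison maps are equivalences. The two-term triangle $\iota_1\pi_1^{R} \to \id_{\cC} \to \iota_2\pi_2^{L}$ followed by induction on $m$, which you identify as the main obstacle, is indeed the cleanest way to close the argument and is the route taken in \cite{kuznetsov-HH-sod}.
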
 

\subsection{Chern characters} 
\label{section-chern-characters}
The functoriality of Hochschild homology can be used to define a theory of Chern characters, as follows. 

Let $\cC$ be a presentable $S$-linear category. 
Then any object $E \in \cC$ determines an $S$-linear functor $\Phi_{E} \colon \Dqc(S) \to \cC$ determined by $\Phi_{E}(\cO_S) = E$, whose right adjoint 
\begin{equation*}
\Phi_{E}^! = \cHom_S(E, -) \colon \cC \to \Dqc(S)
\end{equation*} 
is cocontinuous if and only if $E$ is a compact object of $\cC$. 

Now we assume that $E$ is compact, that $\cC$ is dualizable, and $F \colon \cC \to \cC$ is an endomorphism; for instance, $\cC$ could be of the form $\cC = \Ind(\cC_0)$ for a small $S$-linear 
category $\cC_0$ and $E \in \cC_0$. 
In this setup, we will construct a morphism 
\begin{equation*}
\ch_{E,F} 
\colon \cHom_S(E, F(E)) \to \HH_*(\cC/S, F)
\end{equation*} 
in $\Dqc(S)$, called the \emph{Chern character of $E$ with coefficients in $F$}; 
in practice, we often drop the subscripts $E$ and $F$ in $\ch_{E,F}$ when they are clear from context. 
By Yoneda's Lemma, it suffices to construct functorially in $G \in \Dqc(S)$ a map 
\begin{equation}
\label{ch-yoneda}
\Map_{\Dqc(S)}(G, \cHom_S(E, F(E))) \to \Map_{\Dqc(S)}(G, \HH_*(\cC/S, F)). 
\end{equation} 
The left side is identified with $\Map_{\cC}(E \otimes G, F(E))$. 
This mapping space is in turn identified with the space of natural transformations 
$\gamma \colon \Phi_{E} \circ (- \otimes G) \to F \circ \Phi_E$. 
The pair $(\Phi_E, \gamma)$ is then a morphism of pairs $(\Dqc(S), - \otimes G) \to (\cC, F)$ as 
considered in \S\ref{HH_*-functorial}, and hence determines a morphism on Hochschild homology 
\begin{equation*}
G \simeq \HH_*(\Dqc(S)/S, G) \to \HH_*(\cC/S, F).  
\end{equation*} 
All together, this gives the required map~\eqref{ch-yoneda}.  

\subsection{Base change} 
Hochschild homology satisfies base change in the following sense. 
\begin{lemma}
\label{lemma-HH-bc} 
Let $\cC$ be a dualizable presentable $S$-linear category and 
let $F \colon \cC \to \cC$ be an endomorphism. 
Let $g \colon T \to S$ be a morphism of schemes. 
Let $F_T \colon \cC_T \to \cC_T$ be the base change of $F$ along $g$. 
Then there is a canonical equivalence 
\begin{equation*} 
g^*\HH_*(\cC/S, F) \simeq \HH_*(\cC_T/T, F_T) .
\end{equation*} 
\end{lemma}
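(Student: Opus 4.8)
The plan is to use the fact that Hochschild homology is defined as a categorical trace together with the compatibility of traces with symmetric monoidal functors. First I would observe that base change along $g \colon T \to S$ is implemented by the symmetric monoidal functor
\begin{equation*}
g^* = - \otimes_{\Dqc(S)} \Dqc(T) \colon \PrCat_S \to \PrCat_T ,
\end{equation*}
which sends $\cC$ to $\cC_T$ and sends the endomorphism $F$ to $F_T$. Since $\cC$ is dualizable in $\PrCat_S$, a symmetric monoidal functor preserves dualizability and carries the duality data $(\coev_\cC, \ev_\cC)$ to duality data for $\cC_T$ in $\PrCat_T$ (using Remark~\ref{remark-duality-data}, the image of the unique duality data is the unique duality data). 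Consequently $g^*$ carries the defining diagram of $\Tr(F)$ to the defining diagram of $\Tr(F_T)$, giving a canonical equivalence $g^* \Tr(F) \simeq \Tr(F_T)$ of functors — but one must be careful here, because $g^*$ applied to the map $\Tr(F) \colon \Dqc(S) \to \Dqc(S)$ is a map $\Dqc(T) \to \Dqc(T)$, namely $g^* \circ \Tr(F)$ interpreted through the identification $g^*\Dqc(S) \simeq \Dqc(T)$, so the genuinely useful statement is about what these functors do to structure sheaves.

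The key steps, in order, are: (1) set up the symmetric monoidal base change functor $g^*$ and record that it preserves dualizable objects and their duals; (2) apply $g^*$ to the composite defining $\Tr(F)$, using that $g^*$ is symmetric monoidal and hence commutes with $\otimes$ and takes $\coev_\cC, \ev_\cC, F$ to $\coev_{\cC_T}, \ev_{\cC_T}, F_T$, to obtain a natural equivalence of the two trace diagrams; (3) evaluate on $\cO_S$, using $g^*\cO_S \simeq \cO_T$ and the fact that the trace is a $\Dqc(S)$-linear (equivalently, $\Dqc(S)$-module) endofunctor of $\Dqc(S)$, so that
\begin{equation*}
g^* \bigl( \Tr(F)(\cO_S) \bigr) \simeq g^*\Tr(F)(g^*\cO_S) \simeq \Tr(F_T)(\cO_T) ,
\end{equation*}
which is exactly $\HH_*(\cC_T/T, F_T)$; (4) if desired, naturality in $F$ and functoriality in $g$ follow formally from the corresponding properties of the trace construction in an $(\infty,2)$-categorical setting, as in \cite{nonlinear-traces, toen-traces, higher-traces}. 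For the case of a small $S$-linear category the statement reduces to the presentable case via $\HH_*(\cC/S) = \HH_*(\Ind(\cC)/S)$, together with the equivalence $\Ind(\cC_T) \simeq \Ind(\cC)_T$ in $\PrCat_T^\omega$ coming from the symmetric monoidal equivalence $\Ind \colon \Cat_S \xrightarrow{\sim} \PrCat_S^\omega$.

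The main obstacle is step (3): making precise the interaction between "apply $g^*$" and "evaluate on $\cO_S$". Because $\Tr(F)$ is a priori only a functor $\Dqc(S) \to \Dqc(S)$, not obviously $\Dqc(S)$-linear on the nose, one needs to know that the trace of an endomorphism of a dualizable object in a symmetric monoidal $\infty$-category $\cA$ lands in $\Omega\cA = \Map_\cA(\one,\one)$ as an $E_1$- (indeed $E_\infty$-) algebra map and is natural for symmetric monoidal functors $\cA \to \cB$; this is precisely the content of the cited trace formalism, so I would invoke it rather than reprove it. An alternative, more hands-on route that avoids this is: identify $\HH_*(\cC/S)$ with the relative tensor $\cHom_{\cC \otimes \cC^\svee}(\Delta_*\cO, \Delta_*\cO)$-type expression (or, via Lemma~\ref{lemma-presentable-dualizable}, with $\ev_\cC \circ (\id \otimes \sigma) \circ \coev_\cC$ applied to $\cO_S$) and then check base change termwise using that all the functors in sight ($\otimes$, $\Ind$, the evaluation functor induced by $\cHom_S$) commute with $- \otimes_{\Dqc(S)} \Dqc(T)$; this ultimately appeals to the projection formula / \cite[Theorem 1.2]{bzfn}-style base change for $\cHom_S$, which is where the actual content sits. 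Either way the proof is short once the correct formal input is cited.
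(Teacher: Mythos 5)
Your proposal is correct and is essentially an expanded version of the paper's own (one-line) proof, which simply observes that the trace $\Tr(F)\colon \Dqc(S)\to\Dqc(S)$ commutes with base change by definition; the points you elaborate — that $g^*$ is symmetric monoidal, hence preserves duality data and the trace diagram, and that $\Tr(F)$ is a $\Dqc(S)$-linear endofunctor of the unit and so is determined by its value on $\cO_S$ — are exactly the content behind that remark.
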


\begin{proof}
It follows from the definitions that the trace $\Tr(F) \colon \Dqc(S) \to \Dqc(S)$ commutes with 
base change, which implies the result. 
\end{proof} 

For smooth proper categories in characteristic $0$, the individual Hochschild homology groups are vector bundles and satisfy base change. 

\begin{theorem}[\cite{kaledin1, kaledin2, akhil-degeneration}]
\label{theorem-degeneration} 
Let $\cC$ be a smooth proper $S$-linear category, where $S$ is a $\bQ$-scheme. 
Then $\HH_i(\cC/S)$ is a finite locally free sheaf on $S$ for any $i \in \bZ$. 
Further, if $g \colon T \to S$ is a morphism of schemes, then for any $i \in \bZ$ there is a canonical isomorphism 
\begin{equation*}
g^*\HH_i(\cC/S) \cong \HH_i(\cC_T/T). 
\end{equation*} 
\end{theorem}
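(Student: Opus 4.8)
The plan is to derive both statements from the noncommutative Hodge--de Rham degeneration theorem, with everything else being formal. Since finite local freeness of a sheaf and the base-change isomorphism are local on the relevant schemes, I would first reduce to $S = \Spec R$ (and, in the second assertion, $T = \Spec R'$) with $R$ a commutative $\bQ$-algebra. Because $\cC$ is smooth and proper over $R$ it is dualizable in $\Cat_R$, so by Remark~\ref{remark-HH-dualizable} the complex $\HH_*(\cC/R)$ lies in $\Dperf(R)$, and by Lemma~\ref{lemma-HH-bc} its formation already commutes with arbitrary base change at the level of the complex: $g^*\HH_*(\cC/R) \simeq \HH_*(\cC_T/T)$. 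Hence, once one knows that each $\HH_i(\cC/R)$ is finite locally free --- in particular flat --- the universal-coefficient spectral sequence for $\HH_*(\cC/R)\otimes_R R'$ collapses and produces the canonical isomorphism $g^*\HH_i(\cC/R) \cong \HH_i(\cC/R)\otimes_R R' \cong \HH_i(\cC_T/T)$. So the entire theorem reduces to the assertion that $\HH_i(\cC/R)$ is finite locally free for all $i$.

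For this the key input is Kaledin's degeneration theorem~\cite{kaledin1, kaledin2}, in the relative form proved by Mathew~\cite{akhil-degeneration}: for $\cC$ smooth and proper over a $\bQ$-algebra $R$ the $R$-linear Hodge filtration on periodic cyclic homology $\HP_*(\cC/R)$ splits, giving an equivalence $\HP_*(\cC/R) \simeq \HH_*(\cC/R)((u))$, and $\HP_0(\cC/R)$, $\HP_1(\cC/R)$ are finite locally free $R$-modules (the last point also belonging to this circle of ideas, as a manifestation of smoothness and properness of $\cC$). Granting this, I would take $\pi_0$ and $\pi_1$ in the displayed equivalence: since $\HH_*(\cC/R)$ is a \emph{bounded} complex, these read $\HP_0(\cC/R) \cong \bigoplus_{j}\HH_{2j}(\cC/R)$ and $\HP_1(\cC/R) \cong \bigoplus_{j}\HH_{2j+1}(\cC/R)$ as \emph{finite} direct sums induced by the splitting. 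A direct summand of a finite locally free module is finite locally free, so every $\HH_i(\cC/R)$ is finite locally free, completing the proof in view of the first paragraph. (Should the cited degeneration theorem only be available over Noetherian, or finite-type, $\bQ$-algebras, one first spreads $\cC$ out: it is the base change of a smooth proper $R_0$-linear category over a finitely generated $\bQ$-subalgebra $R_0 \subseteq R$, and finite local freeness over $R_0$ propagates to $R$ by Lemma~\ref{lemma-HH-bc}. And if one simply quotes the cited theorem in its ``Hochschild homology is a vector bundle'' formulation, the proof is just that citation together with the base-change remark of the first paragraph.)

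The hard part is entirely the degeneration input. Over a field the statement is vacuous, as every finite-dimensional vector space is free and base change is automatic, so all of the content resides in the relative degeneration over a general $\bQ$-algebra --- exactly the theorems of Kaledin and Mathew being invoked; the remainder, as above, is bookkeeping with dualizability, the $2$-periodic cyclic structure, and flatness.
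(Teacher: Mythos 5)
Your proposal is correct and follows essentially the same route as the paper: the first assertion is exactly the degeneration theorem of Kaledin and Mathew, and the second follows from the complex-level base change of Lemma~\ref{lemma-HH-bc} once local freeness is known. The paper's proof is just these two citations; you have merely filled in the (correct) bookkeeping of how degeneration plus finiteness of $\HP_0$ and $\HP_1$ yields local freeness of each $\HH_i$.
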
 

\begin{proof}
The first part follows from the degeneration of the noncommutative Hodge-to-de Rham spectral sequence proved by Kaledin \cite{kaledin1, kaledin2}; see also Mathew's recent proof \cite[Theorem 1.3]{akhil-degeneration}. 
The second claim then follows from Lemma~\ref{lemma-HH-bc}. 
\end{proof}

\subsection{Mukai pairing} 
In the smooth and proper case, Hochschild homology carries a canonical 
nondegenerate pairing, known as the Mukai pairing. 
This pairing has been studied from many points of view in the literature \cite{caldararu1, caldararu2, shklyarov, markarian}. 

\begin{lemma}
\label{lemma-mukai} 
Let $\cC$ be a smooth proper $S$-linear category. 
Then $\HH_*(\cC/S) \in \Dperf(S)$ and 
there is a canonical nondegenerate pairing $\HH_*(\cC/S) \otimes \HH_*(\cC/S) \to \cO_S$. 
\end{lemma}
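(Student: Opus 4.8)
The plan is to construct the Mukai pairing as the trace of the symmetric monoidal duality machinery, using the fact that a smooth proper $S$-linear category is self-dual with $\cC^{\svee} \simeq \cC^{\op}$. First I would recall (from \S\ref{section-dualizable}) that smoothness and properness of $\cC$ imply $\cC$ is dualizable in $\Cat_S$, and by Remark~\ref{remark-HH-dualizable} the trace $\Tr(\id_\cC)(\cO_S) = \HH_*(\cC/S)$ already lives in $\Dperf(S)$, which gives the first assertion. For the pairing, I would use the general principle that if $A$ is a dualizable object in a symmetric monoidal $\infty$-category with dual $A^{\svee}$, then $A^{\svee}$ is canonically isomorphic to the dual of $A$ in the categorical sense, and applying this to $A = \cC \in \Cat_S$ together with the equivalence $\cC^{\svee} \simeq \cC^{\op}$ yields an equivalence $\HH_*(\cC/S) \simeq \HH_*(\cC^{\op}/S)$.

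Concretely, the cleanest route is: the Hochschild homology complex $\HH_*(\cC/S)$ is the trace of the identity, which by standard ``trace is symmetric'' formalism satisfies $\Tr(\id_\cC) \simeq \Tr(\id_{\cC^{\svee}})$, hence $\HH_*(\cC/S) \simeq \HH_*(\cC^{\svee}/S) \simeq \HH_*(\cC^{\op}/S)$. I would then observe that $\HH_*(\cC/S)$ is naturally a module over the $\mathbb{E}_\infty$-ring $\HH_*(\Dperf(S)/S) = \cHom_S(\cO_S,\cO_S) = \cO_S$, so everything is $\cO_S$-linear, and the pairing is obtained by combining the identification $\HH_*(\cC/S) \simeq \HH_*(\cC^{\op}/S)$ with a multiplication map $\HH_*(\cC/S) \otimes_{\cO_S} \HH_*(\cC^{\op}/S) \to \HH_*(\Dperf(S)/S) \simeq \cO_S$. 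This multiplication comes from the lax symmetric monoidal structure on Hochschild homology (the functor $\HH_*(-/S)$ sends the tensor product $\cC \otimes_{\Dperf(S)} \cC^{\op}$ to $\HH_*(\cC/S) \otimes_{\cO_S} \HH_*(\cC^{\op}/S)$, and the evaluation functor $\ev_\cC \colon \cC^{\op} \otimes_{\Dperf(S)} \cC \to \Dperf(S)$ from Lemma~\ref{lemma-presentable-dualizable}, together with a cyclic permutation, induces the pairing on traces).

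Nondegeneracy is then formal: the evaluation and coevaluation morphisms $\ev_\cC$ and $\coev_\cC$ exhibiting $\cC^{\svee} \simeq \cC$ as dual to $\cC$ in $\Cat_S$ induce, after applying the monoidal functor $\HH_*(-/S)$ and using $\HH_*(\cC^{\svee}/S) \simeq \HH_*(\cC/S)$, morphisms $\cO_S \to \HH_*(\cC/S) \otimes_{\cO_S} \HH_*(\cC/S)$ and $\HH_*(\cC/S) \otimes_{\cO_S} \HH_*(\cC/S) \to \cO_S$ satisfying the triangle identities; since $\HH_*(\cC/S) \in \Dperf(S)$ is a dualizable object of $\Dqc(S)$, a pairing satisfying the triangle identities with respect to a (co)evaluation automatically identifies $\HH_*(\cC/S)$ with its $\cO_S$-linear dual, hence is nondegenerate. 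Alternatively, one can cite the literature \cite{caldararu1, caldararu2, shklyarov, markarian} for the nondegeneracy in the affine/DG setting and reduce to it by descent, since both $\HH_*(\cC/S)$ and the pairing are compatible with base change (Lemma~\ref{lemma-HH-bc}) and the claim is local on $S$.

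The main obstacle I anticipate is bookkeeping the coherences: making precise that the functoriality construction of \S\ref{HH_*-functorial} applied to the duality data $(\ev_\cC,\coev_\cC)$ really produces the pairing one expects, and that the resulting pairing agrees with the classical Mukai pairing under the comparison with the DG/variety picture. This is entirely a matter of diagram-chasing in a symmetric monoidal $(\infty,2)$-category — tracking the cyclic-rotation isomorphism $\cC \otimes \cC^{\svee} \simeq \cC^{\svee} \otimes \cC$ through the trace — and contains no genuine difficulty, but it is the step requiring care. Everything else (the $\cO_S$-linearity, the perfectness via Remark~\ref{remark-HH-dualizable}, reduction to the affine case) is routine given the formalism already set up in the excerpt.
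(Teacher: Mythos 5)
Your proposal is correct and takes essentially the same route as the paper: both exploit the fact that $\HH_*(-/S) \colon \Cat_S \to \Dqc(S)$ is symmetric monoidal (the paper cites Antieau--Vezzosi for this), so that the duality data of the dualizable object $\cC \in \Cat_S$ is carried to duality data for $\HH_*(\cC/S)$ in $\Dqc(S)$, which gives perfectness, and the image of $\ev_{\cC}$ combined with the canonical identification $\HH_*(\cC^{\svee}/S) \simeq \HH_*(\cC/S)$ gives the nondegenerate pairing. The only cosmetic difference is that you begin with the lax monoidal structure and upgrade to full monoidality for the nondegeneracy step, whereas the paper invokes full symmetric monoidality from the outset.
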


\begin{proof}
We sketch a short proof following \cite{antieau-vezzosi}. 
(We already observed $\HH_*(\cC/S) \in \Dperf(S)$ in 
Remark~\ref{remark-HH-dualizable}, but the following gives another proof.) 
The functor $\HH_*(-/S) \colon \Cat_S \to \Dqc(S)$ is symmetric monoidal; 
see for instance \cite[Proposition 2.1]{antieau-vezzosi} (where the result is stated for $S$ affine, 
but from which the general case follows by Lemma~\ref{lemma-HH-bc}). 
If $\cC$ is smooth and proper over $S$, then it is dualizable as an object of $\Cat_S$. 
Since $\HH_*(-/S)$ is symmetric monoidal, its value on $\cC$ is also dualizable as an object of $\Dqc(S)$, 
and hence belongs to $\Dperf(S)$. 
The evaluation morphism for $\HH_*(\cC/S)$ is obtained by applying the functor $\HH_*(-/S)$ to the 
evaluation morphism for $\cC$, and hence takes the form 
\begin{equation*} 
\HH_*(\cC^{\svee}/S) \otimes \HH_*(\cC/S) \to \cO_S . 
\end{equation*} 
But it follows from the definition of Hochschild homology that there is a canonical identification 
$\HH_*(\cC^{\svee}/S) \simeq \HH_*(\cC/S)$. This completes the proof. 
\end{proof} 

We will need a compatibility between the Mukai pairing, Serre duality, and Chern characters, which 
we formulate in the case where the base is a field. 

\begin{lemma}
\label{lemma-mukai-serre}
Let $\cC$ be a smooth proper $k$-linear category, where $k$ is a field. 
Then for $i \in \bZ$ there is an isomorphism 
\begin{equation*}
\HH_i(\cC/k) \cong \Ext^{-i}_{k}(\id_{\cC}, \rS_{\cC}) 
\end{equation*} 
where $\rS_{\cC}$ is the Serre functor for $\cC$ over $k$ and the $\Ext$ group is considered 
in the category of $k$-linear endofunctors of $\cC$. 
Moreover, for $E \in \cC$ if we denote by 
\begin{equation*}
\eta_E \colon \HH_i(\cC/k) \to \Ext^{-i}_k(E, \rS_{\cC}(E)) 
\end{equation*} 
the natural map arising from the above isomorphism, 
then there is a commutative diagram 
\begin{equation*}
\xymatrix{
\Ext^i_k(E, E) \ar[r]^{\ch_E} \ar[d]_{\cong} & \HH_{-i}(\cC/k) \ar[d]^{\cong} \\ 
\Ext^{-i}_k(E, \rS_{\cC}(E))^{\svee} \ar[r]^{ \, \eta_E^{\svee}} & \HH_i(\cC/k)^{\svee} 
}
\end{equation*} 
where the left vertical arrow is given by Serre duality and the right vertical arrow is give by the Mukai pairing. 
\end{lemma}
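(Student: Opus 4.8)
The plan is to reduce the statement to a purely formal assertion about traces in a symmetric monoidal setting, where both the Mukai pairing and Serre duality are instances of the same duality data. First I would establish the isomorphism $\HH_i(\cC/k) \cong \Ext^{-i}_k(\id_\cC, \rS_\cC)$. By Lemma~\ref{lemma-presentable-dualizable} (applied over $k$), $\Ind(\cC)$ is self-dual with $\Ind(\cC)^\svee \simeq \Ind(\cC^\op)$, and under the equivalence $\Ind(\cC) \otimes_{\Dqc(k)} \Ind(\cC)^\svee \simeq \Fun_k(\Ind(\cC), \Ind(\cC))$ the Hochschild homology $\HH_*(\cC/k) = \Tr(\id)(\cO_k)$ unwinds to $\ev_\cC \circ \coev_\cC$. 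The key input is that for a smooth proper category the Serre functor $\rS_\cC$ represents the ``twisted'' evaluation, i.e. $\cHom_k(E,\rS_\cC(F)) \simeq \cHom_k(F,E)^\svee$; dualizing the evaluation pairing thus exhibits $\HH_*(\cC/k)^\svee$ as computing natural transformations into $\rS_\cC$, and taking cohomology in degree $-i$ gives the claimed identification $\HH_i(\cC/k) \cong \Ext^{-i}_k(\id_\cC, \rS_\cC)$. The map $\eta_E$ is then just evaluation of a natural transformation on the object $E$, which by the construction of $\ch_{E,F}$ in \S\ref{section-chern-characters} is literally dual to the Chern character map $\ch_E \colon \Ext^i_k(E,E) \to \HH_{-i}(\cC/k)$ with $F = \rS_\cC$ (after the identification $\cHom_k(E,\rS_\cC(E)) \simeq \cHom_k(E,E)^\svee$).

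Next I would set up the commutative square. All four corners are cohomology groups of objects of $\Dperf(k)$, so it suffices to produce a commutative diagram of the underlying complexes before passing to $\rH^{-i}$. The bottom arrow $\eta_E^\svee$ is, by the previous paragraph, the $k$-linear dual of $\eta_E$, hence the $k$-linear dual of the evaluation-on-$E$ map; and by functoriality of the trace construction (the diagram~\eqref{tr-functorial} applied to the morphism of pairs $(\Phi_E, \gamma)\colon (\Dqc(k), -\otimes G) \to (\Ind(\cC), \rS_\cC)$) this dual is precisely $\ch_{E}$. The left vertical arrow is Serre duality $\Ext^i_k(E,E) \xrightarrow{\sim} \Ext^{-i}_k(E,\rS_\cC(E))^\svee$, which by definition of the Serre functor is the dual of composition-and-trace; the right vertical arrow is the Mukai pairing, which by the proof of Lemma~\ref{lemma-mukai} is obtained by applying the symmetric monoidal functor $\HH_*(-/k)$ to the evaluation morphism of $\cC$. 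The commutativity is then the statement that $\HH_*(-/k)$ carries the duality data of $\cC$ (which encodes Serre duality) to the duality data of $\HH_*(\cC/k)$ (which is the Mukai pairing) — compatibly with Chern characters, because the Chern character is itself defined via the functoriality of $\HH_*$.

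Concretely, I would unwind both sides to a single diagram of traces: the top-left composite is ``$\ch_E$ followed by the Mukai pairing duality'', which by definition of $\ch_E$ as $\Tr(\Phi_E,\gamma)$ and the fact that the Mukai evaluation is $\HH_*$ applied to $\ev_\cC$, equals the trace of a composite $2$-morphism built from $\ev_\cC$, $\coev_\cC$, and $\gamma$; the bottom-right composite is ``Serre duality followed by $\eta_E^\svee$'', which unwinds to the trace of the ``same'' $2$-morphism read in the dual order. Equality then follows from the snake-relation / zig-zag identities for the duality data of $\cC$, exactly as in the functoriality diagram~\eqref{tr-functorial}. I would phrase this using Remark~\ref{remark-duality-data}: since dualizability and its data are detected in the homotopy category and are unique there, it suffices to check the identity after passing to $\rh\Cat_k$, where it becomes a finite diagram chase with $2$-morphisms.

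The main obstacle I expect is \emph{bookkeeping of the identification} $\cHom_k(E,\rS_\cC(E)) \simeq \cHom_k(E,E)^\svee$ and making sure the various Serre-duality isomorphisms used (one to define $\rS_\cC$, one to define the left vertical map, one implicit in $\ch_{E,\rS_\cC}$) are the \emph{same} up to the sign/shift conventions, so that the square commutes on the nose rather than up to a sign. This is where a careless argument could produce a spurious $(-1)^i$. The cleanest way to control it is to never choose an explicit Serre-duality isomorphism, but to define everything in terms of the abstract duality data of $\cC$ (as in \S\ref{section-dualizable} and Remark~\ref{remark-duality-data}) and of $\HH_*(-/k)$ as a symmetric monoidal functor, and to invoke the zig-zag identities formally; the compatibility with Chern characters is then automatic because $\ch_E$ was \emph{constructed} from this functoriality in \S\ref{section-chern-characters}. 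The remaining content is then a routine (if lengthy) diagram chase, which I would only sketch.
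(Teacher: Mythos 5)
Your outline is essentially sound, but it takes a genuinely different route from the paper: the paper does not prove this lemma internally at all, instead citing \cite[Theorem 4.5 and Proposition 4.6]{kuznetsov-HH-sod} for the isomorphism $\HH_i(\cC/k)\cong\Ext^{-i}_k(\id_\cC,\rS_\cC)$ and \cite[Proposition 11]{caldararu1} for the commutativity of the square, both of which are proved in the Fourier--Mukai kernel setting for semiorthogonal components of $\Dperf$ of a variety. You instead argue intrinsically from the trace formalism of \S\ref{HH_*-functorial}: the evaluation $\ev_\cC$ is induced by $\cHom_k(-,-)$, its $k$-linear dual is governed by $\rS_\cC$, and both $\ch_E$ and $\eta_E$ arise from the same duality data, so the square reduces to zig-zag identities. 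This buys a self-contained argument valid for abstract smooth proper $k$-linear categories without translating to kernels, at the cost of the deferred diagram chase, which is exactly where the content of C\u{a}ld\u{a}raru's Proposition 11 lives -- your phrase ``literally dual'' is the statement to be proved, not an observation.

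Two bookkeeping points to fix in a full write-up. First, dualizing the evaluation description gives $\HH_*(\cC/k)^{\svee}\simeq\cHom_k(\id_\cC,\rS_\cC)$, whose degree $-i$ cohomology is $\HH_{-i}(\cC/k)^{\svee}\cong\Ext^{-i}_k(\id_\cC,\rS_\cC)$, \emph{not} $\HH_i(\cC/k)$; to land on the isomorphism as stated you must compose with the Mukai self-duality $\HH_i(\cC/k)\cong\HH_{-i}(\cC/k)^{\svee}$ of Lemma~\ref{lemma-mukai}. This is harmless (it is precisely the right vertical arrow of the square, so the diagram is the natural formulation of the compatibility), but as written your first paragraph conflates the two identifications. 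Second, $\Ind(\cC)$ is not self-dual; its dual is $\Ind(\cC^{\op})$, and one should keep track of the symmetry swap in the trace when matching the two composites, since this is exactly where a spurious sign could enter -- your proposed remedy of never choosing an explicit Serre-duality isomorphism and working only with the duality data is the right one.
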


\begin{proof}
This is well-known to the experts. 
We provide references to the literature where the statements are proved in a slightly different setup (e.g. $\cC$ assumed to be a semiorthogonal 
component in the derived category of a variety): 
the isomorphism $\HH_i(\cC/k) \cong \Ext^{-i}_{k}(\id_{\cC}, \rS_{\cC})$ follows 
from \cite[Theorem 4.5 and Proposition 4.6]{kuznetsov-HH-sod}, 
and the commutativity of the diagram follows from \cite[Proposition 11]{caldararu1}. 
\end{proof} 

\subsection{HKR isomorphism} 
The Hochschild--Kostant--Rosenberg (HKR) isomorphism identifies the Hochschild homology of the derived category of a scheme in terms of Hodge cohomology. 
This subject has been studied by many authors, see e.g. \cite{HKR, swan, yekutieli}; 
the form in which we state the result is a 
consequence of Yekutieli's work \cite{yekutieli}. 
For a morphism $X \to S$ and an endomorphism $F \colon \Dperf(X) \to \Dperf(X)$, we 
use the notation $\HH_*(X/S, F) = \HH_*(\Dperf(X)/S, F)$. 

\begin{theorem}[\cite{yekutieli}]
Let $f \colon X \to S$ be smooth morphism of relative dimension $n$, where $n!$ is invertible on $S$. 
Let $F \in \Dperf(S)$. 
Then there is an equivalence 
\begin{equation*}
\HH_*(X/S, F) \simeq \bigoplus_{p = 0}^n F \otimes f_* \Omega^{p}_{X/S}[p] . 
\end{equation*}
\end{theorem}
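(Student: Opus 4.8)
The plan is to reduce the assertion to the case of trivial coefficients $F = \cO_S$, where it becomes the classical relative HKR decomposition, which in the stated generality is exactly Yekutieli's theorem \cite{yekutieli}.

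\textbf{Step 1 (reduction to $F = \cO_S$).} By definition $\HH_*(X/S, F) = \HH_*(\Dqc(X)/S, -\otimes f^*F)$, and $\Dqc(X)$ is a dualizable object of $\PrCat_S$ by Lemma~\ref{lemma-presentable-dualizable}. Under the canonical equivalence $\Dqc(X) \otimes_{\Dqc(S)} \Dqc(S) \simeq \Dqc(X)$ the endofunctor $-\otimes f^*F$ is identified with $\id_{\Dqc(X)} \otimes_{\Dqc(S)} (-\otimes F)$, where $-\otimes F$ now denotes the corresponding endofunctor of the unit $\Dqc(S)$. By the multiplicativity of traces under $\otimes$ of dualizable objects (see e.g.\ \cite{toen-traces, higher-traces}), one gets $\Tr(-\otimes f^*F) \simeq \Tr(\id_{\Dqc(X)}) \otimes \Tr(-\otimes F)$ as endofunctors of $\Dqc(S)$; evaluating on $\cO_S$ and using $\Tr(-\otimes F)(\cO_S) \simeq F$ (cf.\ \S\ref{section-chern-characters}) together with $\Tr(\id_{\Dqc(X)})(\cO_S) = \HH_*(X/S)$, we obtain a canonical equivalence
\[ \HH_*(X/S, F) \simeq F \otimes_{\cO_S} \HH_*(X/S) . \]
(Alternatively this follows directly from the $\Dqc(S)$-linearity of $\coev$, $\ev$, and the symmetry in the trace formula, since $-\otimes f^*F$ is multiplication by $F$ for the $\Dqc(S)$-module structure on $\Dqc(X)$.) As $F \otimes_{\cO_S}\bigl(\bigoplus_{p=0}^n f_*\Omega^p_{X/S}[p]\bigr) \simeq \bigoplus_{p=0}^n F\otimes f_*\Omega^p_{X/S}[p]$, it now suffices to treat the case $F = \cO_S$.

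\textbf{Step 2 (the case $F = \cO_S$).} Here I would identify the trace-theoretic Hochschild homology $\HH_*(X/S) = \HH_*(\Dqc(X)/S)$ with the geometric one. By \cite{bzfn} there is an $S$-linear equivalence $\Dqc(X\times_S X) \simeq \Fun_S(\Dqc(X),\Dqc(X))$ sending $\id_{\Dqc(X)}$ to $\Delta_*\cO_X$, where $\Delta\colon X\to X\times_S X$ is the diagonal, and under which the evaluation functor sends a kernel $K$ to $f_*\Delta^*K$; unwinding the definition of $\Tr(\id_{\Dqc(X)})$ therefore gives $\HH_*(X/S) \simeq f_*\Delta^*\Delta_*\cO_X$ (compatibly with base change, by Lemma~\ref{lemma-HH-bc}). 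It remains to produce an equivalence $\Delta^*\Delta_*\cO_X \simeq \bigoplus_{p=0}^n \Omega^p_{X/S}[p]$ and apply $f_*$. This last statement is local on $X$ over $S$, where $\Delta^*\Delta_*\cO_X$ computes the Hochschild homology of a smooth $S$-algebra, and with $n!$ invertible on $S$ the desired decomposition is precisely the HKR theorem in the form established by \cite{yekutieli} (the factorials up to $n!$ entering through the antisymmetrization map). Pushing forward along $f$ completes the proof.

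\textbf{Expected main obstacle.} The genuinely nontrivial input --- the HKR decomposition for the diagonal under the sharp hypothesis that $n!$ be invertible --- is imported wholesale from \cite{yekutieli}, and Step 1 is purely formal. Thus the only delicate point is the bookkeeping in the first half of Step 2: reconciling the abstract trace-theoretic definition of Hochschild homology used throughout this paper with the concrete self-intersection-of-the-diagonal picture in which \cite{yekutieli} is phrased, and keeping track of the attendant compatibilities. I expect this identification, rather than either ingredient, to be the fussiest part of a full write-up.
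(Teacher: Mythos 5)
The paper does not actually prove this statement: it is imported wholesale from \cite{yekutieli}, with the theorem stated in a form the author asserts is "a consequence of Yekutieli's work." Your two-step reconstruction is correct and is exactly how one would derive the stated form from the literature: pulling the coefficient $F$ out of the trace by $\Dqc(S)$-linearity of the duality data is formal, and the identification of the trace-theoretic $\HH_*(X/S)$ with $f_*\Delta^*\Delta_*\cO_X$ via the kernel description of $\Fun_S(\Dqc(X),\Dqc(X))$ from \cite{bzfn} is the standard bridge to the geometric HKR decomposition, whose validity under the hypothesis that $n!$ is invertible is precisely the content of \cite{yekutieli}. You have correctly identified the only delicate point (the compatibility of the abstract and geometric descriptions of Hochschild homology), which the paper itself leaves implicit.
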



\section{Hochschild cohomology}
\label{section-HH-cohomology}

In this section we review the definition of Hochschild cohomology and various of its properties 
relevant to this paper.  
As in our discussion of Hochschild homology, this material is well-known but for convenience 
we often sketch the details. 

\begin{definition}
Let $\cC$ be a small or presentable $S$-linear category, 
and let $F \colon \cC \to \cC$ be an endomorphism. 
Then the \emph{Hochschild cohomology of $\cC$ over $S$ with coefficients in $F$} is the complex 
\begin{equation*}
\HH^*(\cC/S, F) = \cHom_S(\id_{\cC}, F) \in \Dqc(S), 
\end{equation*} 
i.e. the mapping object from $\id_{\cC}$ to $F$ considered as objects of the 
$S$-linear category $\Fun_{S}(\cC, \cC)$. 
The \emph{Hochschild cohomology of $\cC$ over $S$} is the complex 
\begin{equation*}
\HH^*(\cC/S) = \HH^*(\cC/S, \id_{\cC}) \in \Dqc(S). 
\end{equation*} 
As for Hochschild homology, if $F \in \Dqc(S)$ we use the notation 
\begin{equation*}
\HH^*(\cC/S, F) = \HH^*(\cC/S, (- \otimes F)). 
\end{equation*} 
Finally, for $i \in \bZ$ we set  
\begin{equation*}
\HH^i(\cC/S, F) = \rH^{i}(\HH_*(\cC/S, F))
\end{equation*} 
to be the degree $i$ cohomology sheaf of $\HH^*(\cC/S,F)$. 
\end{definition} 

\begin{remark}
If $\cC$ is a small $S$-linear category and $F \colon \cC \to \cC$ is an endofunctor, 
then the Hochschild cohomologies $\HH^*(\cC, F)$ and $\HH^*(\Ind(\cC), \Ind(F))$ are canonically equivalent; 
this follows from the fact that $\Ind \colon \Cat_S \to \PrCat^{\omega}_S$ is an equivalence. 
\end{remark} 

\subsection{Functoriality} 
\label{HH^*-functorial}
As recalled in \S\ref{HH_*-functorial}, Hochschild homology is functorial with respect to functors that admit a right adjoint. 
Hochschild cohomology, however, is only functorial with respect to functors which are also fully faithful. 

Namely, let $(\cC, F)$ be a small or presentable $S$-linear category, and let $F \colon \cC \to \cC$ be an $S$-linear endofunctor. 
Let $(\cD, G)$ be another such pair. 
We consider pairs 
$(\Phi, \delta)$ where $\Phi \colon \cC \to \cD$ is a morphism that admits a right adjoint morphism $\Phi^! \colon \cD \to \cD$ (so $\Phi^!$ is required to be cocontinuous in case $\cC$ and $\cD$ are presentable), and $\delta \colon G \circ \Phi \to \Phi \circ F$ is a natural transformation of functors; 
in other words, we consider a (not necessarily commutative) diagram 
\begin{equation*}
\xymatrix{
\cC \ar[r]^{F} \ar[d]_{\Phi} & \cC \ar[d]^{\Phi}   \\ 
\cD \ar[r]_{G} \ar@{=>}[ur]^{\delta} & \cD 
}   
\end{equation*}
To distinguish from the notion of a morphism $(\cC, F) \to (\cD, G)$ introduced in \S\ref{HH_*-functorial}, we will call such a pair $(\Phi, \delta)$ a \emph{co-morphism} from $(\cC, F)$ to $(\cD, G)$. We say that $(\Phi, \delta)$ is fully faithful if $\Phi$ is fully faithful. 

Now assume that $(\Phi, \delta) \colon (\cC, F) \to (\cD, G)$ is a fully faithful co-morphism. 
In this setup, we will construct a morphism 
\begin{equation*}
\HH^*(\Phi, \delta) \colon \HH^*(\cD, G) \to \HH^*(\cC, F) 
\end{equation*} 
in $\Dqc(S)$. By the definition of Hochschild cohomology and Yoneda, 
it suffices to construct functorially in $A \in \Dqc(S)$ a map 
\begin{equation*}
\Map_{\Fun_{S}(\cD, \cD)}((- \otimes A), G) \to
\Map_{\Fun_{S}(\cC, \cC)}((- \otimes A), F) . 
\end{equation*} 
For this, we send $\alpha \colon (- \otimes A) \to G$ on the left side to the 
morphism $(- \otimes A) \to F$ given by the composition 
\begin{equation*}
(- \otimes A) \xrightarrow{ \sim } 
\Phi^! \circ \Phi \circ (- \otimes A) \xrightarrow{\sim} 
\Phi^! \circ (- \otimes A) \circ \Phi \xrightarrow{\Phi^! \alpha \Phi} 
\Phi^! \circ G \circ \Phi  \xrightarrow{\Phi^! \delta}
\Phi^! \circ \Phi \circ F \xrightarrow{\sim}  
F , 
\end{equation*} 
where the first and last equivalences come from fully faithfulness of $\Phi$ and 
the second equivalence from the $S$-linearity of $\Phi$. 

\subsection{Base change} 
Like Hochschild homology, Hochschild cohomology satisfies base change. 
This will not be needed in the paper, but we include it for completeness. 

\begin{lemma}
\label{lemma-HcoH-bc} 
Let $\cC$ be a dualizable presentable $S$-linear category and 
let $F \colon \cC \to \cC$ be an endomorphism. 
Let $g \colon T \to S$ be a morphism of schemes. 
Let $F_T \colon \cC_T \to \cC_T$ be the base change of $F$ along $g$. 
Then there is a canonical equivalence 
\begin{equation*} 
g^*\HH^*(\cC/S, F) \simeq \HH^*(\cC_T/T, F_T) . 
\end{equation*} 
\end{lemma}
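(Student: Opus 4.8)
The plan is to mimic the proof of base change for Hochschild homology (Lemma~\ref{lemma-HH-bc}), exploiting that Hochschild cohomology is defined as a mapping object in the functor category $\Fun_S(\cC,\cC)$, and that for a dualizable presentable linear category this functor category is itself a base-change-compatible construction. First I would recall the identification $\Fun_S(\cC,\cC) \simeq \cC \otimes_{\Dqc(S)} \cC^{\svee}$ from Lemma~\ref{lemma-presentable-dualizable}, which holds because $\cC$ is dualizable. Under this identification the endofunctor $F$ corresponds to an object of $\cC \otimes_{\Dqc(S)} \cC^{\svee}$, the identity functor $\id_\cC$ corresponds to $\coev_\cC(\cO_S)$, and $\HH^*(\cC/S,F) = \cHom_S(\id_\cC, F)$ is the mapping complex between these two objects.

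The key step is then to check two compatibilities with base change along $g\colon T \to S$. The first: the tensor product of presentable linear categories commutes with base change, so $(\cC \otimes_{\Dqc(S)} \cC^{\svee})_T \simeq \cC_T \otimes_{\Dqc(T)} (\cC^{\svee})_T$, and moreover $(\cC^{\svee})_T \simeq (\cC_T)^{\svee}$ since dualizable objects pull back to dualizable objects with the duality data preserved (Remark~\ref{remark-duality-data}); under these equivalences the base changes of $\id_\cC$ and $F$ go to $\id_{\cC_T}$ and $F_T$, by compatibility of the coevaluation with base change. The second: for a morphism of schemes $g\colon T\to S$ and a dualizable (hence compactly generated, with $g^*$ preserving compact objects appropriately) presentable $S$-linear category $\cD$ with objects $E,E'$, one has $g^*\cHom_S(E,E') \simeq \cHom_T(g^*E, g^*E')$. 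Applying this with $\cD = \Fun_S(\cC,\cC)$, $E = \id_\cC$, $E' = F$ gives exactly $g^*\HH^*(\cC/S,F) \simeq \HH^*(\cC_T/T, F_T)$.

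The main obstacle I anticipate is the second compatibility: that the internal mapping complex $\cHom_S(-,-)$ in $\Fun_S(\cC,\cC)$ commutes with the pullback $g^*$. For a general presentable linear category and general objects this is \emph{false}—$\cHom_S$ only commutes with $g^*$ when the first argument is ``perfect'' in a suitable sense (compact, or more precisely dualizable in the module category). Here the relevant fact is that $\id_\cC$, corresponding to $\coev_\cC(\cO_S)$, is a compact object of $\Fun_S(\cC,\cC)$ precisely because $\cC$ is smooth; but $\cC$ is only assumed dualizable, not smooth. So the honest argument must instead use the trace description directly, exactly as in Lemma~\ref{lemma-HH-bc}: one shows that the functor $\cHom_S(\id_\cC, -)\colon \Fun_S(\cC,\cC) \to \Dqc(S)$ (equivalently, the appropriate ``integration against the identity'' functor) commutes with base change because it is built from the duality data of $\cC$ by the same kind of diagram as~\eqref{tr-functorial}, all of whose constituents (coevaluation, evaluation, the relevant adjoints) are stable under $g^*$. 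Concretely I would show that the composite
\begin{equation*}
\Fun_S(\cC,\cC) \simeq \cC \otimes_{\Dqc(S)} \cC^{\svee} \xrightarrow{\ \ev_\cC\ } \Dqc(S)
\end{equation*}
is naturally identified with $\cHom_S(\id_\cC, -)$ up to a shift encoded by the Serre-type structure, and that $\ev_\cC$ commutes with base change. Then base change for Hochschild cohomology drops out formally, completing the proof.
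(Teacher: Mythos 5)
Your first two paragraphs reproduce the paper's argument: dualizability gives $\Fun_S(\cC,\cC)\simeq \cC^{\svee}\otimes_{\Dqc(S)}\cC$ (Lemma~\ref{lemma-presentable-dualizable}), base change preserves the duality data so that $\Fun_S(\cC,\cC)\otimes_{\Dqc(S)}\Dqc(T)\simeq \Fun_T(\cC_T,\cC_T)$, and the conclusion then follows from base change for mapping objects, for which the paper cites \cite[Lemma 2.10]{NCHPD}. Your worry that $\cHom_S(-,-)$ need not commute with $g^*$ when the source object is not compact is a reasonable one to raise --- that is exactly the point where the citation is being used, and $\id_{\cC}$ is compact in $\Fun_S(\cC,\cC)$ only when $\cC$ is smooth --- but the paper takes no detour here: it simply applies mapping-object base change at this step.

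The genuine problem is with your proposed repair. The composite $\Fun_S(\cC,\cC)\simeq\cC\otimes_{\Dqc(S)}\cC^{\svee}\xrightarrow{\ \ev_{\cC}\ }\Dqc(S)$ is, unwinding the definition of the trace, the functor $F\mapsto \HH_*(\cC/S,F)$: it computes Hochschild \emph{homology}, not the corepresented functor $\cHom_S(\id_{\cC},-)=\HH^*(\cC/S,-)$. These are not identified up to a shift: for $\cC=\Dperf(X)$ with $X$ smooth and proper the two HKR theorems give $\HH_*\simeq\bigoplus_p \rH^{\bullet}(X,\Omega^p_X)[p]$ versus $\HH^*\simeq\bigoplus_t \rH^{\bullet}(X,\wedge^t\rT_X)[-t]$, which already have different total dimensions in general. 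The correct relation is the one in Lemma~\ref{lemma-mukai-serre}: homology is cohomology with coefficients in the Serre functor, $\HH_i(\cC/k)\cong\Ext^{-i}_k(\id_{\cC},\rS_{\cC})$, and undoing that twist requires an invertible Serre functor, i.e.\ $\cC$ smooth and proper, which is not among the hypotheses. So your final step would at best reprove base change for $\HH_*$ --- which is Lemma~\ref{lemma-HH-bc}, already established --- and does not yield the asserted statement for $\HH^*$. To complete the proof you should drop the detour and, after identifying the functor categories as in your first paragraph, invoke base change for mapping objects directly, as the paper does.
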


\begin{proof}
We claim that the natural functor 
\begin{equation*}
\Fun_{S}(\cC, \cC) \otimes_{\Dqc(S)} \Dqc(T) \to \Fun_T(\cC_T, \cC_T) 
\end{equation*}  
is an equivalence of $T$-linear categories. 
From this, the lemma follows from the definition of Hochschild cohomology and 
base change for mapping objects in linear categories (see \cite[Lemma 2.10]{NCHPD}). 

To prove the claim, note that by dualizability of $\cC$ we have an equivalence 
\begin{equation*}
\Fun_{S}(\cC, \cC) \simeq \cC^{\svee} \otimes_{\Dqc(S)} \cC . 
\end{equation*} 
Base change along $T \to S$ preserves dualizability of $\cC$ and $(\cC_T)^{\svee} \simeq (\cC^{\svee})_T$, so we similarly have an equivalence 
\begin{equation*}
\Fun_{T}(\cC_T, \cC_T) \simeq (\cC^{\svee})_T \otimes_{\Dqc(T)} \cC_T . 
\end{equation*} 
Since these descriptions of the functor categories are compatible with base change along $T$, 
the claim follows. 
\end{proof}

\subsection{Action on homology} 
Hochschild cohomology acts on Hochschild homology. 
More generally, suppose $\cC$ is a dualizable presentable $S$-linear category, and $F, G \colon \cC \to \cC$ are endomorphisms. 
Then there is an action functor 
\begin{equation*}
\cHom_S(F, G) \otimes \HH_*(\cC/S, F) \to \HH_*(\cC/S, G) 
\end{equation*}
where the first term is the mapping object from $F$ to $G$ in $\Fun_S(\cC,\cC)$. 
This boils down to assigning to any natural transformation $\gamma \colon F \to G$ a morphism $\HH_*(\cC/S, F) \to \HH_*(\cC/S, G)$; 
since $(\id_{\cC}, \gamma) \colon (\cC, F) \to (\cC, G)$ is a morphism of pairs in the sense of~\S\ref{HH_*-functorial}, we can simply take $\HH_*(\id_{\cC}, \gamma)$. 

Note that as a particular case, we have an action 
\begin{equation*}
\HH^*(\cC/S, F) \otimes \HH_*(\cC/S) \to \HH_*(\cC/S, F), 
\end{equation*} 
and hence for any $i, j \in \bZ$ an action 
\begin{equation*}
\HH^i(\cC/S, F) \otimes \HH_{j}(\cC/S) \to \HH_{j-i}(\cC/S, F). 
\end{equation*}
For any $E \in \cC$, there is also an evident action 
\begin{equation*}
\HH^*(\cC/S, F) \otimes \cHom_S(E,E) \to \cHom_S(E,F(E)). 
\end{equation*}

\begin{lemma}
\label{lemma-ch-functoriality}
For $E \in \cC$ the diagram 
\begin{equation*}
\xymatrix{
\HH^*(\cC/S, F) \otimes \cHom_S(E,E)   \ar[r] \ar[d]_{\id \otimes \ch_{E}} & \cHom_S(E,F(E)) \ar[d]^{\ch_{E,F}} \\ 
\HH^*(\cC/S, F) \otimes \HH_*(\cC/S) \ar[r] & \HH_*(\cC/S, F)
}
\end{equation*} 
commutes. 
\end{lemma}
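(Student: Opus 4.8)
The plan is to unwind all four maps in the square directly from their definitions via Yoneda and the functoriality of Hochschild (co)homology, reducing the commutativity to a compatibility of $2$-morphisms in the $(\infty,2)$-categorical trace formalism recalled in \S\ref{HH_*-functorial}. Recall that by Yoneda it suffices, for each $G \in \Dqc(S)$, to check that the two composites
\begin{equation*}
\Map_{\Dqc(S)}\bigl(G,\, \HH^*(\cC/S,F) \otimes \cHom_S(E,E)\bigr) \longrightarrow \Map_{\Dqc(S)}\bigl(G,\, \HH_*(\cC/S,F)\bigr)
\end{equation*}
agree. Moreover, since the tensor factor $\HH^*(\cC/S,F)$ is fixed throughout, it suffices to fix a natural transformation $\alpha \colon \id_\cC \to F$ (equivalently, after a shift, a class in $\HH^*$) and check that the induced square of maps out of $\cHom_S(E,E)$ commutes; here the relevant maps are: along the top, the action $\alpha_E \colon \cHom_S(E,E) \to \cHom_S(E,F(E))$ post-composing with $\alpha_E$, followed by $\ch_{E,F}$; along the bottom, $\ch_E = \ch_{E,\id}$ followed by the action of $\alpha$ on $\HH_*(\cC/S)$, which by definition of the action in \S4.3 is $\HH_*(\id_\cC, \alpha) \colon \HH_*(\cC/S) \to \HH_*(\cC/S,F)$.

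With this reduction in hand, the key step is to recognize both composites as instances of the functoriality construction $\HH_*(\Phi,\gamma)$ applied to a single morphism of pairs. Concretely, fix a compact object $E \in \cC$ and let $\Phi_E \colon \Dqc(S) \to \cC$ be the functor with $\Phi_E(\cO_S) = E$, whose right adjoint $\Phi_E^! = \cHom_S(E,-)$ is cocontinuous by compactness of $E$; this is exactly the data underlying the Chern character $\ch_{E,-}$ in \S\ref{section-chern-characters}. Given $\alpha \colon \id_\cC \to F$ and an element $f \in \Map_\cC(E \otimes G, E)$ (corresponding to a map $G \to \cHom_S(E,E)$), one obtains two natural transformations $\Phi_E \circ (- \otimes G) \to F \circ \Phi_E$: first, $\gamma_1 = \alpha_{\Phi_E} \circ (\Phi_E \ast f)$, i.e. apply $f$ then $\alpha$; second, the whiskering of the composite $(\alpha \ast \id)$ construction. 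The content of the lemma is that these two pairs $(\Phi_E, \gamma_1)$ and $(\Phi_E, \gamma_2)$ coincide as morphisms of pairs $(\Dqc(S), - \otimes G) \to (\cC, F)$, which is a formal consequence of the interchange law for horizontal and vertical composition of $2$-morphisms in $\Cat_S$: $\alpha_{\Phi_E} \circ (\Phi_E \ast f) = (\Phi_E \ast \id_{E \otimes -}) $ composed appropriately with $\alpha$ on the nose. Then functoriality of $\Tr$ (the pasting diagram~\eqref{tr-functorial}) applied to these equal pairs yields equal maps $G \simeq \HH_*(\Dqc(S)/S, -\otimes G) \to \HH_*(\cC/S, F)$, which upon tracing through the identifications is precisely the commutativity of the square.

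The main obstacle I expect is bookkeeping: making sure that the "action of $\HH^*$ on $\HH_*$" defined via $\HH_*(\id_\cC, \gamma)$ in \S4.3 and the Chern character $\ch_{E,F}$ defined via the morphism of pairs $(\Phi_E, \gamma)$ in \S\ref{section-chern-characters} are being composed in a way that corresponds, on the level of pasting diagrams, to stacking two copies of the square~\eqref{tr-functorial} — one for the morphism $(\Phi_E, \delta)$ and one for the morphism $(\id_\cC, \alpha)$ — and that the composite pasting diagram is the same whether one composes the underlying functors/natural transformations first and then takes traces, or takes traces first and then composes. This "$\Tr$ is functorial for composition of morphisms of pairs" statement is the genuine input; it holds in the abstract framework of traces in symmetric monoidal $(\infty,2)$-categories (e.g. \cite{higher-traces, toen-traces}), and given it the lemma is immediate. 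In the write-up I would either cite this compatibility or, in the spirit of the rest of \S\ref{section-HH-homology} and \S\ref{section-HH-cohomology}, simply sketch it by noting that both sides compute the trace of the same $2$-endomorphism of $\id_{\Dqc(S)}$ obtained by pasting $\coev$, the action of $\alpha \circ f$, and $\ev$.
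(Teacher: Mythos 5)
Your proposal is correct and follows essentially the same route as the paper: the paper's proof simply cites the functoriality of traces (composition of morphisms of pairs is compatible with $\Tr$, per \cite{nonlinear-traces} or \cite{categorical-atiyah-bott}) and unwinds the definitions, which is exactly the pasting-diagram argument you describe. Your explicit identification of both composites as $\Tr$ applied to the composite morphism of pairs $(\Dqc(S), -\otimes G) \to (\cC, \id_{\cC}) \to (\cC, F)$ is the content the paper leaves implicit.
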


\begin{proof}
Using the functoriality of traces (see \cite[Proposition 3.21]{nonlinear-traces} or \cite[Proposition 1.2.11]{categorical-atiyah-bott}), this follows by unwinding the definitions. 
\end{proof}

\subsection{HKR isomorphism} 
The HKR isomorphism for Hochschild cohomology identifies this group in the case of the derived category of a scheme with polyvector field cohomology. 
Like the HKR isomorphism for Hochschild homology, the following form of this result can be deduced from \cite{yekutieli}. 
For a morphism $X \to S$ and an endomorphism 
$F \colon \Dperf(X) \to \Dperf(X)$, we use the notation $\HH^*(X/S, F) = \HH^*(\Dperf(X)/S, F)$.

\begin{theorem}
\label{theorem-HKR-cohomology}
Let $f \colon X \to S$ be smooth morphism of relative dimension $n$, where $n!$ is invertible on $S$. 
Let $F \in \Dperf(S)$. 
Then there is an equivalence 
\begin{equation*}
\HH^*(X/S, F) \simeq \bigoplus_{t = 0}^n F \otimes f_*(\wedge^t \rT_{X/S})[-t] . 
\end{equation*}
\end{theorem}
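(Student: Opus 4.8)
The plan is to deduce the statement from Yekutieli's HKR isomorphism for Hochschild homology together with a general duality between Hochschild homology and Hochschild cohomology in the smooth proper setting. The key observation is that, while the paper only explicitly quoted the homology version of Yekutieli's theorem, the cohomology version is of the same nature: both compute the derived functor appearing in the appropriate trace/mapping-object construction in terms of the decomposition of the Hochschild complex of a smooth morphism. So the first thing I would do is record that $\Dperf(X)$ is smooth and proper over $S$ once $f$ is smooth of relative dimension $n$ with $n!$ invertible — well, smoothness of $\Dperf(X)/S$ holds under the smoothness hypothesis on $f$, though we also want the properness or at least dualizability to make sense of the pairing; in any case the construction of $\HH^*(\Dperf(X)/S, F) = \cHom_S(\id_{\Dperf(X)}, -\otimes F)$ in $\Fun_S(\Dperf(X),\Dperf(X))$ is available without properness.

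Next I would use the standard identification of the functor category with the derived category of the fiber product: $\Fun_S(\Dperf(X),\Dperf(X)) \simeq \Dperf(X\times_S X)$ (via Fourier--Mukai kernels, using Example~\ref{example-Dperf-X} and Lemma~\ref{lemma-presentable-dualizable} at the Ind-level), under which $\id_{\Dperf(X)}$ corresponds to $\Delta_*\cO_X$ for the relative diagonal $\Delta\colon X \to X\times_S X$, and $-\otimes F$ corresponds to $\Delta_*(f^*F)$. Then $\HH^*(X/S, F)$ is computed as $p_* \cHom_{X\times_S X}(\Delta_*\cO_X, \Delta_*f^*F)$ where $p$ is the structure map to $S$; by adjunction and the self-intersection formula for the regular embedding $\Delta$ — whose conormal bundle is $\Omega^1_{X/S}$, so $\Delta^!\Delta_* \cO_X \simeq \bigoplus_t \wedge^t \rT_{X/S}[-t]$ after the HKR-type splitting, which is where the hypothesis that $n!$ is invertible enters — one gets
\begin{equation*}
\HH^*(X/S, F) \simeq f_*\cHom_X(\cO_X, f^*F \otimes \textstyle\bigoplus_t \wedge^t \rT_{X/S}[-t]) \simeq \bigoplus_{t=0}^n F \otimes f_*(\wedge^t \rT_{X/S})[-t],
\end{equation*}
using the projection formula for the last step. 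Alternatively — and this is probably the cleaner route given what is already in the paper — I would derive the cohomology statement formally from the homology statement: in the smooth proper case there is a canonical equivalence $\HH^*(\cC/S, F) \simeq \cHom_S(\HH_*(\cC/S, \rS_{\cC/S}^{-1}\circ F), \cO_S)$-type duality, or more directly one can compute $\HH^*(X/S,F)$ by the same Yekutieli machinery that gives the homology version, simply replacing $\Omega^p_{X/S}$ by $\wedge^t \rT_{X/S}$ and reversing the degree shift.

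The main obstacle I expect is keeping the two HKR statements genuinely independent of the Calabi--Yau/orientation data: the homology HKR produces $\bigoplus_p f_*\Omega^p_{X/S}[p]$ with \emph{positive} shifts, whereas the cohomology HKR produces $\bigoplus_t f_*(\wedge^t\rT_{X/S})[-t]$ with \emph{negative} shifts, and the passage between $\Omega^p$ and $\wedge^t\rT = (\Omega^1)^{\svee,\wedge t}$ involves $\omega_{X/S}$ when one dualizes — so one must be careful to invoke the polyvector-field form of Yekutieli's theorem directly rather than trying to dualize the differential-form version (which would only work after twisting by the relative canonical bundle and would not match the stated formula on the nose). Concretely, the cleanest proof just cites the polyvector-field HKR computation (e.g.\ via the self-intersection of the diagonal as above, or directly from \cite{yekutieli}), and then the only remaining routine step is the projection formula to pull the constant coefficient $F$ out of $f_*$. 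I would present this as: reduce to the kernel description of $\Fun_S(\Dperf(X),\Dperf(X))$, apply the HKR decomposition of $\Delta^!\Delta_*\cO_X$, and conclude by the projection formula.
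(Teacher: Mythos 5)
Your proposal is correct, but it is worth noting that the paper does not actually prove this statement: it simply records it as a form of the polyvector--field HKR theorem deducible from Yekutieli's work, exactly parallel to the homology version quoted earlier. Your first route is therefore a genuinely different (and more self-contained) argument: identify $\Fun_S(\Dperf(X),\Dperf(X))$ with kernels on $X\times_S X$, so that $\HH^*(X/S,F)\simeq f_*\bigl(\Delta^!\Delta_*f^*F\bigr)$, then use that $\Delta$ is a regular immersion with $\Delta^!(-)\simeq \Delta^*(-)\otimes\wedge^n\rT_{X/S}[-n]$ together with the HKR splitting $\Delta^*\Delta_*\cO_X\simeq\bigoplus_p\Omega^p_{X/S}[p]$ (this is where $n!$ invertible enters) and the identification $\Omega^p_{X/S}\otimes\wedge^n\rT_{X/S}\cong\wedge^{n-p}\rT_{X/S}$, which yields $\Delta^!\Delta_*\cO_X\simeq\bigoplus_t\wedge^t\rT_{X/S}[-t]$; the projection formula then finishes. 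This buys an actual proof where the paper offers a citation, at the cost of having to justify the kernel identification and the compatibility $\Delta^!\Delta_*G\simeq G\otimes\Delta^!\Delta_*\cO_X$. Two small remarks: properness of $f$ is neither assumed nor needed, since $\HH^*(\cC/S,F)=\cHom_S(\id_\cC,-\otimes F)$ makes sense without it, so your hedging there can be dropped; and your warning about the second route is well taken --- naively dualizing the homology HKR produces an unwanted twist by $\omega_{X/S}$, so one must use the polyvector-field form directly, which is precisely what the diagonal computation (or the citation to Yekutieli) provides.
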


\subsection{Deformation theory} 
Let $0 \to I \to A' \to A \to 0$ be a square-zero extension of rings, and let $X \to \Spec(A)$ be a smooth morphism of schemes. 
A deformation of $X$ over $A'$ is a smooth scheme $X'$ over $A'$ equipped with an isomorphism $X'_{A} \cong X$. 
Recall that, provided one exists, the set of isomorphism classes of such deformations 
form a torsor under $\rH^1(X, \rT_{X/\Spec(A)} \otimes I)$ (where we abusively write $I$ for the pullback of $I$ to $X$). 
If $A' \to A$ is a trivial square-zero extension, i.e. admits a section $A \to A'$, 
then there is a trivial deformation $X_{A'}$ obtained by base change along the section, 
so there is a canonical identification of the set of deformations of $X$ over $A'$ with $\rH^1(X, \rT_{X/\Spec(A)} \otimes I)$ taking the trivial deformation to $0$;
in this case, for a deformation $X' \to \Spec(A')$ we write 
\begin{equation*}
\kappa(X') \in \rH^1(X, \rT_{X/\Spec(A)} \otimes I)
\end{equation*} 
for the corresponding element, called the \emph{Kodaira-Spencer class}. 

We will need a generalization of the Kodaira--Spencer class to the setting of categories. 
Note that by Theorem~\ref{theorem-HKR-cohomology}, 
if $\dim(X/A)!$ is invertible on $A$ (where $\dim(X/A)$ denotes the relative dimension of $X \to \Spec(A)$), 
then we have an isomorphism 
\begin{equation*}
\HH^2(X/A, I) \cong \rH^0(X, \wedge^2\rT_{X/\Spec(A)} \otimes I) 
\oplus \rH^1(X, \rT_{X/\Spec(A)} \otimes I) 
\oplus \rH^2(X, I) , 
\end{equation*} 
and in particular a natural inclusion 
\begin{equation}
\label{H1-HH2}
\rH^1(X, \rT_{X/\Spec(A)} \otimes I) \hookrightarrow \HH^2(X/A, I). 
\end{equation} 
This suggests that when we replace $X$ by an $A$-linear category, the 
role of the cohomology of $\rT_{X/\Spec(A)}$ in deformation theory should be 
replaced by Hochschild cohomology. 

If $A' \to A$ is a square-zero extension and $\cC$ is an $A$-linear category, 
then a \emph{deformation of $\cC$ over $A'$} is an $A'$-linear category $\cC'$ equipped with 
an equivalence $\cC'_{A} \simeq \cC$. 
If $\Phi \colon \cC \to \cD$ is a morphism of $A$-linear categories, then a 
\emph{deformation of $\Phi$ over $A'$} is a morphism $\Phi' \colon \cC' \to \cD'$ where $\cC'$ and $\cD'$ are deformations of $\cC$ and $\cD$ over $A'$ and the base change $\Phi'_{A}$ is equipped with an equivalence $\Phi'_{A} \simeq \Phi$. 

\begin{lemma}
\label{lemma-KS} 
Let $0 \to I \to A' \to A \to 0$ be a trivial square-zero extension of rings, 
and let $\cC$ be an $A$-linear category. 
Then for any deformation $\cC'$ of $\cC$ over $A'$, there is an associated Kodaira--Spencer class 
\begin{equation*}
\kappa(\cC') \in \HH^2(\cC/A, I) 
\end{equation*} 
with the following properties: 
\begin{enumerate}
\item 
\label{kappa1}
Let $\Phi \colon \cC \to \cD$ be a fully faithful morphism of $A$-linear categories which admits a right adjoint. 
Let $\Phi' \colon \cC' \to \cD'$ be a deformation of $\Phi$ over $A'$. 
Then the map 
\begin{equation*}
\HH^2(\cD/A, I) \to \HH^2(\cC/A, I)
\end{equation*} 
induced by $\Phi$ (see \S\ref{HH^*-functorial}) takes $\kappa(\cD')$ to $\kappa(\cC')$. 

\item
\label{kappa2} 
Let $X \to \Spec(A)$ be a smooth morphism of schemes 
with $\dim(X/A)!$ invertible on $A$. 
Let $X' \to \Spec(A')$ be a deformation of $X$ over $A'$. 
Then the inclusion~\eqref{H1-HH2} takes $\kappa(X')$ to $\kappa(\Dperf(X'))$. 
\end{enumerate}
\end{lemma}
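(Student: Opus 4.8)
The plan is to construct the Kodaira--Spencer class $\kappa(\cC')$ directly from the deformation data, mimicking the classical picture in which a deformation over a trivial square-zero extension $A' = A \oplus I$ (as a non-unital ring, $I$ being a square-zero ideal) is classified by a cocycle valued in $\HH^2$. Concretely, given a deformation $\cC'$ of $\cC$ over $A'$, one can form $\cC'_A = \cC' \otimes_{\Dperf(A')} \Dperf(A) \simeq \cC$ using the section $A \to A'$, and also the ``other'' pullback along $A' \to A$; the discrepancy between these two identifications, or equivalently the obstruction to extending $\id_{\cC}$ to an $A'$-linear self-equivalence of $\cC'$, is measured by an element of $\HH^2(\cC/A, I)$. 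I would phrase this via the deformation theory of the identity functor / the diagonal bimodule: the object $\id_{\cC'} \in \Fun_{A'}(\cC', \cC')$ restricts to $\id_{\cC}$, and the two natural $A'$-module structures (coming from $A' \to A$ and the section $A \to A'$) give two deformations of $\id_{\cC}$ over $A'$ whose difference is a class in $\Ext^1$ of $\id_{\cC}$ against $\id_{\cC}\otimes I[1]$ in $\Fun_A(\cC,\cC)$, i.e. in $\HH^2(\cC/A, I)$. Alternatively, and perhaps more cleanly, I would cite the established cotangent-complex/deformation-theory machinery for linear categories (the deformation theory of $\cC$ as an $A$-linear category is controlled by $\HH^2(\cC/A,-)$ — this is in Lurie's \cite{HA} and the work following it) which directly produces a functorial classifying element, and simply \emph{define} $\kappa(\cC')$ to be that element.

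For part \eqref{kappa1}, I would use the functoriality of this construction under the fully faithful morphism $\Phi$. The key point is that the map $\HH^2(\cD/A, I) \to \HH^2(\cC/A, I)$ from \S\ref{HH^*-functorial} is built from $\Phi^! \circ (-) \circ \Phi$ together with the unit/counit of the adjunction, and the deformed functor $\Phi' \colon \cC' \to \cD'$ (which is automatically fully faithful with a right adjoint, since these are closed conditions under square-zero deformation — this should be noted or cited) intertwines the classifying data of $\cC'$ with that of $\cD'$. So I would argue: the restriction of $\id_{\cD'}$ along $\Phi'$ recovers $\Phi' \circ \id_{\cC'}$, hence applying $(\Phi')^!$ and using full faithfulness identifies the two ``$A'$-module structure'' discrepancies compatibly, which is exactly the statement that the transfer map sends $\kappa(\cD')$ to $\kappa(\cC')$. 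This is essentially a diagram chase once the definitions are in place; the honest content is that the deformation-classifying element is natural in fully faithful right-adjointable co-morphisms, which follows from its construction as a feature of the functor $\Fun_{(-)}(-,-)$ of linear categories.

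For part \eqref{kappa2}, I would compare the categorical construction with the classical one via the HKR isomorphism (Theorem~\ref{theorem-HKR-cohomology}) and the inclusion \eqref{H1-HH2}. The cleanest route: the derived category functor $X \mapsto \Dperf(X)$ sends a deformation $X'$ of $X$ over $A'$ to a deformation $\Dperf(X')$ of $\Dperf(X)$; I want that this sends the geometric Kodaira--Spencer class to $\kappa(\Dperf(X'))$ under \eqref{H1-HH2}. Both sides are functorial, and under HKR the geometric deformation theory of $X$ (controlled by $\rH^1(X,\rT_{X/A})$ together with the polyvector pieces $\rH^0(X,\wedge^2\rT)$ and $\rH^2(X,\cO_X)$) embeds into the categorical deformation theory (controlled by $\HH^2(X/A)$), with the $\rH^1(\rT)$ summand accounting exactly for the deformations of $X$ itself (the other summands correspond to ``gerby'' and ``noncommutative'' deformations that do not come from honest schemes). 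So I would invoke the known comparison (due to Toda, Lowen--Van den Bergh, etc., in the affine/characteristic-zero setting, and reduced to it by Theorem~\ref{theorem-HKR-cohomology}'s base change) between the geometric and categorical deformation functors of $X$, which matches the classical Kodaira--Spencer class with $\kappa(\Dperf(X'))$ under \eqref{H1-HH2}. The main obstacle is precisely this last comparison: making the identification of the categorical Kodaira--Spencer class of $\Dperf(X')$ with the classical one fully rigorous requires either a careful unwinding of the HKR comparison at the level of deformation complexes, or a clean citation to the literature on Hochschild cohomology and deformations of derived categories — I would aim to do the latter, reducing to the affine case by base change and Zariski-local gluing, so that the heavy lifting is quoted rather than reproved.
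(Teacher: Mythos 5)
Your proposal is essentially sound, and at the level of the general construction it coincides with the paper: both ultimately cite Lurie's machinery (\cite[\S 16.6]{SAG}, extended from the case $A = I = k$ a field to the general trivial square-zero extension) to produce $\kappa(\cC')$. The real divergence is in how properties \eqref{kappa1} and \eqref{kappa2} are handled. You treat them as theorems to be verified — \eqref{kappa1} by a naturality diagram chase and \eqref{kappa2} by a comparison between the categorical and geometric Kodaira--Spencer classes under HKR, which you correctly identify as the main obstacle and propose to outsource to the literature. The paper instead observes that the only cases it ever needs are $\cC \hookrightarrow \Dperf(X)$ a semiorthogonal component of a scheme smooth over $A$ and $\cC' \hookrightarrow \Dperf(X')$ a component of a deformation of $X$; in that restricted setting it simply \emph{defines} $\kappa(\Dperf(X'))$ as the image of the geometric class $\kappa(X')$ under \eqref{H1-HH2} and $\kappa(\cC')$ as its image under the restriction map $\HH^2(\Dperf(X)/A, I) \to \HH^2(\cC/A, I)$, so that \eqref{kappa1} and \eqref{kappa2} hold by fiat and the delicate HKR comparison is never confronted. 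Your route is more principled and more general, but it leaves you owing a genuine comparison theorem whose clean citation in the needed generality is not obvious; the paper's route buys a complete argument for its applications at the cost of only defining $\kappa$ in the geometric setting. One small caution about your direct construction: the ``difference of two deformations of $\id_{\cC}$'' naturally lives in $\Ext^1_A(\id_{\cC}, \id_{\cC} \otimes I) = \HH^1(\cC/A, I)$, not $\HH^2$; the degree-$2$ class arises rather as the obstruction natural transformation $\id_{\cC} \to (-\otimes I)[2]$ (cf.\ Lemma~\ref{lemma-defE}\eqref{omega2}) or from the $E_2$-structure in Lurie's formulation, so the shift you insert by hand needs justification if you pursue that version rather than the citation.
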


\begin{proof}
In \cite[\S16.6]{SAG} the construction of a class $\kappa(\cC') \in \HH^2(\cC/S, I)$ is given in the case 
$A = I = k$ are fields, 
but the same construction works in our more general setting and can be checked to satisfy the stated properties. 

More concretely, in this paper we shall only need the class $\kappa(\cC')$ for $\cC \hookrightarrow \Dperf(X)$ a semiorthogonal component of 
a scheme $X$ smooth over $A$ with $\dim(X/A)!$ invertible on $A$, 
and $\cC' \hookrightarrow \Dperf(X')$ 
a semiorthogonal component of a deformation of $X$ over $A'$. 
In this setting, the class $\kappa(\cC')$ can be defined by stipulating that properties~\eqref{kappa1} and~\eqref{kappa2} hold. 
Namely, we define $\kappa(\Dperf(X'))$ as the image of $\kappa(X')$ under the map~\eqref{H1-HH2}, 
and define $\kappa(\cC')$ as the image of $\kappa(\Dperf(X'))$ under the map $\HH^2(\Dperf(X)/A, I) \to \HH^2(\cC/A, I)$. 
\end{proof} 

\begin{remark}
In contrast to the geometric situation, in the setting of Lemma~\ref{lemma-KS} the 
set of isomorphism classes of deformations classes of $\cC$ over $A'$ is 
not necessarily a torsor under $\HH^2(\cC/A, I)$, cf. \cite[Remark 16.6.7.6 and Theorem 16.6.10.2]{SAG}. 
\end{remark} 

We can also describe the deformation theory of objects along a deformation of a category. 
If $A' \to A$ is a square-zero extension of rings, $\cC$ is an $A$-linear category, $\cC'$ is a deformation of
$\cC$ over $A'$, and $E \in \cC$ is an object, then a \emph{deformation of $E$ to $\cC'$} is an object $E' \in \cC'$ equipped with an equivalence $E'_A \simeq E \in \cC$ (where we have used the given identification $\cC'_A \simeq \cC$). 

\begin{lemma}
\label{lemma-defE}
Let $0 \to I \to A' \to A \to 0$ be a square-zero extension of rings. 
Let $\cC$ be an $A$-linear category, $\cC'$ a deformation of
$\cC$ over $A'$, and $E \in \cC$ an object. 
Then there is an obstruction class 
\begin{equation*}
\omega(E) \in \Ext^2_A(E, E \otimes I) 
\end{equation*}  
with the following properties: 
\begin{enumerate}
\item 
\label{omega1}
$\omega(E)$ vanishes if and only if a deformation of $E$ to $\cC'$ exists, 
in which case the set of isomorphism classes of deformations of 
$E$ to $\cC'$ forms a torsor under $\Ext^1_A(E, E \otimes I)$. 

\item 
\label{omega2}
Assume the extension $A' \to A$ is trivial, so that by Lemma~\ref{lemma-KS} 
we have a Kodaira--Spencer class $\kappa(\cC') \in \HH^2(\cC/A, I)$, which by the definition 
of Hochschild cohomology corresponds to a natural transformation $\id_{\cC} \to (- \otimes I)[2]$. 
Then writing $\kappa(\cC')(E) \in \Ext_A^2(E, E \otimes I)$ for the class obtained by applying this natural 
transformation to $E$, we have an equality 
\begin{equation*}
\omega(E) = \kappa(\cC')(E). 
\end{equation*} 
\end{enumerate} 
\end{lemma}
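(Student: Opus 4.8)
The plan is to obtain part~\eqref{omega1} from the standard deformation theory of objects in a linear $\infty$-category, and to deduce part~\eqref{omega2} by matching that construction with the one defining the Kodaira--Spencer class in Lemma~\ref{lemma-KS}.

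For~\eqref{omega1}, I would study the map on maximal $\infty$-groupoids ${\cC'}^{\simeq}\to\cC^{\simeq}$ induced by the base-change functor $\cC'\to\cC'_A\simeq\cC$; a deformation of $E$ to $\cC'$ is by definition a point of the homotopy fiber of this map over $E$, and isomorphism classes of deformations are its $\pi_0$. The input I would use is that the deformation functor of $E$ --- sending a square-zero extension of $A$ by an $A$-module $J$ to the space of lifts of $E$ --- is controlled to first order by the $A$-linear endomorphism complex $\cHom_A(E,E)$: over the trivial extension $A\oplus J$, isomorphism classes of lifts form a group isomorphic to $\Ext^1_A(E,E\otimes J)$ with automorphism group $\Hom_A(E,E\otimes J)$, and the obstruction to lifting along an arbitrary square-zero extension by $J$ lies in $\Ext^2_A(E,E\otimes J)$. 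Taking $J=I$ and the given extension $A'\to A$ yields the class $\omega(E)\in\Ext^2_A(E,E\otimes I)$ with the stated vanishing criterion and torsor property. I would cite \cite[\S16]{SAG} for the precise statements in the required categorical generality; in the geometric case $\cC\subset\Dperf(X)$ relevant to the applications, one can alternatively argue classically, lifting $E$ Zariski-locally on $X$ (possible since $I^2=0$) and reading off the obstruction to gluing.

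For~\eqref{omega2}, when $A'\to A$ is trivial the class $\kappa(\cC')\in\HH^2(\cC/A,I)$ of Lemma~\ref{lemma-KS} is, unwinding the construction of \cite[\S16.6]{SAG}, precisely the classifying datum of the deformation $\cC'$, viewed through the identification $\HH^2(\cC/A,I)=\pi_0\Map_{\Fun_A(\cC,\cC)}(\id_\cC,(-\otimes I)[2])$ as a natural transformation $\id_\cC\to(-\otimes I)[2]$. I would then check that the boundary map producing the obstruction $\omega(E)$ in part~\eqref{omega1} is computed by evaluating this natural transformation at $E$ --- i.e. by the map $\HH^2(\cC/A,I)\to\Ext^2_A(E,E\otimes I)$ appearing in the statement --- since both quantities measure the failure of the canonical square relating $\id_\cC$, the base-change functor $\cC'\to\cC$, and its section over the split extension to commute. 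In the geometric setting the same identification may be checked via property~\eqref{kappa2} of Lemma~\ref{lemma-KS} and the HKR description of Theorem~\ref{theorem-HKR-cohomology}, where it reduces to the classical statement that the obstruction to deforming $E\in\Dperf(X)$ along $X\hookrightarrow X'$ is the Atiyah class of $E$ composed with the Kodaira--Spencer class $\kappa(X')$; passage from $\Dperf(X)$ to a semiorthogonal component $\cC$ then uses that $\cC\hookrightarrow\Dperf(X)$ induces compatible maps on $\HH^2$ and that evaluation at $E\in\cC$ agrees with evaluation at $E\in\Dperf(X)$.

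The conceptual content is soft, so the main obstacle is bookkeeping: transporting the construction of $\kappa(\cC')$ from \cite{SAG}, where it is spelled out for $A=I=k$ a field, to a general square-zero extension $A'\to A$ with the requisite base-change compatibilities, and --- most delicately --- keeping track of shifts and signs so that the equality $\omega(E)=\kappa(\cC')(E)$ holds on the nose rather than merely up to sign, compatibly with property~\eqref{kappa2} of Lemma~\ref{lemma-KS}.
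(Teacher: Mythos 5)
Your proposal is correct and follows essentially the same route as the paper: appeal to \cite[Chapter 16]{SAG} for the abstract statement, and in the geometric case of a semiorthogonal component $\cC\hookrightarrow\Dperf(X)$ identify $\omega(E)$ with the composition of the Atiyah class of $E$ and the Kodaira--Spencer class of $X'$ (the paper cites \cite{lieblich} for the existence and torsor property and \cite{huybrechts-thomas} for that identification), then transfer to $\cC$ via the compatibility of Lemma~\ref{lemma-KS}\eqref{kappa1}. The one caveat worth noting is that the Huybrechts--Thomas formula requires working over a base field (see \cite{huybrechts-thomas-erratum}), which is why the paper restricts the concrete argument to that setting.
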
 

\begin{proof}
Similar to Lemma~\ref{lemma-KS}, the result can be proved using the arguments and results of 
\cite[Chapter 16]{SAG}, cf. \cite[Remark 16.0.0.3]{SAG}. 

More concretely, in this paper we shall only need the result in the case where $\cC \hookrightarrow \Dperf(X)$ 
is a semiorthogonal component for $X$ a noetherian scheme smooth over $A$, 
$\cC' \hookrightarrow \Dperf(X')$ is a semiorthogonal component of a deformation $X'$ of $X$ over $A'$, 
and everything is defined over a base field. 
In this setting, the result can be proved as follows.  
First consider the purely geometric case where $\cC = \Dperf(X)$ and $\cC' = \Dperf(X')$. 
Then \cite[Theorem 3.1.1]{lieblich} gives the existence of a class $\omega(E)$ satisfying property~\eqref{omega1}. 
If the extension $A' \to A$ is trivial, then by 
the main theorem of \cite{huybrechts-thomas} 
we have\footnote{This is where we use our assumption that everything is defined over a base field, see \cite{huybrechts-thomas-erratum}.}  
\begin{equation*}
\omega(E) = ( \id_{E} \otimes \kappa(X') ) \circ A(E), 
\end{equation*}  
where $A(E) \in \Ext^1(E, E \otimes \Omega_{X/\Spec(A)})$ is the Atiyah class of $E$ and $\kappa(X')$ is regarded as an element of $\Ext^1(\Omega_{X/\Spec(A)}, I)$.
One checks  
\begin{equation*}
( \id_{E} \otimes \kappa(X') ) \circ A(E) = \kappa(\Dperf(X))(E) , 
\end{equation*} 
so that~\eqref{omega2} holds. 
Now the case where $\cC \hookrightarrow \Dperf(X)$ and $\cC' \hookrightarrow \Dperf(X')$ are not necessarily equalities follows from two observations: 
an object $E'$ is a deformation of $E$ to $\Dperf(X')$ if and only if $E'$ is a deformation of $E$ to $\cC'$; and we have $\kappa(\Dperf(X))(E) = \kappa(\cC')(E)$ by Lemma~\ref{lemma-KS}\eqref{kappa1}.
\end{proof}


\section{Hodge theory of categories}
\label{section-hodge-theory} 

In this section we explain how to associate natural Hodge structures to $\bC$-linear categories, 
via topological K-theory. 
We use this to formulate several variants of the Hodge conjecture for categories, 
and discuss the relation between these conjectures and their classical counterparts. 
We also prove the results about intermediate Jacobians described in \S\ref{section-intro-IJ}. 

\subsection{Topological K-theory} 
Blanc \cite{blanc} constructed a lax symmetric monoidal topological K-theory functor 
\begin{equation*}
\Ktop \colon \Cat_{\bC} \to \Sp 
\end{equation*} 
from $\bC$-linear categories to the $\infty$-category of spectra. 
The following summarizes the results about this construction that are 
relevant to this paper. 

\begin{theorem}[\cite{blanc}]
{\label{theorem-Ktop}}
{\begin{enumerate}

\item If $\cC = \llangle \cC_1, \dots, \cC_m \rrangle$ is a semiorthogonal decomposition of $\bC$-linear categories, then there is an equivalence 
\begin{equation*}
\Ktop(\cC) \simeq \Ktop(\cC_1) \oplus \cdots \oplus \Ktop(\cC_m)
\end{equation*} 
where the map $\Ktop(\cC) \to \Ktop(\cC_i)$ is induced by the projection functor onto the component~$\cC_i$. 

\item \label{ch-Ktop}
There is a functorial commutative square 
\begin{equation}
\label{diagram-ch-Ktop} 
\vcenter{
\xymatrix{
\rK(\cC) \ar[r]^{\ch} \ar[d] & \HN(\cC) \ar[d] \\ 
\Ktop(\cC) \ar[r]^{\ch^{{\rtop}}} & \HP(\cC) 
}
}
\end{equation} 
where $\rK(\cC)$ denotes the algebraic K-theory of $\cC$, 
$\HN(\cC)$ the negative cyclic homology, and $\HP(\cC)$ the periodic cyclic homology. 

\item \label{Ktop-X-comparison}
If $X$ is a scheme which is separated and of finite type over $\bC$ 
with analytification $X^{\an}$, then there exists a functorial equivalence 
\begin{equation*}
\Ktop(\Dperf(X)) \simeq \Ktop(X^{\an}), 
\end{equation*} 
where the right side denotes the complex K-theory spectrum of the topological space $X^{\an}$. 
Under this equivalence, the left vertical arrow in~\eqref{diagram-ch-Ktop} recovers the usual 
map from algebraic K-theory to topological K-theory, 
and under the identification of $\HP(\Dperf(X))$ with $2$-periodic de Rham cohomology, 
the bottom horizontal arrow in~\eqref{diagram-ch-Ktop} recovers the usual topological Chern character. 
\end{enumerate}}
\end{theorem}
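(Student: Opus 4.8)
The plan is to recall Blanc's construction of $\Ktop$ from \cite{blanc} and indicate why the three stated properties hold; since our applications only use the case of $X$ smooth projective, I would be content to sketch the arguments and refer to \cite{blanc} for the full details. Recall that $\Ktop(\cC)$ is built in two stages. First, one forms the presheaf of spectra $A \mapsto \rK(\cC_A)$ on affine $\bC$-schemes of finite type, where $\cC_A$ denotes base change, and applies Blanc's topological realization functor to obtain the \emph{semitopological K-theory} spectrum $\rK^{\mathrm{st}}(\cC)$; then one sets $\Ktop(\cC) = \rK^{\mathrm{st}}(\cC)[\beta^{-1}]$, the Bott-inverted spectrum, where $\beta$ is the Bott class in $\pi_2$ of the unit. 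Each of the steps---base change $\cC \mapsto \cC_A$, topological realization (a colimit), and Bott inversion (a filtered colimit)---carries an exact sequence of stable $\infty$-categories to a (co)fiber sequence of spectra. Since algebraic K-theory is a localizing invariant (Thomason--Trobaugh, Waldhausen), so is $\Ktop$; as a semiorthogonal decomposition $\cC = \llangle \cC_1, \dots, \cC_m \rrangle$ yields, inductively, split Verdier sequences among the $\cC_i$, this gives the splitting in~(1), with the maps to the summands induced by the projection functors exactly as for $\rK$.

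For~(2), the Goodwillie--Jones cyclic trace gives a natural transformation $\ch \colon \rK \to \HN$ of functors on $\Cat_{\bC}$, and $\HN$, $\HP$ are themselves built from localizing invariants, with $\HP$ already $2$-periodic. Applying topological realization and Bott inversion to $\ch$, and using that the periodic target is unchanged by these operations, one obtains $\ch^{\rtop} \colon \Ktop \to \HP$; the square formed by $\ch$, the canonical map $\HN \to \HP$, the localization $\rK \to \Ktop$, and $\ch^{\rtop}$ then commutes by naturality.

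For~(3), the essential input is the Friedlander--Walker comparison theorem: for $X$ smooth affine over $\bC$ the natural map $\rK^{\mathrm{st}}(\Dperf(X))[\beta^{-1}] \to \Ktop(X^{\an})$ is an equivalence. I would extend this to all smooth $X$ since both sides are Nisnevich (indeed Zariski) sheaves, and to arbitrary separated finite-type $X$ as follows: topological realization sends the affine line to the contractible space $\bC$, so $\rK^{\mathrm{st}}(\Dperf(-))$ is $\mathbf{A}^1$-invariant, hence---by the cdh-descent results of Haesemeyer and Corti\~{n}as--Haesemeyer--Schlichting--Weibel---a cdh sheaf; as $\Ktop((-)^{\an})$ is also a cdh sheaf (topological K-theory of complex analytic spaces satisfies descent for abstract blow-up squares), resolution of singularities in characteristic $0$ reduces to the smooth case. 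For the compatibility of Chern characters, under the identification of $\HP(\Dperf(X))$ with the $2$-periodic de Rham cohomology of $X$, hence (by Grothendieck's comparison theorem) with the $2$-periodic singular cohomology $\rH^*(X^{\an};\bC)$, the map $\ch^{\rtop}$ becomes an additive multiplicative characteristic class for complex topological K-theory; evaluating on line bundles---equivalently, restricting to $\bP^n$ or using the splitting principle---identifies it with the classical topological Chern character, and the left vertical map of~\eqref{diagram-ch-Ktop} with the usual comparison $\rK(X) \to \Ktop(X^{\an})$ by naturality of the trace.

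The hard part is~(3): the passage from the Friedlander--Walker theorem on smooth affine schemes to all finite-type schemes via $\mathbf{A}^1$-invariance, cdh descent, and resolution of singularities is the technical core, and carrying the Chern character compatibilities through these reductions is delicate. Since this is done in full in \cite{blanc}, and since our applications only need $X$ smooth projective, I would simply cite that reference for the statement.
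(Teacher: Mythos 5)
The paper offers no proof of this statement: it is quoted verbatim as a summary of Blanc's results and attributed entirely to \cite{blanc}, which is also where your proposal ultimately defers for the hard comparison in~(3). Your sketch of the intermediate steps (realization of $A \mapsto \rK(\cC_A)$, Bott inversion, additivity via split Verdier sequences, realization of the Goodwillie--Jones trace with $\HP$ as the unchanged target, and Friedlander--Walker plus descent for the comparison with $\Ktop(X^{\an})$) is a faithful outline of how Blanc actually establishes these facts, so your treatment is consistent with, and somewhat more informative than, the paper's.
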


For $\cC \in \Cat_{\bC}$ and an integer $n$, we write 
\begin{equation*}
\Ktop[n](\cC) = \pi_{n} \Ktop(\cC) 
\end{equation*} 
for the $n$-th homotopy group of $\Ktop(\cC)$. 
These groups carry canonical pairings in the proper case. 

\begin{lemma}
\label{lemma-euler-pairing}
Let $\cC$ be a proper $\bC$-linear category. Then for any integer $n$ 
there is a canonical bilinear form $\chi^{\rtop}(-,-) \colon \Ktop[n](\cC) \otimes \Ktop[n](\cC) \to \bZ$, called the Euler pairing, with the following properties: 
\begin{enumerate}
\item \label{euler-pairing-so}
If $\cC = \llangle \cC_1, \dots, \cC_m \rrangle$ is a semiorthogonal decomposition of $\bC$-linear categories, then the inclusions $\Ktop[n](\cC_i) \to \Ktop[n](\cC)$ preserve the Euler pairings, and the direct sum decomposition 
\begin{equation*}
\Ktop[n](\cC) \cong \Ktop[n](\cC_1) \oplus \cdots \oplus \Ktop[n](\cC_m)
\end{equation*} 
is semiorthogonal in the sense that $\chi^{\rtop}(v_i, v_j) = 0$ for $v_i \in \Ktop[n](\cC_i)$, $v_j \in \Ktop[n](\cC_j)$, $i > j$. 

\item If $\chi(-,-) \colon \rK_0(\cC) \otimes \rK_0(\cC) \to \bZ$ denotes the Euler pairing defined by 
\begin{equation*}
\chi(E,F) = \sum_i (-1)^i \dim \Ext_{\bC}^i(E, F) 
\end{equation*} 
for $E , F \in \cC$, 
then the map $\rK_0(\cC) \to \Ktop[0](\cC)$ preserves the Euler pairings. 

\item If $\cC = \Dperf(X)$ for a proper complex variety $X$, then for $v, w \in \Ktop[n](X)$ 
we have 
\begin{equation*}
\chi^{\rtop}(v,w) = p_*(v^{\svee} \otimes w) \in \Ktop[2n](\Spec(\bC) ) \cong \bZ , 
\end{equation*} 
where $p \colon X \to \Spec(\bC)$ is the structure morphism. 
\end{enumerate} 
\end{lemma}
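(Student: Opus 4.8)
The plan is to construct the Euler pairing by means of a duality on topological K-theory induced by the Serre functor — or, more cleanly, by the evaluation morphism witnessing the dualizability of $\cC$. For a proper (small) $\bC$-linear category $\cC$, the mapping objects $\cHom_{\bC}(E,F)$ lie in $\Dperf(\bC)$, so there is a well-defined $\bC$-linear pairing functor $\cHom_{\bC}(-,-)\colon \cC^{\op}\otimes_{\Dperf(\bC)}\cC \to \Dperf(\bC)$. Applying Blanc's lax symmetric monoidal functor $\Ktop$, which sends $\Dperf(\bC)$ to the K-theory spectrum $\rK^{\rtop}$ of a point and is compatible with the monoidal structures, yields a map of spectra
\begin{equation*}
\Ktop(\cC^{\op}) \wedge \Ktop(\cC) \longrightarrow \Ktop(\cC)^{\svee}\wedge \Ktop(\cC) \longrightarrow \rK^{\rtop},
\end{equation*}
and after identifying $\Ktop(\cC^{\op})\simeq \Ktop(\cC)$ (via the anti-equivalence $E\mapsto E$ together with Remark~\ref{remark-duality-data}, which tracks the duality data through $\Ind$) and passing to $\pi_n$ via the K\"unneth-type pairing $\pi_n\Ktop(\cC)\otimes \pi_n\Ktop(\cC)\to \pi_{2n}\rK^{\rtop}\cong\pi_0\rK^{\rtop}\cong\bZ$ (Bott periodicity for the last identification), I obtain the desired form $\chi^{\rtop}(-,-)$. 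First I would set up this construction carefully, checking that the composite is bilinear and lands in $\bZ$; this is essentially formal given Theorem~\ref{theorem-Ktop} and the properness hypothesis.

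Next I would verify property~(1). Given an $S$-linear (here $S=\Spec\bC$) semiorthogonal decomposition $\cC=\llangle\cC_1,\dots,\cC_m\rrangle$ with admissible components, Theorem~\ref{theorem-Ktop}(1) gives the direct sum decomposition of $\Ktop(\cC)$, with the inclusions induced by the embedding functors $\cC_i\hookrightarrow\cC$ and the projections by the projection functors. Compatibility of the Euler pairing with the inclusions is immediate from the naturality of the $\cHom_{\bC}$ pairing under the fully faithful embeddings. The semiorthogonality statement $\chi^{\rtop}(v_i,v_j)=0$ for $v_i\in\Ktop[n](\cC_i)$, $v_j\in\Ktop[n](\cC_j)$ with $i>j$ follows from the vanishing $\cHom_{\bC}(E_i,E_j)=0$ for $E_i\in\cC_i$, $E_j\in\cC_j$, $i>j$ (the defining property of a semiorthogonal decomposition), which passes to topological K-theory. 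Since this is an exact-triangle/additivity argument on the level of the pairing functor, it should be routine.

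For property~(2) I would use the functorial commutative square~\eqref{diagram-ch-Ktop} of Theorem~\ref{theorem-Ktop}(2), or more directly the fact that the natural map $\rK(\cC)\to\Ktop(\cC)$ is a map of modules compatible with the pairing functors: the algebraic Euler pairing $\chi(E,F)=\sum(-1)^i\dim\Ext^i_{\bC}(E,F)$ is, by construction, obtained from the same functor $\cHom_{\bC}(-,-)$ followed by the (algebraic) K-theory of $\Dperf(\bC)$ and the rank map $\rK_0(\Dperf(\bC))\cong\bZ$; naturality of $\Ktop$ then gives the claim. For property~(3), when $\cC=\Dperf(X)$ I would invoke Theorem~\ref{theorem-Ktop}(3) to identify $\Ktop(\Dperf(X))\simeq\Ktop(X^{\an})$ compatibly with pushforward and tensor product, and then the formula $\chi^{\rtop}(v,w)=p_*(v^{\svee}\otimes w)$ reduces to the standard fact that, under this identification, the evaluation morphism for $\Dperf(X)$ as a dualizable object corresponds to the pushforward $p_*\colon \Ktop(X^{\an})\to\Ktop(\mathrm{pt})$ precomposed with dualize-and-multiply — this is the topological analogue of Serre duality / the Mukai pairing description, and can be matched against the Atiyah–Hirzebruch–Riemann–Roch picture.

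The main obstacle I anticipate is the identification $\Ktop(\cC^{\op})\simeq\Ktop(\cC)$ and, more precisely, checking that the pairing so constructed genuinely agrees with ``the negative of the Euler pairing'' used elsewhere to define the Mukai structure, with the correct signs and the correct behavior of the evaluation morphism through Blanc's lax (not strong) monoidal functor. One must be careful that $\Ktop$ being only \emph{lax} symmetric monoidal still suffices to produce the pairing — this works because $\Dperf(\bC)$ is the monoidal unit, so $\Ktop(\Dperf(\bC))\simeq\rK^{\rtop}$ is a ring spectrum and $\Ktop(\cC)$ is a module over it, which is exactly the structure needed. Verifying this module-theoretic compatibility, and that the resulting bilinear form on $\pi_n$ matches the one coming from the Serre functor / dualizability data of $\cC$ itself (Remark~\ref{remark-duality-data}), is where the real care is required; the three enumerated properties are then comparatively formal consequences.
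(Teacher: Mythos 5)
Your construction is essentially identical to the paper's: both use the lax symmetric monoidal structure of $\Ktop$ to get a map $\Ktop(\cC^{\op})\otimes\Ktop(\cC)\to\Ktop(\cC^{\op}\otimes_{\Dperf(\Spec(\bC))}\cC)$, the canonical identification $\Ktop(\cC^{\op})\simeq\Ktop(\cC)$, and the evaluation functor induced by $\cHom_{\bC}(-,-)$ (which lands in $\Dperf(\Spec(\bC))$ precisely by properness), followed by Bott periodicity on $\pi_{2n}$; the semiorthogonality in (1) is likewise deduced in both from the vanishing of $\cHom_{\bC}(-,-)$ on $\cC_i^{\op}\times\cC_j$ for $i>j$. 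The detour through a dual spectrum $\Ktop(\cC)^{\svee}$ is unnecessary but harmless, and the remaining properties are handled as in the paper, which simply notes they follow from the definition.
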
 

\begin{proof}
As the functor $\Ktop \colon \Cat_{\bC} \to \Sp$ is lax monoidal, we have a natural map 
\begin{equation*}
\Ktop(\cC^{\op}) \otimes \Ktop(\cC) \to \Ktop(\cC^{\op} \otimes_{\Dperf(\Spec(\bC))} \cC). 
\end{equation*} 
There is a canonical identification $\Ktop(\cC^{\op}) = \Ktop(\cC)$; indeed, this follows from the 
definition of $\Ktop(-)$ in \cite{blanc} and the corresponding identification for algebraic K-theory. 
Therefore, passing to homotopy groups we obtain a map 
\begin{equation*}
\Ktop[n](\cC) \otimes \Ktop[n](\cC) \to \Ktop[2n](\cC^{\op} \otimes_{\Dperf(\Spec(\bC))} \cC). 
\end{equation*} 
As $\cC$ is proper over $\bC$, we have an evaluation functor 
\begin{equation*} 
\cC^{\op} \otimes_{\Dperf(\Spec(\bC))} \cC \to \Dperf(\Spec(\bC)) 
\end{equation*} 
induced by the functor $\cHom_{\bC}(-,-) \colon \cC^{\op} \times \cC \to \Dperf(\Spec(\bC))$. 
Taking the topological K-theory of this functor and composing with the above map, we 
obtain the sought for map 
\begin{equation*}
\chi^{\rtop}(-,-) \colon \Ktop[n](\cC) \otimes \Ktop[n](\cC) \to \Ktop[2n](\Dperf(\Spec(\bC))) \cong \bZ. 
\end{equation*}
All of the claimed properties follow directly from the definition. 
For instance, to show the semiorthogonality claimed in~\eqref{euler-pairing-so}, note that restriction of the pairing to 
$\Ktop[n](\cC_i) \otimes \Ktop[n](\cC_j)$ is induced by the functor 
\begin{equation*} 
\cC_i^{\op} \otimes_{\Dperf(\Spec(\bC))} \cC_j \to \Dperf(\Spec(\bC)) , 
\end{equation*} 
which is in turn induced by the functor $\cHom_{\bC}(-,-) \colon \cC_i^{\op} \times \cC_j \to \Dperf(\Spec(\bC))$; 
but if $i > j$, then this functor vanishes by semiorthogonality. 
\end{proof} 

\begin{remark}
We suspect that if $\cC$ is a smooth proper $\bC$-linear category, 
then the Euler pairing on $\Ktop[n](\cC)$ is nondegenerate. 
As in Lemma~\ref{lemma-mukai}, this (and more) would follow if for instance the functor $\Ktop \colon \Cat_{\bC} \to \Sp$ were monoidal (not only lax monoidal) when restricted to the subcategory of smooth proper $\bC$-linear categories. 
\end{remark}

\begin{proposition}
\label{proposition-Ktop-HS}
Let $\cC \subset \Dperf(X)$ be a $\bC$-linear admissible subcategory, where $X$ is a smooth proper complex variety. 
\begin{enumerate}
\item  For any integer $n$, $\Ktop[n](\cC)$ is a finitely generated abelian group, 
and there is a canonical Hodge structure of weight $-n$ on 
$\Ktop[n](\cC)$ such that there is a canonical isomorphism 
\begin{equation*} 
\gr^p(\Ktop[n](\cC) \otimes \bC) \cong \HH_{n+2p}(\cC) , 
\end{equation*} 
where the left side denotes the $p$-th graded piece of the Hodge filtration.  
\item \label{Ktop-H-comparison}
For $\cC = \Dperf(X)$ the Chern character induces an isomorphism 
\begin{equation*}
\Ktop[n](\Dperf(X)) \otimes \bQ \cong \bigoplus_{k \in \bZ} \rH^{2k - n}(X, \bQ)(k) 
\end{equation*} 
of rational Hodge structures, where $\rH^{2k - n}(X, \bQ)(k)$ denotes (the Tate twist by $k$ of) 
the Betti cohomology of $X$.  
\end{enumerate} 
\end{proposition}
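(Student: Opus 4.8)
The plan is to define the Hodge structure intrinsically --- its underlying lattice is $\Ktop[n](\cC)$ and its Hodge filtration is transported, along the topological Chern character $\ch^{\rtop}\colon\Ktop(\cC)\to\HP(\cC)$ of the square~\eqref{diagram-ch-Ktop}, from the Hodge filtration on periodic cyclic homology arising from negative cyclic homology --- and then to verify its properties by reducing to the case $\cC=\Dperf(X)$, where everything becomes a statement in classical Hodge theory. Since $\cC$ is admissible it is a semiorthogonal component of $\Dperf(X)$, so by Theorem~\ref{theorem-Ktop} the spectrum $\Ktop(\cC)$ is a direct summand of $\Ktop(\Dperf(X))\simeq\Ktop(X^{\an})$; as $X^{\an}$ is homotopy equivalent to a finite CW complex, each $\Ktop[n](X^{\an})$ is finitely generated (Atiyah--Hirzebruch), hence so is its summand $\Ktop[n](\cC)$. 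Being a semiorthogonal component of a smooth proper category, $\cC$ is smooth and proper over $\bC$, so the degeneration of the noncommutative Hodge-to-de Rham spectral sequence \cite{kaledin1,kaledin2,akhil-degeneration} applies to $\cC$: writing $F^{\bullet}$ for the Hodge filtration on $\HP_{*}(\cC)$ arising from $\HN_{*}(\cC)$ via powers of the periodicity operator, degeneration yields a canonical isomorphism $\gr_{F}^{p}\HP_{n}(\cC)\cong\HH_{n+2p}(\cC)$, and in particular $\dim_{\bC}\HP_{n}(\cC)=\sum_{i\equiv n\,(2)}\dim_{\bC}\HH_{i}(\cC)$.

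Next I would show the map $\ch^{\rtop}\otimes\bC\colon\Ktop[n](\cC)\otimes\bC\to\HP_{n}(\cC)$ coming from~\eqref{diagram-ch-Ktop} is an isomorphism. For $\cC=\Dperf(X)$, Theorem~\ref{theorem-Ktop} identifies $\ch^{\rtop}$ with the topological Chern character of $X^{\an}$ and $\HP_{*}(\Dperf(X))$ with $2$-periodic de Rham cohomology $\bigoplus_{k}\rH^{2k-n}_{\dR}(X/\bC)$; since the topological Chern character is a rational isomorphism, $\ch^{\rtop}\otimes\bC$ is an isomorphism. For general admissible $\cC$, both $\Ktop(-)$ and (because $\HH_{*}$ is additive along semiorthogonal decompositions, hence so are $\HN_{*}$ and $\HP_{*}$) $\HP_{*}(-)$ split compatibly along the semiorthogonal decomposition containing $\cC$, and $\ch^{\rtop}$ is functorial, so $\ch^{\rtop}_{\cC}\otimes\bC$ is a direct summand of $\ch^{\rtop}_{\Dperf(X)}\otimes\bC$ and therefore also an isomorphism. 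Transporting $F^{\bullet}$ along this isomorphism equips $\Ktop[n](\cC)\otimes\bC$ with a decreasing, exhaustive, separated filtration with $\gr^{p}\cong\HH_{n+2p}(\cC)$, which will be the Hodge filtration.

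It remains to prove part~(2) and, simultaneously, that this filtration defines a Hodge structure of weight $-n$. For $\cC=\Dperf(X)$, the topological Chern character gives $\Ktop[n](\Dperf(X))\otimes\bQ\cong\bigoplus_{k}\rH^{2k-n}(X,\bQ)$; combining HKR for $\HH_{*}$ with the identification of Connes' differential $B$ with the de Rham differential, one checks that under the induced identification $\HP_{n}(\Dperf(X))\cong\bigoplus_{k}\rH^{2k-n}(X,\bC)$ the filtration $F^{\bullet}$ corresponds to $F^{p}\HP_{n}(\Dperf(X))=\bigoplus_{k}F^{p+k}\rH^{2k-n}(X,\bC)$, where $F^{\bullet}$ on the right is the classical Hodge filtration (consistently, both sides have $p$-th graded piece $\HH_{n+2p}(X)$). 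Hence the filtered vector space of the previous step is $\bigl(\bigoplus_{k}\rH^{2k-n}(X,\bQ)(k)\bigr)\otimes\bC$ with its Hodge filtration; since each $\rH^{2k-n}(X,\bQ)(k)$ is a Tate twist of a pure Hodge structure of weight $2k-n$, this is a rational Hodge structure of weight $-n$ and $\ch^{\rtop}$ is an isomorphism of rational Hodge structures, which is part~(2). For general admissible $\cC\subset\Dperf(X)$, the composite of the projection $\Dperf(X)\to\cC$ with the inclusion $\cC\hookrightarrow\Dperf(X)$ is an idempotent $\bC$-linear endofunctor, so by functoriality of $\Ktop$, $\HN$, $\HP$ and~\eqref{diagram-ch-Ktop} it induces an idempotent endomorphism of the rational Hodge structure $\Ktop[n](\Dperf(X))\otimes\bQ$ preserving $F^{\bullet}$, whose image is $\Ktop[n](\cC)\otimes\bQ$ with the filtration built above; the image of an idempotent endomorphism of a pure Hodge structure is again a pure Hodge structure of the same weight, giving part~(1).

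The main obstacle I anticipate is the comparison, in the case $\cC=\Dperf(X)$, between the noncommutative Hodge filtration (defined via $\HN_{*}\to\HP_{*}$) and the classical Hodge filtration on de Rham cohomology: this is the one ingredient that is a genuine theorem rather than formal bookkeeping, amounting to the HKR isomorphism refined to the mixed-complex / cyclic level, where Connes' $B$ becomes $d_{\dR}$ and the filtration by powers of the periodicity operator becomes the Hodge filtration. Everything else --- additivity of the invariants along semiorthogonal decompositions, the summand and idempotent arguments, Kaledin's degeneration, and the classical rational Chern character isomorphism --- is routine given the results already recalled, so locating and citing a clean reference for this cyclic HKR comparison is the crux of the argument.
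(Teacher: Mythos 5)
Your proposal is correct and follows essentially the same route as the paper: finite generation via the summand of $\Ktop[n](X^{\an})$, the filtration on $\HP_n(\cC)$ from Kaledin--Mathew degeneration with graded pieces $\HH_{n+2p}(\cC)$, transport along $\ch^{\rtop}\otimes\bC$, and reduction to $\cC=\Dperf(X)$ where the comparison with the classical ($2$-periodic) Hodge filtration is the cited input (the paper invokes Weibel for exactly the cyclic HKR comparison you flag as the crux). Your idempotent-projector argument just makes explicit what the paper compresses into ``this claim is preserved under passing to semiorthogonal components.''
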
 

\begin{remark}
\label{remark-Ktop-HS} 
The existence of an admissible embedding $\cC \subset \Dperf(X)$ in Proposition~\ref{proposition-Ktop-HS} allows us in the proof below to leverage deep results about the Hodge theory of varieties into statements for $\cC$. 
We conjecture, however, that Proposition~\ref{proposition-Ktop-HS} remains true for any smooth proper $\bC$-linear category $\cC$, without assuming the existence of an embedding. 
\end{remark} 

\begin{proof}
First note that $\Ktop[n](\cC)$ is a summand of the finitely generated abelian group $\Ktop[n](X^{\an})$, and hence 
finitely generated. 
The noncommutative Hodge-to-de Rham spectral sequence 
and its degeneration \cite{kaledin1, kaledin2, akhil-degeneration} gives a canonical filtration of 
$\HP_n(\cC)$ whose $p$-th graded piece is $\HH_{n+2p}(\cC)$. 
Consider the Chern character map $\Ktop[n](\cC) \otimes \bC \to \HP_n(\cC)$ from Theorem~\ref{theorem-Ktop}. 
We claim that this map is an isomorphism for $\cC$ as in the proposition, 
and the above filtration provides the desired Hodge structure on $\cC$. 
This claim is preserved under passing to semiorthogonal components, 
so we may assume that $\cC = \Dperf(X)$. 

In this case, it is well-known that the Chern character indeed provides an isomorphism 
\begin{equation*}
\Ktop[n](\Dperf(X)) \otimes \bQ \cong \bigoplus_{k \in \bZ} \rH^{2k - n}(X, \bQ) 
\end{equation*} 
of abelian groups. 
Recall \cite{weibel} 
we have an identification $\HP_n(\Dperf(X)) \cong \bigoplus_{k \in \bZ} \rH_{\dR}^{2k-n}(X)$ with $2$-periodic de Rham cohomology, 
under which the noncommutative Hodge-to-de Rham filtration agrees with the $2$-periodic Hodge-to-de Rham filtration, i.e. 
the filtration corresponding to the Hodge structure on $\bigoplus_{k \in \bZ} \rH^{2k - n}(X, \bQ)(k)$ under 
the comparison isomorphism $\rH^{2k - n}(X, \bC) \cong \rH_{\dR}^{2k-n}(X)$. 
We conclude that $\Ktop[n](\Dperf(X)) \otimes \bC \xrightarrow{\sim} \HP_n(\Dperf(X))$ is an isomorphism, 
the noncommutative Hodge-to-de Rham filtration defines a Hodge structure of weight $-n$, 
and the above  isomorphism of abelian groups provided by the Chern character is in fact an isomorphism of rational Hodge structures. 
\end{proof} 

We will need a generalization of Proposition~\ref{proposition-Ktop-HS} to families of categories. 
This relies on a relative version of Blanc's topological K-theory, due to Moulinos \cite{moulinos}. 
Namely, for a scheme $S$ over $\bC$, 
Moulinos constructs a functor 
\begin{equation*} 
\Ktop(-/S) \colon \Cat_S \to \Shv_{\Sp}(S^{\an}) 
\end{equation*} 
from $S$-linear categories to the $\infty$-category of sheaves of spectra on 
the analytification $S^{\an}$. 

\begin{theorem}[\cite{moulinos}] 
{\label{theorem-KtopS}}

\begin{enumerate}
\item If $S = \Spec(\bC)$, there is an equivalence $\Ktop(-/S) \simeq \Ktop(-)$. 

\item If $\cC = \llangle \cC_1, \dots, \cC_m \rrangle$ is a semiorthogonal decomposition of $S$-linear categories, then there is an equivalence 
\begin{equation*}
\Ktop(\cC/S) \simeq \Ktop(\cC_1/S) \oplus \cdots \oplus \Ktop(\cC_m/S) 
\end{equation*} 
where the map $\Ktop(\cC/S) \to \Ktop(\cC_i/S)$ is induced by the projection functor onto the component~$\cC_i$. 

\item \label{Ktop-sheaf}
If $f \colon X \to S$ is a proper morphism of complex varieties 
and $f^{\an} \colon X^{\an} \to S^{\an}$ is its analytification, 
then $\Ktop(\Dperf(X)/S)$ is the sheaf of spectra on $S^{\an}$ given by the formula $U \mapsto \Ktop((f^{\an})^{-1}(U))$.  

\end{enumerate} 
\end{theorem}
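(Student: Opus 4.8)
The plan is to recall Moulinos's construction of the functor $\Ktop(-/S)$ and then verify the three asserted properties, the first two being essentially formal and the third being the substantive point. Recall that Blanc's absolute functor $\Ktop\colon\Cat_\bC\to\Sp$ is built from algebraic $\rK$-theory in two stages: one first forms the \emph{semitopological} $\rK$-theory, a realization against the standard topological simplices of the algebraic $\rK$-theory of suitable base changes of $\cC$, and one then inverts the Bott class to obtain a $2$-periodic spectrum. Moulinos performs the same two stages \emph{internally to} $\Shv_{\Sp}(S^{\an})$: starting from the sheafification on $S^{\an}$ of the algebraic $\rK$-theory presheaf attached to $\cC$ and its base changes, one applies the topological-simplicial realization and Bott inversion in the stable $\infty$-category of sheaves of spectra. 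By construction this yields a lax symmetric monoidal functor $\Ktop(-/S)\colon\Cat_S\to\Shv_{\Sp}(S^{\an})$, compatible with pullback along morphisms $T\to S$.

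For part (1): when $S=\Spec(\bC)$ the analytic site is a point, sheaves of spectra on it are spectra, and each stage of the relative construction literally reduces to the corresponding stage of Blanc's, so $\Ktop(-/\Spec(\bC))\simeq\Ktop(-)$. For part (2): algebraic $\rK$-theory is an additive invariant, so it sends a semiorthogonal decomposition $\cC=\langle\cC_1,\dots,\cC_m\rangle$ to a direct sum $\bigoplus_i\rK(\cC_i)$, compatibly with the base changes occurring in the construction; since topological-simplicial realization, sheafification, and Bott inversion all commute with finite direct sums, the same splitting persists for $\Ktop(\cC/S)$, with the projection onto the $i$-th summand induced by the projection functor $\cC\to\cC_i$ --- this is the exact analogue of Blanc's absolute statement and is proved the same way.

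For part (3), the relative comparison theorem, properness of $f$ is essential. First observe that $U\mapsto\Ktop((f^{\an})^{-1}(U))$ is itself a sheaf of spectra on $S^{\an}$: complex topological $\rK$-theory is representable and hence satisfies Mayer--Vietoris descent for open covers, so this presheaf is the pushforward $f^{\an}_*$ of the locally constant sheaf of spectra $\underline{KU}$ on $X^{\an}$. The plan is to construct a natural comparison map $\Ktop(\Dperf(X)/S)\to f^{\an}_*\underline{KU}$ --- using functoriality of the construction together with Blanc's absolute comparison --- and to show it is an equivalence by checking on stalks at points $s\in S^{\an}$. On one side, since $f$ is proper the fiber $X_s^{\an}$ is compact, so proper base change in topology identifies the stalk $(f^{\an}_*\underline{KU})_s$ with $\Ktop(X_s^{\an})$. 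On the other side, the pullback compatibility of the relative construction (together with part (1)) identifies the stalk of $\Ktop(\Dperf(X)/S)$ at $s$ with $\Ktop(\Dperf(X)_s)\simeq\Ktop(\Dperf(X_s))$, and then Blanc's absolute theorem (Theorem~\ref{theorem-Ktop}\eqref{Ktop-X-comparison}) identifies this with $\Ktop(X_s^{\an})$; one checks these two identifications are interchanged by the comparison map, which gives the equivalence.

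The main obstacle is precisely this last stalk identification on the algebro-geometric side, i.e.\ establishing a genuine proper base change theorem for $\Ktop(-/S)$ that simultaneously uses the analytic-local nature of Moulinos's construction (as opposed to merely Zariski-local input) and the compactness of fibers supplied by properness. Without properness the topological $\rK$-theory of the fibers of $f^{\an}$ need not assemble into a sheaf computing $\Ktop(\Dperf(X)/S)$, so the hypothesis cannot be dropped; the technical heart is thus propagating Blanc's absolute equivalence over the base and reconciling it with $f^{\an}_*$, while everything else reduces to formal manipulations with $\rK$-theoretic functors, colimits, and sheafification.
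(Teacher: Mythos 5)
This theorem is imported verbatim from Moulinos's paper \cite{moulinos}; the present text gives no proof, so there is nothing internal to compare against and I can only measure your sketch against what the cited construction actually supplies. Your treatment of parts (1) and (2) is fine: the collapse of the relative construction to Blanc's when $S=\Spec(\bC)$, and additivity via additivity of algebraic K-theory together with the stability of semiorthogonal decompositions under base change and the commutation of realization, sheafification and Bott inversion with finite direct sums, are exactly the formal arguments one expects.

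The problem is in part (3), and it sits precisely where you locate the ``main obstacle'' --- but it is worse than a technical loose end: as written the argument is circular. Your stalkwise strategy requires identifying the stalk of $\Ktop(\Dperf(X)/S)$ at $s$ with $\Ktop(\Dperf(X_s))$, which you attribute to ``pullback compatibility of the relative construction together with part (1).'' The construction only supplies a canonical comparison map from that stalk (a filtered colimit over analytic neighbourhoods of $s$) to the topological K-theory of the algebraic fiber category; proving this map is an equivalence is essentially the content of the theorem, not an input to it. The paper's own logic confirms this: Proposition~\ref{proposition-Ktop-VHS}\eqref{Ktop-fiber} \emph{deduces} the fiber identification from Theorem~\ref{theorem-KtopS}\eqref{Ktop-sheaf} (plus Ehresmann and Blanc's absolute comparison), and Remark~\ref{remark-Ktop-VHS} records that for a general smooth proper $S$-linear category, where no such geometric comparison is available, the analogous statements are only conjectural. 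The route that actually works avoids stalks on the categorical side altogether: using Zariski descent for K-theory one identifies $\Ktop(\Dperf(X)/S)$ with $f^{\an}_*$ of the relative topological K-theory of $X$ over itself, identifies the latter with the constant sheaf $\underline{KU}$ on $X^{\an}$ by the local (sheaf-level) form of Blanc's theorem, and then reads off the sections of $f^{\an}_*\underline{KU}$ over $U$ as $\Ktop((f^{\an})^{-1}(U))$; properness is what lets the pushforward be commuted past the filtered colimits (sheafification and Bott inversion) occurring in the construction. Your topological-side stalk computation via proper base change is correct, but it cannot be paired with an algebro-geometric stalk computation that presupposes the theorem.
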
 

For $\cC \in \Cat_{S}$ and an integer $n$, we write  
\begin{equation*}
\Ktop[n](\cC/S) = \pi_{n} \Ktop(\cC/S) 
\end{equation*} 
for the $n$-th homotopy sheaf of $\Ktop(\cC/S)$, which is a sheaf of abelian groups on $S^{\an}$. 

\begin{proposition}
\label{proposition-Ktop-VHS}
Let $\cC \subset \Dperf(X)$ be an $S$-linear admissible subcategory, 
where $f \colon X \to S$ is a smooth proper morphism of complex varieties. 
\begin{enumerate}
\item \label{Ktop-fiber}
For any integer $n$, $\Ktop[n](\cC/S)$ is a local system of finitely generated abelian groups on $S^{\an}$ whose 
fiber over any point $s \in S(\bC)$ is $\Ktop[n](\cC_s)$. 
\item 
$\Ktop[n](\cC/S)$ underlies a canonical variation of Hodge structures of 
weight $-n$ on $S^{\an}$, which fiberwise for $s \in S(\bC)$ recovers the Hodge structure on $\Ktop[n](\cC_s)$ 
from Proposition~\ref{proposition-Ktop-HS}. 
\item \label{KtopS-comparison-VHS}
For $\cC = \Dperf(X)$ there is an isomorphism 
\begin{equation*}
\Ktop[n](\Dperf(X)/S) \otimes \bQ \cong \bigoplus_{k \in \bZ} \rR^{2k-n}f^{\an}_* \underline{\bQ}(k) 
\end{equation*} 
of variations of rational Hodge structures over $S^{\an}$. 

\item \label{relative-euler-pairing}
There is a bilinear form $\chi^{\rtop}(-,-) \colon \Ktop[n](\cC/S) \otimes \Ktop[n](\cC/S) \to \underline{\bZ}$, which fiberwise for $s \in S(\bC)$ recovers the Euler pairing on $\Ktop[n](\cC_s)$ 
from Lemma~\ref{lemma-euler-pairing}. 
\end{enumerate}  
\end{proposition}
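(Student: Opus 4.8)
The plan is to deduce everything from the geometric case $\cC = \Dperf(X)$, where the relevant statements can be matched with the classical Hodge theory of the family $X \to S$, exactly as the absolute Proposition~\ref{proposition-Ktop-HS} is deduced from the case of a point. For part~(1), since $f$ is smooth and proper its analytification $f^{\an}\colon X^{\an}\to S^{\an}$ is a proper submersion, hence a locally trivial fibre bundle by Ehresmann's theorem; combined with Theorem~\ref{theorem-KtopS}\eqref{Ktop-sheaf} this shows that the sheaf of spectra $\Ktop(\Dperf(X)/S)$ is locally constant with stalk at $s$ canonically $\Ktop(X_s^{\an})\simeq\Ktop(\Dperf(X_s))$. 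Applying the semiorthogonal additivity of relative topological K-theory (Theorem~\ref{theorem-KtopS}) to the $S$-linear decomposition $\Dperf(X)=\llangle\cC,\cC^{\perp}\rrangle$ exhibits $\Ktop(\cC/S)$ as a direct summand, compatibly stalkwise with the decomposition $\Ktop(\Dperf(X_s))\simeq\Ktop(\cC_s)\oplus\Ktop((\cC^{\perp})_s)$ obtained by base change of semiorthogonal decompositions; hence $\Ktop(\cC/S)$ is locally constant with stalk $\Ktop(\cC_s)$, and taking $\pi_n$ gives the local system $\Ktop[n](\cC/S)$ with fibre $\Ktop[n](\cC_s)$, finitely generated by Proposition~\ref{proposition-Ktop-HS}.

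For parts~(2) and~(3), I would mirror the construction of Proposition~\ref{proposition-Ktop-HS} in families. The relative periodic cyclic homology $\HP_n(\cC/S)$ is a vector bundle on $S$ with Gauss--Manin connection, carrying the relative noncommutative Hodge-to-de Rham filtration whose $p$-th graded piece is $\HH_{n+2p}(\cC/S)$; by Theorem~\ref{theorem-degeneration} the latter is finite locally free and stable under arbitrary base change, and relative degeneration of the noncommutative Hodge-to-de Rham spectral sequence \cite{kaledin1, kaledin2, akhil-degeneration} makes the filtration exhaustive and bounded with the expected ranks. Moulinos's relative Chern character gives a comparison map from $\Ktop[n](\cC/S)\otimes_{\underline{\bZ}}\cO_{S^{\an}}$ to the analytification of $\HP_n(\cC/S)$; I would show it is an isomorphism underlying a variation of Hodge structures by reducing to $\cC=\Dperf(X)$, using that both $\HH$ and $\HP$ split along $\Dperf(X)=\llangle\cC,\cC^{\perp}\rrangle$ compatibly with the Chern character (for $\HH$ this is additivity over semiorthogonal decompositions, see \S\ref{section-HH-homology}; for $\HP$ it follows via the spectral sequence), together with the fact that a sub-local-system of a variation of Hodge structures which is fibrewise a sub-Hodge-structure --- as here, by Proposition~\ref{proposition-Ktop-HS} --- is a sub-variation. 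For $\Dperf(X)$ itself, the relative HKR isomorphism identifies $\HP_n(\Dperf(X)/S)$ and its filtration with relative $2$-periodic de Rham cohomology $\bigoplus_k\rH^{2k-n}_{\dR}(X/S)$ and the relative Hodge filtration, and under the identification of Theorem~\ref{theorem-KtopS}\eqref{Ktop-sheaf} the relative Chern character becomes the classical topological Chern character; rationally this yields $\Ktop[n](\Dperf(X)/S)\otimes\bQ\cong\bigoplus_k\rR^{2k-n}f^{\an}_*\underline{\bQ}(k)$ as variations of rational Hodge structures, proving~(3), and transporting the integral lattice gives the canonical variation in~(2). The fibrewise statement in~(2) follows since every ingredient (relative Hochschild homology by Lemma~\ref{lemma-HH-bc} and Theorem~\ref{theorem-degeneration}, relative periodic cyclic homology, the relative Chern character, relative HKR) is compatible with base change along a point $s\in S(\bC)$, so the construction restricts on fibres to that of Proposition~\ref{proposition-Ktop-HS}.

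For part~(4), $\cC$ is proper over $S$ (a semiorthogonal component of $\Dperf(X)$ with $X\to S$ smooth and proper), and Moulinos's functor $\Ktop(-/S)\colon\Cat_S\to\Shv_{\Sp}(S^{\an})$ is lax symmetric monoidal, so the construction of Lemma~\ref{lemma-euler-pairing} carries over verbatim: combine the lax structure map $\Ktop(\cC^{\op}/S)\otimes\Ktop(\cC/S)\to\Ktop(\cC^{\op}\otimes_{\Dperf(S)}\cC/S)$, the canonical equivalence $\Ktop(\cC^{\op}/S)\simeq\Ktop(\cC/S)$, and the relative topological K-theory of the evaluation functor $\cC^{\op}\otimes_{\Dperf(S)}\cC\to\Dperf(S)$, then pass to homotopy sheaves using $\pi_{2n}\Ktop(\Dperf(S)/S)\cong\underline{\bZ}$; base change along $s\in S(\bC)$ recovers the Euler pairing of Lemma~\ref{lemma-euler-pairing}. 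Alternatively, one can simply glue the fibrewise Euler pairings of Lemma~\ref{lemma-euler-pairing}, which are monodromy-invariant because defined by a functorial recipe.

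The hard part will be the relative comparison used in part~(2): checking that $\HP_n(\Dperf(X)/S)$ with its noncommutative Hodge-to-de Rham filtration, assembled via Moulinos's relative Chern character, genuinely underlies the classical variation of Hodge structures --- this needs a relative HKR isomorphism matching the filtrations and the compatibility of the relative noncommutative Chern character with the classical topological one, together with the base-change control supplied by Theorem~\ref{theorem-degeneration}. Granting the geometric case, the local system statement, the passage to semiorthogonal summands, the fibrewise identifications, and the Euler pairing are all formal.
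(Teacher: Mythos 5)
Your proposal is correct and follows essentially the same route as the paper: reduce everything to the geometric case $\cC = \Dperf(X)$ via the summand decomposition of $\Ktop(\Dperf(X)/S)$ coming from the semiorthogonal decomposition, use Ehresmann's theorem together with Theorem~\ref{theorem-KtopS}\eqref{Ktop-sheaf} for the local system statement, and appeal to classical Hodge theory of the family plus the base-change and local-freeness input of Theorem~\ref{theorem-degeneration} for the variation of Hodge structures and the pairing. The paper's own proof is in fact much terser (it only writes out part~\eqref{Ktop-fiber} explicitly), so your write-up supplies more detail than the original but no new ideas beyond it.
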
 

\begin{remark}
\label{remark-Ktop-VHS} 
Similar to Remark~\ref{remark-Ktop-HS}, we conjecture that Proposition~\ref{proposition-Ktop-HS} remains true for any smooth proper $S$-linear category. 
\end{remark} 

\begin{proof}
As in Proposition~\ref{proposition-Ktop-HS}, all of the statements reduce to the case $\cC = \Dperf(X)$, in 
which case they follow from standard results. 
For example, let us explain the details of~\eqref{Ktop-fiber}. 
By Ehresmann's theorem and Theorem~\ref{theorem-KtopS}\eqref{Ktop-sheaf}, 
$\Ktop[n](\Dperf(X)/S)$ is a local system of abelian groups on $S^{\an}$ whose 
fiber over any point $s \in S(\bC)$ is $\Ktop[n](X_s) \simeq \Ktop[n](\Dperf(X_s))$. 
This implies that $\Ktop[n](\cC/S)$ is a local system, being a summand of $\Ktop[n](\Dperf(X)/S)$, 
and by functoriality the fiber of this local system over $s \in S(\bC)$ 
is the summand $\Ktop[n](\cC_s)$ of $\Ktop[n](\Dperf(X_s))$. 
\end{proof}

\subsection{The noncommutative Hodge conjecture and its variants} 
Using the above, we can formulate a natural notion of Hodge classes on a category. 

\begin{definition}
Let $\cC \subset \Dperf(X)$ be a $\bC$-linear admissible subcategory, 
where $X$ is a smooth proper complex variety. 
The \emph{group of integral Hodge classes} $\Hdg(\cC, \bZ)$ 
on $\cC$ is the subgroup of Hodge classes in $\Ktop[0](\cC)$ for the 
Hodge structure given by Proposition~\ref{proposition-Ktop-HS}. 
More explicitly, $\Hdg(\cC, \bZ)$ consists of all classes in $\Ktop[0](\cC)$ which 
map to $\HH_0(\cC)$ under the Hodge decomposition  
\begin{equation*}
\Ktop[0](\cC) \otimes \bC \cong \bigoplus_{p+q = 0} \HH_{p-q}(\cC) . 
\end{equation*} 
The \emph{group of rational Hodge classes} is defined by 
$\Hdg(\cC, \bQ) = \Hdg(\cC, \bZ) \otimes \bQ$. 
We say an element $v \in \Ktop[0](\cC)$ is \emph{algebraic} if it is in the image of $\rK_0(\cC) \to \Ktop[0](\cC)$; similarly, an element $v \in \Ktop[0](\cC) \otimes \bQ$ is \emph{algebraic} if it is in the image of $\rK_0(\cC) \otimes \bQ \to \Ktop[0](\cC) \otimes \bQ$. 
\end{definition} 

By Proposition~\ref{proposition-Ktop-HS}\eqref{Ktop-H-comparison}, 
if $X$ is a smooth proper complex variety, then the Chern character 
identifies $\Hdg(\Dperf(X), \bQ)$ with the 
usual group of rational Hodge classes $\Hdg^*(X, \bQ)$, i.e. the group of Hodge classes for 
$\bigoplus_{k \in \bZ} \rH^{2k}(X, \bQ)(k)$. 
Recall that the cycle class map from the Chow ring 
\begin{equation*}
\CH^*(X) \otimes \bQ \to \rH^*(X, \bQ) 
\end{equation*} 
factors through $\Hdg^*(X, \bQ)$, and the usual Hodge conjecture predicts that 
this map surjects onto $\Hdg^*(X, \bQ)$. 
Since the Chern character gives an isomorphism 
\begin{equation*} 
\rK_0(\Dperf(X)) \otimes \bQ \cong \CH^*(X) \otimes \bQ, 
\end{equation*} 
we conclude that the map 
\begin{equation*}
\rK_0(\Dperf(X)) \to \Ktop[0](\Dperf(X)) 
\end{equation*}
factors through $\Hdg(\Dperf(X), \bZ)$, and the usual Hodge conjecture is equivalent to 
the surjectivity of the map $\rK_0(\Dperf(X)) \otimes \bQ \to \Hdg(\Dperf(X), \bQ)$. 
Now using additivity under semiorthogonal decompositions of all the invariants in sight 
leads to the following lemma and conjecture.  

\begin{lemma}
Let $\cC \subset \Dperf(X)$ be a $\bC$-linear admissible subcategory, 
where $X$ is a smooth proper complex variety. 
Then the map $\rK_0(\cC) \to \Ktop[0](\cC)$ factors through $\Hdg(\cC, \bZ) \subset \Ktop[0](\cC)$. 
\end{lemma}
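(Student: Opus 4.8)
The plan is to bootstrap from the case $\cC = \Dperf(X)$, which is already in hand, using that $\cC$ is a semiorthogonal component of $\Dperf(X)$ and that all the invariants in sight are functorial (equivalently, additive) along semiorthogonal decompositions. Since $\cC \subset \Dperf(X)$ is admissible it is in particular left admissible, so $\Dperf(X) = \llangle \cC, {^{\perp}}\cC \rrangle$ is a $\bC$-linear semiorthogonal decomposition; I write $\gamma \colon \cC \hookrightarrow \Dperf(X)$ for the inclusion and $\pi \colon \Dperf(X) \to \cC$ for the projection onto this component, so that $\pi \circ \gamma \simeq \id_{\cC}$.

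I would then assemble two inputs. First, for $\Dperf(X)$ itself the statement is already established in the discussion preceding the lemma: via Proposition~\ref{proposition-Ktop-HS} the map $\rK_0(\Dperf(X)) \to \Ktop[0](\Dperf(X))$ is identified rationally with the cycle class map $\CH^*(X) \otimes \bQ \to \Hdg^*(X, \bQ)$, so its image lies in $\Hdg(\Dperf(X), \bZ)$. Second, $\rK_0(-) \to \Ktop[0](-)$ is a natural transformation (the commutative square of Theorem~\ref{theorem-Ktop}), and the Hodge structure of Proposition~\ref{proposition-Ktop-HS} is constructed from the natural isomorphism $\Ktop[0](-) \otimes \bC \cong \HP_0(-)$ together with the natural noncommutative Hodge-to-de Rham filtration $F^\bullet$ on $\HP_0$; hence $\gamma$ and $\pi$ induce maps on $\Ktop[0]$ whose complexifications preserve $F^\bullet$, and being defined over $\bZ$ they therefore preserve the $(0,0)$-part $\HH_0$ and hence carry integral Hodge classes to integral Hodge classes.

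With these in place the argument is one line. Given $x \in \rK_0(\cC)$ with image $\bar{x} \in \Ktop[0](\cC)$, naturality gives that $\gamma_* x \in \rK_0(\Dperf(X))$ maps to $\gamma_*\bar{x} \in \Ktop[0](\Dperf(X))$, which lies in $\Hdg(\Dperf(X), \bZ)$ by the case of $\Dperf(X)$; applying $\pi_*$ (which preserves integral Hodge classes) yields $\pi_*\gamma_*\bar{x} \in \Hdg(\cC, \bZ)$; and $\pi_*\gamma_*\bar{x} = \bar{x}$ since $\pi \circ \gamma \simeq \id_{\cC}$. Thus the image of $\rK_0(\cC) \to \Ktop[0](\cC)$ lies in $\Hdg(\cC, \bZ)$, as desired.

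The only point needing care — though I do not expect it to be a genuine obstacle, given how Proposition~\ref{proposition-Ktop-HS} is set up — is the functoriality of the Hodge structure on $\Ktop[0]$ in the category, used to see that $\pi_*$ preserves the Hodge decomposition; this is implicit in the manifestly natural construction (topological K-theory, periodic cyclic homology, the Chern character square, and the degenerating noncommutative Hodge-to-de Rham spectral sequence). If one prefers to avoid the embedding altogether, there is an intrinsic variant: the Chern character square of Theorem~\ref{theorem-Ktop} forces the image of $\rK_0(\cC)$ in $\Ktop[0](\cC) \otimes \bC \cong \HP_0(\cC)$ to lie in the image of $\HN_0(\cC) \to \HP_0(\cC)$, which the degeneration theorem identifies with the zeroth step $F^0$ of the Hodge filtration, and for a weight $0$ Hodge structure an integral class lying in $F^0$ automatically lies in $\overline{F^0}$, hence in the $(0,0)$-part $\HH_0(\cC)$, i.e. is an integral Hodge class. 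On that route the delicate point instead becomes pinning down that the image of $\HN_0 \to \HP_0$ is exactly $F^0$ for an abstract smooth proper $\bC$-linear category, which the argument via the embedding sidesteps.
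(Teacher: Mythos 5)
Your argument is correct and is essentially the paper's own: the lemma is deduced from the case $\cC = \Dperf(X)$ (where the map is rationally the cycle class map into $\Hdg^*(X,\bQ)$) together with additivity/functoriality of $\rK_0$, $\Ktop[0]$, and the Hodge structure under semiorthogonal decompositions, which is exactly what your $\gamma$/$\pi$ bookkeeping makes explicit. The intrinsic variant via $\HN_0 \to \HP_0$ is a reasonable aside, but the embedding route you take is the one the paper intends.
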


\begin{conjecture}[Noncommutative Hodge conjecture] 
Let $\cC \subset \Dperf(X)$ be a $\bC$-linear admissible subcategory, 
where $X$ is a smooth proper complex variety. 
Then the map 
\begin{equation*}
\rK_0(\cC) \otimes \bQ \to \Hdg(\cC, \bQ) 
\end{equation*} 
is surjective. 
\end{conjecture}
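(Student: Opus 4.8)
The plan is to deduce the statement from the classical Hodge conjecture for $X$, by using that $\cC$ occurs as a semiorthogonal factor of $\Dperf(X)$ and that every invariant in sight splits compatibly along such a decomposition. Since $\cC \subset \Dperf(X)$ is admissible, it is in particular right admissible, so there is a semiorthogonal decomposition $\Dperf(X) = \llangle \cC^{\perp}, \cC \rrangle$. As recalled just before the statement, under the Chern character identifications $\rK_0(\Dperf(X)) \otimes \bQ \cong \CH^*(X) \otimes \bQ$ and $\Hdg(\Dperf(X), \bQ) \cong \Hdg^*(X, \bQ)$, the surjectivity of $\rK_0(\Dperf(X)) \otimes \bQ \to \Hdg(\Dperf(X), \bQ)$ is exactly the classical Hodge conjecture for $X$; the goal is to transport this surjectivity onto the factor $\cC$.

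First I would record the relevant additivities and their compatibility. Additivity of algebraic $\rK$-theory gives $\rK_0(\Dperf(X)) \cong \rK_0(\cC^{\perp}) \oplus \rK_0(\cC)$, and Theorem~\ref{theorem-Ktop} gives $\Ktop[0](\Dperf(X)) \cong \Ktop[0](\cC^{\perp}) \oplus \Ktop[0](\cC)$. Because the Hodge filtration on $\Ktop[0]$ is computed by Hochschild homology (Proposition~\ref{proposition-Ktop-HS}), which is additive along semiorthogonal decompositions (\S\ref{section-HH-homology}), the latter splitting is one of Hodge structures; taking rational Hodge classes gives $\Hdg(\Dperf(X), \bQ) \cong \Hdg(\cC^{\perp}, \bQ) \oplus \Hdg(\cC, \bQ)$.

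Next I would observe that the cycle class map is block diagonal for these splittings. All three decompositions are induced by the projection functor onto $\cC$ (and its orthogonal), and the Chern character is functorial via the commutative square of Theorem~\ref{theorem-Ktop}, so it commutes with these projections. Concretely, given a class $w \in \Hdg(\cC, \bQ)$, I would view it inside $\Hdg(\Dperf(X), \bQ)$, apply the classical Hodge conjecture for $X$ to obtain $v \in \rK_0(\Dperf(X)) \otimes \bQ$ with $\ch(v) = w$, and then project: the image of $v$ under $\rK_0(\Dperf(X)) \to \rK_0(\cC)$ maps to the $\cC$-component of $\ch(v)$, namely $w$. This exhibits $w$ in the image of $\rK_0(\cC) \otimes \bQ \to \Hdg(\cC, \bQ)$, proving surjectivity.

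The main obstacle --- and the reason the statement is recorded as a conjecture rather than a theorem --- is that this argument is purely a reduction: it establishes the statement only conditionally on the classical Hodge conjecture for $X$, which is open in general. In fact the reduction runs both ways, since $\Dperf(X)$ is itself an admissible subcategory of $\Dperf(X)$, so the noncommutative Hodge conjecture for all admissible $\cC$ is equivalent to the classical Hodge conjecture for all smooth proper complex varieties; no unconditional proof can be expected from these formal manipulations alone.
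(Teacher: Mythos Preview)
Your analysis is correct, and it matches what the paper does. The statement is recorded as a conjecture precisely because it is equivalent to the classical Hodge conjecture; the paper does not prove it. The surrounding discussion in the paper derives the conjecture exactly as you do, by noting that $\rK_0$, $\Ktop[0]$, and the Hodge filtration (via Hochschild homology) are all additive under semiorthogonal decompositions, so the map $\rK_0(\cC)\otimes\bQ \to \Hdg(\cC,\bQ)$ is a direct summand of the corresponding map for $\Dperf(X)$. The paper then records the equivalence you state in your last paragraph as Lemma~\ref{lemma-HC-iff-NCHC} (for $\cC = \Dperf(X)$) and the compatibility with semiorthogonal decompositions as Lemma~\ref{lemma-IHC-sod}. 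So there is nothing to compare: your ``proof'' is the same conditional reduction the paper gives, and you correctly flag that it cannot be made unconditional.
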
 

We record the following observation from above. 
\begin{lemma}
\label{lemma-HC-iff-NCHC}
Let $X$ be a smooth proper complex variety. 
Then the Hodge conjecture holds for $X$ if and only if the 
Hodge conjecture holds for $\Dperf(X)$. 
\end{lemma}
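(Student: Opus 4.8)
The plan is to unwind the identifications set up in the discussion preceding the lemma. Recall that the Chern character induces an isomorphism $\rK_0(\Dperf(X)) \otimes \bQ \cong \bigoplus_{n} \CH^n(X) \otimes \bQ$ and an isomorphism $\Ktop[0](\Dperf(X)) \otimes \bQ \cong \bigoplus_{k} \rH^{2k}(X,\bQ)(k)$ of rational Hodge structures, the latter by Proposition~\ref{proposition-Ktop-HS}\eqref{Ktop-H-comparison}. The first step is to check that, under these isomorphisms, the natural map $\rK_0(\Dperf(X)) \otimes \bQ \to \Ktop[0](\Dperf(X)) \otimes \bQ$ is compatible with the codimension and Hodge filtrations and induces on the $k$-th graded piece the cycle class map $\CH^k(X) \otimes \bQ \to \rH^{2k}(X,\bQ)(k)$; this is the compatibility of the algebraic and topological Chern characters from Theorem~\ref{theorem-Ktop}\eqref{Ktop-X-comparison} together with the classical relation between Chern characters and cycle classes. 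In particular the image of $\rK_0(\Dperf(X)) \otimes \bQ$ in the $k$-th summand $\rH^{2k}(X,\bQ)(k)$ is the $\bQ$-span of the classes of codimension-$k$ cycles.

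The second step is to identify the target. Since the displayed isomorphism is one of rational Hodge structures and each Tate twist $\rH^{2k}(X,\bQ)(k)$ is itself a Hodge structure of weight $0$, the subgroup of $(0,0)$-classes of $\bigoplus_k \rH^{2k}(X,\bQ)(k)$ is $\bigoplus_k \Hdg^k(X,\bQ)$, where $\Hdg^k(X,\bQ) \subset \rH^{2k}(X,\bQ)(k)$ is the usual group of degree-$k$ rational Hodge classes. Hence $\Hdg(\Dperf(X),\bQ) \cong \bigoplus_k \Hdg^k(X,\bQ)$, and by the first step the factorized map $\rK_0(\Dperf(X)) \otimes \bQ \to \Hdg(\Dperf(X),\bQ)$ is identified with the direct sum over $k$ of the cycle class maps $\CH^k(X) \otimes \bQ \to \Hdg^k(X,\bQ)$.

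The final step is the trivial observation that a direct sum of homomorphisms of abelian groups is surjective if and only if each summand is. Therefore the Hodge conjecture for $\Dperf(X)$, i.e. surjectivity of $\rK_0(\Dperf(X)) \otimes \bQ \to \Hdg(\Dperf(X),\bQ)$, holds precisely when $\CH^k(X) \otimes \bQ \to \Hdg^k(X,\bQ)$ is surjective for every $k$, which is the Hodge conjecture for $X$ in all degrees. I do not expect a real obstacle here: all the ingredients are already in place above, and the only point meriting care is the filtered compatibility of the two Chern characters with the cycle class map used in the first step.
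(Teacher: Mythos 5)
Your proposal is correct and follows essentially the same route as the paper, which proves this lemma by exactly the identifications you describe in the discussion preceding its statement (the Chern character isomorphisms $\rK_0(\Dperf(X))\otimes\bQ \cong \CH^*(X)\otimes\bQ$ and $\Ktop[0](\Dperf(X))\otimes\bQ \cong \bigoplus_k \rH^{2k}(X,\bQ)(k)$, plus their compatibility with the cycle class map). Your explicit attention to the filtered compatibility of the two Chern characters is the right point to flag, but it is the same argument.
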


There is also an obvious integral variant of the Hodge conjecture for categories. 
\begin{conjecture}[Noncommutative integral Hodge conjecture] 
\label{conjecture-NCIHC}
Let $\cC \subset \Dperf(X)$ be a $\bC$-linear admissible subcategory, 
where $X$ is a smooth proper complex variety. 
Then the map 
\begin{equation*}
\rK_0(\cC) \to \Hdg(\cC, \bZ) 
\end{equation*} 
is surjective. 
\end{conjecture}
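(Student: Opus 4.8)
The first point to confront is that the displayed surjectivity cannot hold for an arbitrary admissible $\cC$: specializing to $\cC = \Dperf(X)$, the map $\rK_0(\Dperf(X)) \to \Hdg(\Dperf(X), \bZ)$ is, via the comparison of Proposition~\ref{proposition-Ktop-HS}\eqref{Ktop-H-comparison} and the close relationship recorded in Proposition~\ref{proposition-IHC-vs-NCIHC}, tied to the classical integral cycle class map landing in integral Hodge classes, which the Atiyah--Hirzebruch examples show is not surjective for general $X$. So the conjecture is genuinely open, and false in the naive unconditional sense, and any plan must either add hypotheses or restrict the class of categories. My plan is therefore to describe the program by which one establishes the conjecture in favorable cases, as the paper does for two-dimensional Calabi--Yau components.

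Granting such geometric input, the strategy proceeds in two stages. First, reduce to an indecomposable situation: by additivity of $\Ktop[0]$, of the Euler pairing, and of $\rK_0$ under semiorthogonal decompositions (Theorem~\ref{theorem-Ktop} and Lemma~\ref{lemma-euler-pairing}), a Hodge class of $\cC$ splits into Hodge classes of the components, exceptional components contribute only manifestly algebraic classes, and it suffices to treat each nontrivial component separately. Second, for a component that is a CY2 category, attack a generator $v$ of the cokernel of $\rK_0(\cC) \to \Hdg(\cC, \bZ)$ variationally: produce a family $\cD$ of CY2 categories over a base $S$ with $\cD_0 \simeq \cC$ and a second fiber $\cD_1 \simeq \Dperf(T, \alpha)$ for a K3 or abelian surface $T$, along which $v$ stays of Hodge type as a section of $\tH(\cD/S, \bZ)$, and with $v$ satisfying the self-intersection bound of Theorem~\ref{theorem-IHC}. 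On the geometric fiber such a class is represented by a simple universally gluable object, so it remains to propagate algebraicity from $\cD_1$ back to $\cD_0$; this is exactly the content of Theorem~\ref{theorem-IHC}, whose ``in particular'' clause then yields surjectivity once the cokernel is generated by such $v$.

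The genuinely hard input --- and the step I expect to be the main obstacle --- is the relative smoothness of the moduli spaces that drives this propagation, namely Theorem~\ref{theorem-mukai}: for a fiberwise-Hodge section $\vphi$ of $\tH(\cD/S, \bZ)$, the substack $s\cM(\cD/S, \vphi)$ of simple objects and the underlying algebraic space $s\rM(\cD/S, \vphi)$ are smooth over $S$. This generalizes the theorems of Mukai~\cite{mukai-K3} and Inaba~\cite{inaba} from a fixed surface to a family of CY2 categories, with no ambient variety to lean on. Its proof rests on the deformation theory of Lemma~\ref{lemma-defE}: the obstruction to deforming an object $E$ along a square-zero thickening of the base is the Kodaira--Spencer class $\kappa(\cD')(E) \in \Ext^2(E, E \otimes I)$, and one must show this obstruction vanishes for objects of class $\vphi$ using only the CY2 Serre duality $\Ext^2(E, E) \cong \Ext^0(E, E)^{\svee}$ together with the Hodge-type hypothesis on $\vphi$, uniformly in the family. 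Given such smoothness, a Bloch-type semiregularity argument (the criterion of Proposition~\ref{proposition-VHC-criterion}, which underlies Theorem~\ref{theorem-VHC}) shows that algebraicity of $\vphi_0$ at a point where $\cM(\cD/S, \vphi) \to S$ is smooth spreads to every fiber; applying this with the geometric fiber $\cD_1$ as the smooth point delivers the algebraicity of $v$ on $\cC$, and with it the conditional surjectivity the conjecture asks for.
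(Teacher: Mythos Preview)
The statement is labeled a \emph{conjecture} in the paper and carries no proof; the very next remark and Example~\ref{example-NCIHC-false} explain that it is false in general (via Koll\'ar's threefolds and Corollary~\ref{corollary-NCIHC-threefold}). Your proposal correctly recognizes this and then accurately summarizes the paper's conditional program---additivity under semiorthogonal decompositions, the variational criterion of Proposition~\ref{proposition-VHC-criterion} and Theorem~\ref{theorem-VHC}, the smoothness result Theorem~\ref{theorem-mukai}, and the deformation-to-$\Dperf(T,\alpha)$ strategy of Theorem~\ref{theorem-IHC}---so there is nothing to correct.
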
 

\begin{remark}
As we explain in Example~\ref{example-NCIHC-false} below, 
Conjecture~\ref{conjecture-NCIHC} is false in general. 
Nonetheless, we call it a ``conjecture'' in keeping with 
similar terminology for the (known to be false) integral Hodge conjecture 
for varieties.  
\end{remark} 

The integral Hodge conjectures for varieties and categories are closely related, but 
not so simply as in the rational case. 
The result can be conveniently formulated in terms of Voisin groups. 
Recall from \S\ref{section-introduction} that for a smooth proper complex variety $X$, 
the degree $n$ Voisin group $\Voi^n(X)$ is defined as the cokernel of the cycle 
class map $\CH^n(X) \to \Hdg^n(X, \bZ)$.

\begin{definition}
Let $\cC \subset \Dperf(X)$ be a $\bC$-linear admissible subcategory, 
where $X$ is a smooth proper complex variety. 
The \emph{Voisin group} of $\cC$ is the cokernel 
\begin{equation*}
\Voi(\cC) = \coker(\rK_0(\cC) \to \Hdg(\cC, \bZ)). 
\end{equation*} 
\end{definition} 
Note that the integral Hodge conjecture holds for $\cC$ if and only if $\Voi(\cC) = 0$. 

\begin{proposition}
\label{proposition-IHC-vs-NCIHC}
Let $X$ be a smooth proper complex variety. 
Assume that $\rH^*(X, \bZ)$ is torsion free. 
\begin{enumerate}
\item \label{IHC-implies-NCIHC}
If the integral Hodge conjecture holds in all degrees for $X$, then the 
integral Hodge conjecture holds for $\Dperf(X)$. 
\item \label{NCIHC-implies-IHC}
Assume further that for some integer $n$, the cohomology 
$\rH^{2m}(X, \bZ)$ is of Tate type for all $m > n$, i.e. 
$\rH^{2m}(X, \bC) = \rH^{m,m}(X)$ for $m > n$. 
If $\Voi(\Dperf(X))$ is $d$-torsion for some integer $d$, then 
the $\Voi^m(X)$ is $d (m-1)!$-torsion for all $m \geq n$. 
In particular, if $n = 2$ and the integral Hodge conjecture holds for $\Dperf(X)$, then 
the integral Hodge conjecture in degree $2$ holds for $X$.  
\end{enumerate}
\end{proposition}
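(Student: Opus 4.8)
The plan is to work with the topological (Atiyah--Hirzebruch) filtration on $\Ktop[0](\Dperf(X)) \cong \Ktop[0](X^{\an})$ (Theorem~\ref{theorem-Ktop}\eqref{Ktop-X-comparison}) together with the compatibility of the algebraic and topological Chern characters. Since $\rH^*(X,\bZ)$ is torsion free, the Atiyah--Hirzebruch spectral sequence degenerates, so $\Ktop[0](X^{\an})$ is a torsion-free finitely generated abelian group and $\ch^{\rtop}\colon \Ktop[0](X^{\an}) \hookrightarrow \bigoplus_k \rH^{2k}(X,\bQ)$ is injective. There is a decreasing filtration $\Ktop[0](X^{\an}) = G^0 \supseteq G^1 \supseteq \cdots \supseteq G^{d+1} = 0$ ($d = \dim X$), the topological filtration, with $G^j = \{ v : \ch^{\rtop}_i(v) = 0 \text{ for } i < j\}$, and $\ch^{\rtop}_j$ restricts to a surjection $G^j \twoheadrightarrow \rH^{2j}(X,\bZ)$ with kernel $G^{j+1}$. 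For any $E \in \rK_0(X)$ supported in codimension $\geq j$ one has $\mathrm{cl}(E) \in G^j$ and $\ch^{\rtop}_j(\mathrm{cl}(E)) = \mathrm{cl}(\ch^{\alg}_j(E))$, where $\ch^{\alg}_j(E) \in \CH^j(X)$ is the codimension-$j$ cycle of $E$. Finally, by Proposition~\ref{proposition-Ktop-HS}\eqref{Ktop-H-comparison} a class $v$ is a Hodge class of $\Dperf(X)$ precisely when each $\ch^{\rtop}_k(v) \in \rH^{2k}(X,\bQ)$ lies in $\Hdg^k(X,\bQ)$.

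\textbf{Part~\eqref{IHC-implies-NCIHC}.} I would prove by descending induction on $j$ that every Hodge class $v \in G^j$ is algebraic, the case $j = d+1$ being vacuous. Given a Hodge class $v \in G^j$, the class $\ch^{\rtop}_j(v) \in \rH^{2j}(X,\bZ)$ is Hodge, hence lies in $\Hdg^j(X,\bZ)$, so by the integral Hodge conjecture for $X$ in degree $j$ it equals $\mathrm{cl}(\zeta)$ for some $\zeta \in \CH^j(X)$. Representing $\zeta$ by a combination $E_\zeta \in \rK_0(X)$ of structure sheaves of codimension-$j$ subvarieties, the class $w = \mathrm{cl}(E_\zeta) \in G^j$ satisfies $\ch^{\rtop}_j(w) = \mathrm{cl}(\zeta) = \ch^{\rtop}_j(v)$, so $v - w \in G^{j+1}$; and $v-w$ is a Hodge class, since $w$ is algebraic and hence Hodge. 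By induction $v - w$ is algebraic, so $v = w + (v-w)$ is too. Taking $j = 0$ gives the integral Hodge conjecture for $\Dperf(X)$.

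\textbf{Part~\eqref{NCIHC-implies-IHC}.} Fix $m \geq n$ and $\alpha \in \Hdg^m(X,\bZ)$. First lift $\alpha$ to a Hodge class of $\Dperf(X)$: choose any $\tilde v \in G^m$ with $\ch^{\rtop}_m(\tilde v) = \alpha$. Then $\ch^{\rtop}_i(\tilde v) = 0$ for $i < m$ and $\ch^{\rtop}_m(\tilde v) = \alpha$ are Hodge, while for $i > m$ we have $i > n$, so $\rH^{2i}(X,\bC) = \rH^{i,i}(X)$ by hypothesis and $\ch^{\rtop}_i(\tilde v)$ is automatically Hodge; thus $\tilde v \in \Hdg(\Dperf(X),\bZ)$. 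Since $\Voi(\Dperf(X))$ is $d$-torsion, $d\tilde v = \mathrm{cl}(E)$ for some $E \in \rK_0(\Dperf(X)) = \rK_0(X)$. For $i < m$ the class $\ch^{\alg}_i(E) \in \CH^i(X)_\bQ$ is homologically trivial because $\mathrm{cl}(\ch^{\alg}_i(E)) = \ch^{\rtop}_i(d\tilde v) = 0$; consequently each $c_i(E)$ with $i < m$, being a universal polynomial with zero constant term in $\ch^{\alg}_1(E),\dots,\ch^{\alg}_i(E)$, is homologically trivial as well. Newton's identities give $\ch^{\alg}_m = \tfrac{(-1)^{m-1}}{(m-1)!}\,c_m + Q_m(c_1,\dots,c_{m-1})$ with $Q_m$ a polynomial of zero constant term, so
\begin{equation*}
(m-1)!\,\ch^{\alg}_m(E) = (-1)^{m-1}c_m(E) + (m-1)!\,Q_m(c_1(E),\dots,c_{m-1}(E)),
\end{equation*}
where $c_m(E) \in \CH^m(X)$ and the last summand, a sum of products each involving a homologically trivial factor, is homologically trivial. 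Applying the cycle class map therefore gives
\begin{equation*}
(m-1)!\,d\,\alpha = (m-1)!\,\ch^{\rtop}_m(\mathrm{cl}(E)) = \mathrm{cl}\big((-1)^{m-1}c_m(E)\big) \in \mathrm{cl}(\CH^m(X)).
\end{equation*}
As $\alpha$ was arbitrary, $\Voi^m(X)$ is $d(m-1)!$-torsion. The last assertion is the case $n = m = 2$ with $d = 1$, so that $d(m-1)! = 1$.

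\textbf{Main obstacle.} The delicate part is~\eqref{NCIHC-implies-IHC}: an integral class of $\Ktop[0](\Dperf(X))$ need not have integral Chern character components, and the discrepancy is exactly the $(m-1)!$ of Grothendieck's Riemann--Roch without denominators. The key device above is to avoid correcting $E$ into the codimension filtration — which would reintroduce denominators and spoil the bound — by observing instead that $(m-1)!\,\ch^{\alg}_m(E) - (-1)^{m-1}c_m(E)$ is homologically trivial once the lower Chern character components of $E$ vanish homologically, hence disappears under the cycle class map. Carefully justifying this homological triviality, the identity $\ch^{\rtop}_j \circ \mathrm{cl} = \mathrm{cl} \circ \ch^{\alg}_j$, and the description of $G^\bullet$ in terms of codimension of supports (which uses torsion-freeness of $\rH^*(X,\bZ)$, via degeneration of the Atiyah--Hirzebruch spectral sequence) is where the real work lies.
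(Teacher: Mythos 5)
Your argument is correct and follows essentially the same route as the paper: part~(1) is the same descending induction that strips off the integral Hodge leading term of the Chern character using the Atiyah--Hirzebruch facts for torsion-free cohomology, and part~(2) is the same Newton-identity / Riemann--Roch-without-denominators computation producing $(m-1)!\,d\,\alpha = \pm\,\mathrm{cl}(c_m(E))$. The one step to make explicit is that this last identity is first obtained in $\rH^{2m}(X,\bQ)$, so torsion-freeness of $\rH^{2m}(X,\bZ)$ must be invoked once more to upgrade it to an equality of integral classes, exactly as the paper does.
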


\begin{proof}
For the proof we will need the following properties, which 
hold by \cite[\S2.5]{atiyah-hirzebruch} due to our assumption that $\rH^*(X, \bZ)$ is torsion free: 
\begin{enumerate}
\item \label{Ktop-tf} $\Ktop[0](X)$ is torsion free and $\ch \colon \Ktop[0](X) \to \rH^{\mathrm{even}}(X, \bQ)$ is injective. 
\item \label{cha-integral} 
For any $v \in \Ktop[0](X)$ the leading term of $\ch(v)$ is integral, i.e. if 
$\ch(v) = \alpha_i + \alpha_{i+1} + \cdots$ with $\alpha_j \in \rH^{2j}(X, \bQ)$ 
then $\alpha_i \in \rH^{2i}(X, \bZ)$. 
\item \label{lift-to-Ktop} For any $\alpha_i \in \rH^{2i}(X, \bZ)$ there exists $v \in \Ktop[0](X)$ 
such that the leading term of $\ch(v)$ is~$\alpha_i$. 
\end{enumerate} 
(The analogous assertions relating $\Ktop[1](X)$ and the odd cohomology of $X$ are also true, but we will not need this.) 

Now assume that the integral Hodge conjecture holds in all degrees for $X$. 
We must show that any $v \in \Hdg(\Dperf(X), \bZ)$ 
is in the image of $\rK_0(\Dperf(X))$. 
Write $\ch(v) = \alpha_i + \alpha_{i+1} + \cdots$ as above. 
Then $\alpha_i$ is a Hodge class by Proposition~\ref{proposition-Ktop-HS}\eqref{Ktop-H-comparison} and integral by property~\eqref{cha-integral} above, i.e. $\alpha_i \in \Hdg^{i}(X, \bZ)$. 
Therefore, by assumption there are closed 
subvarieties $Z_k \subset X$ of codimension $i$ and integers $c_k \in \bZ$ 
such that $\alpha_i$ is the cycle class of $\sum c_k Z_k$. 
Replacing $v$ by $v - \sum c_k [\cO_{Z_k}]$, we may thus assume $\alpha_i = 0$. 
Continuing in this way, we may assume that $\ch(v) = 0$.
But then $v = 0$ by property~\eqref{Ktop-tf} above, so we are done. 
This proves part~\eqref{IHC-implies-NCIHC} of the proposition. 

Now assume that the cohomological condition in part~\eqref{NCIHC-implies-IHC} of the proposition holds, and that $\Voi(\Dperf(X))$ is $d$-torsion. 
Let $m \geq n$ and $\alpha_m \in \Hdg^m(X,\bZ)$. 
By property~\eqref{lift-to-Ktop} above we may choose a class $v \in \Ktop[0](X)$ such that 
$\ch(v) = \alpha_m + \alpha_{m+1} + \cdots$ where $\alpha_i \in \rH^{2i}(X, \bQ)$. 
By assumption $\alpha_m$ is a Hodge class, and so is $\alpha_i$ for $i > m$ because 
$\rH^{2i}(X, \bZ)$ is of Tate type. 
Thus $v \in \Hdg(\Dperf(X), \bZ)$ is a Hodge class by Proposition~\ref{proposition-Ktop-HS}\eqref{Ktop-H-comparison}. 
Therefore, by assumption there is an object $E \in \Dperf(X)$ whose class in 
$\Ktop[0](X)$ is $dv$, and so $\ch(E) = d\alpha_{m} + d\alpha_{m+1} + \cdots$. 
By the standard formula for the Chern character in terms of Chern classes, 
the vanishing of $\ch_i(E)$ for $i < m$ 
implies that $d\alpha_m = \frac{(-1)^{m-1}}{(m-1)!} c_m(E)$ 
in $\rH^{2m}(X, \bQ)$. 
By torsion freeness of $\rH^{2m}(X, \bZ)$, this is equivalent to 
$d(m-1)! \alpha_m = (-1)^{m-1}c_m(E)$ in $\rH^{2m}(X, \bZ)$. 
This proves that $d(m-1)!$ kills the class of $\alpha_m$ in the cokernel of $\CH^m(X) \to \Hdg^m(X, \bZ)$, as 
required. 
\end{proof} 

\begin{corollary}
\label{corollary-NCIHC-surface}
Let $X$ be a smooth proper complex variety with $\dim(X) \leq 2$, and assume 
$\rH^*(X, \bZ)$ is torsion free in case $\dim(X) = 2$. 
Then the integral Hodge conjecture holds for $\Dperf(X)$. 
\end{corollary}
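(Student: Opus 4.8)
The plan is to deduce this directly from Proposition~\ref{proposition-IHC-vs-NCIHC}\eqref{IHC-implies-NCIHC}, which already asserts that if $\rH^*(X,\bZ)$ is torsion free and the usual integral Hodge conjecture holds in all degrees for $X$, then the integral Hodge conjecture holds for $\Dperf(X)$. So it suffices to verify the two hypotheses of that statement --- torsion-freeness of $\rH^*(X,\bZ)$ and the classical integral Hodge conjecture in every degree --- under the assumption $\dim(X) \le 2$.

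First I would dispatch the torsion-freeness hypothesis: when $\dim(X) = 2$ it is part of the assumption in the corollary, and when $\dim(X) \le 1$ it is automatic, since the integral cohomology of a point or of a smooth proper curve is torsion free in every degree. Next I would check the classical integral Hodge conjecture in all degrees: the only degrees $n$ for which $\rH^{2n}(X,\bZ)$ can be nonzero are $0 \le n \le \dim(X)$, and among these the conjecture holds in degrees $n = 0$ and $n = \dim(X)$ for the trivial reasons recalled in the introduction. The only remaining case is $n = 1$, which occurs only when $\dim(X) = 2$; there the integral Hodge conjecture is exactly the Lefschetz $(1,1)$ theorem, i.e. the statement that the cycle class map $\CH^1(X) = \Pic(X) \to \rH^2(X,\bZ)$ surjects onto $\Hdg^1(X,\bZ)$. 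With both hypotheses verified, Proposition~\ref{proposition-IHC-vs-NCIHC}\eqref{IHC-implies-NCIHC} then yields the corollary.

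There is no deep obstacle here --- essentially all of the content sits in Proposition~\ref{proposition-IHC-vs-NCIHC}\eqref{IHC-implies-NCIHC}. The only points that require attention are the bookkeeping over the three cases $\dim(X) = 0, 1, 2$ (in particular, recognizing that the sole ``middle'' degree to worry about, $n=1$, is covered by Lefschetz $(1,1)$), and the observation that the torsion-freeness of $\rH^*(X,\bZ)$ --- which is what makes the comparison between $\rK_0(\Dperf(X))$ and $\Hdg(\Dperf(X),\bZ)$ behave well in the proof of that proposition --- is either imposed in the hypothesis or automatic in each case.
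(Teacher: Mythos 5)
Your proposal is correct and follows exactly the route the paper takes: the paper's proof is the one-line observation that the corollary follows from Proposition~\ref{proposition-IHC-vs-NCIHC}\eqref{IHC-implies-NCIHC} because the integral Hodge conjecture holds for varieties of dimension at most $2$. Your additional bookkeeping (torsion-freeness being automatic in dimension $\le 1$, and the degree-$1$ case being the Lefschetz $(1,1)$ theorem) just makes explicit what the paper leaves implicit.
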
 

\begin{proof}
Follows from Proposition~\ref{proposition-IHC-vs-NCIHC}\eqref{IHC-implies-NCIHC} because the 
integral Hodge conjecture holds for varieties of dimension at most $2$. 
\end{proof} 

\begin{corollary}
\label{corollary-NCIHC-threefold}
Let $X$ be a smooth proper complex threefold 
with $\rH^*(X, \bZ)$ torsion free.
Then the integral Hodge conjecture holds for $X$ if and only if the integral Hodge conjecture holds for $\Dperf(X)$. 
\end{corollary}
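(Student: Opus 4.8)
The plan is to deduce this directly from Proposition~\ref{proposition-IHC-vs-NCIHC}, after first recording the elementary observation that for a smooth proper complex threefold the integral Hodge conjecture is only nontrivial in degree $2$. Indeed, in degrees $0$, $1$, and $3$ it holds automatically (for $n=1$ by the Lefschetz $(1,1)$ theorem, and for $n=0,\dim(X)$ for trivial reasons), so the integral Hodge conjecture holds for $X$ if and only if $\Voi^2(X)=0$, i.e. the cycle class map $\CH^2(X)\to\Hdg^2(X,\bZ)$ is surjective.

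For the forward implication I would simply invoke Proposition~\ref{proposition-IHC-vs-NCIHC}\eqref{IHC-implies-NCIHC}: the hypothesis that $\rH^*(X,\bZ)$ is torsion free is in force, so if the integral Hodge conjecture holds in all degrees for $X$, then it holds for $\Dperf(X)$.

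For the converse I would apply Proposition~\ref{proposition-IHC-vs-NCIHC}\eqref{NCIHC-implies-IHC} with $n=2$. The only thing to check is the cohomological hypothesis that $\rH^{2m}(X,\bZ)$ is of Tate type for all $m>2$, and this is automatic on a threefold: $\rH^{2m}(X,\bZ)=0$ for $m\geq 4$, while $\rH^6(X,\bZ)$ is generated by the class of a point and hence of type $(3,3)$. The proposition then gives that if the integral Hodge conjecture holds for $\Dperf(X)$, it holds in degree $2$ for $X$, which by the first paragraph is equivalent to the integral Hodge conjecture for $X$.

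There is no real obstacle here; the entire content lies in Proposition~\ref{proposition-IHC-vs-NCIHC}, and the corollary is merely its specialization to threefolds, where degree $2$ is simultaneously the unique nontrivial degree and satisfies the Tate-type hypothesis of part~\eqref{NCIHC-implies-IHC} with $n=2$. The one small thing to be careful about when writing it up is not to conflate ``the integral Hodge conjecture holds for $X$'' with the automatic statement in degrees $\neq 2$, so the reduction in the first paragraph should be stated explicitly.
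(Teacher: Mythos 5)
Your proposal is correct and follows exactly the paper's argument: observe that for a threefold the integral Hodge conjecture is automatic in degrees $0,1,3$, so only degree $2$ matters, and then apply both parts of Proposition~\ref{proposition-IHC-vs-NCIHC} (with $n=2$ for the converse, where the Tate-type hypothesis is vacuous as you note). Nothing further is needed.
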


\begin{proof}
For a threefold the integral Hodge conjecture always holds in degrees $n=0,1,3$, so the only interesting case is $n = 2$. 
Thus the result follows from Proposition~\ref{proposition-IHC-vs-NCIHC}. 
\end{proof} 

\begin{example}
\label{example-NCIHC-false}
Let $X \subset \bP^4$ be a very general complex hypersurface of degree divisible by $p^3$ 
for an integer $p$ coprime to $6$. 
Then Koll\'{a}r showed that the integral Hodge conjecture in degree $2$ fails for $X$ \cite{trento}. 
By Corollary~\ref{corollary-NCIHC-threefold} we conclude that 
the integral Hodge conjecture also fails for $\Dperf(X)$. 
\end{example} 

The (integral) Hodge conjecture for categories behaves well under semiorthogonal decompositions. 
This will be important in our applications to the integral Hodge conjecture for varieties with CY2 semiorthogonal components. 

\begin{lemma}
\label{lemma-IHC-sod}
Let $\cC \subset \Dperf(X)$ be a $\bC$-linear admissible subcategory, 
where $X$ is a smooth proper complex variety. 
Let $\cC = \llangle \cC_1, \dots, \cC_m \rrangle$ be a $\bC$-linear semiorthogonal decomposition. 
Then there is an isomorphism of Voisin groups 
\begin{equation*}
\Voi(\cC) \cong \Voi(\cC_1) \oplus \cdots \oplus \Voi(\cC_{m}) . 
\end{equation*} 
In particular, the (integral) Hodge conjecture holds for $\cC$ if and only if the (integral) Hodge conjecture holds for all of the semiorthogonal components $\cC_1, \dots, \cC_m$. 
\end{lemma}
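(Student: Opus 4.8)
The plan is to show that every group and every structure map entering the definition of the Voisin group is additive with respect to the semiorthogonal decomposition, and then to conclude by taking cokernels of a block-diagonal map. Observe first that each $\cC_i$, being a semiorthogonal component of the admissible subcategory $\cC \subset \Dperf(X)$, is itself an admissible subcategory of $\Dperf(X)$, so that $\Voi(\cC_i)$ is defined.

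\emph{Step 1: the two $K$-groups.} Algebraic $\rK$-theory is an additive invariant, so $\rK_0(\cC) \cong \bigoplus_i \rK_0(\cC_i)$ with the projection onto the $i$-th summand induced by the projection functor $\cC \to \cC_i$; likewise $\Ktop[0](\cC) \cong \bigoplus_i \Ktop[0](\cC_i)$ by Theorem~\ref{theorem-Ktop}, with the same description of the projections. Because the natural transformation $\rK \to \Ktop$ of Theorem~\ref{theorem-Ktop}\eqref{ch-Ktop} is functorial, the composite of $\rK_0(\cC) \to \Ktop[0](\cC)$ with the projection to $\Ktop[0](\cC_i)$ agrees with the composite $\rK_0(\cC) \to \rK_0(\cC_i) \to \Ktop[0](\cC_i)$; since both direct sum decompositions are realized by these projections, the map $\rK_0(\cC) \to \Ktop[0](\cC)$ is identified with $\bigoplus_i \bigl( \rK_0(\cC_i) \to \Ktop[0](\cC_i) \bigr)$.

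\emph{Step 2: Hodge classes.} Next I would show $\Hdg(\cC, \bZ) \cong \bigoplus_i \Hdg(\cC_i, \bZ)$ under the above identification of $\Ktop[0](\cC)$. Everything producing the Hodge structure in Proposition~\ref{proposition-Ktop-HS} is functorial and additive: $\HH_*$ is additive under semiorthogonal decompositions (\S\ref{section-HH-homology}), the noncommutative Hodge-to-de Rham filtration on $\HP_*$ (whose graded pieces are the $\HH_{2p}$) is functorial, hence additive, and the Chern character $\Ktop[0](\cC) \otimes \bC \to \HP_0(\cC)$ is functorial. Hence the Hodge decomposition $\Ktop[0](\cC) \otimes \bC \cong \bigoplus_{p+q=0} \HH_{p-q}(\cC)$ is the direct sum of the corresponding decompositions for the $\cC_i$, compatibly with the projections. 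Since $v \in \Ktop[0](\cC)$ is a Hodge class precisely when its components in $\HH_{2k}(\cC)$ vanish for all $k \neq 0$, and this is an additive condition on $v = (v_i)$, a class is Hodge if and only if each $v_i$ is, which gives the splitting of $\Hdg(\cC,\bZ)$.

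\emph{Step 3: conclusion.} Combining Steps 1 and 2, the map $\rK_0(\cC) \to \Hdg(\cC, \bZ)$ is identified with $\bigoplus_i \bigl( \rK_0(\cC_i) \to \Hdg(\cC_i, \bZ) \bigr)$, and passing to cokernels yields $\Voi(\cC) \cong \bigoplus_i \Voi(\cC_i)$. The remaining assertions are then immediate: the integral Hodge conjecture for $\cC$ says exactly $\Voi(\cC) = 0$, which holds if and only if each $\Voi(\cC_i) = 0$; and tensoring the displayed isomorphism with the flat $\bZ$-module $\bQ$ (using $\Hdg(-, \bQ) = \Hdg(-, \bZ) \otimes \bQ$ and the exactness of cokernel formation under flat base change) gives the rational statement in the same way. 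I expect the only genuine subtlety to be the compatibility of Hodge structures in Step 2 — namely that the Chern character and the noncommutative Hodge-to-de Rham spectral sequence genuinely respect the semiorthogonal decomposition — which, granted the additivity of $\HH_*$ and the functoriality already recorded, amounts to bookkeeping; everything else is formal manipulation of cokernels of block-diagonal maps.
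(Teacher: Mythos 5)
Your proof is correct and follows exactly the route the paper takes: the paper's own proof is a one-line appeal to the additivity under semiorthogonal decompositions of every invariant entering the definition of $\Voi(\cC)$, which is precisely what your Steps 1--3 spell out in detail. The extra care you take (admissibility of the $\cC_i$ in $\Dperf(X)$, compatibility of the Chern character and the Hodge filtration with the projections) is sound and just makes explicit what the paper leaves implicit.
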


\begin{proof}
Follows immediately from the fact that all of the invariants involved in the definition of $\Voi(\cC)$ 
are additive under semiorthogonal decompositions. 
\end{proof} 

We can also formulate a version of the variational Hodge conjecture for categories. 
\begin{conjecture}[Noncommutative variational Hodge conjecture]
\label{conjecture-VHC}
Let $\cC \subset \Dperf(X)$ be an $S$-linear admissible subcategory, 
where $f \colon X \to S$ is a smooth proper morphism of complex varieties. 
Let $\vphi$ be a section of the local system $\Ktop[0](\cC/S) \otimes \bQ$ of $\bQ$-vector spaces on $S^{\an}$. 
Assume there exists a complex point $0 \in S(\bC)$ such that the fiber 
$\vphi_0 \in \Ktop[0](\cC_0) \otimes \bQ$ is algebraic. 
Then $\vphi_s$ is algebraic for every $s \in S(\bC)$. 
\end{conjecture}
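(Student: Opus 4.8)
The plan is to treat this as the noncommutative analogue of Grothendieck's variational Hodge conjecture: as in the classical case I would not expect an unconditional proof, but one can reduce the statement \emph{uniformly} to a smoothness assertion for moduli of objects, which then becomes accessible under extra hypotheses --- and it is exactly this reduction that makes Theorem~\ref{theorem-VHC} go through in the CY2 setting.

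First I would reduce to the case where $\vphi$ is represented at the base point by a single well-behaved object. Since $\vphi_0$ is algebraic it is a $\bQ$-combination of classes $[E_i]$ of objects $E_i \in \cC_0$; after replacing $\vphi$ by a positive integer multiple, I would try to exhibit a single \emph{simple universally gluable} object $E \in \cC_0$ with $[E] = \vphi_0$, for instance by a general-object argument inside the fixed K-theory class in the spirit of moduli of sheaves on surfaces. This step is not formal for an arbitrary admissible subcategory, and in the applications of this paper it is simply bypassed: on the K3 or abelian side such an $E$ is handed to us directly, which is why Theorem~\ref{theorem-VHC} is stated with a simple universally gluable representative as a hypothesis rather than mere algebraicity.

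Granting such an $E$, I would pass to Lieblich's relative moduli stack $\cM(\cC/S,\vphi)\to S$ of universally gluable objects of class $\vphi$ and invoke the criterion of Proposition~\ref{proposition-VHC-criterion}: if $\cM(\cC/S,\vphi)\to S$ is smooth at the point corresponding to $E\in\cC_0$, then $\vphi_s$ is algebraic for every $s\in S(\bC)$. The mechanism behind that criterion is a noncommutative incarnation of Bloch's semiregularity argument \cite{bloch-semiregularity}: smoothness produces, after a finite \'etale base change, an honest family of objects over a neighborhood of $0$ whose class is the flat section $\vphi$, and since the locus in $S$ where $\vphi$ is algebraic is a countable union of closed subvarieties that now contains a Euclidean neighborhood of $0$, it must be all of $S$.

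The real content --- and the step I expect to be the obstacle --- is the smoothness of $\cM(\cC/S,\vphi)$ along the locus of interest. By Lemma~\ref{lemma-defE} the obstruction to deforming $E$ to a first-order deformation of $\cC_0$ inside the family $\cC/S$ lies in $\Ext^2_{\cC_0}(E,E)$ and is obtained by applying the Kodaira--Spencer class of the family to $E$; there is no reason in general for this to vanish, which is exactly why the conjecture is open. In the CY2 case one has Serre duality $\Ext^2_{\cC_0}(E,E)\cong\Hom_{\cC_0}(E,E)^{\svee}$, which is one-dimensional for $E$ simple, so the obstruction becomes a trace-type pairing of the Kodaira--Spencer class against $\ch(E)$, and the compatibility of the Mukai pairing with Serre duality and Chern characters (Lemma~\ref{lemma-mukai-serre}) forces this pairing to annihilate the Hodge class $\vphi_0$ --- this is the substance of Theorem~\ref{theorem-mukai}, and it is what activates the previous step. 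For a general admissible $\cC$ one would need an independent source of smoothness for the moduli space, short of proving the Hodge conjecture itself; so the honest unconditional output of this approach is: Conjecture~\ref{conjecture-VHC} holds for $\cC$ whenever the relative moduli space of objects can be shown to be smooth over $S$ near a point lying above $0$.
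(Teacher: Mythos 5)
This statement is labeled a \emph{conjecture} in the paper, and the paper offers no proof of it: immediately after stating it, the author remarks that ``in general, this conjecture is extremely difficult'' and that Theorem~\ref{theorem-VHC} is only an instance of it for families of CY2 categories. You correctly recognize this, refrain from claiming an unconditional proof, and your conditional reduction is exactly the paper's: the general criterion is Proposition~\ref{proposition-VHC-criterion} (algebraicity propagates from a point where $\cM(\cC/S,\vphi)\to S$ is smooth), and the CY2 case is activated by Theorem~\ref{theorem-mukai}, whose proof runs precisely through the Serre-duality/Mukai-pairing annihilation of the obstruction class that you describe. So as an assessment of what can and cannot be proved, your proposal matches the paper.

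One caveat on your description of the mechanism behind Proposition~\ref{proposition-VHC-criterion}: you assert that ``the locus in $S$ where $\vphi$ is algebraic is a countable union of closed subvarieties,'' so that containing a Euclidean neighborhood forces it to be all of $S$. That is not what the paper does, and it is not established (nor obviously true) for algebraicity of a class in $\Ktop[0]$; the Cattani--Deligne--Kaplan-type statement concerns Hodge loci, not algebraic loci. The paper instead uses the section of $\cM^{\circ}\to U$ to produce an object $E_U\in\cC_U$ over an open $U\ni 0$, lifts it to an object of $\Dperf(X)$ over all of $S$ (possible since $X$ may be assumed smooth, by Polishchuk's extension lemma), projects into $\cC$, and then observes that the resulting flat section of the local system $\Ktop[0](\cC/S)$ agrees with $\vphi$ on $U$ and hence everywhere. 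If you intend to use the criterion as a black box this does not matter, but your stated justification for it would not survive scrutiny as written.
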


Note that as in Lemma~\ref{lemma-HC-iff-NCHC}, 
for $\cC = \Dperf(X)$ the noncommutative variational Hodge conjecture is equivalent to 
the usual variational Hodge conjecture. 
In general, this conjecture is extremely difficult. 
One of the main results of this paper, Theorem~\ref{theorem-VHC}, is an 
integral version of the noncommutative variational Hodge conjecture for  
families of CY2 categories. 

We note that using deep known results for varieties, it is easy to prove 
the statement obtained by replacing ``algebraic'' with ``Hodge'' in the 
noncommutative variational Hodge conjecture. 

\begin{lemma}
\label{lemma-invariant-cycles}
Let $\cC \subset \Dperf(X)$ be an $S$-linear admissible subcategory, 
where $f \colon X \to S$ is a smooth proper morphism of complex varieties. 
Let $\vphi$ be a section of the local system $\Ktop[0](\cC/S) \otimes \bQ$ of $\bQ$-vector spaces on $S^{\an}$. 
Assume there exists a complex point $0 \in S(\bC)$ such that 
the fiber $\vphi_0 \in \Ktop[0](\cC_0) \otimes \bQ$ is a Hodge class. 
Then $\vphi_s$ is a Hodge class for every $s \in S(\bC)$. 
\end{lemma}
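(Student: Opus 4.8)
The plan is to reduce the statement to the classical theorem of Cattani--Deligne--Kaplan on the algebraicity of Hodge loci, or more elementarily to the theorem of the fixed part / global invariant cycles, by transferring everything to the derived category $\Dperf(X)$. First I would invoke Proposition~\ref{proposition-Ktop-VHS}\eqref{KtopS-comparison-VHS}: since $\cC \subset \Dperf(X)$ is an admissible $S$-linear subcategory, the local system $\Ktop[0](\cC/S) \otimes \bQ$ is a summand of $\Ktop[0](\Dperf(X)/S) \otimes \bQ$, which is isomorphic as a variation of rational Hodge structures to $\bigoplus_{k} \rR^{2k}f^{\an}_* \underline{\bQ}(k)$. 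Thus $\vphi$ corresponds to a global section of a direct sum of (Tate twists of) the local systems $\rR^{2k}f^{\an}_*\underline{\bQ}$ on $S^{\an}$, and the condition that $\vphi_0$ be a Hodge class is the condition that the corresponding section lie in $F^k (\rR^{2k}f^{\an}_*\cO)$ at the point $0$.

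Next I would observe that a global section of a variation of Hodge structures that is of Hodge type at one point is of Hodge type at every point. The cleanest way to see this: a global flat section $v$ of a polarizable VHS determines a morphism of variations of Hodge structures $\underline{\bQ}(0) \to \mathcal{V}$ over some connected component of $S^{\an}$ (after passing to the universal cover, or more carefully using that the monodromy fixes $v$); by the theorem that morphisms of VHS are strict for the Hodge filtration and the fact that a constant sub-VHS $\underline{\bQ}(0)$ has its Hodge filtration jump only in degree $0$, if $v_0 \in F^k\mathcal{V}_0$ then the whole image of $\underline{\bQ}(0)$ lands in $F^k\mathcal{V}$, so $v_s \in F^k\mathcal{V}_s$ for all $s$. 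Equivalently, and perhaps more elementarily given what is available in the excerpt: the set of $s \in S^{\an}$ at which the flat section $\vphi_s$ is of Hodge type (i.e. lies in the appropriate piece of the Hodge filtration) is a closed analytic subset by the holomorphicity of the Hodge filtration, and it is also open because the section is flat and the Hodge bundles vary holomorphically — alternatively one argues that the Hodge locus of a flat section, being cut out by the vanishing of the projection to $\mathcal{V}/F^k\mathcal{V}$, is both closed analytic and (being all of a neighborhood of $0$, since $\vphi$ is flat and remains in $F^k$ to first order by Griffiths transversality and then to all orders by an induction using transversality) open; since $S$ is a variety, $S^{\an}$ is connected, so the Hodge locus is all of $S^{\an}$.

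Finally I would reduce the case of $\cC$ to the case of $\Dperf(X)$: the summand inclusion $\Ktop[0](\cC/S) \hookrightarrow \Ktop[0](\Dperf(X)/S)$ is a morphism of variations of Hodge structures (by Proposition~\ref{proposition-Ktop-VHS}), so $\vphi_s$ is a Hodge class in $\Ktop[0](\cC_s)$ if and only if its image is a Hodge class in $\Ktop[0](\Dperf(X_s))$, and the latter statement is exactly the case $\cC = \Dperf(X)$ just treated. The main obstacle, such as it is, is purely expository: making the argument that "Hodge type at one point implies Hodge type everywhere" rigorous requires a small amount of care about whether one uses the Cattani--Deligne--Kaplan theorem (overkill, since that gives algebraicity of the Hodge locus, far more than needed) or the soft argument via Griffiths transversality and connectedness of $S^{\an}$; the transversality argument is elementary but one must be slightly careful that a \emph{flat} section whose value is Hodge at $0$ stays Hodge — this is where transversality enters, since without flatness the claim is false. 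I expect to present the soft analytic argument, cross-referencing only results already available, and this should occupy only a few lines.
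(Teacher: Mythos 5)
The reduction to $\cC = \Dperf(X)$ via the summand inclusion and Proposition~\ref{proposition-Ktop-VHS}\eqref{KtopS-comparison-VHS} is exactly the paper's first step. The gap is in the argument you say you would actually write out. The Hodge locus of a flat section of a variation of Hodge structures is \emph{not} open: Griffiths transversality only tells you that the relevant component of $\nabla\vphi$ lands in $F^{k-1}/F^{k}$, and that component (the contraction of $\vphi$ with the derivative of the period map) is generically nonzero, so a flat section that is Hodge at $0$ typically leaves $F^{k}$ already to first order. The Noether--Lefschetz locus is the standard counterexample: the class of a line on a quartic surface $X_0 \subset \bP^3$ extends flatly over a simply connected neighborhood in the family of all quartics, is a Hodge class at $0$, but is Hodge only on a proper analytic subset. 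No purely local argument can prove the lemma, because locally there is no difference between a section of the local system near $0$ and a global one.

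What rescues the global statement is precisely monodromy invariance: a global section of $\Ktop[0](\Dperf(X)/S)\otimes\bQ$ is fixed by $\pi_1(S^{\an})$, and Deligne's global invariant cycle theorem (equivalently, the theorem of the fixed part) asserts that the monodromy-invariant part of $\bigoplus_{k} \rR^{2k}f^{\an}_*\underline{\bQ}(k)$ is a \emph{constant} sub-variation of Hodge structures, so that evaluation at each point is a morphism of Hodge structures and a class of type $(0,0)$ at one point is of type $(0,0)$ everywhere. This is the route the paper takes, citing \cite[Proposition 11.3.5]{charles}, and it is the first of the two options you list. Note also that your intermediate claim that a global flat section ``determines a morphism of variations of Hodge structures $\underline{\bQ}(0) \to \cV$'' is circular as stated: knowing that this map of local systems is a morphism of variations of Hodge structures already amounts to knowing that $\vphi_s$ has type $(0,0)$ for every $s$, which is the conclusion; the theorem of the fixed part is what supplies this. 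The proposal is therefore salvageable, but only by committing to the global invariant cycle theorem and discarding the open-and-closed argument.
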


\begin{proof}
As in earlier arguments, we may reduce to the case where $\cC = \Dperf(X)$. 
Then in view of the isomorphism of Proposition~\ref{proposition-Ktop-VHS}\eqref{KtopS-comparison-VHS}, this is a well-known consequence of Deligne's global invariant cycle theorem, 
see \cite[Proposition 11.3.5]{charles}. 
\end{proof}

\subsection{Odd degree cohomology and intermediate Jacobians} 
\label{section-IJ}
The following gives conditions under which the odd topological K-theory of categories recovers the odd integral cohomology of varieties. 
For an abelian group $A$, we write $A_{\tf}$ for the quotient by its torsion subgroup. 
\begin{proposition}
\label{proposition-odd-cohomology} 
Let $X$ be a smooth proper complex variety of odd dimension $n$, 
such that $\rH^k(X, \bZ) = 0$ for all odd $k < n$. 
Let $\Dperf(X) = \llangle \cC_1, \dots, \cC_m \rrangle$ be a 
semiorthogonal decomposition. 
Then the Chern character induces an isometry of weight~$n$ Hodge structures 
\begin{equation*}
\ch \colon \Ktop[-n](\cC_1)_{\tf} \oplus \cdots \oplus \Ktop[-n](\cC_m)_{\tf} 
\xrightarrow{\sim} \rH^n(X, \bZ)_{\tf} 
\end{equation*} 
where the left side is the orthogonal sum of the Hodge structures $\Ktop[-n](\cC_i)_{\tf}$ of Proposition~\ref{proposition-Ktop-HS} equipped with their Euler pairings of 
Lemma~\ref{lemma-euler-pairing}, 
and the right side is equipped with its standard Hodge structure and pairing. 
If moreover $\rH^k(X, \bZ) = 0$ for all odd $k \neq n$, then the above isomorphism holds before 
quotienting by torsion, i.e. 
\begin{equation*}
\Ktop[-n](\cC_1) \oplus \cdots \oplus \Ktop[-n](\cC_m) 
\xrightarrow{\sim} \rH^n(X, \bZ) .  
\end{equation*} 
\end{proposition}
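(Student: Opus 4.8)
The plan is to reduce to the case $\cC = \Dperf(X)$ and then analyze the Atiyah--Hirzebruch spectral sequence of $X^{\an}$. By the additivity of $\Ktop$ under semiorthogonal decompositions (Theorem~\ref{theorem-Ktop}), of the Euler pairing (Lemma~\ref{lemma-euler-pairing}\eqref{euler-pairing-so}), and of Hochschild homology (hence of the Hodge structures of Proposition~\ref{proposition-Ktop-HS}), the left-hand side of each assertion is canonically identified with $\Ktop[-n](\Dperf(X))$ as a Hodge structure with Euler pairing, compatibly with the Chern character to $\rH^n(X,\bZ)$. So it suffices to treat $\cC = \Dperf(X)$, and by Theorem~\ref{theorem-Ktop}\eqref{Ktop-X-comparison} I may replace $\Ktop[-n](\Dperf(X))$ by the topological K-theory $\Ktop[-n](X^{\an})$ of the closed complex manifold $X^{\an}$, with the Chern character replaced by its topological avatar.

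For the torsion-free statement, the Atiyah--Hirzebruch spectral sequence $E_2^{p,q} = \rH^p(X^{\an},\bZ) \otimes \Ktop[q](\mathrm{pt}) \Rightarrow \Ktop[-p-q](X^{\an})$ is concentrated in even $q$, so only the odd differentials $d_3, d_5, \dots$ can be nonzero, and all of them are torsion-valued because the Chern character is a rational isomorphism. Poincaré duality on the oriented $2n$-manifold $X^{\an}$, $\rH^k(X^{\an},\bZ)_{\tf} \cong \rH^{2n-k}(X^{\an},\bZ)_{\tf}$, combined with the hypothesis $\rH^k(X,\bZ) = 0$ for odd $k<n$, shows that $\rH^k(X^{\an},\bZ)$ is pure torsion for every odd $k \neq n$; in particular $\rH^{\mathrm{odd}}(X^{\an},\bQ) = \rH^n(X,\bQ)$, so $\Ktop[-n](X^{\an}) \otimes \bQ \cong \rH^n(X,\bQ)$. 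Using the odd-degree analogues of the Atiyah--Hirzebruch integrality statements (the leading Chern character term of a K-theory class is integral, and every integral class arises this way), the Chern character then defines a surjection $\Ktop[-n](X^{\an}) \twoheadrightarrow \rH^n(X,\bZ)$ with finite kernel, hence an isomorphism after killing torsion, and Lemma~\ref{lemma-euler-pairing} part (3) together with the concentration in degree $n$ gives compatibility with the pairings.

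For the integral refinement, assume in addition $\rH^k(X,\bZ) = 0$ for all odd $k \neq n$. Poincaré duality with universal coefficients then shows that the only cohomology groups of $X^{\an}$ carrying torsion are $\rH^n$ and $\rH^{n+1}$, with $\rH^{n+1}(X^{\an},\bZ)_{\tors} \cong \rH^n(X^{\an},\bZ)_{\tors}$. Since $\rH^p(X^{\an},\bZ) = 0$ for all odd $p \neq n$, every spectral sequence entry in an odd column other than $p = n$ vanishes, so the filtration on $\Ktop[-n](X^{\an})$ collapses to one step and $\Ktop[-n](X^{\an}) \cong E_\infty^{n,0}$; a parity count shows that the only differentials meeting the column $p = n$ have source, respectively target, the honest groups $\rH^{n-r}(X^{\an},\bZ)$, respectively $\rH^{n+r}(X^{\an},\bZ)$, which are torsion-free for $r \geq 3$, so the outgoing differentials vanish and $E_\infty^{n,0}$ is a quotient of $\rH^n(X^{\an},\bZ)$ by a finite subgroup $T$. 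To see $T = 0$ I would compute $\Ktop[-n](X^{\an})_{\tors}$ a second way: the same parity analysis of the even columns gives $\Ktop[0](X^{\an})_{\tors} \cong \rH^{n+1}(X^{\an},\bZ)_{\tors}$ (all other even-degree cohomology being torsion-free and the relevant differentials only altering those torsion-free groups), while Poincaré duality and the universal coefficient theorem in topological K-theory give $\Ktop[-n](X^{\an})_{\tors} \cong \Ktop[0](X^{\an})_{\tors}$; hence $|T| = 1$. Thus $\Ktop[-n](X^{\an})$ and $\rH^n(X^{\an},\bZ)$ are abstractly isomorphic finitely generated abelian groups, and since the Chern character is a surjection between them it is an isomorphism.

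Finally, to upgrade these group isomorphisms to isometries of weight $n$ Hodge structures: Proposition~\ref{proposition-Ktop-HS}\eqref{Ktop-H-comparison} and the concentration of $\rH^{\mathrm{odd}}(X,\bQ)$ in degree $n$ identify $\Ktop[-n](\Dperf(X)) \otimes \bQ$ with $\rH^n(X,\bQ)$ as rational Hodge structures of weight $n$, so the integral maps above are maps of Hodge structures; and by part (3) of Lemma~\ref{lemma-euler-pairing} the Euler pairing is $\chi^{\rtop}(v,w) = \int_X \ch(v^{\svee})\,\ch(w)\,\td(X)$, which under our hypotheses (only degree-$n$ Chern character components occur) reduces to $\pm\int_X \ch(v)\cup\ch(w)$, the intersection pairing on $\rH^n(X,\bZ)$ up to a universal sign depending only on $n$. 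I expect the main obstacle to be the integral refinement: a priori an Atiyah--Hirzebruch differential could carry a torsion-free class of $\rH^{n-r}(X^{\an},\bZ)$ into the torsion of $\rH^n(X^{\an},\bZ)$, which would make $\Ktop[-n](X^{\an})$ a proper quotient of $\rH^n(X^{\an},\bZ)$; the K-theoretic Poincaré duality plus universal coefficients computation of $\Ktop[-n](X^{\an})_{\tors}$ is exactly what rules this out.
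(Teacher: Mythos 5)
Your reduction to $m=1$ and your treatment of the torsion-free statement are essentially workable, but two points need attention. First, the paper disposes of the $m=1$ case by citing (the proof of) \cite{ottem-rennemo}, whereas you reprove it via the Atiyah--Hirzebruch spectral sequence; that is a legitimate, more self-contained route, but your assertion that ``every integral class arises'' as a leading Chern character term is not a formal consequence of Atiyah--Hirzebruch (in that form it requires torsion-free cohomology). What you actually need is that the outgoing differentials $d_r \colon E_r^{n,0} \to E_r^{n+r,1-r}$ vanish, which holds because $\rH^{n+r}(X,\bZ)_{\tors} \cong \rH^{n-r+1}(X,\bZ)_{\tors} = 0$ for odd $r \geq 3$ by Poincar\'e duality and the hypothesis --- an argument you only gesture at, and only in the integral part. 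Second, the statement asserts that the left-hand side is an \emph{orthogonal} sum, while Lemma~\ref{lemma-euler-pairing} gives only semiorthogonality; the vanishing of the cross-terms $\chi^{\rtop}(v_i,v_j)$ for $i<j$ follows from the antisymmetry of the pairing on $\rH^n(X,\bZ)_{\tf}$ transported through the $m=1$ isometry (this is precisely the one point the paper's proof singles out as needing explanation). You observe at the end that the Euler pairing is $\pm$ the intersection pairing, but you never connect this to the orthogonality claim.

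The genuine gap is in the integral refinement. You correctly reduce to showing that the subgroup $T \subset \rH^n(X,\bZ)_{\tors}$ generated by the images of the incoming differentials $d_r \colon \rH^{n-r}(X,\bZ) \to \rH^n(X,\bZ)$ vanishes, and you propose to rule out $T \neq 0$ by computing $\Ktop[-n](X^{\an})_{\tors} \cong \Ktop[0](X^{\an})_{\tors} \cong \rH^{n+1}(X^{\an},\bZ)_{\tors}$. The K-theoretic Poincar\'e duality/universal coefficient step is fine, but the last identification does not follow from the parity analysis: the spectral sequence only exhibits $\rH^{n+1}(X,\bZ)$ as the associated graded piece $E_\infty^{n+1,*}$ of a filtration whose next step $F^{n+3}\Ktop[0](X^{\an})$ is torsion-free, and in the extension $0 \to F^{n+3} \to F^{n+1} \to \rH^{n+1}(X,\bZ) \to 0$ the torsion of the middle term injects into, but need not surject onto, $\rH^{n+1}(X,\bZ)_{\tors}$ (compare $0 \to \bZ \xrightarrow{2} \bZ \to \bZ/2 \to 0$). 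So you only obtain $|\Ktop[-n](X^{\an})_{\tors}| \leq |\rH^{n}(X,\bZ)_{\tors}|$, which is exactly the inequality $|\rH^n(X,\bZ)_{\tors}|/|T| \leq |\rH^n(X,\bZ)_{\tors}|$ you already have, and $T=0$ does not follow. Some further input is required here --- either a direct vanishing argument for the differentials into the column $p=n$ (the first of which is $\beta\,\mathrm{Sq}^2$ applied to the mod~$2$ reduction), or the finer analysis carried out in the proof of \cite[Proposition 2.1 and Remark 2.3]{ottem-rennemo}, which is what the paper itself invokes. As written, this step of your argument fails.
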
 

\begin{proof}
If the decomposition of $\Dperf(X)$ is trivial, i.e. $m = 1$, 
then the result holds by (the proof of) \cite[Proposition 2.1 and Remark 2.3]{ottem-rennemo}. 
The general case follows from additivity of the invariants involved under semiorthogonal decompositions. 
The only point which requires explanation is that the direct sum decomposition is orthogonal for the Euler pairing. 
By Lemma~\ref{lemma-euler-pairing} the sum is semiorthogonal. 
By the $m = 1$ case we have a Hodge isometry $\Ktop[-n](\Dperf(X))_{\tf} \cong \rH^n(X, \bZ)_{\tf}$ where the left side is equipped with the Euler pairing and the right side with the usual pairing on cohomology. 
But the pairing on $\rH^n(X, \bZ)_{\tf}$ is anti-symmetric, so the same is true of the Euler pairing, and orthogonality of the direct sum decomposition follows from 
semiorthogonality. 
\end{proof} 

Recall that if $H$ is a Hodge structure of odd weight $n$, then  
its \emph{intermediate Jacobian} is  
\begin{equation*}
J(H) = \frac{H_{\bC}}{F^{\frac{n+1}{2}}(H_{\bC}) \oplus H_{\bZ} }, 
\end{equation*} 
where $H_{\bZ}$ is the underlying abelian group of $H$, 
$H_{\bC}$ is its complexification, and 
$F^{\bullet}(H_{\bC})$ is the Hodge filtration. 
In general $J(H)$ only has the structure of a complex torus, 
but if $H$ is polarized and its Hodge decomposition only has two terms 
(i.e. $H_{\bC} = H^{p,q} \oplus H^{q,p}$ for some $p,q$),  
then $J(H)$ is a principally polarized abelian variety. 
If $X$ is a smooth proper complex variety, 
for an odd integer $k$ we write $J^k(X)$ for the intermediate Jacobian 
associated to the Hodge structure $\rH^k(X, \bZ)$. 

\begin{definition}
\label{definition-IJC}
Let $\cC \subset \Dperf(X)$ be a $\bC$-linear admissible subcategory, where 
$X$ is a smooth proper complex variety. 
The \emph{intermediate Jacobian} of $\cC$ is the complex 
torus 
\begin{equation*}
J(\cC) = J(\Ktop[1](\cC))
\end{equation*} 
given by the intermediate Jacobian of the weight $-1$ Hodge structure 
$\Ktop[1](\cC)$. 
\end{definition} 

\begin{remark} 
For any odd integer $n$, we can also consider the intermediate Jacobian of 
$\Ktop[n](\cC)$. 
However, these are all isomorphic for varying odd $n$, 
because by 2-periodicity of topological K-theory the Hodge structures 
$\Ktop[n](\cC)$ are Tate twists of each other. 
\end{remark} 

Using Proposition~\ref{proposition-odd-cohomology} we can prove 
Theorem~\ref{theorem-IJ} and Corollary~\ref{corollary-IJ-threefold} announced in \S\ref{section-intro-IJ}. 

\begin{proof}[Proof of Theorem~\ref{theorem-IJ}]
By Proposition~\ref{proposition-odd-cohomology} and our assumptions 
on the cohomology of $X$ and $Y_i$, we have isomorphisms 
$J(\Dperf(X)) \cong J^n(X)$ and 
$J(\Dperf(Y_i)) \cong J^{n_i}(Y_i)$ if $n_i$ is odd. 
Moreover, Proposition~\ref{proposition-Ktop-HS}\eqref{Ktop-H-comparison} and our cohomological assumption imply that if $n_i$ is even then $\Ktop[1](\Dperf(Y_i))$ is torsion, so $J(\Dperf(Y_i)) = 0$. 
Thus by Proposition~\ref{proposition-odd-cohomology} applied to the 
semiorthogonal decomposition of $\Dperf(X)$, we 
obtain an isomorphism of complex tori 
\begin{equation}
\label{equation-JnX}
J^n(X) \cong \bigoplus_{n_i \, \mathrm{odd}} J^{n_i}(Y_i) . 
\end{equation}  
Under the further assumption that $\rH^{n_i}(Y_i, \bC) = \rH^{p_i, q_i}(Y_i) \oplus \rH^{q_i, p_i}(Y_i)$, the HKR isomorphism shows that 
$\rH^n(X, \bC) = \rH^{p,q}(X) \oplus \rH^{q,p}(X)$, where $p_i, q_i, p, q$ are as in the statement of Theorem~\ref{theorem-IJ}. 
The fact that the isomorphisms of Proposition~\ref{proposition-odd-cohomology} respect the pairings on each side then implies that the above isomorphism~\eqref{equation-JnX} respects the principal polarizations on each side. 
\end{proof} 

\begin{proof}[Proof of Corollary~\ref{corollary-IJ-threefold}] 
Note that the category $\llangle E_j \rrangle \subset \Dperf(X)$ generated by $E_j$ is equivalent to $\Dperf(\Spec(\bC))$. 
Hence the first part of Corollary~\ref{corollary-IJ-threefold} is just a special case of Theorem~\ref{theorem-IJ}. 
Under the further assumption that $\rH^5(X, \bZ) = 0$, we can apply the second part of Proposition~\ref{proposition-odd-cohomology} (and the $2$-periodicity of $\Ktop[n](-)$) to conclude there are isomorphisms 
\begin{align*}
\rH^3(X, \bZ) & \cong \Ktop[-3](\Dperf(X))  \\ 
& \cong \Ktop[-3](\Dperf(C_1)) \oplus \cdots \oplus \Ktop[-3](\Dperf(C_r)) \\ 
& \cong \rH^1(C_1, \bZ) \oplus \cdots \oplus \rH^1(C_r, \bZ). 
\end{align*} 
In particular, $\rH^3(X, \bZ)_{\tors} = 0$. 
\end{proof} 

\begin{example}
\label{example-XL-YL}
Let $V_6$ be a $6$-dimensional vector space, and consider the Pl\"{u}cker embedded 
Grassmannian $\Gr(2,V_6) \subset \bP(\wedge^2V_6)$ and the Pfaffian 
cubic hypersurface $\Pf(4, V_6^{\svee}) \subset \bP(\wedge^2 V_6^{\svee})$ 
parameterizing forms of rank at most $4$. 
Let $L \subset \wedge^2V_6$ be a codimension $3$ linear subspace, and let 
$L^{\perp} = \ker(\wedge^2V_6^{\svee} \to L^{\svee})$ be its orthogonal. 
We assume $L$ is generic so that the intersections 
\begin{equation*}
X_L = \Gr(2, V_6) \cap \bP(L) \qquad \text{and} \qquad 
Y_L = \Pf(4, V_6^{\svee}) \cap \bP(L^{\perp}) 
\end{equation*} 
are smooth of expected dimension, in which case 
$X_L$ is a Fano fivefold and $Y_L$ is an elliptic curve. 
Then there is an isomorphism 
\begin{equation}
\label{JXL-JYL} 
J^5(X_L) \cong J^1(Y_L) 
\end{equation} 
of principally polarized abelian varieties. 
Indeed, by \cite[\S10]{kuznetsov-grassmannian-lines} there is a semiorthogonal decomposition 
of $\Dperf(X_L)$ consisting of $\Dperf(Y_L)$ and exceptional objects, 
and by the Lefschetz hyperplane theorem $\rH^k(X_L, \bZ) = 0$ for all odd $k < 5$, 
so Theorem~\ref{theorem-IJ} gives the result. 
\end{example} 

\begin{remark}
In \cite{bernardara-tabuada} Bernardara and Tabuada give criteria for relating the intermediate Jacobians 
of varieties whose derived categories share a semiorthogonal component, but their criteria are 
quite restrictive and difficult to verify, especially in dimension $5$ and up. 
In particular, they were unable to prove the isomorphism \eqref{JXL-JYL} (see \cite[Remark 1.18]{bernardara-tabuada}).   
\end{remark} 

Homological projective geometry \cite{kuznetsov-HPD, categorical-plucker, categorical-joins, categorical-cones} is a general theory for producing varieties whose derived categories have a semiorthogonal component in common, 
and gives many examples to which our methods apply.
Example~\ref{example-XL-YL} is a simple instance of this. 
To explain a more interesting example, we need some terminology. 

\begin{definition}
\label{definition-GM} 
A \emph{Gushel--Mukai (GM) variety} is a smooth $n$-dimensional intersection 
\begin{equation*}
X = \Cone(\Gr(2,5)) \cap Q, \quad 2 \leq n \leq 6 , 
\end{equation*} 
where $\Cone(\Gr(2,5)) \subset \bP^{10}$ is the projective cone over the Pl\"{u}cker embedded Grassmannian 
$\Gr(2,5) \subset \bP^{9}$ and $Q \subset \bP^{10}$ is a quadric hypersurface in a linear subspace $\bP^{n+4} \subset \bP^{10}$. 
\end{definition} 

The results of Gushel \cite{gushel} and Mukai \cite{mukai} show these varieties coincide with the class of all smooth Fano varieties of Picard number $1$, coindex $3$, and degree $10$ (corresponding to $n \geq 3$), together with the Brill--Noether general polarized K3 surfaces of degree $10$ (corresponding to $n = 2$). Recently, GM varieties have attracted attention because of the rich structure of their birational geometry, Hodge theory, and derived categories \cite{DIM3fold, IM-EPW, DIM4fold, DebKuz:birGM, DebKuz:moduli, DebKuz:periodGM, GM-IJ, KuzPerry:dercatGM, GMstability}. 

In \cite{DebKuz:birGM} Debarre and Kuznetsov classified 
GM varieties in terms of linear algebraic data, 
by constructing for any GM variety $X$ a \emph{Lagrangian data set} $(V_6(X), V_5(X), A(X))$, where 
\begin{itemize} 
\item $V_6(X)$ is a $6$-dimensional vector space, 
\item $V_5(X) \subset V_6(X)$ is a hyperplane, and 
\item $A(X) \subset \wedge^3 V_6(X)$ is a Lagrangian subspace with respect to the wedge product, 
\end{itemize} 
and proving that $X$ is completely determined by its dimension and this data.  
Interestingly, many of the properties of $X$ only depend on $A(X)$. 
To state such a result for intermediate Jacobians, 
recall that if $X_1$ and $X_2$ are GM varieties such that $\dim(X_1) \equiv \dim(X_2) \pmod 2$, 
then they are called \emph{generalized partners} if there exists an isomorphism $V_6(X_1) \cong V_6(X_2)$ identifying $A(X_1) \subset \wedge^3 V_6(X_1)$ with $A(X_2) \subset \wedge^3V_6(X_2)$, or 
\emph{generalized duals} if there exists an isomorphism $V_6(X_1) \cong V_6(X_2)$ identifying $A(X_1) \subset \wedge^3 V_6(X_1)$ with $A(X_2)^{\perp} \subset \wedge^3V_6(X_2)^{\svee}$. 
For a GM variety of odd dimension $n$, we have $\rH^n(X, \bC) = \rH^{p,q}(X) \oplus \rH^{q,p}(X)$ where $p-q =1$ and $\rH^{p,q}(X)$ is $10$-dimensional \cite[Proposition 3.1]{DebKuz:periodGM}, so $J^n(X)$ is a $10$-dimensional principally polarized abelian variety. 

\begin{theorem}[{\cite{GM-IJ}}]
\label{theorem-GM-IJ} 
Let $X_1$ and $X_2$ be GM varieties of odd dimensions $n_1$ and $n_2$ 
which are generalized partners or duals.  
Then there is an isomorphism 
\begin{equation*}
J^{n_1}(X_1) \cong J^{n_2}(X_2) 
\end{equation*} 
of principally polarized abelian varieties. 
\end{theorem}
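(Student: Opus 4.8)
The plan is to reduce the statement to the derived equivalence $\Ku(X_1)\simeq\Ku(X_2)$ supplied by \cite[Theorem~1.6]{categorical-cones}, together with the fact that the principally polarized intermediate Jacobian of an odd-dimensional GM variety is recovered from the odd topological K-theory of its Kuznetsov component.

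First I would recall the Kuznetsov--Perry semiorthogonal decomposition $\Dperf(X_i)=\llangle \Ku(X_i), E_{i,1},\dots,E_{i,s_i}\rrangle$ for a GM variety $X_i$, in which each exceptional object generates a copy of $\Dperf(\Spec(\bC))$, so that $\Ktop[n](\llangle E_{i,j}\rrangle)=0$ for all odd $n$. A GM variety $X_i$ of odd dimension $n_i$ has $\rH^k(X_i,\bZ)=0$ for all odd $k\neq n_i$ (Lefschetz hyperplane theorem and Poincar\'{e} duality), and $\rH^{n_i}(X_i,\bC)=\rH^{p,q}(X_i)\oplus\rH^{q,p}(X_i)$ with $p-q=1$ by \cite[Proposition~3.1]{DebKuz:periodGM}. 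Hence Proposition~\ref{proposition-odd-cohomology} applies, and since the exceptional summands contribute nothing in odd K-theory it yields a Hodge isometry $\Ktop[-n_i](\Ku(X_i))\xrightarrow{\sim}\rH^{n_i}(X_i,\bZ)$. Using the $2$-periodicity of topological K-theory --- which makes the weight $-n_i$ Hodge structure $\Ktop[-n_i](\Ku(X_i))$ a Tate twist of $\Ktop[1](\Ku(X_i))$, hence with the same intermediate Jacobian --- together with the two-term Hodge decomposition above, this upgrades to an isomorphism of principally polarized abelian varieties $J(\Ku(X_i))\cong J^{n_i}(X_i)$, where $J(\Ku(X_i))$ is the intermediate Jacobian of Definition~\ref{definition-IJC}.

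Next I would invoke \cite[Theorem~1.6]{categorical-cones}: the hypothesis that $X_1$ and $X_2$ are generalized partners or duals produces a $\bC$-linear equivalence $\Ku(X_1)\simeq\Ku(X_2)$. It then remains to note that $J(\cC)$, as a principally polarized abelian variety, is an invariant of the $\bC$-linear category $\cC$: the group $\Ktop[1](\cC)$ is functorial in $\cC$ by Theorem~\ref{theorem-Ktop}, its weight $-1$ Hodge structure is defined intrinsically via the noncommutative Hodge filtration on $\HP_1(\cC)$ with graded pieces $\HH_{1+2p}(\cC)$ (Proposition~\ref{proposition-Ktop-HS}), and the Euler pairing is functorial by Lemma~\ref{lemma-euler-pairing}; so any $\bC$-linear equivalence induces an isomorphism of the corresponding intermediate Jacobians respecting polarizations. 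Combining this with the previous step gives $J^{n_1}(X_1)\cong J^{n_2}(X_2)$ as principally polarized abelian varieties.

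The bulk of the argument is thus the assembly of quotable inputs; the one point genuinely requiring care is the compatibility of polarizations along the chain --- namely that under Proposition~\ref{proposition-odd-cohomology} the Euler pairing on $\Ktop[1](\Ku(X_i))$ matches, up to the sign fixed by the Tate twist, the cup-product polarization on $\rH^{n_i}(X_i,\bZ)$, and that it is transported to the analogous pairing under the equivalence of Kuznetsov components. I expect this to follow from the functoriality in Lemma~\ref{lemma-euler-pairing} and the additivity bookkeeping already present in the proof of Proposition~\ref{proposition-odd-cohomology}, but tracking the signs (and the normalization that makes $J^{n_i}(X_i)$ canonically principally polarized) is the step I would spend the most effort on.
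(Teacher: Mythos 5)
Your proposal is correct and takes essentially the same route as the paper: the Lefschetz hyperplane theorem plus Proposition~\ref{proposition-odd-cohomology} give the isomorphism $J^{n_i}(X_i)\cong J(\Ku(X_i))$ of principally polarized abelian varieties (this is Lemma~\ref{lemma-JKuX} in the paper), and the equivalence $\Ku(X_1)\simeq\Ku(X_2)$ from \cite[Theorem~1.6]{categorical-cones} then finishes the argument. The one small point the paper makes explicit that you elide is the verification, via \cite[Theorem~3.16]{DebKuz:birGM}, that the Lagrangian $A(X_i)$ contains no decomposable vectors, which is the hypothesis needed to invoke the duality conjecture of \cite{KuzPerry:dercatGM} in the form proved in \cite{categorical-cones}.
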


This is proved in \cite{GM-IJ} by intricate geometric arguments, 
but as we explain now it can be deduced as a consequence of a categorical statement. Recall that by \cite{KuzPerry:dercatGM}, for any GM variety there is a Kuznetsov component $\Ku(X) \subset \Dperf(X)$  
defined by the semiorthogonal decomposition 
\begin{equation}
\label{equation-Ku}
\Dperf(X) = \llangle \Ku(X), \cO_X, \cU_X^{\vee}, \dots, \cO_X(\dim(X)-3), \cU_X^{\vee}(\dim(X)-3) \rrangle  , 
\end{equation} 
where $\cU_X$ and $\cO_X(1)$ denote the pullbacks to $X$ of the rank $2$ tautological subbundle and Pl\"{u}cker line bundle on $\Gr(2,5)$. 

\begin{lemma}
\label{lemma-JKuX}
Let $X$ be a GM variety of odd dimension $n$. 
Then there is an isomorphism 
\begin{equation*}
J^n(X) \cong J(\Ku(X)) 
\end{equation*} 
of principally polarized abelian varieties. 
\end{lemma}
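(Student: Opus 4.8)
The plan is to read off the result from Proposition~\ref{proposition-odd-cohomology} applied to the semiorthogonal decomposition~\eqref{equation-Ku}. First I would verify the hypothesis that $\rH^k(X,\bZ)=0$ for all odd $k<n$: a GM variety of dimension $n$ is a quadric section of a linear section of the cone over $\Gr(2,5)$, so the Lefschetz hyperplane theorem identifies $\rH^k(X,\bZ)$ with the cohomology of the (cone over the) Grassmannian in degrees $k<n$, which is algebraic and in particular has no odd part; this vanishing is also recorded in \cite{DebKuz:periodGM}. The components of~\eqref{equation-Ku} other than $\Ku(X)$ are generated by exceptional objects, hence each equivalent to $\Dperf(\Spec\bC)$, and since $n$ is odd we have $\Ktop[-n](\Dperf(\Spec\bC))=0$, the complex K-theory spectrum being concentrated in even degrees. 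Therefore Proposition~\ref{proposition-odd-cohomology} yields a Hodge isometry of weight $n$ Hodge structures
\begin{equation*}
\ch\colon \Ktop[-n](\Ku(X))_{\tf}\xrightarrow{\sim}\rH^n(X,\bZ)_{\tf},
\end{equation*}
where the left side carries the Euler pairing of Lemma~\ref{lemma-euler-pairing} and the right side the cup product pairing.

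Next I would pass to intermediate Jacobians. By $2$-periodicity of topological K-theory the Hodge structures $\Ktop[1](\Ku(X))$ and $\Ktop[-n](\Ku(X))$ are Tate twists of one another (note $-n$ is odd), so the remark following Definition~\ref{definition-IJC} gives a canonical isomorphism $J(\Ku(X))=J(\Ktop[1](\Ku(X)))\cong J(\Ktop[-n](\Ku(X)))$. Since the intermediate Jacobian of a Hodge structure depends only on its torsion-free quotient, the Hodge isometry above induces an isomorphism $J(\Ktop[-n](\Ku(X)))\cong J(\rH^n(X,\bZ)_{\tf})=J^n(X)$ of complex tori.

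It remains to see that this isomorphism respects the principal polarizations. For a GM variety of odd dimension $n$ the Hodge decomposition of $\rH^n(X,\bC)$ has only the two terms $\rH^{p,q}(X)\oplus\rH^{q,p}(X)$ with $p-q=1$ (see \cite[Proposition 3.1]{DebKuz:periodGM}), and the cup product pairing on $\rH^n(X,\bZ)$ is a polarization, so $J^n(X)$ is a principally polarized abelian variety; transporting this polarization through the Hodge isometry equips $J(\Ku(X))$ with the corresponding principal polarization, and the displayed isomorphism is then one of principally polarized abelian varieties. The only points requiring care are the cohomological vanishing just discussed and keeping track of the signs relating the Euler pairing, the cup product, and the polarization form — but these are routine once Proposition~\ref{proposition-odd-cohomology} is granted, and there is no genuine geometric obstacle, all the content being contained in that proposition (and ultimately in \cite{ottem-rennemo}).
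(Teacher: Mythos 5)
Your proof is correct and follows exactly the route the paper takes: the paper's own proof is the one-line observation that the Lefschetz hyperplane theorem gives $\rH^k(X,\bZ)=0$ for odd $k<n$, after which Proposition~\ref{proposition-odd-cohomology} applied to the decomposition~\eqref{equation-Ku} yields the result. Your additional verifications (vanishing of $\Ktop[-n]$ of the exceptional blocks, $2$-periodicity relating $\Ktop[1]$ to $\Ktop[-n]$, and the two-term Hodge decomposition from \cite[Proposition 3.1]{DebKuz:periodGM} giving the principal polarization) are exactly the details the paper leaves implicit, and they are all accurate.
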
 

\begin{proof}
The Lefschetz hyperplane theorem implies $\rH^k(X, \bZ) = 0$ for all odd $k < n$, 
so the result follows from Proposition~\ref{proposition-odd-cohomology}. 
\end{proof} 

\begin{proof}[Proof of Theorem~\ref{theorem-GM-IJ}]
By \cite[Theorem 3.16]{DebKuz:birGM} for a GM variety $X$ of dimension at least~$3$, the associated Lagrangian subspace $A(X)$ 
does not contain decomposable vectors. 
Hence we may apply the duality conjecture \cite[Conjecture 3.7]{KuzPerry:dercatGM} 
proved in \cite[Theorem 1.6]{categorical-cones} to conclude there is an equivalence $\Ku(X_1) \simeq \Ku(X_2)$. Now the result follows from Lemma~\ref{lemma-JKuX}. 
\end{proof}

\begin{remark}
In \cite{DebKuz:periodGM} an analogue of Theorem~\ref{theorem-GM-IJ} is proved for even-dimensional GM varieties, asserting that generalized partners or duals have the same period. 
This can also be reproved categorically by a more elaborate version of the above argument, 
which will appear in \cite{Fano3fold}. 
\end{remark}


\section{CY2 categories} 
\label{section-CY2}

In this section we define CY2 categories and their associated Mukai Hodge structures, 
and survey the known examples of CY2 categories. 
We also give a sample application of the results of \S\ref{section-hodge-theory} to 
torsion orders of Voisin groups (Corollary~\ref{corollary-dumb-IHC}). 

\subsection{Definitions} 

\begin{definition}
\label{definition-CY2}
A \emph{CY2 category over a field $k$} is a $k$-linear category $\cC$ such that: 
\begin{enumerate}
\item \label{CY2-1}
There exists an admissible $k$-linear embedding $\cC \hookrightarrow \Dperf(X)$, where $X$ is a smooth proper variety over $k$. 
\item \label{CY2-2}
The shift functor $[2]$ is a Serre functor for $\cC$ over $k$. 
\item \label{CY2-3} 
The Hochschild cohomology of $\cC$ satisfies $\HH^0(\cC/k) = k$. 
\end{enumerate}
More generally, a \emph{CY2 category over a scheme $S$} is an $S$-linear category $\cC$ such that: 
\begin{enumerate}
\item There exists an admissible $S$-linear embedding $\cC \hookrightarrow \Dperf(X)$, where $X \to S$ is a smooth proper morphism. 
\item For every point $s \colon \Spec(k) \to S$, the fiber $\cC_s$ is a CY2 category over $k$. 
\end{enumerate}
\end{definition} 

\begin{remark}
\label{remark-CY2-smooth-proper} 
Condition~\eqref{CY2-1} in the definition of a CY2 category over a field or scheme 
says that $\cC$ ``comes from geometry''. 
For all of the results in this paper, it would be enough to instead assume that $\cC$ is smooth and proper and the conclusions of Propositions~\ref{proposition-Ktop-HS} and~\ref{proposition-Ktop-VHS} hold, cf. Remarks~\ref{remark-Ktop-HS} and~\ref{remark-Ktop-VHS}. 
On the other hand, condition~\eqref{CY2-1} will be automatic in the examples we consider in the paper.  
\end{remark}

\begin{remark}
\label{remark-CY2} 
Let us explain the motivation for the other conditions appearing in the definition of a CY2 category $\cC$ over $k$. 

Condition~\eqref{CY2-2} --- the most important part of the definition of a CY2 category --- says that from the perspective of Serre duality, 
$\cC$ behaves like the derived category of a smooth proper surface with trivial canonical bundle, i.e. 
a two-dimensional Calabi--Yau variety. 

Condition~\eqref{CY2-3} says that $\cC$ is connected in the sense of \cite{kuznetsov-fractional-CY}. 
The source of this terminology is the observation that for a smooth 
proper variety $X$ over $k$, we have $\HH^0(X/k) = \rH^0(X, \cO_X)$,  so $X$ is connected if and only if $\HH^0(X/k) = k$. 
Note also that by Lemma~\ref{lemma-mukai-serre} and condition~\eqref{CY2-2} 
we have $\HH^0(\cC/k) \cong \HH_2(\cC/k)$, and by Lemma~\ref{lemma-mukai} we have 
$\HH_2(\cC/k) \cong \HH_{-2}(\cC/k)$; thus Condition~\eqref{CY2-3} amounts to $\HH_2(\cC/k)$, or equivalently $\HH_{-2}(\cC/k)$, being a $1$-dimensional $k$-vector space. 
\end{remark} 

\begin{definition}
\label{definition-MHS}
Let $\cC$ be a CY2 category over $\bC$. 
The \emph{Mukai Hodge structure} of $\cC$ is the weight $2$ Hodge structure $\tH(\cC, \bZ) = \Ktop[0](\cC)(-1)$, where $\Ktop[0](\cC)$ 
is endowed with the weight $0$ Hodge structure from Proposition~\ref{proposition-Ktop-HS} and $(-1)$ denotes a Tate twist. 
We equip $\tH(\cC, \bZ)$ with the bilinear form $(-,-) = - \chi^{\rtop}(-,-)$ given by the negative of the Euler pairing. 

More generally, if $\cC$ is a CY2 category over a complex variety $S$, then 
the \emph{Mukai local system} is defined by $\tH(\cC/S, \bZ) = \Ktop[0](\cC/S)(-1)$, 
which by Proposition~\ref{proposition-Ktop-VHS} is equipped with the structure of a variation of Hodge structures 
which fiberwise for $s \in S(\bC)$ recovers the Mukai Hodge structure $\tH(\cC_s, \bZ)$. 
\end{definition} 

The Tate twist in the definition is included for historical reasons, 
see Remark~\ref{remark-MHS-K3} below. 

\subsection{Examples}  
\label{section-CY2-examples}
The main known examples of CY2 categories are as follows. 
For simplicity, we will work over the complex numbers, but all of the examples also work 
over a field of sufficiently large characteristic. 
We focus on the absolute case, but all of the constructions also work in families 
to give examples of CY2 categories over a base scheme, as we explain in a particular 
case in Example~\ref{example-cubic}. 

\begin{example}[K3 and abelian surfaces]
\label{example-surface} 
Let $T$ be a smooth connected proper surface 
with trivial canonical bundle, i.e. a K3 or abelian surface. 
Then $\Dperf(T)$ is a CY2 category. Indeed, condition~\eqref{CY2-1} of 
Definition~\ref{definition-CY2} is obvious, condition~\eqref{CY2-2} holds since $T$ has trivial canonical bundle, 
and condition~\eqref{CY2-3} holds since $T$ is connected (see Remark~\ref{remark-CY2}). 
More generally, if $T$ is equipped with a Brauer class $\alpha \in \Br(T)$, then the twisted 
derived category $\Dperf(T, \alpha)$ is a CY2 category. 
\end{example} 

\begin{remark}
\label{remark-MHS-K3} 
For $T$ as above there is an isomorphism 
$\Ktop[0](T) \cong \rH^{\mathrm{even}}(T, \bZ)$ given by \mbox{$v \mapsto \ch(v) \td(T)^{1/2}$}. 
This isomorphism identifies $\tH(\Dperf(T), \bZ)$, equipped with its weight $2$ Hodge structure and pairing from Definition~\ref{definition-MHS}, with the classically defined Mukai Hodge structure on $\rH^{\mathrm{even}}(T, \bZ)$. 
Similarly, $\tH(\Dperf(T, \alpha), \bZ)$ recovers the usual Mukai Hodge structure in the twisted case. 
\end{remark} 

\begin{example}[Cubic fourfolds]
\label{example-cubic}
Let $X \subset \bP^5$ be a \emph{cubic fourfold}, i.e. a smooth cubic hypersurface. 
The \emph{Kuznetsov component} $\Ku(X) \subset \Dperf(X)$ is the subcategory defined by the semiorthogonal 
decomposition 
\begin{equation*}
\Dperf(X) = \llangle \Ku(X), \cO_X, \cO_X(1), \cO_X(2) \rrangle . 
\end{equation*} 
The category $\Ku(X)$ was introduced by 
Kuznetsov \cite{kuznetsov-cubic} 
who proved that it is an example of CY2 category. 
For very general cubic fourfolds $\Ku(X)$ is not equivalent to the derived category of a (twisted) K3 or abelian surface, so this is a genuinely new example of a CY2 category. 
The category $\Ku(X)$ has close connections to birational geometry, Hodge theory, and hyperk\"{a}hler varieties, 
and has been the subject of many recent works \cite{addington-thomas, huybrechts-cubic, huybrechts-torelli, BLMS, LPZ, BLMNPS}. 

Let us also explain a relative version of the above construction. 
Namely, let $f \colon X \to S$ be a family of cubic fourfolds with $\cO_{X}(1)$ the corresponding relatively ample line bundle. 
Then $f^* \colon \Dperf(S) \to \Dperf(X)$ is fully faithful, and 
the Kuznetsov component $\Ku(X) \subset \Dperf(X)$ is the $S$-linear category 
defined by the $S$-linear semiorthogonal decomposition 
\begin{equation*}
\Dperf(X) = \llangle \Ku(X), f^*\Dperf(S), f^*\Dperf(S) \otimes \cO_X(1), f^* \Dperf(S) \otimes \cO_X(2) \rrangle. 
\end{equation*} 
Note that $\Ku(X)$ fiberwise recovers the Kuznetsov components of the fibers of $f \colon X \to S$, i.e. 
for any point $s \in S$ we have $\Ku(X)_s \simeq \Ku(X_s)$. 
\end{example}

\begin{example}[Gushel--Mukai varieties] 
\label{example-GM}
Let $X$ be a GM variety as in Definition~\ref{definition-GM}. 
Recall that there is a Kuznetsov component $\Ku(X) \subset \Dperf(X)$ as defined in~\eqref{equation-Ku}. 
In~\cite{KuzPerry:dercatGM} it is shown that $\Ku(X)$ is a CY2 category if $\dim(X)$ is even, 
and if $\dim(X) = 4$ or $6$ then for very general 
$X$ the category $\Ku(X)$ is not equivalent to any of the CY2 
categories discussed in Examples~\ref{example-surface} and~\ref{example-cubic} above. 
\end{example}

\begin{example}[Debarre--Voisin varieties] 
\label{example-DV}
A \emph{Debarre--Voisin (DV) variety} is a smooth Pl\"{u}cker hyperplane section $X$ of the Grassmannian 
$\Gr(3,10)$. 
These varieties were originally studied in \cite{debarre-voisin} because of their role in the construction of a certain hyperk\"{a}hler fourfold.  
There is a Kuznetsov component $\Ku(X) \subset \Dperf(X)$ defined by a semiorthogonal decomposition 
\begin{equation*}
\Dperf(X) = \llangle \Ku(X), \cB_X, \cB_X(1), \dots, \cB_X(9) \rrangle. 
\end{equation*} 
Here, $\cB_X \subset \Dperf(X)$ is the subcategory generated by $12$ exceptional objects 
\begin{equation*}
\cB_X = \llangle \Sigma^{\alpha_1, \alpha_2}\cU_X^{\svee} \st 0 \leq \alpha_1 \leq 4, 0 \leq \alpha_2 \leq 2, \alpha_2 \leq \alpha_1 \rrangle ,
\end{equation*} 
where $\cU_X$ denotes the pullback to $X$ of the rank $3$ tautological subbundle on $\Gr(3,10)$, $\Sigma^{\alpha_1, \alpha_2}$ denotes the Schur functor for the Young diagram of type $(\alpha_1, \alpha_2)$, 
and $\cB_X(i)$ denotes the subcategory obtained by tensoring $\cB_X$ with the $i$-th power of the Pl\"{u}cker line bundle restricted to $X$. 
By \cite[Corollary 4.4]{kuznetsov-fractional-CY}, the category $\Ku(X)$ has 
Serre functor $[2]$. 
Moreover, 
a direct computation using the HKR isomorphism shows $\HH^0(\Ku(X)/\bC) = \bC$, so $\Ku(X)$ is a CY2 category. 
Using arguments as in~\cite{KuzPerry:dercatGM}, one can show that 
for very general $X$, the category $\Ku(X)$ is not equivalent to 
any of the CY2 categories discussed in Examples~\ref{example-surface},~\ref{example-cubic}, or~\ref{example-GM}, so this provides yet another new example of a CY2 category. 
\end{example}

\begin{example}
\label{example-dumb-CY2}
Using the results of \cite{kuznetsov-fractional-CY}, it is easy to construct 
other examples of varieties with CY2 categories as a semiorthogonal component, but 
a posteriori one can often show that these CY2 categories reduce to one of the above examples. 
For instance, if $X \subset \bP^3 \times \bP^3 \times \bP^3$ is a smooth divisor of class $H_1 + H_2 + H_3$, where $H_i$ is the hyperplane class on the $i$-th factor, then there is a CY2 category 
$\Ku(X) \subset \Dperf(X)$ defined by the decomposition 
\begin{equation*}
\Dperf(X) = \llangle 
\Ku(X), \pi_{12}^* \Dperf(\bP^3 \times \bP^3), \pi_{12}^* \Dperf(\bP^3 \times \bP^3)(H_3), \pi_{12}^* \Dperf(\bP^3 \times \bP^3)(2H_3) \rrangle
\end{equation*} 
where $\pi_{12} \colon X \to \bP^3 \times \bP^3$ is the projection onto the first two factors. 
However, one can show that $\Ku(X) \simeq \Dperf(T)$, where $T \subset \bP^3 \times \bP^3$ is a K3 
surface given as a complete intersection of $4$ hyperplanes determined by the defining equation of $X$. 

Similarly, by \cite[Corollary 4.2]{kuznetsov-fractional-CY} one obtains an infinite list of weighted projective hypersurfaces whose derived category contains a CY2 category as the orthogonal to a collection of exceptional objects, but it seems that most (and possibly all) of these categories reduce to a known example.  
\end{example} 

The classification of CY2 categories is an important open problem, especially because 
of their role in constructing hyperk\"{a}hler varieties \cite{BLMNPS, GMstability}. 
However, finding new CY2 categories appears to be a difficult problem. 
Besides the above examples, there is conjecturally a new CY2 category arising as a semiorthogonal 
component in the derived category of a so-called K\"{u}chle fourfold \cite{kuznetsov-kuchle}, and 
there is recent work 
that uses Hodge theory to find candidate Fano varieties with CY2 
categories as semiorthogonal components~\cite{FanoK3type}. 
In general, Markman and Mehrotra proved the existence of a family of categories satisfying 
conditions~\eqref{CY2-2} and~\eqref{CY2-3} of Definition~\ref{definition-CY2}
over a Zariski open subset of any moduli space of hyperk\"{a}hler varieties of K3$^{[n]}$ type \cite{markman}; 
we expect these categories also satisfy condition~\eqref{CY2-1}, and thus 
give an infinite series of CY2 categories. 

\begin{remark} 
The categories from Examples~\ref{example-cubic}, \ref{example-GM}, and \ref{example-DV} are all of K3 type in 
the sense that their Hochschild homology agrees with that of a K3 surface. 
In fact, in Examples~\ref{example-cubic} and \ref{example-GM} it is known that for special $X$ 
the category $\Ku(X)$ is equivalent to the derived category of a K3 surface, and conjecturally the same holds in Example~\ref{example-DV}. 
It would be interesting to construct new examples which do not have this property, e.g. 
which have the same Hochschild homology as an abelian surface. 
\end{remark}

To end this section, let us explain how the results of \S\ref{section-hodge-theory} can be applied in Example~\ref{example-dumb-CY2}. 

\begin{corollary}
\label{corollary-dumb-IHC}
Let $X \subset \bP^3 \times \bP^3 \times \bP^3$ be a smooth $(1,1,1)$ divisor. 
Then the Voisin group $\Voi^4(X)$ is $6$-torsion.
\end{corollary}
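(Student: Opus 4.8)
The plan is to deduce the statement from Proposition~\ref{proposition-IHC-vs-NCIHC}\eqref{NCIHC-implies-IHC}, applied to $X$, after first establishing the integral Hodge conjecture for $\Dperf(X)$. Recall from Example~\ref{example-dumb-CY2} that $\Ku(X)\simeq\Dperf(T)$ for a K3 surface $T\subset\bP^3\times\bP^3$, and that the other three components of the semiorthogonal decomposition of $\Dperf(X)$ recorded there are each equivalent to $\Dperf(\bP^3\times\bP^3)$. Since $\Dperf(\bP^3\times\bP^3)$ has a full exceptional collection of length $16$, refining those components produces a semiorthogonal decomposition $\Dperf(X) = \llangle \Dperf(T), E_1, \dots, E_{48}\rrangle$ with each $E_i$ exceptional, so $\llangle E_i\rrangle \simeq \Dperf(\Spec(\bC))$.

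First I would compute the Voisin group of $\Dperf(X)$. By Lemma~\ref{lemma-IHC-sod}, $\Voi(\Dperf(X)) \cong \Voi(\Dperf(T)) \oplus \bigoplus_{i=1}^{48}\Voi(\llangle E_i\rrangle)$; the summands $\Voi(\llangle E_i\rrangle) = \Voi(\Dperf(\Spec(\bC)))$ vanish, and $\Voi(\Dperf(T)) = 0$ by Corollary~\ref{corollary-NCIHC-surface}, since a K3 surface has torsion free integral cohomology. Hence $\Voi(\Dperf(X)) = 0$, and in Proposition~\ref{proposition-IHC-vs-NCIHC}\eqref{NCIHC-implies-IHC} we may take the torsion bound $d = 1$.

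It remains to verify the two cohomological hypotheses of that proposition for $X$. Torsion freeness of $\rH^*(X,\bZ)$: I would use the Leray spectral sequence of $\pi_{12}\colon X\to\bP^3\times\bP^3$, whose fibres are $\bP^2$ away from $T$ and $\bP^3$ over $T$; a computation of the sheaves $R^q(\pi_{12})_*\bZ$ shows the $E_2$-page is assembled only from the torsion free groups $\rH^*(\bP^3\times\bP^3,\bZ)$ and $\rH^*(T,\bZ)$, and a comparison of total dimensions with $\rH^*(X,\bQ)$ (which is computed from the semiorthogonal decomposition and Proposition~\ref{proposition-Ktop-HS}\eqref{Ktop-H-comparison}) shows the sequence degenerates over $\bQ$, hence also over $\bZ$ because a homomorphism between torsion free groups that dies rationally is zero; therefore each $\rH^n(X,\bZ)$ is an iterated extension of torsion free groups, hence torsion free. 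Tate type: Proposition~\ref{proposition-Ktop-HS}\eqref{Ktop-H-comparison} and additivity of topological K-theory give, as rational Hodge structures of weight $0$, $\bigoplus_k\rH^{2k}(X,\bQ)(k)\cong(\Ktop[0](\Dperf(T))\otimes\bQ)\oplus\bQ^{48}$, a direct sum of a Tate twist of the K3 Mukai Hodge structure of $T$ with trivial Hodge structures; hence the only Hodge types occurring are $(1,-1)$, $(0,0)$, $(-1,1)$, and the $(1,-1)$ piece is one dimensional. Equivalently, $\rH^{2k}(X,\bC) = \rH^{k-1,k+1}(X)\oplus\rH^{k,k}(X)\oplus\rH^{k+1,k-1}(X)$ for every $k$, with $\sum_k h^{k+1,k-1}(X) = 1$. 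Hard Lefschetz (its operator having Hodge bidegree $(1,1)$) then gives $h^{k+1,k-1}(X) = h^{9-k,7-k}(X)$, so the non-negative integer $h^{j+1,j-1}(X)$ is invariant under $j\mapsto 8-j$ and hence vanishes for $j\neq 4$; thus $\rH^{2m}(X,\bZ)$ is of Tate type for every $m\neq 4$, in particular for all $m>4$, and we may take $n = 4$.

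With $d = 1$ and $n = 4$, Proposition~\ref{proposition-IHC-vs-NCIHC}\eqref{NCIHC-implies-IHC} applied with $m = 4$ shows that $\Voi^4(X)$ is $(4-1)! = 6$-torsion, which is the claim. The categorical part is a clean application of Lemma~\ref{lemma-IHC-sod} and Corollary~\ref{corollary-NCIHC-surface}, and the Hodge-type bookkeeping is elementary; the point that needs care is the torsion freeness of $\rH^*(X,\bZ)$ in the middle degree $8$, where one must check that the sheaves $R^q(\pi_{12})_*\bZ$ really behave as simply as claimed across the jumping locus $T$ (alternatively, torsion freeness could be established by a more direct geometric analysis of $X$).
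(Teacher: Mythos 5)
Your categorical reduction is exactly the paper's: the semiorthogonal decomposition of Example~\ref{example-dumb-CY2} together with Lemma~\ref{lemma-IHC-sod} and Corollary~\ref{corollary-NCIHC-surface} gives $\Voi(\Dperf(X))=0$, and Proposition~\ref{proposition-IHC-vs-NCIHC}\eqref{NCIHC-implies-IHC} with $n=m=4$, $d=1$ yields the $3!=6$ torsion bound. Where you diverge is in verifying the two cohomological hypotheses. The paper disposes of both in one line: $X$ is an ample divisor in $P=\bP^3\times\bP^3\times\bP^3$, so the Lefschetz hyperplane theorem gives $\rH^k(X,\bZ)\cong\rH^k(P,\bZ)$ for $k<8$, and Poincar\'e duality plus universal coefficients then gives torsion freeness of all of $\rH^*(X,\bZ)$ and Tate type of $\rH^{2m}(X,\bZ)$ for $m>4$, since $\rH^*(P,\bZ)$ is torsion free and entirely of Tate type. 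Your substitute for the Tate-type statement --- reading off from $\Ktop[0](\Dperf(X))\otimes\bQ\cong(\Ktop[0](\Dperf(T))\otimes\bQ)\oplus\bQ^{48}$ that $\sum_k h^{k+1,k-1}(X)=1$ and then using the hard Lefschetz symmetry $h^{k+1,k-1}=h^{9-k,7-k}$ to force the non-Tate part into the middle degree --- is correct and rather pretty, but it is more machinery than needed. Your substitute for torsion freeness, however, is the one genuinely incomplete step: the Leray spectral sequence of $\pi_{12}$ requires an actual computation of the sheaves $R^q(\pi_{12})_*\bZ$ across the jump locus $T$ (where the fibre degenerates from $\bP^2$ to $\bP^3$), and you only assert, without carrying it out, that the $E_2$-page is built from torsion free groups and that integral degeneration follows from rational degeneration. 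You flag this yourself as the delicate point --- and it is precisely the point that the Lefschetz hyperplane theorem handles for free. I would replace that paragraph with the one-line ampleness argument; as written, the torsion-freeness claim is not fully proved.
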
 

\begin{proof}
By the Lefschetz hyperplane theorem, the group $\rH^*(X, \bZ)$ is torsion free and $\rH^{2m}(X, \bZ)$ is of Tate type for $m > 4$. 
By Proposition~\ref{proposition-IHC-vs-NCIHC}\eqref{NCIHC-implies-IHC} 
it thus suffices to prove the integral Hodge conjecture for $\Dperf(X)$. 
But as explained in Example~\ref{example-dumb-CY2}, this category admits 
a semiorthogonal decomposition consisting of the derived category of a 
K3 surface and several copies of $\Dperf(\bP^3 \times \bP^3)$, and 
$\Dperf(\bP^3 \times \bP^3)$ in turn admits a semiorthogonal decomposition 
consisting of copies of the derived category of a point.  
Therefore the result follows from Lemma~\ref{lemma-IHC-sod} and Corollary~\ref{corollary-NCIHC-surface}. 
\end{proof} 

\begin{remark}
It would be interesting to determine whether Corollary~\ref{corollary-dumb-IHC} is optimal, i.e. whether there exists an $X$ 
such that $\Voi^4(X)$ has an order $6$ element.
\end{remark}

Corollary~\ref{corollary-dumb-IHC} illustrates 
the principle that the Hodge conjecture and its variants for a given variety 
can often be reduced to simpler cases via semiorthogonal decompositions. 
Later in \S\ref{section-theorem-proofs} we will use this method to prove 
the integral Hodge conjecture in degree $2$ for cubic and GM fourfolds (Corollary~\ref{corollary-example-IHC}); the key new ingredient needed to handle these examples is the integral 
Hodge conjecture for their Kuznetsov components (Proposition~\ref{proposition-Ku-IHC}).  


\section{Moduli of objects in CY2 categories} 
\label{section-moduli-objects} 

In this section we prove a key result of this paper, Theorem~\ref{theorem-mukai}, 
which asserts the smoothness of suitable relative moduli spaces of objects in families 
of CY2 categories. 

\subsection{Moduli of objects in categories} 

Let $\cC \subset \Dperf(X)$ be an $S$-linear admissible subcategory, 
where $X \to S$ is a smooth proper morphism of complex varieties. 
Recall from \cite[\S9]{BLMNPS} that Lieblich's work \cite{lieblich} implies 
there is an algebraic stack $\cM(\cC/S)$ locally of finite presentation over $S$, parameterizing universally gluable, relatively perfect objects in $\cC$. 
Due to our assumption that $X \to S$ is smooth, the word ``relatively'' can be dropped, i.e. this stack can be defined as follows: for $T \to S$ a scheme over $S$, the $T$-points of $\cM(\cC/S)$ are objects $E \in \cC_{T}$ which for all points $t \in T$ satisfy $\Ext^{<0}_{\kappa(t)}(E_t, E_t) = 0$. 

\begin{remark}
The stack $\cM(\cC/S)$ can also be constructed for $\cC$ smooth and proper without assuming 
the existence of an admissible embedding $\cC \subset \Dperf(X)$, see \cite{toen-moduli-objects}.  
\end{remark}

Note that any object $E \in \cC$ gives rise to a relative class $\vphi_E \in \Gamma(S^{\an}, \Ktop[0](\cC/S))$, 
whose fiber over $s \in S(\bC)$ is the class of $E_{s}$ in $\Ktop[0](\cC_s)$. 

\begin{definition}
For $\vphi \in \Gamma(S^{\an}, \Ktop[0](\cC/S))$,  
we denote by $\cM(\cC/S, \vphi)$ the subfunctor of $\cM(\cC/S)$ parameterizing 
objects with class equal to $\vphi$, and by $s\cM(\cC/S, \vphi)$ the subfunctor of 
$\cM(\cC/S, \vphi)$ parameterizing objects which are simple. 
\end{definition}

\begin{lemma}
\label{lemma-sM-lft}
For $\vphi \in \Gamma(S^{\an}, \Ktop[0](\cC/S))$, 
the functors $\cM(\cC/S, \vphi)$ and $s\cM(\cC/S, \vphi)$ are algebraic stacks locally of finite presentation over $S$, 
and the canonical morphisms 
\begin{equation*}
s\cM(\cC/S, \vphi) \to \cM(\cC/S, \vphi) \to \cM(\cC/S)
\end{equation*} 
are open immersions. Moreover, 
the stack $s\cM(\cC/S, \vphi)$ is a $\bG_m$-gerbe over an algebraic space $s\rM(\cC/S,\vphi)$ locally of finite presentation over $S$. 
\end{lemma} 

\begin{proof}
By \cite[Proposition 9.10]{BLMNPS} it is enough to show that $\cM(\cC/S, \vphi)$ is an open 
substack of $\cM(\cC/S)$. 
This follows from the fact proved in Proposition~\ref{proposition-Ktop-VHS} 
that $\Ktop[0](\cC/S)$ is a local system on $S^{\an}$. 
\end{proof} 

\subsection{Generalized Mukai's theorem} 
We will prove Theorem~\ref{theorem-mukai} after two preparatory lemmas.  
The first will be useful for controlling deformations of simple universally gluable objects in CY2 categories. 
Recall from \S\ref{section-chern-characters} the formalism of Chern classes valued in Hochschild homology.  
 
\begin{lemma}
\label{lemma-simple-ug-CY2}
Let $\cC$ be a CY2 category over a Noetherian $\bQ$-scheme $S$. 
Let $E \in \cC$ be a simple universally gluable object.  
\begin{enumerate}
\item \label{coh-ExtEE}
The cohomology sheaves $\cExt^i_S(E,E)$ of $\cHom_S(E,E)$ are locally free, 
vanish for $i \notin [0,2]$, and are line bundles for $i = 0$ and $i = 2$. 
\item \label{ExtEE-bc}
The formation of the sheaves $\cExt^i_S(E,E)$ commutes with base change, i.e. 
for any morphism $g \colon T \to S$ and $i \in \bZ$ there is a canonical isomorphism 
\begin{equation*}
g^*\cExt^i_S(E,E) \cong \cExt^i_T(E_T,E_T). 
\end{equation*} 
\item \label{chern-simple-ug}
The Chern character map 
\begin{equation*}
\ch \colon \cExt^2_S(E, E) \to \HH_{-2}(\cC/S) 
\end{equation*} 
is an isomorphism. 
\end{enumerate}
\end{lemma}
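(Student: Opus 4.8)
The plan is to reduce everything to a fiberwise analysis combined with a base change and coherence argument. First I would invoke the assumption that $E$ is simple and universally gluable, which by definition says $\Hom(E,E)$ is $1$-dimensional and $\Ext^{<0}(E,E) = 0$ on each fiber; combined with the CY2 property (the shift $[2]$ being a Serre functor on each fiber $\cC_s$), Serre duality gives $\Ext^i_{\kappa(s)}(E_s, E_s) \cong \Ext^{2-i}_{\kappa(s)}(E_s, E_s)^{\svee}$, so on each fiber the $\Ext$-groups vanish outside $[0,2]$ and are $1$-dimensional in degrees $0$ and $2$. This pins down the fiberwise behavior of $\cHom_S(E,E)$ completely.

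Next I would upgrade this fiberwise statement to a statement about the sheaves $\cExt^i_S(E,E)$ themselves. The complex $\cHom_S(E,E) \in \Dperf(S)$ is perfect (since $\cC$ is proper over $S$, so $\cHom_S(E,E) \in \Dperf(S)$), and its derived restriction to each point $s$ computes $\Ext^*_{\kappa(s)}(E_s, E_s)$ by base change for mapping objects in linear categories (as in \cite[Lemma 2.10]{NCHPD}). Since the fiberwise cohomology has constant rank ($1,?,1$ in degrees $0,1,2$ and zero elsewhere — but actually I must be careful: a priori only degrees $0$ and $2$ are controlled to be $1$-dimensional, while degree $1$ could jump), the standard cohomology-and-base-change / Grauert-type argument shows the cohomology sheaves are locally free in the extreme degrees and their formation commutes with base change there. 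More precisely: the degree $0$ sheaf $\cExt^0_S(E,E) = \cHom$-sheaf is the pushforward of an honest sheaf, and fiberwise $1$-dimensional forces it to be a line bundle with base change; then working from the top, $\cExt^2_S(E,E)$ is likewise a line bundle by the dual argument (using the Serre functor to identify $\cExt^2_S(E,E)$ with the dual of $\cExt^0_S(E,E)$, which also gives part \eqref{coh-ExtEE} for $i=2$ once we know the complex is concentrated in $[0,2]$). The vanishing outside $[0,2]$ and local freeness in between follows because a perfect complex on a reduced Noetherian scheme whose fiberwise cohomology vanishes outside $[0,2]$ must itself be quasi-isomorphic to a complex concentrated in those degrees, and then local freeness plus base change in degrees $0$ and $2$ forces the same in degree $1$. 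This handles \eqref{coh-ExtEE} and \eqref{ExtEE-bc} simultaneously.

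For \eqref{chern-simple-ug}, the Chern character $\ch \colon \cExt^2_S(E,E) \to \HH_{-2}(\cC/S)$ was constructed in \S\ref{section-chern-characters}; by Theorem~\ref{theorem-degeneration} the target $\HH_{-2}(\cC/S)$ is a finite locally free sheaf whose formation commutes with base change, and by the CY2 condition \eqref{CY2-3} together with Remark~\ref{remark-CY2} it is a line bundle. Since both source and target are line bundles on $S$ whose formation commutes with base change, it suffices to check $\ch$ is an isomorphism fiberwise, i.e. over each point $s$. There the claim is that $\ch \colon \Ext^2_{\kappa(s)}(E_s, E_s) \to \HH_{-2}(\cC_s/\kappa(s))$ is an isomorphism of $1$-dimensional vector spaces, i.e. nonzero. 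This is where I would use the compatibility of Lemma~\ref{lemma-mukai-serre}: the diagram there identifies $\ch_{E_s}$ in degree $2$ with the Serre-duality pairing between $\Ext^2_{\kappa(s)}(E_s,E_s)$ and $\Hom_{\kappa(s)}(E_s, \rS_{\cC_s}(E_s)) = \Hom_{\kappa(s)}(E_s, E_s[2])$... — more directly, via Lemma~\ref{lemma-mukai-serre} the map $\ch_{E_s} \colon \Ext^0_{\kappa(s)}(E_s, E_s) \to \HH_0(\cC_s)$ is dual (under the Mukai pairing and Serre duality) to $\eta_{E_s} \colon \HH_2(\cC_s) \to \Ext^2_{\kappa(s)}(E_s, \rS_{\cC_s}(E_s))$, and since $E_s$ is simple, $\mathrm{id}_{E_s} \in \Ext^0$ is sent by $\ch$ to a nonzero class (the class $\ch(\mathrm{id}_{E_s})$ pairs nontrivially against $\eta_{E_s}(\text{generator})$ because the composition of $\eta_{E_s}$ with the trace is the identity, as $\eta$ comes from evaluating the universal natural transformation at $E_s$). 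Dualizing this nondegeneracy statement gives that $\ch$ in degree $-2$ (equivalently degree $2$ after the Serre-duality identification $\cExt^2_S(E,E) \cong \cHom$-sheaf dualized) is also nonzero, hence an isomorphism of line bundles fiberwise, hence globally.

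The main obstacle I anticipate is the degree-$1$ part of \eqref{coh-ExtEE}: the CY2/simple/universally-gluable hypotheses directly control only degrees $0$ and $2$ fiberwise, so to conclude that $\cExt^1_S(E,E)$ is locally free with base change (rather than possibly jumping in rank) I need the structural fact that a perfect complex on a Noetherian scheme with fiberwise cohomology supported in $[0,2]$ and locally free with base change in the extreme degrees is forced to have locally free middle cohomology too — this requires a careful local argument with a minimal perfect representative, and I would want to either cite the relevant statement or spell out the two-term-at-a-time reduction. Everything else is a fairly mechanical assembly of cohomology-and-base-change, Serre duality via the Serre functor, and the Hochschild-theoretic compatibilities already established in the excerpt.
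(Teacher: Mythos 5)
Your fiberwise analysis (Serre duality on each fiber $\cC_s$ via $\rS_{\cC_s}=[2]$, concentration in $[0,2]$, one-dimensionality in degrees $0$ and $2$) matches the paper, as does the use of Lemma~\ref{lemma-mukai-serre} plus Theorem~\ref{theorem-degeneration} for part~\eqref{chern-simple-ug}, and your anticipated spectral-sequence/Tor argument for the middle degree is essentially the paper's Step for $\cExt^1$. But there is a genuine gap in how you handle the extreme degrees, and it matters. Your route to local freeness of $\cExt^2_S(E,E)$ is a ``Grauert-type'' constant-fiber-dimension argument (you even write ``reduced Noetherian scheme'' at one point), and constancy of $\dim_{\kappa(s)}(\cF\otimes\kappa(s))$ does \emph{not} imply local freeness over a non-reduced base (e.g.\ $k[x]/(x^2)$ and the module $k$). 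The lemma cannot be restricted to reduced $S$: in the proof of Theorem~\ref{theorem-mukai} it is applied with $S=\Spec(A)$ for Artinian local $\bC$-algebras, which is precisely where the $T^1$-lifting argument lives. Your fallback --- identifying $\cExt^2$ with the dual of $\cExt^0$ via a relative Serre functor $\rS_{\cC/S}=[2]$ --- is not available either: the definition of a CY2 category over $S$ only constrains the fibers, and the paper deliberately never invokes a relative Serre functor here.

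The paper resolves this by inverting your logical order: part~\eqref{chern-simple-ug} is proved \emph{first}, and local freeness of $\cExt^2$ is a \emph{consequence} of it. Concretely, $\ch\colon\cExt^2_S(E,E)\to\HH_{-2}(\cC/S)$ is a map from a coherent sheaf to a sheaf that is locally free of rank one by Theorem~\ref{theorem-degeneration} and condition~\eqref{CY2-3}; such a map is an isomorphism as soon as its underived restriction to every point is (Nakayama for surjectivity, then Nakayama again for the kernel). The underived fiber $i_s^*\cExt^2_S(E,E)$ is identified with $\Ext^2_{\kappa(s)}(E_s,E_s)$ precisely because $\cExt^2$ is the \emph{top} cohomology sheaf of $\cHom_S(E,E)$ --- a point your reduction skips --- and the fiberwise nonvanishing then follows from Lemma~\ref{lemma-mukai-serre}, since the dual map $\Ext^{-2}_{\kappa(s)}(\id_{\cC_s},\rS_{\cC_s})\to\Ext^{-2}_{\kappa(s)}(E_s,\rS_{\cC_s}(E_s))$ is evidently nonzero when $\rS_{\cC_s}=[2]$ (it sends the identity natural transformation to $\id_{E_s}$); your paragraph on this point gestures at the right statement but conflates the degree-$0$ and degree-$2$ Chern characters. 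Once $\cExt^2$ is known to be a line bundle this way, and $\cExt^0\cong\cO_S$ by simplicity, the local criterion for flatness together with the hyperext spectral sequence $\rH^j(i_s^*\cExt^i_S(E,E))\Rightarrow\Ext^{i+j}_{\kappa(s)}(E_s,E_s)$ forces $\Tor_1(\cExt^1_S(E,E),\kappa(s))=0$ (the abutment in total degree $0$ is one-dimensional and already exhausted by $E_2^{0,0}$), which is the precise form of the ``structural fact'' you were hoping to cite; part~\eqref{ExtEE-bc} then follows from the base change formula for $\cHom_S(E,E)$ and local freeness of all three cohomology sheaves.
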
 

\begin{proof}
For any morphism $g \colon T \to S$, we have the base change formula 
\begin{equation}
\label{bc-Hom}
g^* \cHom_S(E, E) \simeq \cHom_T(E_T, E_T) 
\end{equation}
for mapping objects (see \cite[Lemma 2.10]{NCHPD}). 
For a point $i_s \colon \Spec(\kappa(s)) \to S$, this gives for any $i$ an isomorphism 
an isomorphism 
\begin{equation}
\label{Hi-HomEE}
\rH^i(i_s^* \cHom_S(E,E)) \cong \Ext^i_{\kappa(s)}(E_s, E_s). 
\end{equation} 
Since $\rS_{\cC_s} = [2]$ is a Serre functor for $\cC_s$, we have 
\begin{equation*}
\Ext^i_{\kappa(s)}(E_s, E_s) \cong \Ext^{2-i}_{\kappa(s)}(E_s, E_s)^{\svee}. 
\end{equation*} 
We conclude that these groups vanish for $i \notin [0,2]$ because $E$ is universally gluable, 
and are $1$-dimensional for $i = 0,2$ because $E$ is simple. 
In particular, the complex $\cHom_S(E,E)$ is concentrated in degrees $[0,2]$, 
as this is true of $i_s^* \cHom_S(E,E)$ for every point $s$.  

Now we prove~\eqref{chern-simple-ug}. 
As the sheaf $\cExt^2_S(E, E)$ is coherent and 
$\HH_{-2}(\cC/S)$ is finite locally free by Theorem~\ref{theorem-degeneration}, 
to prove $\ch \colon \cExt^2_S(E, E) \to \HH_{-2}(\cC/S)$ is an isomorphism 
it suffices to show that for any point $s \in S$ the (underived) base changed map 
\begin{equation*}
\rH^0(i_s^* \ch) \colon \rH^0(i_s^* \cExt^2_S(E, E))  \to \rH^0(i_s^*\HH_{-2}(\cC/S))
\end{equation*} 
is an isomorphism. 
Note that 
\begin{equation*}
\rH^0(i_s^*\cExt^2_S(E,E)) \cong \rH^2(i_s^*\cHom_S(E,E)) \cong \Ext^2_{\kappa(s)}(E_s, E_s), 
\end{equation*} 
where the first isomorphism holds because, as shown above, $\cExt^2_S(E,E)$ is the top cohomology sheaf of 
$\cHom_S(E,E)$, and the second isomorphism holds by~\eqref{Hi-HomEE}. 
On the other hand, by Theorem~\ref{theorem-degeneration} 
we have 
\begin{equation*}
\rH^0(i_s^*\HH_{-2}(\cC/S)) \cong i_s^*\HH_{-2}(\cC/S) \cong \HH_{-2}(\cC_s/\kappa(s)). 
\end{equation*}
Under these isomorphisms, the map $\rH^0(i_s^* \ch)$ in question is identified with 
the Chern character map 
\begin{equation*}
\ch \colon \Ext^2_{\kappa(s)}(E_s, E_s) \to \HH_{-2}(\cC_s/\kappa(s)). 
\end{equation*} 
Note that the domain and target of this map are $1$-dimensional $\kappa(s)$-vector spaces; 
indeed, for the domain this was observed above, and for the target holds by the definition of a 
CY2 category (see Remark~\ref{remark-CY2}).  
By Lemma~\ref{lemma-mukai-serre} this map is dual to the map 
\begin{equation*}
\Ext_{\kappa(s)}^{-2}(\id_{\cC_s}, \rS_{\cC_s}) \to \Ext_{\kappa(s)}^{-2}(E_s, \rS_{\cC_s}(E_s)) . 
\end{equation*} 
Since $\rS_{\cC_s} = [2]$ this is evidently nonzero, and hence an isomorphism. 
All together this proves~\eqref{chern-simple-ug}, and also shows that $\cExt^2_S(E,E)$ is a 
line bundle, because as observed in the proof $\HH_{-2}(\cC/S)$ is locally free of rank $1$. 

Now we finish the proof of~\eqref{coh-ExtEE}. 
As $E$ is simple, $\cExt^0_S(E,E) \cong \cO_S$ is a line bundle, so it remains only to show 
that $\cExt^1_S(E,E)$ is locally free. 
Since $S$ is Noetherian and $\cExt^1_S(E,E)$ is coherent, by the local criterion for flatness it suffices 
to show $\rH^{-1}(i_s^* \cExt_S^1(E,E)) = 0$ for every point $s \in S$. 
By~\eqref{Hi-HomEE} we have a spectral sequence with $E_2$-page 
\begin{equation*}
E_2^{i,j} = \rH^j(i_s^* \cExt_S^i(E,E)) \implies \Ext_{\kappa(s)}^{i+j}(E_s,E_s). 
\end{equation*} 
By what we have already shown, $E_2^{0,0}$ and $E_2^{2,0}$ are $1$-dimensional, 
$E_2^{0,j}$ and $E_2^{2,j}$ vanish for $j \neq 0$, 
and $\Ext_{\kappa(s)}^{0}(E_s,E_s)$ is $1$-dimensional, 
so the desired vanishing $\rH^{-1}(i_s^* \cExt_S^1(E,E)) = 0$ follows. 

Finally,~\eqref{ExtEE-bc} holds by the base change formula~\eqref{bc-Hom} and the 
freeness of the cohomology sheaves $\cExt^i_S(E,E)$ proved in~\eqref{coh-ExtEE}. 
\end{proof}

The following simple flatness result will be used below to reduce Theorem~\ref{theorem-mukai} to the case where the base $S$ is smooth. 

\begin{lemma}
\label{lemma-flatness} 
Let $f \colon X \to S$ be a locally finite type morphism of schemes, with $S$ reduced and locally noetherian. 
Assume there exists a surjective finite type universally closed morphism $S' \to S$ such that the base change 
$f' \colon X' = X_{S'} \to S'$ is flat. 
Then $f \colon X \to S$ is flat. 
\end{lemma}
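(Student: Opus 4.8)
The goal is to prove Lemma~\ref{lemma-flatness}: flatness of a locally finite type morphism $f\colon X\to S$ (with $S$ reduced and locally noetherian) can be checked after a surjective finite type universally closed base change $S'\to S$.

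The plan is to reduce to the local situation and apply the valuative/fibre-dimension criterion for flatness over a reduced base. First I would reduce to the case where $S = \Spec(R)$ is affine, $R$ a reduced noetherian ring, and $X$ is a scheme of finite type over $S$; since flatness is local on $X$ and on $S$, and since the hypothesis is stable under such localization (restrict $S'$ over the affine open of $S$), nothing is lost. The key classical input is the criterion that, for $S$ reduced and locally noetherian and $f$ locally of finite type, $f$ is flat if and only if $f$ is flat over a dense open subset of $S$ together with a ``constancy of Hilbert polynomial / fibre dimension'' type condition — more precisely, I would invoke the criterion (see \citestacks{0522} or EGA IV 11.3.10, and its refinement over reduced bases) that a finite-type morphism over a reduced noetherian base is flat iff all fibres have the same Hilbert polynomial with respect to some relatively ample bundle, or more robustly the generic flatness plus dimension semicontinuity argument. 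Alternatively, and perhaps cleanest: use the fact that flatness over a reduced noetherian scheme can be detected on the generic points of $S$ plus openness — i.e. one shows the flat locus is open (generic flatness + constructibility) and contains all generic points of $S$, hence is all of $S$ because it is stable under generization and $S$ is reduced. The role of $S'\to S$ being surjective and universally closed is exactly to transport this: flatness of $f'$ over $S'$ gives control over every point $s\in S$ by choosing a point $s'\in S'$ above it.

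Concretely, here are the steps I would carry out. (1) Reduce to $S$ affine, noetherian, reduced, and (by decomposing into irreducible components, noting $S'\to S$ restricts appropriately over each component and that flatness is checked componentwise for a reduced scheme) further reduce to $S$ integral, i.e. $R$ a domain. (2) By generic flatness (\citestacks{052B}), $f$ is flat over a dense open $U\subset S$. (3) Show the flat locus $V\subset S$ of $f$ is open — this holds because $f$ is of finite presentation (finite type over noetherian), so the non-flat locus is closed by \citestacks{0399}. (4) The crux: show $V = S$. Suppose not; pick a point $s\in S\setminus V$, and choose $s'\in S'$ mapping to $s$ (possible by surjectivity). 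Since $f'$ is flat, $f'$ is flat at every point of $X'_{s'}$. Now descend: for a point $x\in X_s$, pick $x'\in X'$ over both $x$ and some point of $X'_{s'}$ — the fibre product $X\times_S S' \to X$ is surjective on the fibre over $s$ since $S'\to S$ is surjective (so $\Spec\kappa(s')\to\Spec\kappa(s)$ is surjective, hence $X_{s'}\to X_s$ is surjective). Flatness of $f'$ at $x'$ plus faithful flatness of $\cO_{X,x}\to\cO_{X',x'}$? — this last is the subtle point, since $S'\to S$ need not be flat. So instead I would argue via the local criterion over a reduced base: flatness of $f$ at $x$ is equivalent to flatness of $f$ over $\Spec\cO_{S,s}$ near $x$, and by the universally closed + surjective hypothesis one can find a valuation ring or a suitable extension realizing $\cO_{S,s}\to\cO_{S',s'}$ in a way that lets one apply the valuative criterion of flatness (\citestacks{0GUH}) — or, more simply, use that flatness over a reduced noetherian local ring can be checked after a faithfully flat base change, and reduce to showing $\cO_{S,s}\to\cO_{S',s'}$ (or a completion/localization thereof) can be arranged faithfully flat, which is where the universally closed hypothesis — guaranteeing properness of fibres and hence going-down-type behaviour — is essential.

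I expect the main obstacle to be precisely step (4): transferring flatness of $f'$ down to $f$ when $S'\to S$ is only surjective, finite type, and universally closed rather than faithfully flat. The clean resolution is likely to invoke the criterion that a finite-type morphism $X\to S$ with $S$ reduced noetherian is flat iff $X_{\mathrm{red}}\times_S \Spec\kappa(s)$ has locally constant dimension and ... — more precisely I would cite \citestacks{0CZ9} or the statement that flatness descends along surjective universally closed morphisms for finite-type targets over reduced noetherian bases, if available; failing a direct reference, I would spell out the argument by (a) reducing to $R$ a DVR (localizing at codimension-$1$ points suffices to conclude flatness over a regular ring of dimension $\le 1$ — actually for general regular $R$ one needs more, so instead reduce to $R$ a valuation ring via the valuative criterion), then (b) over a DVR, $f$ is flat iff $X$ has no embedded or isolated associated points lying over the closed point, and (c) this property is insensitive to the base change $S'\to S$ because, $S'\to S$ being universally closed and surjective, the generic and special points of $\Spec R$ both have preimages and the fibre dimensions are forced to agree. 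This DVR reduction, combined with openness of the flat locus and density of the generic flat locus, closes the argument.
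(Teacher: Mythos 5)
Your overall framework --- reduce to checking flatness after base change to a DVR via the valuative criterion of flatness over a reduced noetherian base --- is exactly the route the paper takes (EGA IV 11.8.1). But the step where you actually transfer flatness from $S'$ down to the DVR is the whole content of the lemma, and your proposal does not close it. Your final fallback (c) asserts that flatness over a DVR follows because ``the fibre dimensions are forced to agree,'' and this is false: for a DVR $R$ with uniformizer $\pi$, the scheme $X = \Spec\bigl(R[x]/(\pi x, x^2)\bigr)$ has zero-dimensional fibres over both points of $\Spec(R)$ yet is not flat, since $x$ is $\pi$-torsion. Constancy of fibre dimension (or even of Hilbert polynomials, absent properness) is not a substitute for the torsion-freeness you need. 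Your earlier alternative --- arranging $\cO_{S,s}\to\cO_{S',s'}$ to be faithfully flat --- also cannot work as stated, since $S'\to S$ is in no way assumed flat and no localization or completion will fix that at the level of the base.

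The missing idea is to perform the lift at the level of valuation rings rather than local rings of $S$. Given $g\colon T=\Spec(R)\to S$ with $R$ a DVR with fraction field $K$: since $S'\to S$ is surjective and of finite type, there is a finitely generated field extension $K'/K$ and a morphism $\Spec(K')\to S'$ lying over $\Spec(K)\to S$ (take a point of the fibre of $S'_K\to\Spec K$). Choose a DVR $R'$ with fraction field $K'$ dominating $R$, and set $T'=\Spec(R')$. The valuative criterion for universal closedness applied to $S'\to S$ extends $\Spec(K')\to S'$ to a morphism $T'\to S'$ compatible with $T'\to T\to S$. Now $X_{T'}\to T'$ is flat because it is a base change of the flat morphism $X'\to S'$, and $R\to R'$ is an injective local homomorphism of DVRs, hence faithfully flat; so flatness descends along $T'\to T$ to give flatness of $X_T\to T$. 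This is where both hypotheses on $S'\to S$ are genuinely used: surjectivity and finite type produce the finitely generated extension $K'$ of the generic point, and universal closedness produces the integral point of the lift. Your steps (1)--(3) (generic flatness, openness of the flat locus) are then unnecessary.
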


\begin{proof}
By the valuative criterion for flatness \cite[11.8.1]{EGA4-3}, it suffices to show that the base change 
$X_T \to T$ is flat for any morphism $g \colon T = \Spec(R) \to S$ from the spectrum of a DVR. 
Let $K$ be the fraction field of $R$. 
As $S' \to S$ is surjective and finite type, there exists a finitely generated field extension $K'$ of $K$ and a morphism $\Spec(K') \to S'$ whose image in $S$ is the image of $\Spec(K) \to S$. 
Let $R'$ be a DVR with fraction field $K'$ which dominates $R$, and let $T' = \Spec(R')$. 
By the valuative criterion for universal closedness \citestacks{05JY} we can find a commutative diagram 
\begin{equation*}
\xymatrix{ 
S' \ar[r] & S  \\ 
T' \ar[r] \ar[u] & T \ar[u]_{g}
}
\end{equation*} 
The base change $X_{T'} \to T'$ is flat by the assumption that $f' \colon X' \to S'$ is flat, 
so since $T' \to T$ is faithfully flat, the base change $X_{T} \to T$ is also flat. 
\end{proof} 

\begin{proof}[Proof of Theorem~\ref{theorem-mukai}]
We will prove that $s\rM(\cC/S,\vphi)$ is smooth over $S$; 
this implies the same for $s\cM(\cC/S, \vphi)$, because it is a 
$\bG_m$-gerbe over $s\rM(\cC/S,\vphi)$. 
We prove the result in several steps. 

\medskip 

\begin{step}{1}
Reduction to the case where the base $S$ is smooth. 
\end{step}

Let $S' \to S$ be a morphism from a complex variety $S'$, 
let $\cC'$ be the base change of $\cC$, and let $\vphi'$ be the pullback of 
the section $\vphi$ to $S'$. 
Then $s\rM(\cC'/S',\vphi') \to S'$ is the base 
change of $s\rM(\cC/S,\vphi) \to S$ along $S' \to S$. 
Hence the theorem in the case where the base is a point implies smoothness 
of the fibers of $s\rM(\cC/S,\vphi) \to S$. 
Similarly, taking a resolution of singularities $S' \to S$, by Lemma~\ref{lemma-flatness} 
we see that the theorem in the case of a smooth base implies flatness of $s\rM(\cC/S,\vphi) \to S$.
Together, this shows that $s\rM(\cC/S,\vphi) \to S$ is smooth. 
Thus from now on we may assume the base $S$ to be smooth. 

\medskip 

\begin{step}{2}
Reduction to proving smoothness of the total space $s\rM(\cC/S,\vphi)$. 
\end{step}

As observed above, the case where the base is a point implies that the closed fibers of $s\rM(\cC/S, \vphi) \to S$ are smooth. 
It follows that these fibers are also of constant dimension $-\chi^{\rtop}(\vphi, \vphi) + 2$, 
where $\chi^{\rtop}(-,-)$ denotes the Euler pairing from Lemma~\ref{proposition-Ktop-VHS}\eqref{relative-euler-pairing}, because this number computes $\dim \Ext^1(E_0, E_0)$ for any object $E_0$ in $s\rM(\cC_0/\Spec(\bC), \vphi_0)$, $0 \in S(\bC)$, as $\cC_0$ is CY2. 
Therefore, the morphism $s\rM(\cC/S,\vphi) \to S$ is smooth, being a locally finite type morphism between smooth spaces whose closed fibers are smooth of constant dimension. 

\medskip 

\begin{step}{3}
\label{step3-smoothness}
Smoothness in terms of deformation functors of objects. 
\end{step} 

Since $s\rM(\cC/S,\vphi)$ is locally of finite type over $\bC$ by Lemma~\ref{lemma-sM-lft}, 
it is smooth if it is formally smooth at any $\bC$-point. 
More precisely, let $E_0$ be a $\bC$-point of $s\rM(\cC/S,\vphi)$ lying over a point 
$0 \in S$. 
Let $\Art_{\bC}$ denote the category of Artinian local $\bC$-algebras with residue field $\bC$, 
and consider the deformation functor 
\begin{equation*}
F \colon \Art_{\bC} \to \Sets
\end{equation*}
of $E_0$, whose value on $A \in \Art_{\bC}$ consists of pairs $(\Spec(A) \to S, E)$ 
where $\Spec(A) \to S$ takes the closed point $p \in \Spec(A)$ to $0 \in S$, 
and $E \in s\rM(\cC/S, \vphi)(A)$ is such that $E_{p} \cong E_0$. 
For simplicity we often write $E \in F(A)$, suppressing the map $\Spec(A) \to S$ 
from the notation. 
Note that in the definition of $F(A)$, the condition $E \in s\rM(\cC/S, \vphi)(A)$ can be replaced with $E \in \cC_A$, 
since then the condition $E_p \cong E_0 \in s\rM(\cC/S, \vphi)(\bC)$  guarantees $E \in s\rM(\cC/S, \vphi)(A)$. 
To prove that $s\rM(\cC/S,\vphi)$ is formally smooth at $E_0$, we must show that 
$F$ is a smooth functor, i.e. for any surjection $A' \to A$ in $\Art_{\bC}$ the map 
$F(A') \to F(A)$ is surjective. 

For this purpose, it will also be useful to consider the deformation functor 
$G \colon \Art_{\bC} \to \Sets$ of the point $0 \in S$, 
whose value on $A \in \Art_{\bC}$ consists of morphisms $\Spec(A) \to S$ taking the closed point to $0$. 
Note that there is a natural morphism of functors $F \to G$. 

\medskip 

\begin{step}{4} 
\label{step4-smoothness}
Let $A' \to A$ be a split square-zero extension in $\Art_{\bC}$, i.e. $A' \cong A[\varepsilon]/(\varepsilon^2)$. 
Then the fiber $\Def_E(A')$ of the map 
\begin{equation*}
F(A') \to G(A') \times_{G(A)} F(A) 
\end{equation*} 
over any point $(\Spec(A') \to S, E)$ is a torsor under $\Ext^1_{A}(E, E)$. 
\end{step} 

By Lemma~\ref{lemma-defE}, the claim is equivalent to the vanishing of the class 
$\kappa(\cC_{A'})(E) \in \Ext^2_A(E, E)$. 
By Lemma~\ref{lemma-simple-ug-CY2}\eqref{chern-simple-ug}, this is in turn equivalent to the vanishing of  
$\ch(\kappa(\cC_{A'})(E)) \in \HH_{-2}(\cC_A/A)$, 
which by Lemma~\ref{lemma-ch-functoriality} is equal to the product $\kappa(\cC_{A'}) \cdot \ch(E)$. 
Finally, this vanishes because by assumption the class of $E$ remains of Hodge type along $S$. 

\medskip 

\begin{step}{5}
The functor $F$ is smooth. 
\end{step} 

For any integer $n \geq 0$ set $A_n = \bC[t]/(t^{n+1})$ and $A'_n = A_n[\varepsilon]/(\varepsilon^2)$.  
By the $T^1$-lifting theorem \cite{T1-kawamata, T1-fantechi}, the functor 
$F$ is smooth if for every $n \geq 0$ the natural map 
\begin{equation*}
F(A'_{n+1}) \to F(A'_n) \times_{F(A_n)} F(A_{n+1}) 
\end{equation*} 
is surjective. 
Note that this map fits into a commutative diagram 
\begin{equation*}
\xymatrix{
F(A'_{n+1}) \ar[r] \ar[d] & F(A'_n) \times_{F(A_n)} F(A_{n+1}) \ar[d] \\ 
G(A'_{n+1}) \ar[r]  & G(A'_n) \times_{G(A_n)} G(A_{n+1}) 
}
\end{equation*} 
where the bottom horizonal arrow is surjective because $0 \in S$ is a smooth point. 
Therefore, it suffices to prove the map 
\begin{equation} 
\label{T1-diagram} 
F(A'_{n+1}) \to G(A'_{n+1}) \times_{G(A'_n) \times_{G(A_n)} G(A_{n+1})}  ( F(A'_n) \times_{F(A_n)} F(A_{n+1}) ) 
\end{equation} 
is surjective. Let $(\Spec(A'_{n+1}) \to S, E'_n, E_{n+1})$ be a point of the target of this map, 
and set $E_n = (E_{n+1})_{A_n} \cong (E'_n)_{A_n}$. 
By Step~\ref{step4-smoothness}, the set $\Def_{E_{n+1}}(A'_{n+1})$  of deformations 
of $E_{n+1}$ over $A'_{n+1}$ is an $\Ext^1_{A_{n+1}}(E_{n+1}, E_{n+1})$-torsor, 
and the set $\Def_{E_{n}}(A'_n)$ of deformations of $E_n$ over $A'_n$ is an  
$\Ext^1_{A_n}(E_n, E_n)$-torsor.  
The restriction map 
\begin{equation}
\label{T1Def}
\Def_{E_{n+1}}(A'_{n+1}) \to \Def_{E_{n}}(A'_{n}) 
\end{equation} 
is compatible with the torsor structures under the natural map 
\begin{equation*}
\Ext^1_{{A_{n+1}}}(E_{n+1}, E_{n+1}) \to 
\Ext^1_{{A_n}}(E_n, E_n) . 
\end{equation*} 
By Lemma~\ref{lemma-simple-ug-CY2}\eqref{ExtEE-bc} the map on $\Ext^1$ 
groups is identified with the natural surjective map 
\begin{equation*}
\Ext^1_{{A_{n+1}}}(E_{n+1}, E_{n+1}) \to \Ext^1_{{A_{n+1}}}(E_{n+1}, E_{n+1}) \otimes_{A_{n+1}} A_n, 
\end{equation*} 
so~\eqref{T1Def} is also surjective. 
Thus there is an element $E'_{n+1} \in \Def_{E_{n+1}}(A'_{n+1})$ which restricts to $E'_n \in \Def_{E_{n}}(A'_{n})$. 
Equivalently, $E'_{n+1}$ maps to $(\Spec(A'_{n+1}) \to S, E'_n, E_{n+1})$ under~\eqref{T1-diagram}, which proves the required surjectivity. 
By Step~\ref{step3-smoothness}, this completes the proof of the theorem. 
\end{proof} 


\section{Proofs of results on the integral Hodge conjecture}
\label{section-theorem-proofs} 

In this section we prove Theorem~\ref{theorem-IHC}, Corollary~\ref{corollary-example-IHC}, and Theorem~\ref{theorem-VHC}
announced in the introduction, as well as several complementary results. 

We start with a general criterion for verifying 
the noncommutative variational integral Hodge conjecture. 

\begin{proposition}
\label{proposition-VHC-criterion}
Let $\cC \subset \Dperf(X)$ be an $S$-linear admissible subcategory, 
where $X \to S$ is a smooth proper morphism of complex varieties. 
Let $\vphi$ be a section of the local system $\Ktop[0](\cC/S)$. 
Assume there exists a complex point $0 \in S(\bC)$ such that the fiber $\vphi_0 \in \Ktop[0](\cC_0)$ is 
the class of an object $E_0 \in \cC_0$ with the property that 
$\cM(\cC/S, \vphi) \to S$ is smooth at $E_0$. 
Then $\vphi_s \in \Ktop[0](\cC_s)$ is algebraic for every $s \in S(\bC)$. 
\end{proposition}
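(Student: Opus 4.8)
The plan is to prove this by a noncommutative version of Bloch's semiregularity argument: the point is to manufacture a single object, or $\rK_0$-class, on a global model dominating $S$ whose fibrewise classes recover $\vphi$. The essential difficulty is passing from the ``easy'' conclusion that $\vphi_s$ is algebraic for $s$ in a dense open subset of $S$ to the full statement over all of $S$; since $\cM(\cC/S,\vphi)$ is not proper over $S$ one cannot simply take limits of objects, and the substitute will be properness of an auxiliary model together with the fact that only the $\rK_0$-class, which does extend, needs to be transported.

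First I would produce an \'etale-local family of objects near $0$. Since $\cM(\cC/S,\vphi)\to S$ is smooth at $E_0$, choosing a smooth atlas $P\to\cM(\cC/S,\vphi)$ and a point $p$ over $[E_0]$, the composite $P\to S$ is smooth at $p$; as smooth morphisms admit sections after \'etale base change, and such a section may be arranged to pass through a prescribed point of a prescribed fibre, there is an \'etale neighbourhood $u\colon (S',0')\to(S,0)$ together with a morphism $S'\to\cM(\cC/S,\vphi)$, i.e.\ an object $\mathcal E'\in\cC_{S'}$ whose class is $u^*\vphi$. Here $V:=u(S')$ is open, since \'etale morphisms are open, and nonempty, hence dense because $S$ is irreducible. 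Next, by Zariski's main theorem factor $u$ as $S'\hookrightarrow\overline{S'}\xrightarrow{\bar u}S$ with $\bar u$ finite and, replacing $\overline{S'}$ by the reduced closure of $S'$, with $S'$ a dense open subscheme of $\overline{S'}$; since $S'$ is smooth over $\bC$ it lies in the smooth locus of $\overline{S'}$, so a resolution of singularities $\widetilde Y\to\overline{S'}$ that is an isomorphism over that locus yields a smooth variety $\widetilde Y$ with a dense open immersion $S'\hookrightarrow\widetilde Y$ and a proper morphism $g\colon\widetilde Y\to S$ extending $u$; as $g$ has closed image containing the dense set $V$, it is surjective.

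Then I would extend the class. Base change along $g$ gives a smooth proper morphism $X_{\widetilde Y}\to\widetilde Y$ with $X_{\widetilde Y}$ smooth over $\bC$, hence regular, and makes $\cC_{\widetilde Y}\subset\Dperf(X_{\widetilde Y})$ an admissible subcategory with projection functor $\widetilde p$. Because $X_{S'}\subset X_{\widetilde Y}$ is a dense open in a regular scheme, $\rK_0(X_{\widetilde Y})\to\rK_0(X_{S'})$ is surjective (localization for $G$-theory and $G_0=\rK_0$ for regular schemes), so I may pick $\mathcal F\in\Dperf(X_{\widetilde Y})$ with $[\mathcal F|_{X_{S'}}]=[\mathcal E']$ and set $\widetilde{\mathcal G}=\widetilde p(\mathcal F)\in\cC_{\widetilde Y}$; since projections commute with base change (\S\ref{section-linear-categories}), $[\widetilde{\mathcal G}|_{X_{S'}}]=[\mathcal E']$ in $\rK_0(\cC_{S'})$. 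Now $[\widetilde{\mathcal G}]$ gives a section of the local system $\Ktop[0](\cC_{\widetilde Y}/\widetilde Y)$ (Proposition~\ref{proposition-Ktop-VHS}) whose restriction to the dense open $(S')^{\an}$ equals $u^*\vphi=(g^*\vphi)|_{(S')^{\an}}$. Two sections of a local system on a smooth variety agreeing on a dense open subset agree everywhere, the locus of agreement being open and closed and meeting every connected component, so this section equals $g^*\vphi$ on all of $\widetilde Y^{\an}$. Hence for every $y\in\widetilde Y(\bC)$ the class $\vphi_{g(y)}$, identified with $(g^*\vphi)_y\in\Ktop[0](\cC_y)$ under the canonical equivalence $\cC_y\simeq\cC_{g(y)}$, is the image of $[\widetilde{\mathcal G}_y]\in\rK_0(\cC_y)$, hence algebraic; since $g$ is surjective, $\vphi_s$ is algebraic for every $s\in S(\bC)$.

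The step I expect to be the main obstacle is precisely this passage from a dense open to all of $S$: smoothness of $\cM(\cC/S,\vphi)\to S$ at one point directly produces objects of class $\vphi$ only over the fibres lying in the dense open image of the smooth locus, and — in contrast to the classical situation, where a flat family of cycles extends over a puncture by properness of the Hilbert scheme — objects of $\cC$ do not extend. The fix is to carry out the spreading-out on a smooth proper model $g\colon\widetilde Y\to S$ rather than on $S$ itself, to transport only the $\rK_0$-class, which extends by Thomason--Trobaugh, and to recognise it fibrewise using that relative topological K-theory of $\cC$ is a local system. All other ingredients — smoothness of $\cM(\cC/S,\vphi)$, openness of smooth morphisms, Zariski's main theorem, resolution of singularities, and the local-system property of $\Ktop[0](\cC/S)$ — are either hypotheses or already established above.
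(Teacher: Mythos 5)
Your argument is correct and follows essentially the same route as the paper's proof: use smoothness of $\cM(\cC/S,\vphi)\to S$ at $E_0$ to get a section over an \'etale neighbourhood, pass to a proper surjective (smooth, via resolution) model of $S$ on which that neighbourhood sits as a dense open, extend the resulting object/class over the boundary, and invoke rigidity of sections of the local system $\Ktop[0](\cC/S)$ to identify the extension with the pullback of $\vphi$. The one genuine point of divergence is the extension step: the paper lifts the actual object $E_U\in\cC_U\subset\Dperf(X_U)$ to a perfect complex on all of $X$ (citing Polishchuk's result on constant families of $t$-structures, valid because $X$ has been made smooth), whereas you extend only its class in $\rK_0$ using the localization sequence for $G$-theory together with regularity of the total space, and then project a representative into $\cC_{\widetilde Y}$. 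Both devices require resolution of singularities at exactly the same spot and both suffice, since only the $\rK_0$-class is needed downstream; your variant trades the statement about extending complexes for the more classical surjectivity of $G_0$ under open restriction.
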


\begin{proof}
Note that 
the conclusion of the proposition is insensitive to base change along a 
surjective morphism $S' \to S$. 
More precisely, given such an $S' \to S$, choose $0' \in S'(\bC)$ mapping to $0 \in S(\bC)$. 
By base change we obtain an $S'$-linear admissible 
subcategory $\cC' \subset \Dperf(X')$ where $X' = X \times_{S} S' \to S'$ and 
a section $\vphi'$ of $\Ktop[0](\cC'/S')$ such that $\vphi'_{0'}$ is the class of 
the pullback $E_{0'}$  of $E_0$ to $\cC_{0'}$, with the property that  
the morphism $\cM(\cC'/S', \vphi') \to S'$ is smooth at $E_{0'}$. 
Then the conclusion of the proposition for this base changed family implies the 
same result for the original family, since it is a fiberwise statement. 
We freely use this observation below. 

Let $\cM^{\circ} \subset \cM(\cC/S, \vphi)$ be the smooth locus of  
$\cM(\cC/S, \vphi) \to S$, i.e. the maximal open substack to which the restricted morphism 
is smooth (see \citestacks{0DZR}).  
By assumption $E_0$ lies in the fiber of $\cM^{\circ} \to S$ over $0$. 
The image of $\cM^{\circ} \to S$ is thus a nonempty open subset $U \subset S$ containing $0$. 
Since $\cM^{\circ} \to U$ is smooth and surjective, there exists a surjective \'{e}tale morphism 
$U' \to U$ with a point $0' \in U'(\bC)$ mapping to $0 \in U(\bC)$, such that 
the base change $\cM^{\circ}_{U'} \to U'$ admits a section taking $0'$ to $E_{0'} \in \cM^{\circ}_{U'}$. 
Thus, by base changing along a compactification $S' \to S$ of the morphism $U' \to S$, 
we may assume that $\cM^{\circ} \to U$ admits a section taking $0$ to $E_0$. 
In other words, if $\vphi_U$ denotes the section of $\Ktop[0](\cC_U/U)$ given by the restriction of $\vphi$, then there exists an object $E_U \in \cC_U$ of class $\vphi_U$ such that 
$(E_{U})_0 \simeq E_0$. 

Next we aim to produce a lift of $E_U \in \cC_U$ to an object $E \in \cC$. 
First, we may assume $S$ is smooth by base changing along a resolution of singularities. 
Then $X$ is smooth since it is smooth over $S$. 
It follows that the object $E_U \in \cC_U \subset \Dperf(X_U)$ lifts to an object 
$F \in \Dperf(X)$ (see e.g. \cite[Lemma 2.3.1]{polishchuk}). 
The projection of $F$ onto $\cC \subset \Dperf(X)$ then gives the desired lift $E \in \cC$ 
of $E_U$. 
The class $\vphi_E \in \Gamma(S^{\an}, \Ktop[0](\cC/S))$ of $E$ must equal $\vphi$, 
since these sections of the local system $\Ktop[0](\cC/S)$ agree over the open subset 
$U$. Therefore, $\vphi_s$ equals the class of $E_s$ for every $s \in S(\bC)$, and 
in particular is algebraic.  
\end{proof}

\begin{proof}[Proof of Theorem~\ref{theorem-VHC}] 
By Lemma~\ref{lemma-invariant-cycles} the fibers $\vphi_s \in  \tH(\cC_s, \bZ)$ 
are Hodge classes for all $s \in S(\bC)$, 
so by Theorem~\ref{theorem-mukai} the morphism $s\cM(\cC/S, \vphi) \to S$ is smooth. 
As the morphism $s\cM(\cC/S, \vphi) \to \cM(\cC/S)$ is an open immersion by Lemma~\ref{lemma-sM-lft}, 
it follows that the morphism $\cM(\cC/S, \vphi) \to S$ is smooth at any point of the domain corresponding to a simple universally gluable object. 
Thus Proposition~\ref{proposition-VHC-criterion} applies to show 
$\vphi_s \in \tH(\cC_s, \bZ)$ is algebraic for every $s \in S(\bC)$. 
\end{proof}

\begin{proof}[Proof of Theorem~\ref{theorem-IHC}] 
We may assume that $v$ is primitive, because if the result is true for $v$ then 
it is also true for any multiple of $v$. 
By Theorem~\ref{theorem-VHC}, it thus suffices to show that if $w \in \Hdg(\Dperf(T, \alpha), \bZ)$ is primitive and satisfies  
$(w,w) \geq -2$ or $(w,w) \geq 0$ according to whether $T$ is K3 or abelian, 
then it is the class of a simple universally gluable object in 
$\Dperf(T, \alpha)$. In fact, more is true: there is a nonempty distinguished component $\Stab^{\dagger}(T, \alpha)$ of the space of Bridgeland stability conditions on $\Dperf(T, \alpha)$, such that for $\sigma \in \Stab^{\dagger}(T, \alpha)$ generic with respect to $w$, the moduli space of $\sigma$-stable objects in $\Dperf(T, \alpha)$ of class $w$ is nonempty of dimension $(w,w)+2$. 
For $T$ a K3 surface this is \cite[Theorem 6.8]{bayer-macri} and \cite[Theorem 2.15]{bayer-macri-MMP} (based on \cite{yoshioka-abelian, yoshioka-twisted}), and 
for $T$ an abelian surface and $\alpha = 0$ this is \cite[Theorem 2.3]{bayer-li} (based on \cite{yoshioka-abelian16, MYY}) but the case of general $\alpha$ holds by 
similar arguments.  
This completes the proof, because a Bridgeland stable object is necessarily simple and universally gluable. 
\end{proof} 

Our proof of Corollary~\ref{corollary-example-IHC} will be based on the following result. 

\begin{proposition}
\label{proposition-Ku-IHC} 
Let $X$ be a cubic or GM fourfold. 
Then the integral Hodge conjecture holds for $\Ku(X)$. 
\end{proposition}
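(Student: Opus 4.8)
The plan is to apply Theorem~\ref{theorem-IHC} to the CY2 category $\Ku(X)$, which by Lemma~\ref{lemma-IHC-sod} (together with additivity of Voisin groups under the semiorthogonal decompositions defining $\Ku(X)$) is equivalent to proving the integral Hodge conjecture for the Kuznetsov components themselves. To invoke Theorem~\ref{theorem-IHC}, we must exhibit, for each generator $v \in \Hdg(\Ku(X), \bZ)$ of $\Voi(\Ku(X))$, a family of CY2 categories $\cD$ over a base $S$ with a point specializing to $\Ku(X)$, another point specializing to $\Dperf(T,\alpha)$ for a (twisted) K3 or abelian surface, and such that $v$ stays of Hodge type along $S$, and finally verify the numerical condition $(v,v) \geq -2$ (the K3 case).

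First I would set up the relevant moduli space of CY2 categories: for cubic fourfolds this is the (coarse) moduli space $\cM_4$ of smooth cubic fourfolds, over which one has the family of Kuznetsov components $\Ku(\cX) \subset \Dperf(\cX)$ of Example~\ref{example-cubic}; for GM fourfolds one uses the moduli of GM fourfolds and the relative Kuznetsov component of \cite{KuzPerry:dercatGM}. In both cases the Mukai local system $\tH(\cD/S,\bZ)$ and the associated period map are understood: the lattice $\tH(\Ku(X),\bZ)$ is an even lattice of signature $(2,\rho)$ (signature $(4,20)$ for the full lattice in the cubic case, with Mukai-type form), and the Hodge locus of a primitive class $v$ is a nonempty divisor (or intersection of such) in $S$. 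The key geometric input is the Hassett–style result that the Hodge locus of any primitive class $v$ of suitable square contains a point where $\Ku(X)$ becomes equivalent to $\Dperf(T,\alpha)$: for cubic fourfolds this is precisely Kuznetsov's characterization of the divisors $\cC_d$ together with Addington–Thomas \cite{addington-thomas} (and its twisted extension), and for GM fourfolds the analogous statement is in \cite{KuzPerry:dercatGM}. So I would argue: given primitive $v \in \Hdg(\Ku(X),\bZ)$, restrict the family to a connected component of the Hodge locus $S_v$ of $v$; this component contains a point $1$ where the fiber is $\Dperf(T,\alpha)$, by the lattice-theoretic/period criterion for such equivalences. Along $S_v$ the class $v$ is by construction a flat section of $\tH(\cD/S_v,\bZ)$, so hypothesis (3) of Theorem~\ref{theorem-IHC} holds, and hypotheses (1), (2) hold at the endpoints $0,1$.

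Next I would address the numerical hypothesis $(v,v) \geq -2$. This is where a small additional argument is needed: an arbitrary primitive Hodge class need not have square $\geq -2$. The remedy is that $\Voi(\Ku(X))$ is generated by classes that can be taken to have controlled square — one can add to $v$ a suitable algebraic class (e.g. a multiple of the class of a point-like object, or of $\cO_X$ restricted appropriately) without changing its image in the Voisin group, and by the structure of the Mukai lattice arrange $(v,v) \geq -2$; alternatively, one observes that the relevant cokernel is generated by $v$ with $(v,v) = -2$ or $0$ directly from the description of $\Hdg(\Ku(X),\bZ)$ as (a twist of) the algebraic part of a K3-type lattice. I would phrase this as: the cokernel of $\rK_0(\Ku(X)) \to \Hdg(\Ku(X),\bZ)$ is generated by primitive classes $v$ with $(v,v) \geq -2$, so Theorem~\ref{theorem-IHC} applies to each such generator and yields surjectivity.

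The main obstacle I expect is the deformation/degeneration step — producing the family $\cD$ over $S$ interpolating between $\Ku(X)$ and $\Dperf(T,\alpha)$ with the class $v$ remaining of Hodge type. This requires knowing that the Hodge locus of $v$ in the moduli of cubic (resp.\ GM) fourfolds actually meets the locus of "geometric" Kuznetsov components, which rests on the existence of enough special divisors $\cC_d$ (Hassett divisors and their GM analogues) and on the derived-category interpretations of those divisors from \cite{addington-thomas} and \cite{KuzPerry:dercatGM}; the twisted case $\Dperf(T,\alpha)$ is what lets one handle all admissible $d$, not merely those with an untwisted K3. Everything else — additivity over the semiorthogonal decomposition, flatness of $v$ along the Hodge locus, the square condition — is formal given the results already assembled in the paper. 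I would therefore organize the proof as: (i) reduce to $\Ku(X)$ via Lemma~\ref{lemma-IHC-sod}; (ii) recall the relative Kuznetsov component over the moduli space and its Mukai local system; (iii) for a primitive generator $v$ of $\Voi(\Ku(X))$ with $(v,v)\geq -2$, restrict to a component of its Hodge locus and cite the (twisted) derived Torelli–type results to find a fiber of the form $\Dperf(T,\alpha)$; (iv) apply Theorem~\ref{theorem-IHC}.
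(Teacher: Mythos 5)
Your overall strategy is the same as the paper's: verify the hypotheses of Theorem~\ref{theorem-IHC}, fix the numerical condition by replacing $v$ with $v+t\lambda$ for $\lambda$ an algebraic class of positive square in the image of $\rK_0(\Ku(X))$ (the canonical rank~$2$ positive definite sublattice of \cite{addington-thomas} and \cite{KuzPerry:dercatGM}), and then deform within the Hodge locus of $v$ to a fourfold whose Kuznetsov component is $\Dperf(T,\alpha)$. For cubic fourfolds your argument is essentially complete and matches the paper: \cite[Theorem 4.1]{addington-thomas}, which rests on the Laza--Looijenga description of the image of the period map, shows that $X$ deforms within the Hodge locus of $v$ to a cubic fourfold containing a plane. (One small remark: your step (i) invoking Lemma~\ref{lemma-IHC-sod} is not needed here --- the statement is already about $\Ku(X)$; that lemma is what later deduces the integral Hodge conjecture for $X$ itself from this proposition. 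Also, your ``alternative'' claim that the cokernel is generated by classes with $(v,v)=-2$ or $0$ directly is not justified; stick with the $v+t\lambda$ trick.)

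The genuine gap is in the GM case. You assert that ``the analogous statement is in \cite{KuzPerry:dercatGM},'' but it is not: that reference identifies $\Ku(X)\simeq\Dperf(T)$ for ordinary GM fourfolds containing a quintic del Pezzo surface, and it does \emph{not} show that every Hodge locus meets this locus. The Addington--Thomas argument cannot simply be transported because the image of the period map for GM fourfolds is not known (it is only conjectural, \cite[Question 9.1]{DIM4fold}), so one cannot realize an arbitrary lattice-theoretically admissible period inside the Hodge locus. The paper works around this in two steps: first, the explicit construction in the proof of \cite[Theorem 8.1]{DIM4fold} shows $X$ deforms within the Hodge locus of $v$ to an ordinary GM fourfold $X'$ containing a $\sigma$-plane; second, using the EPW-sextic description of GM fourfolds (nonemptiness of the stratum $Y^3_{A(X')}$, choice of a suitable point of $Y^1_{A(X')^{\perp}}$), one finds an ordinary GM fourfold $X''$ containing a quintic del Pezzo surface \emph{in the same fiber of the period map} as $X'$, and then uses that preimages of irreducible subvarieties under the period map remain irreducible to conclude $X$ is deformation equivalent to $X''$ within the Hodge locus. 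Without some argument of this kind your step (iii) does not go through for GM fourfolds.
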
 

\begin{proof}
We verify the criterion of Theorem~\ref{theorem-IHC}. 
First we claim that the cokernel of the map $\rK_0(\Ku(X)) \to \Hdg(\Ku(X), \bZ)$ is generated by elements $v \in \Hdg(\Ku(X), \bZ)$ with $(v, v) \geq -2$. 
Indeed, the image of $\rK_0(\Ku(X)) \to \Hdg(\Ku(X), \bZ)$ contains a class $\lambda$ with   
$(\lambda, \lambda) > 0$; in fact, the image contains --- and in the very general case equals --- a 
canonical positive definite rank $2$ sublattice, see \cite[\S2.4]{addington-thomas} and \cite[Lemma 2.27]{KuzPerry:dercatGM}. 
The claim then follows because for any $v \in \Hdg(\Ku(X), \bZ)$, we have $(v + t \lambda, v+t\lambda) \geq -2$ for $t$ a sufficiently large integer. 

Therefore, it suffices to show that for any $v \in \Hdg(\Ku(X), \bZ)$, 
there exists a family of cubic or GM fourfolds $Y \to S$ and  
points $0,1 \in S(\bC)$ such that 
$Y_0 \cong X$, 
$\Ku(Y_1) \simeq \Dperf(T, \alpha)$ for a twisted K3 surface $(T, \alpha)$, 
and $v$ remains of Hodge type along $S$. 

If $X$ is a cubic fourfold, the existence of such a family of cubic fourfolds $Y \to S$ 
follows from \cite[Theorem 4.1]{addington-thomas}. 
More precisely, the Kuznetsov component of a cubic fourfold containing a plane is equivalent 
to the derived of a twisted K3 surface \cite{kuznetsov-cubic}, 
and \cite[Theorem 4.1]{addington-thomas} uses 
Laza and Looijenga's description of the image of the period map for cubic fourfolds  \cite{laza, looijenga} 
to show that $X$ is deformation equivalent within the Hodge locus for $v$ to a cubic fourfold containing a plane. 

For GM fourfolds, the argument is more complicated, because less is known 
about the image of their period map. 
Recall a GM fourfold is called \emph{special} or \emph{ordinary} according to whether or not, 
in the notation of Definition~\ref{definition-GM}, the vertex of $\Cone(\Gr(2, V_5))$ is contained in the linear 
subspace $\bP^{8} \subset \bP^{10}$. 
By \cite[Theorem 1.2]{KuzPerry:dercatGM}, the Kuznetsov component of an ordinary GM fourfold containing a quintic del Pezzo surface is equivalent to the derived category of a K3 surface. 
Thus it suffices to show that if $X$ is a GM fourfold, it is deformation equivalent within the Hodge locus for $v$ to such a GM fourfold. 
If the conjectural description of the image of the period map for GM fourfolds were known  \cite[Question 9.1]{DIM4fold}, this could be proved analogously to 
the case of cubic fourfolds by a lattice theoretic computation. 
This conjecture is not known, but we can still use the period map to complete the argument as follows. 

By the construction of GM fourfolds in the proof of \cite[Theorem 8.1]{DIM4fold}, 
it follows that $X$ is deformation equivalent within the Hodge locus for $v$ to an 
ordinary GM fourfold $X'$ containing a so-called $\sigma$-plane.  
We claim that in the fiber through $X'$ of the period map for GM fourfolds, 
there is an ordinary GM fourfold $X''$ containing a quintic del Pezzo surface. 
Since preimages of irreducible subvarieties under 
the period map remain irreducible (see \cite[Lemma 5.12]{GMstability}), 
the claim implies that $X$ is deformation equivalent within the Hodge locus for $v$ to $X''$.  

To prove the claim, we freely use the notation and terminology on EPW sextics introduced in 
\cite[\S3]{DebKuz:birGM} and summarized in \cite[\S3]{KuzPerry:dercatGM}. 
Because $X'$ contains a $\sigma$-plane, by \cite[Remark 5.29]{DebKuz:periodGM} 
the EPW stratum $Y_{A(X')}^3 \subset \bP(V_6(X'))$ is nonempty. 
Let $\mathbf{p} \in Y_{A(X')^{\perp}}^{1} \subset \bP(V_6(X')^{\svee})$ be a point in the top stratum of the dual EPW sextic, 
such that the corresponding hyperplane in $\bP(V_6(X'))$ does not contain $Y_{A(X')}^3$. 
Let $X''$ be the ordinary GM fourfold corresponding to the pair $(A(X'), \mathbf{p})$ 
(see \cite[Theorem 3.10]{DebKuz:birGM} or \cite[Theorem~3.1]{KuzPerry:dercatGM}). 
Then \cite{DebKuz:periodGM} shows $X'$ and $X''$ lie in the same fiber of the period map, 
and \cite[Lemma 4.4]{KuzPerry:dercatGM} shows that $X''$ contains a quintic del Pezzo surface. 
This finishes the proof of the claim. 
\end{proof} 

\begin{remark}
\label{remark-CY2-IHC}
The result \cite[Proposition~5.8]{GMstability} gives the existence of families of GM fourfolds 
$Y \to S$ satisfying even stronger conditions than those required in the above proof. 
We preferred to give the above more elementary argument instead, 
because \cite[Proposition~5.8]{GMstability} relies on deep ingredients: 
the construction of stability conditions on Kuznetsov components of GM fourfolds, 
as well as the theory of stability conditions in families from \cite{BLMNPS}. 

In fact, one of the motivations for this paper was to develop a technique for proving 
the integral Hodge conjecture for CY2 categories that avoids the difficult 
problem of constructing stability conditions. 
As the proof of Proposition~\ref{proposition-Ku-IHC} illustrates, if our categories occur as the 
Kuznetsov components $\Ku(X) \subset \Dperf(X)$ of members $X$ of a family of varieties, 
our technique requires three ingredients: 
\begin{enumerate}
\item \label{IHC-ingredient-1}
The existence of a class $\lambda$ in the image of the map $\rK_0(\Ku(X)) \to \Hdg(\Ku(X), \bZ)$ 
which satisfies $(\lambda, \lambda) > 0$. 
\item \label{IHC-ingredient-2} 
The existence of $X$ such that $\Ku(X) \simeq \Dperf(T, \alpha)$ for a twisted K3 or abelian surface. 
\item \label{IHC-ingredient-3} 
Sufficient control of Hodge loci to ensure that they always contain $X$ as in~\eqref{IHC-ingredient-2}. 
\end{enumerate}
Condition~\eqref{IHC-ingredient-1} holds in all of the known examples of CY2 categories from~\S\ref{section-CY2-examples}, and we expect it holds whenever condition~\eqref{IHC-ingredient-2} does. 
In practice, checking conditions~\eqref{IHC-ingredient-2} and~\eqref{IHC-ingredient-3} requires more work, 
but there are many available tools, e.g. homological projective geometry \cite{kuznetsov-HPD, categorical-plucker, categorical-joins, categorical-cones} has been crucial in checking condition~\eqref{IHC-ingredient-2} in the known examples. 
\end{remark} 

\begin{proof}[Proof of Corollary~\ref{corollary-example-IHC}]
Let $X$ be a cubic or GM fourfold. 
We claim that $\rH^*(X, \bZ)$ is torsion free and $\rH^{2m}(X, \bZ)$ is of Tate type for $m > 2$. 
(In fact the Hodge diamond of $X$ can be computed explicitly, see \cite{hassett, DIM4fold}, but the following argument gives a simpler proof of the Tate type statement.) 
Indeed, for a cubic fourfold the claim holds by the Lefschetz hyperplane theorem. 
If $X$ is an ordinary GM fourfold, then projection from the vertex of $\Cone(\Gr(2,V_5))$ gives an isomorphism 
\begin{equation*}
X \cong \Gr(2, V_5) \cap \bP^8 \cap Q, 
\end{equation*} 
where $\bP^8 \subset \bP^9$ is a hyperplane in the Pl\"{u}cker space and $Q \subset \bP^8$ is a quadric hypersurface. In this case, the Lefschetz hyperplane theorem again gives the claim. 
This also implies the claim for special GM fourfolds, because they are deformation equivalent to ordinary GM fourfolds. 

By Proposition~\ref{proposition-IHC-vs-NCIHC}\eqref{NCIHC-implies-IHC}  it thus suffices to prove the integral Hodge conjecture for $\Dperf(X)$. 
Recall there is a semiorthogonal decomposition of $\Dperf(X)$ consisting of $\Ku(X)$ and copies of the derived category of a point. 
Therefore the result follows from Lemma~\ref{lemma-IHC-sod} and Proposition~\ref{proposition-Ku-IHC}. 
\end{proof} 

Similar arguments yield the following. 
\begin{corollary}
\label{corollary-GM6}
Let $X$ be a GM sixfold. 
Then the Voisin group $\Voi^3(X)$ is $2$-torsion. 
\end{corollary}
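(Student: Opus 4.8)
The plan is to follow the proof of Corollary~\ref{corollary-example-IHC} verbatim, with GM sixfolds in place of GM fourfolds. Let $X$ be a GM sixfold, so $\dim(X) = 6$ is even and by~\cite{KuzPerry:dercatGM} the Kuznetsov component $\Ku(X) \subset \Dperf(X)$ is a CY2 category, appearing in the semiorthogonal decomposition~\eqref{equation-Ku}
\begin{equation*}
\Dperf(X) = \llangle \Ku(X), \cO_X, \cU_X^{\vee}, \cO_X(1), \cU_X^{\vee}(1), \dots, \cO_X(3), \cU_X^{\vee}(3) \rrangle ,
\end{equation*}
all of whose components other than $\Ku(X)$ are generated by a single exceptional object and hence equivalent to $\Dperf(\Spec(\bC))$.

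First I would verify the hypotheses of Proposition~\ref{proposition-IHC-vs-NCIHC}\eqref{NCIHC-implies-IHC} with $n = 3$: that $\rH^*(X, \bZ)$ is torsion free and $\rH^{2m}(X, \bZ)$ is of Tate type for all $m > 3$. A GM sixfold is a double cover $\gamma \colon X \to \Gr(2,5)$ branched along a smooth quadric section $D$ (an ordinary GM fivefold); as $D$ is ample its complement is affine, so Artin vanishing forces the anti-invariant part of $\rH^k(X, \bQ)$ to vanish for $k < 6$, and hence for all $k \neq 6$ by Poincar\'e duality. Away from the middle degree the cohomology of $X$ therefore agrees with that of $\Gr(2,5)$, which is torsion free and spanned by Tate classes; this yields torsion-freeness of $\rH^*(X,\bZ)$ and Tate type of $\rH^{2m}(X,\bZ)$ for all $m \neq 3$, which is more than enough. (Alternatively one may cite the computation of the Hodge structure of GM varieties in the literature.) Given this, Proposition~\ref{proposition-IHC-vs-NCIHC}\eqref{NCIHC-implies-IHC} shows that if $\Voi(\Dperf(X))$ is $d$-torsion then $\Voi^3(X)$ is $d \cdot (3-1)! = 2d$-torsion; and by Lemma~\ref{lemma-IHC-sod} and the decomposition above, $\Voi(\Dperf(X)) \cong \Voi(\Ku(X))$. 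So it remains to prove the integral Hodge conjecture for $\Ku(X)$, i.e. that $d = 1$.

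For this I would reduce to Proposition~\ref{proposition-Ku-IHC}. Since $\dim(X) \geq 3$, by~\cite[Theorem 3.16]{DebKuz:birGM} the Lagrangian subspace $A(X) \subset \wedge^3 V_6(X)$ contains no decomposable vectors, so there exists a smooth GM fourfold $X'$ with $A(X') \cong A(X)$; then $X$ and $X'$ are generalized partners, and the duality conjecture~\cite[Conjecture 3.7]{KuzPerry:dercatGM} proved in~\cite[Theorem 1.6]{categorical-cones} yields an equivalence $\Ku(X) \simeq \Ku(X')$. (Equivalently, one can rerun the argument of Proposition~\ref{proposition-Ku-IHC} directly for $X$: the inputs it needs---a positive class in the image of $\rK_0(\Ku(X)) \to \Hdg(\Ku(X),\bZ)$, a GM sixfold whose Kuznetsov component is the derived category of a twisted K3 surface, and enough control of Hodge loci---are available for GM varieties of dimension $\geq 3$.) By Proposition~\ref{proposition-Ku-IHC} the integral Hodge conjecture holds for $\Ku(X') \simeq \Ku(X)$, so $d = 1$ and $\Voi^3(X)$ is $2$-torsion.

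The points requiring genuine care, and hence the expected main obstacles, are the cohomological input for GM sixfolds (the double-cover and Artin-vanishing argument above, or an appropriate citation) and the existence of a smooth GM fourfold carrying the prescribed Lagrangian data, which is precisely what makes the equivalence of~\cite{categorical-cones} applicable.
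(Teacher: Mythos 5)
Your proposal is correct and follows essentially the same route as the paper: cohomological input for GM sixfolds (which the paper simply cites from the literature rather than rederiving via the double cover), Proposition~\ref{proposition-IHC-vs-NCIHC}\eqref{NCIHC-implies-IHC} with $n=3$, and reduction to Proposition~\ref{proposition-Ku-IHC} via the duality conjecture. The only cosmetic difference is that the paper passes to a GM fourfold as a \emph{generalized dual} (citing \cite[Lemma 3.8]{KuzPerry:dercatGM} for its existence) rather than a generalized partner with the same Lagrangian $A(X)$, but both are covered by \cite[Theorem 1.6]{categorical-cones}.
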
 

\begin{proof}
By \cite{GM6-cohomology} the group $\rH^*(X, \bZ)$ is torsion free 
and $\rH^{2m}(X, \bZ)$ is of Tate type for $m > 3$. Thus, as in the proof of 
Corollary~\ref{corollary-example-IHC}, by Proposition~\ref{proposition-IHC-vs-NCIHC}\eqref{NCIHC-implies-IHC}
we reduce to proving the integral Hodge conjecture for $\Ku(X)$. 
By the duality conjecture for GM varieties \cite[Conjecture 3.7]{KuzPerry:dercatGM} proved in \cite[Theorem 1.6]{categorical-cones} and the description of generalized duals of GM varieties from \cite[Lemma 3.8]{KuzPerry:dercatGM}, there exists a GM fourfold $X'$ and an equivalence $\Ku(X) \simeq \Ku(X')$. 
Hence the result follows from Proposition~\ref{proposition-Ku-IHC}.  
\end{proof} 

\begin{remark}
It would be interesting to determine whether Corollary~\ref{corollary-GM6} is optimal, i.e. whether 
there exists a GM sixfold $X$ such that $\Voi^3(X) \neq 0$. 
\end{remark}


\newcommand{\etalchar}[1]{$^{#1}$}
\providecommand{\bysame}{\leavevmode\hbox to3em{\hrulefill}\thinspace}
\providecommand{\MR}{\relax\ifhmode\unskip\space\fi MR }
\providecommand{\MRhref}[2]{%
  \href{http://www.ams.org/mathscinet-getitem?mr=#1}{#2}
}
\providecommand{\href}[2]{#2}


\end{document}